\def\to{\mathchoice{\longrightarrow}{\rightarrow}{\rightarrow}{\rightarrow}}
\newcommand{\shortxra}[2][]{\ext@arrow 0359\rightarrowfill@{#1}{#2}}
\def\longrightarrowfill@{\arrowfill@\relbar\relbar\longrightarrow}
\newcommand{\longxra}[2][]{\ext@arrow 0359\longrightarrowfill@{#1}{#2}}
\renewcommand{\xrightarrow}[2][]{\mathchoice{\longxra[#1]{#2}}%
  {\shortxra[#1]{#2}}{\shortxra[#1]{#2}}{\shortxra[#1]{#2}}}
\def\Nopagebreak{\@nobreaktrue\nopagebreak}
\theoremstyle{plain}
\newtheorem{theorem}{Theorem}[section]
\newtheorem{proposition}[theorem]{Proposition}
\newtheorem{corollary}[theorem]{Corollary}
\newtheorem{lemma}[theorem]{Lemma}
\newtheorem*{claim}{Claim}
\newtheorem*{theorem*}{Theorem}
\theoremstyle{definition}
\newtheorem{definition}[theorem]{Definition}
\newtheorem{question}[theorem]{Question}
\newtheorem{example}[theorem]{Example}
\newtheorem{remark}[theorem]{Remark}
\def\Z{\mathbb{Z}}
\def\Q{\mathbb{Q}}
\def\C{\mathbb{C}}
\def\Im{\operatorname{Im}}
\def\Hom{\operatorname{Hom}}
\def\lk{\ell k}
\def\sign{\operatorname{sign}}
\def\ba{\begin{array}}
\def\ea{\end{array}}
\def\setminus{\smallsetminus}
\def\sm{\setminus}
\newcommand{\eps}{\varepsilon}
\let\oldsharp=\# \def\#{\mathbin{\oldsharp}}
\DeclareMathOperator{\im}{im}
\DeclareMathOperator{\cl}{cl}
\def\emptystr{}
\newcommand{\mkc}[2][]{\begin{color}{red}#2%
  \def\tempstr{#1}%
  \ifx\tempstr\emptystr \else\textsf{\SMALL\ \raise.7ex\hbox{[\tempstr]}}\fi
\end{color}}
\newcommand{\bsm}{\left(\begin{smallmatrix}}
\newcommand{\esm}{\end{smallmatrix}\right)}
\newcommand{\Bl}{\operatorname{Bl}}
\providecommand{\im}{\mathop{\rm im}\nolimits}
\providecommand{\Hom}{\mathop{\rm Hom}\nolimits}
\providecommand{\ev}{\mathop{\rm ev}\nolimits}
\providecommand{\PD}{\mathop{\rm PD}\nolimits}
\providecommand{\ev}{\mathop{\rm ev}\nolimits}
\begin{document}
\title{Abelian invariants of doubly slice links}
\author{Anthony Conway}
\address{Massachusetts Institute of Technology, Cambridge MA 02139}
\email{anthonyyconway@gmail.com}

\author{Patrick Orson}
\address{Department of Mathematics, ETH Z\"{u}rich, Switzerland}
\email{patrick.orson@math.ethz.ch}

\begin{abstract}
We provide obstructions to a link in $S^3$ arising as the cross section of any number of unlinked spheres in $S^4$.
Our obstructions arise from the multivariable signature, the Blanchfield form and generalised Seifert matrices.
We also obtain obstructions in the case of surfaces of higher genera, leading to a lower bound on the doubly slice genus of links.
\end{abstract}
\maketitle

\section{Introduction}

A knot
$K\subset S^3$
 is \emph{doubly slice} if it arises as the intersection of a locally flat unknotted~$2$-sphere in~$S^4$ with the equatorial~$S^3$.
The first detailed study of doubly slice knots was made by Sumners~\cite{Sumners}, and the concept arises as an extension of the ideas of Fox~\cite{MR0140099}.
In this article, we study several natural generalisations of this notion to links.
Given an $n$-component oriented link~$L \subset S^3$, and some $1\leq \mu\leq n$, the link is called~\emph{$\mu$-doubly slice} if it arises as the equatorial cross-section of~$\mu$ oriented unlinked $2$-spheres in $S^4$
(each intersecting the equator $S^3$ nontrivially),
inducing the given orientation on~$L$. The cases~$\mu=1$ and~$\mu=n$ have previously been studied in~\cite{Donald,MR4163852,McCoyMcDonald} and are called respectively \emph{weakly} and \emph{strongly} doubly slice.

\begin{figure}[h]
    \begin{center}
    \begin{tikzpicture}[scale=0.9]
    \node[inner sep=0pt] at (-6.5,-0.9)
    {\includegraphics[width=0.3\textwidth]{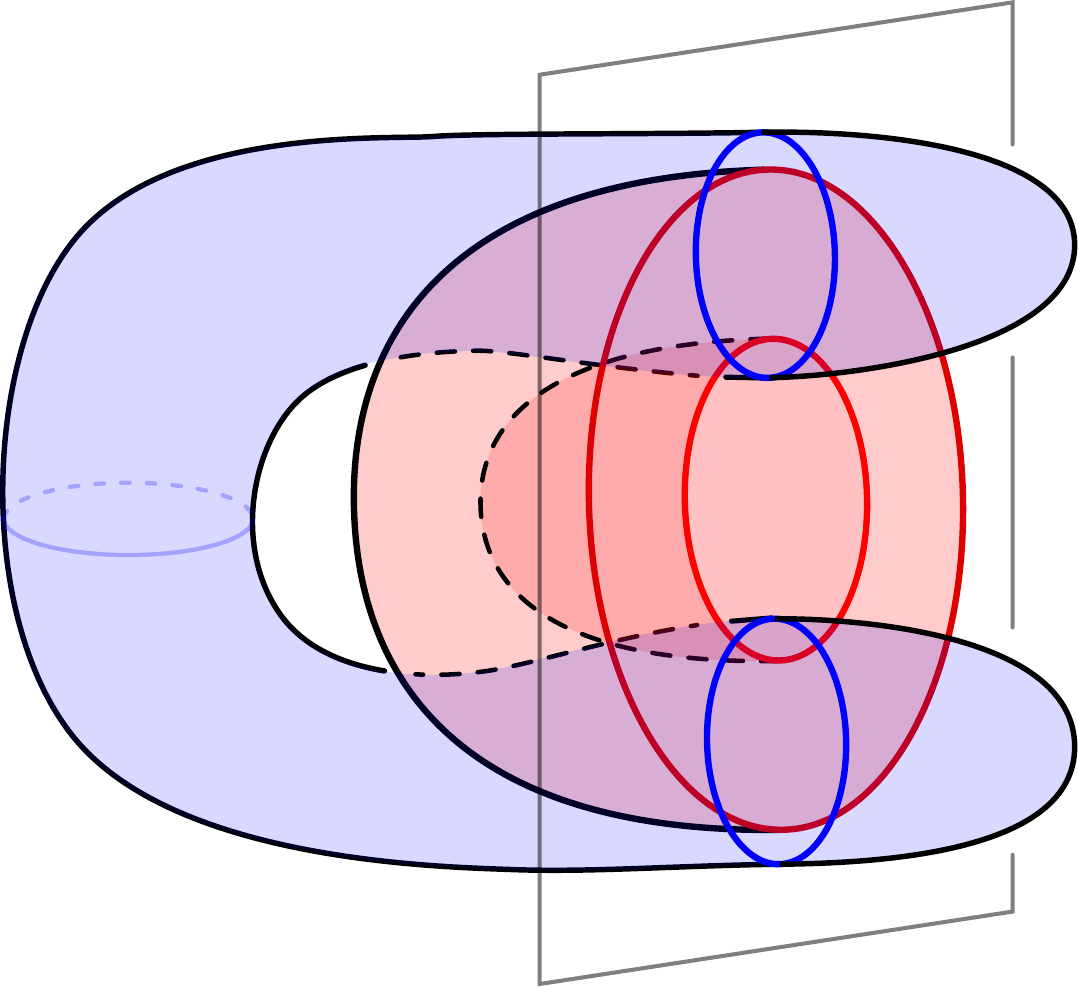}};
    
    \node[inner sep=0pt] at (-9.9,0.5) {\scalebox{1}{$S^4$}};
        \node[inner sep=0pt] at (-4.1,-3) {\scalebox{1}{$S^3$}};
                \node[inner sep=0pt] at (4.5,0) {\scalebox{1}{$\,$}};
                
                \draw [->] [decorate, decoration={snake}, thick] (-3.3,-1) -- (-1.37,-1);

%%%% FIRST COLUMN   
   
\begin{scope}[xscale=-1, shift={(0,0)}]      
   
\begin{scope}[shift={(0,0)}]   
    \draw[thick, blue] (0,0)
to [out=-20, in=up] (0.5,-0.5);
    \draw[line width =1.5mm, white] (0.5,0)
to [out=down, in=up] (0,-0.5);
    \draw[thick, red] (0.5,0)
to [out=down, in=up] (0,-0.5);
\end{scope}

\begin{scope}[shift={(0,-0.5)}]   
    \draw[thick, red] (0,0)
to [out=down, in=up] (0.5,-0.5);
    \draw[line width =1.5mm, white] (0.5,0)
to [out=down, in=up] (0,-0.5);
    \draw[thick, blue] (0.5,0)
to [out=down, in=up] (0,-0.5);
\end{scope}

\begin{scope}[shift={(0,-1)}]   
    \draw[thick, blue] (0,0)
to [out=down, in=up] (0.5,-0.5);
    \draw[line width =1.5mm, white] (0.5,0)
to [out=down, in=up] (0,-0.5);
    \draw[thick, red] (0.5,0)
to [out=down, in=up] (0,-0.5);
\end{scope}

\begin{scope}[shift={(0,-1.5)}]   
    \draw[thick, red] (0,0)
to [out=down, in=up] (0.5,-0.5);
    \draw[line width =1.5mm, white] (0.5,0)
to [out=down, in=20] (0,-0.5);
    \draw[thick, blue] (0.5,0)
to [out=down, in=20] (0,-0.5);
\end{scope}

\end{scope}

%%%% SECOND COLUMN   
   
\begin{scope}[xscale=1, shift={(0.5,0)}]      
   
\begin{scope}[shift={(0,0)}]   
    \draw[thick, blue] (0,0)
to [out=-20, in=up] (0.5,-0.5);
    \draw[line width =1.5mm, white] (0.5,0)
to [out=200, in=up] (0,-0.5);
    \draw[thick, red] (0.5,0)
to [out=200, in=up] (0,-0.5);
\end{scope}

\begin{scope}[shift={(0,-0.5)}]   
    \draw[thick, red] (0,0)
to [out=down, in=up] (0.5,-0.5);
    \draw[line width =1.5mm, white] (0.5,0)
to [out=down, in=up] (0,-0.5);
    \draw[thick, blue] (0.5,0)
to [out=down, in=up] (0,-0.5);
\end{scope}

\begin{scope}[shift={(0,-1)}]   
    \draw[thick, blue] (0,0)
to [out=down, in=up] (0.5,-0.5);
    \draw[line width =1.5mm, white] (0.5,0)
to [out=down, in=up] (0,-0.5);
    \draw[thick, red] (0.5,0)
to [out=down, in=up] (0,-0.5);
\end{scope}

\begin{scope}[shift={(0,-1.5)}]   
    \draw[thick, red] (0,0)
to [out=down, in=160] (0.5,-0.5);
    \draw[line width =1.5mm, white] (0.5,0)
to [out=down, in=20] (0,-0.5);
    \draw[thick, blue] (0.5,0)
to [out=down, in=20] (0,-0.5);
\end{scope}

\end{scope}

%%%% THIRD COLUMN   
   
\begin{scope}[xscale=-1, shift={(-2,0)}]      
   
\begin{scope}[shift={(0,0)}]   
    \draw[thick, blue] (0,0)
to [out=-20, in=up] (0.5,-0.5);
    \draw[line width =1.5mm, white] (0.5,0)
to [out=200, in=up] (0,-0.5);
    \draw[thick, red] (0.5,0)
to [out=200, in=up] (0,-0.5);
\end{scope}

\begin{scope}[shift={(0,-0.5)}]   
    \draw[thick, red] (0,0)
to [out=down, in=up] (0.5,-0.5);
    \draw[line width =1.5mm, white] (0.5,0)
to [out=down, in=up] (0,-0.5);
    \draw[thick, blue] (0.5,0)
to [out=down, in=up] (0,-0.5);
\end{scope}

\begin{scope}[shift={(0,-1)}]   
    \draw[thick, blue] (0,0)
to [out=down, in=up] (0.5,-0.5);
    \draw[line width =1.5mm, white] (0.5,0)
to [out=down, in=up] (0,-0.5);
    \draw[thick, red] (0.5,0)
to [out=down, in=up] (0,-0.5);
\end{scope}

\begin{scope}[shift={(0,-1.5)}]   
    \draw[thick, red] (0,0)
to [out=down, in=160] (0.5,-0.5);
    \draw[line width =1.5mm, white] (0.5,0)
to [out=down, in=20] (0,-0.5);
    \draw[thick, blue] (0.5,0)
to [out=down, in=20] (0,-0.5);
\end{scope}

\end{scope}

%%%% FOURTH COLUMN   
   
\begin{scope}[xscale=1, shift={(2.5,0)}]      
   
\begin{scope}[shift={(0,0)}]   
    \draw[thick, blue] (0,0)
to [out=-20, in=up] (0.5,-0.5);
    \draw[line width =1.5mm, white] (0.5,0)
to [out=down, in=up] (0,-0.5);
    \draw[thick, red] (0.5,0)
to [out=down, in=up] (0,-0.5);
\end{scope}

\begin{scope}[shift={(0,-0.5)}]   
    \draw[thick, red] (0,0)
to [out=down, in=up] (0.5,-0.5);
    \draw[line width =1.5mm, white] (0.5,0)
to [out=down, in=up] (0,-0.5);
    \draw[thick, blue] (0.5,0)
to [out=down, in=up] (0,-0.5);
\end{scope}

\begin{scope}[shift={(0,-1)}]   
    \draw[thick, blue] (0,0)
to [out=down, in=up] (0.5,-0.5);
    \draw[line width =1.5mm, white] (0.5,0)
to [out=down, in=up] (0,-0.5);
    \draw[thick, red] (0.5,0)
to [out=down, in=up] (0,-0.5);
\end{scope}

\begin{scope}[shift={(0,-1.5)}]   
    \draw[thick, red] (0,0)
to [out=down, in=up] (0.5,-0.5);
    \draw[line width =1.5mm, white] (0.5,0)
to [out=down, in=20] (0,-0.5);
    \draw[thick, blue] (0.5,0)
to [out=down, in=20] (0,-0.5);
\end{scope}

\end{scope}

%%%%%% TOP CAPS

\draw[thick, red, looseness=0.38] (-0.5,0)
to [out=up, in=up] (3,0);

\draw[thick, red, looseness=0.5] (1,0)
to [out=20, in=160] (1.5,0);

\draw[thick, blue, looseness=0.5] (0,0)
to [out=20, in=160] (0.5,0);

\draw[thick, blue, looseness=0.5] (2,0)
to [out=20, in=160] (2.5,0);

%%%%%% BOTTOM CAPS

\draw[thick, red, looseness=0.38] (-0.5,-2)
to [out=down, in=down] (3,-2);

\draw[thick, red, looseness=0.5] (1,-2)
to [out=-20, in=200] (1.5,-2);

\draw[thick, blue, looseness=0.5] (0,-2)
to [out=-20, in=200] (0.5,-2);

\draw[thick, blue, looseness=0.5] (2,-2)
to [out=-20, in=200] (2.5,-2);

    \end{tikzpicture}
        \end{center}
\caption{Unlinked 2-spheres in $S^4$ intersecting $S^3$, and a 2-doubly slice $4$-component link $P(-4,4,-4,4)\subset S^3$ arising as the cross section; cf.~Example~\ref{ex:folding}.}
\label{fig:schematic}
\end{figure}

\subsection{Set-up and multivariable abelian invariants}

From now on all links $L\subset S^3$ are assumed to be oriented.
In this paper we will develop several obstructions to a link being $\mu$-doubly slice using the multivariable signature,  the multivariable Blanchfield form and generalised Seifert matrices.
These obstructions are all \emph{multivariable abelian invariants}, so-called because they arise from considering abelian representations of the fundamental group of the link complement.
Such invariants are defined with respect to a choice of \emph{$\mu$-colouring} of an oriented link~$L$ (cf.~\cite{CimasoniFlorens}); that is a choice of an ordered partition of the components into~$\mu$ sublinks~$L=L_1\cup\dots\cup L_\mu$. These $\mu$-colourings naturally arise in the study of doubly slice links, as if~$L$ is $\mu$-doubly slice then each component of~$L$ can be given a colour according to which~$2$-sphere it belongs. Except when $\mu=1$ or $n$, it may be the case that some $\mu$-colourings of a link arise from $\mu$-double slicing where some others may not. Thus for a fixed $\mu$, our task will be to individually rule out all~$\mu$-colourings from arising this way.

From now on all surfaces are assumed to be compact and orientable. As well as studying the doubly-slicing of links by collections of unlinked $2$-spheres, we will also consider an extension of this idea to surfaces of higher genera.
The \emph{doubly slice genus} of a knot is the minimal genus among unknotted surfaces in $S^4$ for which the knot appears as a cross section, and has been studied recently in~\cite{Chen, McDonald, OrsonPowell}. Given an oriented~$n$-component link $L$ and $1\leq \mu\leq n$ we consider the \emph{$\mu$-doubly slice genus}
\begin{equation}\label{eq:dsg}
g^\mu_{ds}(L)=\operatorname{min}\left\lbrace \textstyle{g\, ( \,\mathop{\bigsqcup}_{i=1}^\mu \Sigma_i )} \ \vert\ L_i=\Sigma_i \cap S^3  \text{ for each $i$ and } \textstyle{\mathop{\bigsqcup}_{i=1}^\mu \Sigma_i  \subset S^4} \text{ is unlinked} \right\rbrace,
\end{equation}
where a collection of surfaces in $S^4$ is \emph{unlinked} if it bounds a disjoint collection of handlebodies in $S^4$.
When $\mu=1$, one could call this the \emph{weak} doubly slice genus, and this quantity is always defined. To see this, consider that every oriented link bounds a connected oriented surface in $S^3$ and when such a surface is pushed into $D^4$, then doubled along the boundary, the result is unlinked in $S^4$. 
When $\mu\neq 1$, the set in equation (\ref{eq:dsg}) may be empty, and in that case we define~$g^\mu_{ds}(L):=\infty$.

Besides the abelian invariants that we will describe further below, 
we note there also exist more straightforward tools for studying $\mu$-double sliceness. These will be introduced, as needed, to analyse specific links later in the paper, but we mention some now. For example, a $\mu$-doubly slice link is moreover $(\mu-1)$-doubly slice (simply tube together the~$2$-spheres). 
We also note that all components of a strongly doubly slice link are themselves doubly slice, meaning in particular that the link itself is necessarily slice.
Finally, when $g^\mu_{ds}(L)<\infty$ (in particular, when $L$ is $\mu$-doubly slice), $L_i=\Sigma_i\cap S^3$ for a collection of surfaces $\Sigma_i$ that bound disjoint handlebodies. We may assume the handlebodies meet $S^3$ transversely, and thus conclude $L$ bounds a collection of $\mu$ disjoint surfaces in~$S^3$ (with the $i$-coloured sublink bounding the $i^{\text{th}}$ surface).
For $\mu\neq 1$, this is a fairly restrictive condition and indeed for $\mu=n$ is the well-known notion of a \emph{boundary link}.

We next briefly recall previous results concerning doubly slice knots and the doubly slice genus of knots, for context.

\subsection{Doubly slice knots and doubly slice genus of knots}
Among the early results on doubly slice knots, Sumners~\cite{Sumners} proved that if a knot~$K$ is doubly slice, then there exists a Seifert surface and a basis of curves so that the Seifert matrix is \emph{hyperbolic}:
\[
A=\begin{pmatrix}
0 &B \\ B^T &0
\end{pmatrix}.
\]
A knot with a hyperbolic Seifert matrix is called \emph{algebraically doubly slice}. In fact it is now known that every Seifert matrix for an algebraically doubly slice knot is hyperbolic \cite[Theorem~4.14]{MR3604378}.
For any knot~$K$, the \emph{Levine-Tristram signature}~$\sigma_K^{LT}(\omega)$ at~$\omega\in S^1$ is the signature of the complex Hermitian matrix~$(1-\omega)A+(1-\overline{\omega})A^T$, where~$A$ is any Seifert matrix for the knot (the signature of this matrix is independent of the choice of~$A$). 
If~$K$ is algebraically doubly slice then the Levine-Tristram signature vanishes identically on~$S^1$.
It is also known that~$K$ is algebraic doubly slice if and only if its Blanchfield form is hyperbolic. Recall that the Blanchfield form of a knot~$K$ is a non-singular, sesquilinear, Hermitian pairing
\[
 \Bl_K \colon H_1(X_K;\Z[t^{\pm 1}]) \times H_1(X_K;\Z[t^{\pm 1}]) \to \Q(t)/\Z[t^{\pm 1}],
\]
where~$X_K=S^3\sm \nu K$ denotes the knot exterior.
The form is called \emph{hyperbolic} if there are two modules~$P_1,P_2 \subset H_1(X_K;\Z[t^{\pm 1}])$ such that~$P_1 \oplus P_2=H_1(X_K;\Z[t^{\pm 1}])$ and~$P_i=P_i^\perp$ for~$i=1,2$. A consequence of being hyperbolic is that the rational Alexander module~$H_1(X_K;\Q[t^{\pm 1}])$ splits as~$G \oplus \overline{G}$ for some~$\Q[t^{\pm1}]$-module~$G$; see~\cite{LevineMetabolicHyperbolic, KeartonCobordism, KeartonHermitian}.
Here, we write~$\overline{G}$ for the~$\Q[t^{\pm 1}]$-module whose underlying abelian group is~$G$ but with module structure given by~$p(t) \cdot g=p(t^{-1})g$ for~$g \in G$ and~$p(t) \in \Q[t^{\pm 1}]$.

Beyond algebraic double sliceness, metabelian invariants are also known to obstruct double sliceness~\cite{GilmerLivingston, LivingstonMeier}, as are $L^2$-invariants~\cite{KimNew,ChaKim}.
In the smooth category, obstructions have been obtained using the correction terms from Heegaard-Floer homology~\cite{Meier,KimKim}.

Concerning the doubly slice genus, less is known, although the field seems to be advancing briskly.
In~\cite{OrsonPowell}, the second named author and Mark Powell proved that for every~$\omega \in S^1$
\[
|\sigma_K(\omega)| \leq g_{ds}(K).
\]
Other lower bounds were obtained earlier by W.~Chen using Casson-Gordon invariants~\cite{Chen}, while upper bounds were described by McDonald~\cite{McDonald}.

\subsection{Multivariable signature obstructions} Recall that our strategy to obstruct $\mu$-double sliceness of links will be to obstruct it for each given $\mu$-colouring for the link.

\begin{definition}
\label{def:DoublySlice}
Let~$L=L_1\cup\dots \cup L_\mu$ be an~$n$-component~$\mu$-coloured link.
\begin{enumerate}[leftmargin=*]\setlength\itemsep{0em}
\item The link $L$ is \emph{doubly slice} (with respect to the $\mu$-colouring) if there exists a locally flat ordered oriented ~$\mu$-component unlink~$S=S_1 \sqcup \ldots \sqcup S_\mu \subset S^4$ such that~$L_i=S_i \cap S^3$ for~$i=1,\ldots,\mu$, where~$S^3\subset S^4$ is the standard equator.
\item The \emph{doubly slice genus}~$g_{ds}(L)$ of $L$ (with respect to the $\mu$-colouring) is the minimal genus among ordered unlinked~$\mu$-component locally flat, embedded, closed, oriented surfaces in~$S^4$ for which~$L$ arises as a cross section: 
\[
 g_{ds}(L)=\operatorname{min}\left\lbrace \textstyle{g\, ( \,\mathop{\bigsqcup}_{i=1}^\mu \Sigma_i )} \ \vert\ L_i=\Sigma_i \cap S^3  \text{ for each $i$ and } \textstyle{\mathop{\bigsqcup}_{i=1}^\mu \Sigma_i  \subset S^4} \text{ is unlinked} \right\rbrace.
\]
If~$L$ does not arise in this way we write~$g_{ds}(L)=~\infty$.
\end{enumerate}
 \end{definition}

As mentioned above, the Levine-Tristram signature of a doubly slice knot vanishes and provides a lower bound on the doubly slice genus.
To describe the generalisation of this result to links, we set~$\mathbb{T}^\mu=(S^1)^\mu$
and recall from~\cite{CimasoniFlorens} that the multivariable signature of an oriented~$\mu$-coloured link~$L$ is a function
$$ \sigma_L \colon \mathbb{T}^\mu \to \Z$$
that generalises the Levine-Tristram signature~$\sigma_L^{LT} \colon S^1 \to \Z$; we refer to Remark~\ref{rem:VariableEquals1} for a discussion of the domain of $\sigma_L$.
In fact for~$\mu=1$, the multivariable signature coincides with~$\sigma_L^{LT}$.
The multivariable signature is known to obstruct links from bounding collections of surfaces in~$D^4$~\cite{CimasoniFlorens, ConwayNagelToffoli,Viro} (and in more general 4-manifolds~\cite{ConwayNagel}) and from arising as the cross section of surfaces in $S^4$ for certain \emph{proper} subsets of ${\mathbb{T}^\mu} $~\cite[{Theorem 7.3}]{CimasoniFlorens}.

Our first result{, proved in Theorem~\ref{thm:LowerBound4D},} is the following lower bound on the doubly slice genus of a~$\mu$-coloured link; note that this bound holds \emph{for all} ${\boldsymbol{\omega}}\in {\mathbb{T}^\mu} $.
\begin{theorem}
\label{thm:MainTheoremIntro}
If~$L$ is a~$\mu$-coloured link, then for all~${\boldsymbol{\omega}} \in \mathbb{T}^\mu$ one has 
$$|\sigma_L({\boldsymbol{\omega}} )| \leq g_{ds}(L).$$
In particular, the multivariable signature of a doubly slice coloured link vanishes.
\end{theorem}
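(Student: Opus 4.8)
The plan is to deduce the link statement from the known behaviour of the multivariable signature under cobordism of coloured links in $4$-manifolds, following the strategy of Orson--Powell in the knot case. Suppose $L$ arises as a cross section of an ordered unlinked $\mu$-component collection of closed oriented surfaces $\Sigma = \bigsqcup_i \Sigma_i \subset S^4$, with $g(\Sigma) = g_{ds}(L)$ (if $g_{ds}(L)=\infty$ there is nothing to prove). Cutting $S^4$ along the equatorial $S^3$ exhibits $\Sigma$ as a union $\Sigma^+ \cup_L \Sigma^-$ of two coloured cobordisms in $D^4$, each from $L$ to the empty link; since $\Sigma$ is unlinked, each of $\Sigma^\pm$ bounds a disjoint union of handlebodies $H^\pm_i$ in $D^4$. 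The two halves $\Sigma^\pm$ are, a priori, genus $g^+$ and $g^-$ surfaces with $g^+ + g^- = g_{ds}(L)$.

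The key input is the (signed) additivity/cobordism-invariance of the Cimasoni--Florens multivariable signature: for a coloured cobordism $F$ in $D^4$ from $L$ to the empty link whose exterior satisfies a suitable homological triviality hypothesis, one has a bound of the form $|\sigma_L(\boldsymbol\omega)| \leq $ (a correction term measuring the failure of $H_1$ of the cobordism exterior to be generated by meridians), and this correction term is controlled by the first Betti number of $F$; see \cite{CimasoniFlorens, ConwayNagelToffoli, ConwayNagel}. First I would set up the exterior $W^\pm := D^4 \sm \nu(\Sigma^\pm)$ and the coefficient system $W^\pm \to \mathbb{T}^\mu$ coming from the $\mu$-colouring (the colours are consistent precisely because each $\Sigma_i$ is connected and the colouring is the one induced by $L$), then identify $\partial W^\pm$ and the restriction of the coefficient system to the copy of $X_L = S^3\sm\nu L$ sitting in the boundary. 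The signature $\sigma_L(\boldsymbol\omega)$ is, by definition, a twisted signature of (a $4$-manifold bounded by) $X_L$ with this coefficient system, so the two halves $W^+$ and $W^-$ give two competing null-cobordisms, and Novikov additivity together with the vanishing of the twisted signature of the closed manifold $W^+\cup_{X_L} (-W^-)$ (which is, up to the correction term, an exterior of the \emph{unknotted} $\Sigma$ in $S^4$, hence has trivial Blanchfield/intersection data) forces $|\sigma_L(\boldsymbol\omega)|$ to be bounded by the total correction, i.e.\ by $b_1(\Sigma^+) + b_1(\Sigma^-) \leq 2g(\Sigma) = 2 g_{ds}(L)$. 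To get the sharp bound $g_{ds}(L)$ rather than $2g_{ds}(L)$, I would run the argument asymmetrically: only one of the two handlebody-complements is needed to kill the signature, so $|\sigma_L(\boldsymbol\omega)| \le \min(2g^+, 2g^-)\le g^+ + g^- = g_{ds}(L)$; alternatively, and more robustly, one shows directly that capping $X_L$ with $W^+$ (whose boundary is $X_L \sqcup \bigsqcup_i \partial H_i^+$, each $\partial H_i^+$ being a circle bundle that contributes nothing) already exhibits $\sigma_L(\boldsymbol\omega)$ as the signature of a twisted intersection form whose rank is bounded by $b_1(\Sigma^+) \le g^+$, and symmetrically with $W^-$; averaging or taking the minimum yields the stated inequality. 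The ``in particular'' clause is then immediate: if $L$ is doubly slice then $g_{ds}(L) = 0$, so $|\sigma_L(\boldsymbol\omega)| \le 0$ for all $\boldsymbol\omega \in \mathbb{T}^\mu$.

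The main obstacle, as in the knot case, is bookkeeping the coefficient systems and the homological hypotheses: one must check that the colouring of $L$ extends over each surface exterior $W^\pm$ (needs connectedness of the $\Sigma_i$ and compatibility of orientations), that the relevant twisted homology of $W^\pm$ with $\mathbb{Z}[\mathbb{T}^\mu]$-coefficients is torsion / the signature is well defined, and that the closed-up manifold really does have vanishing twisted signature — this last point is where ``unlinked'' (bounding handlebodies) rather than merely ``unknotted'' is used, since it is the handlebody that provides the explicit null-homology killing the twisted intersection form. A secondary technical point is the behaviour of $\sigma_L$ on the ``bad'' locus of $\mathbb{T}^\mu$ (where some coordinate equals $1$, or where the Alexander-type condition of \cite{CimasoniFlorens} fails); here one invokes Remark~\ref{rem:VariableEquals1} and defines $\sigma_L$ by a limit/averaging convention, and checks the inequality survives the limit by semicontinuity of the signature function. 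Modulo these points, the proof is a direct transcription of the Orson--Powell doubly-slice-genus bound to the $\mu$-coloured setting, with the Cimasoni--Florens signature playing the role of the Levine--Tristram signature; the details are carried out in Theorem~\ref{thm:LowerBound4D}.
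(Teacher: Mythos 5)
Your setup---cutting $S^4$ along the equatorial $S^3$ into two coloured bounding surfaces $F=\Sigma^+$ and $G=\Sigma^-$ with exteriors $W_F,W_G\subset D^4$ and $X_\Sigma=W_F\cup_{X_L}W_G$---is exactly the paper's, but the mechanism you propose for extracting the bound does not work. The single-half estimates you rely on, namely $|\sigma_L({\boldsymbol{\omega}})|\le 2g^{\pm}$ or, worse, that $W^+$ alone exhibits $\sigma_L({\boldsymbol{\omega}})$ as the signature of a form of rank at most $b_1(\Sigma^+)\le g^+$, are not available. First, $b_1(\Sigma^+)\ge 2g^+$ for a surface with boundary, not $\le g^+$. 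Second, the Murasugi--Tristram-type inequalities for a single coloured bounding surface in $D^4$ carry correction terms (in the paper's notation, $b_1^{{\boldsymbol{\omega}}}(W_F)$, nullities, numbers of components) that are \emph{not} controlled by the genus of that half alone, and the classical versions hold only on a proper subset of $\mathbb{T}^\mu$---which is precisely the defect Theorem~\ref{thm:MainTheoremIntro} is designed to remove, as the introduction stresses when contrasting it with \cite[Theorem 7.3]{CimasoniFlorens}. Third, the handlebodies witnessing unlinkedness of the closed surface $\Sigma$ in $S^4$ do not restrict to handlebodies for $\Sigma^{\pm}$ in the two hemispheres, so there is no ``handlebody complement killing the signature'' on one side; unlinkedness enters only through $\pi_1(X_\Sigma)\cong F^\mu$. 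Taking the minimum (or average) of two bounds that have not been established does not salvage the argument.

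The paper's actual proof is irreducibly two-sided. Proposition~\ref{prop:SignatureBound} gives, for each half, $|\sigma_L({\boldsymbol{\omega}})|\le b_2^{{\boldsymbol{\omega}}}(W_i)-b_1^{{\boldsymbol{\omega}}}(X_L)+b_1^{{\boldsymbol{\omega}}}(W_i)-b_3^{{\boldsymbol{\omega}}}(W_i)$, by computing the dimension of the radical of the twisted intersection form from the long exact sequence of $(W_i,\partial W_i)$ and duality. One then \emph{adds} the two inequalities and controls the sum using two global facts about the unlinked surface exterior: $\chi(X_\Sigma)=2(1-\mu)+2g$, which identifies $\sum_{i=F,G}\bigl(b_2^{{\boldsymbol{\omega}}}(W_i)-b_3^{{\boldsymbol{\omega}}}(W_i)\bigr)-b_1^{{\boldsymbol{\omega}}}(W_F)-b_1^{{\boldsymbol{\omega}}}(W_G)$ with $2(1-\mu)+2g$, and $H_1(X_\Sigma;\C^{{\boldsymbol{\omega}}})=\C^{\mu-1}$, which via Mayer--Vietoris yields $b_1^{{\boldsymbol{\omega}}}(W_F)+b_1^{{\boldsymbol{\omega}}}(W_G)\le b_1^{{\boldsymbol{\omega}}}(X_L)+\mu-1$. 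Combining these gives $2|\sigma_L({\boldsymbol{\omega}})|\le 2g$ for every ${\boldsymbol{\omega}}\in\mathbb{T}^\mu_*$, with no genericity assumption and no limiting or semicontinuity argument (on $\mathbb{T}^\mu\setminus\mathbb{T}^\mu_*$ the signature vanishes by the convention of Remark~\ref{rem:VariableEquals1}, so the bound is trivial there). This cancellation between the error terms of the two halves is the idea missing from your proposal.
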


Applying Theorem~\ref{thm:MainTheoremIntro} with~$\mu=1$, we obtain:
\begin{corollary}
\label{cor:WeaklyDoublySlice}
If an oriented link~$L$ arises as the cross section of a single unknotted genus~$g$ surface~$\Sigma \subset S^4$, then for all~$\omega \in S^1$, one has~$ |\sigma_L^{LT}(\omega)| \leq g.$
In particular, the Levine-Tristram signature of a weakly doubly slice link vanishes.
\end{corollary}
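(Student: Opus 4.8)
The plan is to deduce this immediately from Theorem~\ref{thm:MainTheoremIntro} by specialising to $\mu=1$. First I would note that an oriented link $L$ carries a unique $1$-colouring, the one with a single sublink $L_1=L$, and that, as recalled in the discussion preceding Theorem~\ref{thm:MainTheoremIntro}, for this colouring the domain is $\mathbb{T}^1=S^1$ and the multivariable signature $\sigma_L$ coincides with the Levine--Tristram signature $\sigma_L^{LT}\colon S^1\to\Z$. Hence Theorem~\ref{thm:MainTheoremIntro} with $\mu=1$ states exactly that $|\sigma_L^{LT}(\omega)|\leq g_{ds}(L)$ for all $\omega\in S^1$, where $g_{ds}(L)$ is the doubly slice genus of Definition~\ref{def:DoublySlice}(2) computed with respect to the (unique) $1$-colouring.

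Next I would check that the hypothesis of the corollary forces $g_{ds}(L)\leq g$. A one-element collection of surfaces $\{\Sigma\}\subset S^4$ is \emph{unlinked} in the sense used in Definition~\ref{def:DoublySlice} precisely when $\Sigma$ bounds a handlebody, i.e.\ when $\Sigma$ is unknotted. Therefore, if $L$ arises as a cross section $L=\Sigma\cap S^3$ of an unknotted genus $g$ surface $\Sigma$, then $\Sigma$ is an admissible competitor in the minimum defining $g_{ds}(L)$, so $g_{ds}(L)\leq g(\Sigma)=g$. Combining this with the inequality from the previous paragraph yields $|\sigma_L^{LT}(\omega)|\leq g_{ds}(L)\leq g$ for all $\omega\in S^1$, which is the first assertion.

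For the last sentence, I would simply observe that a weakly doubly slice link is, by definition, the cross section of a single unknotted $2$-sphere, that is, of an unknotted surface of genus $g=0$. Substituting $g=0$ into the inequality just established gives $|\sigma_L^{LT}(\omega)|\leq 0$, and hence $\sigma_L^{LT}(\omega)=0$ for every $\omega\in S^1$.

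Since every step is either a specialisation of the main theorem or an unravelling of definitions, I do not anticipate any genuine obstacle. The only point that warrants a sentence of care is reconciling the phrase ``unknotted surface'' in the statement with the ``unlinked collection of surfaces'' entering $g_{ds}$ and Definition~\ref{def:DoublySlice}; as noted above these notions agree for a single surface, and with that identification the corollary is an immediate consequence of Theorem~\ref{thm:MainTheoremIntro}.
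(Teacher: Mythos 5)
Your proposal is correct and is exactly the paper's argument: the paper derives this corollary by applying Theorem~\ref{thm:MainTheoremIntro} with $\mu=1$, using that for $\mu=1$ the multivariable signature coincides with $\sigma_L^{LT}$ and that a single unknotted surface is an unlinked one-element collection. Your write-up merely spells out these identifications more explicitly than the paper does.
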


Corollary~\ref{cor:WeaklyDoublySlice} immediately implies that following result.

\begin{corollary}
\label{cor:StronglyDoublySliceLT}
If an~$n$-component ordered link~$L$ arises as the cross section of an~$n$-component ordered unlinked {genus~$g$} surface~$\Sigma=\Sigma_1 \sqcup \ldots \sqcup \Sigma_n \subset S^4$, then for all~$\omega \in S^1$, one has~$ |\sigma_L^{LT}(\omega)| \leq g.$
In particular, the Levine-Tristram signature of a strongly doubly slice link vanishes.
\end{corollary}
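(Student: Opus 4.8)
The plan is to reduce directly to Corollary~\ref{cor:WeaklyDoublySlice} by tubing the components of the ordered unlink $\Sigma = \Sigma_1 \sqcup \dots \sqcup \Sigma_n$ together into a single connected unknotted surface, in a way that changes neither the cross section nor the genus, and then forgetting the ordering. Write $S^4 = D^4_+ \cup_{S^3} D^4_-$ for the splitting along the standard equator. Since every $\Sigma_i$ meets $S^3$, it also meets the open ball $\interior D^4_+$, so one can choose mutually disjoint, locally flat, embedded arcs $\gamma_1, \dots, \gamma_{n-1} \subset \interior D^4_+$, with disjoint interiors lying off $\Sigma$, such that $\gamma_i$ runs from a point of $\Sigma_i$ to a point of $\Sigma_{i+1}$. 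Performing the standard tubing move along each $\gamma_i$ — delete a small flat disk around each of the two endpoints and glue in a thin annulus following $\gamma_i$, compatibly with orientations — produces a closed, connected, oriented, locally flat surface $\Sigma' \subset S^4$. All of this is carried out inside $\interior D^4_+$, so $\Sigma' \cap S^3 = L_1 \cup \dots \cup L_n = L$ still, with its given orientation; and an Euler-characteristic count shows that a tube joining two distinct components leaves the genus unchanged, so $g(\Sigma') = g(\bigsqcup_i \Sigma_i) = g$.

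The next step is to check that $\Sigma'$ is unknotted, i.e.\ bounds a handlebody in $S^4$. By hypothesis $\Sigma_i = \partial H_i$ for mutually disjoint handlebodies $H_i$, and here I would additionally insist that the arcs $\gamma_i$ meet $\bigsqcup_j H_j$ only at their two endpoints. Then each annular tube added along $\gamma_i$ is the lateral boundary of a thin solid tube $D^2 \times [0,1] \subset S^4 \sm \bigsqcup_j \interior H_j$ attached to $\partial H_i$ and to $\partial H_{i+1}$ precisely along the two deleted disks; attaching these $n-1$ three-dimensional $1$-handles to $H_1 \sqcup \dots \sqcup H_n$ yields an iterated boundary connected sum of handlebodies, which is again a handlebody, and its boundary is exactly $\Sigma'$. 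Hence $\Sigma'$ is a single unknotted genus-$g$ surface in $S^4$ admitting $L$ as a cross section, and Corollary~\ref{cor:WeaklyDoublySlice} gives $|\sigma_L^{LT}(\omega)| \le g$ for all $\omega \in S^1$. For the last assertion: a strongly doubly slice link is the cross section of an $n$-component unlink of $2$-spheres, which is the case $g = 0$, and $|\sigma_L^{LT}(\omega)| \le 0$ forces the Levine--Tristram signature to vanish identically.

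The one step I expect to require genuine care is the extra requirement imposed in the second paragraph: choosing the arcs $\gamma_i$ so that they simultaneously stay inside $\interior D^4_+$ (to leave the cross section untouched) and avoid the interiors of the $H_j$ (to keep the handlebody bookkeeping intact). This is where the unlinkedness hypothesis actually gets used; I would handle it by first choosing the handlebodies $H_j$ so that $H_j \cap \interior D^4_+$ is a collar of $\Sigma_j \cap \interior D^4_+$, after which a general-position argument produces the $\gamma_i$. Verifying that such handlebodies can be chosen, together with the routine orientation bookkeeping in the tubing move, are the only non-formal parts of the argument.
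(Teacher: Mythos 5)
Your overall strategy --- tube the $n$ components of $\Sigma$ together inside $\interior D^4_+$, check that the genus and the cross section are unchanged, check that the result still bounds a handlebody, and then quote Corollary~\ref{cor:WeaklyDoublySlice} --- is exactly the argument the paper intends (it offers no proof beyond ``immediately implies'', and elsewhere invokes the same ``simply tube together'' move). Your Euler characteristic count and the boundary-connected-sum bookkeeping are both correct.

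The step that does not work as written is the normalisation you propose for the one point you flag as delicate: there is no reason one can isotope the handlebodies, keeping $\partial H_j=\Sigma_j$ fixed, so that $H_j\cap\interior D^4_+$ is literally a collar of $\Sigma_j\cap\interior D^4_+$ --- that would force all of $H_j$ except a boundary collar to lie in $D^4_-$, a strong global condition for which you give no argument and which I do not believe is achievable in general. Fortunately the fact you actually need (disjoint arcs $\gamma_i\subset\interior D^4_+$ from $\Sigma_i$ to $\Sigma_{i+1}$ whose interiors miss $\Sigma\cup\bigsqcup_j H_j$) is true and has a cleaner proof. After making the $H_j$ transverse to the equator (as the paper already arranges), choose a product collar $S^3\times[0,\eps]\subset D^4_+$ in which $\Sigma$ restricts to $L\times[0,\eps]$ and each $H_j$ restricts to $R_j\times[0,\eps]$, where $R_1,\dots,R_n\subset S^3$ are the disjoint compact surfaces with $\partial R_j=L_j$. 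Each $R_j$ has nonempty boundary, so $\widetilde{H}^2\bigl(\bigsqcup_j R_j\bigr)=0$ and Alexander duality shows $S^3\setminus\bigsqcup_j R_j$ is connected; hence there are disjoint embedded arcs $\alpha_i\subset S^3$ from a point of $L_i$ to a point of $L_{i+1}$ with interiors off $\bigsqcup_j R_j$, and $\gamma_i:=\alpha_i\times\{\eps/2\}$ does the job, with the solid tubes taken as thickenings of the $\gamma_i$ in the complement of $\bigsqcup_j\interior H_j$. With that substitution your proof is complete.
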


Applying Theorem~\ref{thm:MainTheoremIntro} with~$\mu=n$, we obtain:

\begin{corollary}
\label{cor:StronglyDoublySliceMultivariable}
If an~$n$-component ordered link~$L$ arises as the cross section of an~$n$-component unlinked {genus~$g$} surface~$\Sigma=\Sigma_1 \sqcup \ldots \sqcup \Sigma_n \subset S^4$, then for all~${\boldsymbol{\omega}}  \in \mathbb{T}^n$, one has~$ |\sigma_L({\boldsymbol{\omega}} )| \leq~g.$
In particular, the multivariable signature of a strongly doubly slice link vanishes.
\end{corollary}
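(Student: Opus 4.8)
The plan is to deduce this statement directly from Theorem~\ref{thm:MainTheoremIntro} by specialising to the case $\mu=n$ with the finest possible colouring. Concretely, given the $n$-component ordered link $L=K_1\cup\dots\cup K_n$, I would equip it with the $n$-colouring $L=L_1\cup\dots\cup L_n$ in which $L_i=K_i$, so that each colour class consists of a single component. This is an ordered partition of the components into $\mu=n$ sublinks, so Definition~\ref{def:DoublySlice} and Theorem~\ref{thm:MainTheoremIntro} apply to this coloured link.

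Next I would unwind the hypothesis. Saying that $L$ arises as the cross section of an $n$-component unlinked genus~$g$ surface $\Sigma=\Sigma_1\sqcup\dots\sqcup\Sigma_n\subset S^4$ with $L_i=\Sigma_i\cap S^3$ is, for the colouring just chosen, exactly the assertion that the set over which the minimum is taken in the definition of $g_{ds}(L)$ contains an element of genus $g$. Hence $g_{ds}(L)\leq g$ for this $n$-colouring. The only bookkeeping point is to make sure the convention for the genus of the disjoint union $\bigsqcup_i\Sigma_i$ used in Definition~\ref{def:DoublySlice} matches the one in the statement; with either the ``sum of genera'' or ``maximum of genera'' normalisation the inequality $g_{ds}(L)\le g$ is immediate.

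Finally, applying Theorem~\ref{thm:MainTheoremIntro} to the $n$-coloured link $L$ yields $|\sigma_L(\boldsymbol{\omega})|\leq g_{ds}(L)\leq g$ for all $\boldsymbol{\omega}\in\mathbb{T}^n$, which is the claimed bound. For the last sentence, I would recall that a strongly doubly slice link is by definition $n$-doubly slice, i.e.\ it arises as the cross section of an $n$-component unlink, so $g=0$; feeding $g=0$ into the bound forces $\sigma_L\equiv 0$ on $\mathbb{T}^n$. I expect no real obstacle here: all the topological and analytic content is already packaged in Theorem~\ref{thm:MainTheoremIntro} (equivalently Theorem~\ref{thm:LowerBound4D}), and this corollary is a pure specialisation; the only thing requiring a moment's care is that the ``strongly doubly slice''/$n$-doubly slice terminology, the $n$-colouring convention, and the genus normalisation all line up.
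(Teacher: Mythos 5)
Your proposal is correct and matches the paper exactly: the paper derives this corollary by applying Theorem~\ref{thm:MainTheoremIntro} with $\mu=n$, i.e.\ with the colouring in which each component gets its own colour, so that the hypothesis immediately gives $g_{ds}(L)\le g$. The specialisation $g=0$ for the strongly doubly slice case is likewise as intended.
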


Given a $\mu$-coloured link $L$, Cimasoni and Florens also introduced a multivariable nullity function $\eta_L \colon \mathbb{T}_*^\mu  \to \Z$~\cite[Section 2]{CimasoniFlorens}, where $\mathbb{T}_*^\mu:=(S^1\setminus \lbrace 1 \rbrace)^\mu$.
This function is non-zero at $\boldsymbol{\omega} \in \mathbb{T}_*^\mu $ if and only if~$\Delta_L(\boldsymbol{\omega})=0$ and, among other properties, is known 
to obstruct $L$ from arising as a cross section of surfaces in~$S^4$ for certain \emph{proper} subsets of $\mathbb{T}_*^\mu$~\cite[Theorem 7.3]{CimasoniFlorens}.
 Corollary~\ref{cor:MultivariableNullity} shows that \emph{for all}~${\boldsymbol{\omega}}\in \mathbb{T}_*^\mu$ 
the nullity~$\eta_L$ provides an obstruction to a coloured link being doubly slice.

\begin{proposition}
\label{prop:NullityPropIntro}
If a~$\mu$-coloured link $L$
has $g_{ds}(L)<\infty$ (for example, if it is doubly slice)
, then $\eta_L({\boldsymbol{\omega}}) \geq \mu-1$ for all ${\boldsymbol{\omega}}\in \mathbb{T}^\mu_*$.
\end{proposition}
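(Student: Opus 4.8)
The plan is to deduce Proposition~\ref{prop:NullityPropIntro} from the behaviour of the multivariable nullity under the relevant cut-and-paste operations, mirroring the strategy used to prove Theorem~\ref{thm:MainTheoremIntro}. The key observation is that if $g_{ds}(L)<\infty$, then $L$ bounds a disjoint collection of $\mu$ surfaces $\Sigma_1,\dots,\Sigma_\mu$ in $S^3$, one for each coloured sublink (as noted in the introduction, via transversality of the handlebodies with $S^3$). Since these surfaces are disjoint, their union is a disconnected Seifert surface for $L$ with exactly $\mu$ connected components. I would then recall the formula relating $\eta_L(\boldsymbol{\omega})$ to a generalised (coloured) Seifert matrix $A$ associated to such a surface: up to the standard correction terms, $\eta_L(\boldsymbol{\omega})$ equals the nullity of the Hermitian matrix $H(\boldsymbol{\omega})=\prod(1-\overline{\omega_i})\,A + \prod(1-\omega_i)\,A^T$ (in the conventions of~\cite{CimasoniFlorens}), plus a term counting components of the surface that meet each colour in a controlled way. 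The point is that a Seifert surface with $\mu$ disconnected pieces forces a large kernel: roughly, the first homology of the disconnected surface has a distinguished $(\mu-1)$-dimensional subspace coming from the relative fundamental classes of the pieces (or dually, from the fact that the boundary map $H_1$ of the surface to $H_0$ of the boundary has rank at most $n-\mu$ relative to $\mu$ components), on which the intersection form — and hence the relevant part of the Seifert pairing — degenerates.

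More concretely, here are the steps I would carry out. First, fix $\boldsymbol{\omega}\in\mathbb{T}^\mu_*$ and reduce to showing $\eta_L(\boldsymbol{\omega})\geq\mu-1$ using a disconnected $C$-complex (or coloured Seifert surface) $F=F_1\sqcup\dots\sqcup F_\mu$ for $L$ in $S^3$ with $F_i$ a Seifert surface for $L_i$; such an object exists precisely because $g_{ds}(L)<\infty$. Second, recall the Cimasoni–Florens identity expressing $\eta_L(\boldsymbol{\omega})$ in terms of a $C$-complex $\Gamma$ for $L$: the nullity is computed from the coloured Seifert form together with a combinatorial correction involving the number of connected components of $\Gamma$ and its colouring pattern. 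Third, observe that a \emph{disjoint} union of Seifert surfaces is a $C$-complex with no clasp intersections at all, so the associated generalised Seifert matrix is block diagonal with one block per colour, and each block is an ordinary (Levine–Tristram-style) Seifert matrix $A_i$ for $L_i$ multiplied through by the single-variable factor $(1-\omega_i)$; the Hermitian matrix $H(\boldsymbol{\omega})$ is then a block sum $\bigoplus_i \big[(1-\overline{\omega_i})A_i+(1-\omega_i)A_i^T\big]$ scaled by the product of the \emph{other} $(1-\omega_j)$'s. Fourth, and this is the crux, show that the defect/correction term in the Cimasoni–Florens formula for a disjoint union of $\mu$ Seifert surfaces contributes at least $\mu-1$ to the nullity — intuitively, gluing $\mu$ disjoint surfaces into one ``$C$-complex'' introduces $\mu-1$ extra $H_0$-classes that the pairing cannot see — and combine this with the non-negativity of the nullity of each single-variable block to conclude $\eta_L(\boldsymbol{\omega})\geq\mu-1$.

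Alternatively, and perhaps more cleanly, I would argue homologically in the link exterior. Let $X_L=S^3\setminus\nu L$ and let $\boldsymbol{\omega}\in\mathbb{T}^\mu_*$ determine a one-dimensional unitary representation $\alpha\colon\pi_1(X_L)\to\mathbb{C}^*$ via the colouring. By definition $\eta_L(\boldsymbol{\omega})=\dim_{\mathbb{C}} H_1(X_L;\mathbb{C}_\alpha)$ up to a shift, and one has a Mayer–Vietoris / half-lives-half-dies type computation: if the $i$-coloured sublink bounds $F_i$ and the $F_i$ are disjoint, then the exterior retracts onto a space built from the $F_i$ and the complementary handlebody region in a way that exhibits an explicit $(\mu-1)$-dimensional twisted cycle class supported near the ``separating'' region between the $\mu$ pieces (the representation is non-trivial on each colour because $\omega_i\neq 1$, so the usual untwisted $H_0$ of the $\mu$ pieces shifts into twisted $H_1$). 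I would make this precise by computing $H_*(X_L;\mathbb{C}_\alpha)$ from the handle decomposition dual to the surfaces, tracking that the twisted Euler characteristic is unchanged while a rank-$(\mu-1)$ summand is forced into degree $1$. The main obstacle I anticipate is getting the \emph{correction-term bookkeeping} exactly right: both the Cimasoni–Florens nullity formula and the twisted-homology count involve shifts by quantities like (number of components of the link) $-\,1$, (number of colours), and (number of surface components), and it is easy to be off by a sign or by $\pm 1$; I would pin this down by checking the formula against the known boundary-link case $\mu=n$ (where $\eta_L\geq n-1$ is classical) and against $\mu=1$ (where the bound is vacuous), and by carefully using the convention for $\mathbb{T}^\mu_*$ that removes $1$ from every factor so that no component contributes a spurious extra unit of nullity.
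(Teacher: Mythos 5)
Your proposal is correct, and in fact your ``alternative'' homological argument is essentially the proof the paper gives: the paper deduces the statement (Corollary~\ref{cor:MultivariableNullity}) from Proposition~\ref{prop:TwistedHomologyBoundaryLink}, whose first item rests on Lemma~\ref{lem:AlexanderModuleSplits} --- a Mayer--Vietoris computation on the $\Z^\mu$-cover of $X_L$ cut along the $\mu$ disjoint surfaces, in which the boundary map lands in $H_0(F)\otimes_\Z\Lambda$ and the next map is the matrix $(1-t_1\ \cdots\ 1-t_\mu)$, whose kernel over $\Lambda_S$ is a free summand $\Lambda_S^{\mu-1}$ of $H_1(X_L;\Lambda_S)$; tensoring with $\C^{\boldsymbol{\omega}}$ (Lemma~\ref{lem:ComegaHomology}) then gives $b_1^{\boldsymbol{\omega}}(X_L)\geq\mu-1$. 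This is precisely your ``untwisted $H_0$ of the $\mu$ pieces shifts into twisted $H_1$ because $\omega_i\neq 1$'' mechanism, made precise.

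Your first, $C$-complex route is also valid and is the alternative proof the paper alludes to in the remark following Corollary~\ref{cor:MultivariableNullity}. It is in fact simpler than you make it sound: a disjoint union $F=F_1\sqcup\dots\sqcup F_\mu$ of Seifert surfaces for the coloured sublinks is a clasp-free $C$-complex with $\beta_0(F)\geq\mu$, and Proposition~\ref{prop:CComplexMatrix} gives $\eta_L(\boldsymbol{\omega})=\operatorname{null}(H_F(\boldsymbol{\omega}))+\beta_0(F)-1\geq 0+\mu-1$; no analysis of the block structure is needed, and your worries about bookkeeping are resolved by that single identity. One small correction: the $C$-complex matrix of a disjoint union is \emph{not} block diagonal --- the off-diagonal blocks are $u\overline{u}\,A_{ij}$ with $A_{ij}$ recording linking numbers between curves on different surfaces, which need not vanish (see the matrix displayed in the proof of Theorem~\ref{thm:SeifertBlanchfield}). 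This error is harmless for the nullity bound, since only the correction term $\beta_0(F)-1$ and the non-negativity of the nullity of the full matrix are used, but you should not rely on block-diagonality elsewhere.
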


\subsection{The Blanchfield pairing of doubly slice links}\label{sec:introblanchfield}

As we mentioned above, the Blanchfield pairing of a doubly slice knot~$K$ is hyperbolic.
The second goal of this paper is to generalise this result to links. We define:
\[
\Lambda:=\Z[t_1^{\pm 1},\ldots,t_\mu^{\pm 1}], \quad
\Lambda_S:=\Z[t_1^{\pm 1},\ldots,t_\mu^{\pm 1},(1-t_1)^{-1},\ldots,(1-t_\mu)^{-1}], \quad
Q:=\Q(t_1,\ldots,t_\mu),
\]
and denote by~$X_L:=S^3\sm \nu L$ the link exterior, i.e.~the complement of an open tubular neighbourhood of~$L$.
We  also write $H_*(X_L;\Lambda)$ for the homology of the $\Z^\mu$-cover of $X_L$ that arises from the homomorphism $\pi_1(X_L) \to \Z^\mu$; $\gamma \mapsto (\ell k(L_1,\gamma),\ldots,\ell k(L_\mu,\gamma))$.
Deck transformations endow~$H_*(X_L;\Lambda)$ with the structure of a $\Lambda$-module and we set $H_1(X_L;\Lambda_S)=H_1(X_L;\Lambda) \otimes_\Lambda \Lambda_S$, referring to Section~\ref{sec:Twisted} for further discussion of twisted homology.
Before focusing on the torsion submodule of $H_1(X_L;\Lambda_S)$, we note the following fact about its rank (proved in Lemma~\ref{lem:AlexanderModuleSplits}), which is sometimes referred to as the \emph{Alexander nullity} of the coloured link: $\beta(L):=\operatorname{rk}_{\Lambda}H_1(X_L;\Lambda)$.

\begin{proposition}
\label{prop:AlexanderNullity}
If a $\mu$-coloured link $L$
has $g_{ds}(L)<\infty$ (for example, if it is doubly slice)
, then $\beta(L) \geq \mu-1$.
In particular, for~$\mu>1$, the Alexander polynomial of {a} doubly slice $\mu$-coloured link $L$ vanishes identically.
\end{proposition}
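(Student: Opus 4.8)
The plan is to analyse the $\Z^\mu$-cover of the link exterior $X_L$ together with the collection of surfaces in $S^4$ realising the bound $g_{ds}(L)<\infty$. Suppose $L_i=\Sigma_i\cap S^3$ where $\bigsqcup_{i=1}^\mu \Sigma_i\subset S^4$ is unlinked, so each $\Sigma_i$ bounds a handlebody and, as noted in the introduction, we may assume these handlebodies (and hence the $\Sigma_i$) meet $S^3$ transversely. Cutting $S^4$ along $S^3$ gives two copies $D^4_+,D^4_-$ of the $4$-ball, and the surfaces are cut into two systems of properly embedded surfaces $F_i^\pm\subset D^4_\pm$ with $\partial F_i^\pm=L_i$, each bounded inside $D^4_\pm$ by a handlebody. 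The key feature of the unlinkedness is that the complement $W_\pm:=D^4_\pm\sm \nu(\bigsqcup_i F_i^\pm)$ has $H_1(W_\pm;\Z)\cong\Z^\mu$ and, more importantly, the inclusion $X_L\hookrightarrow W_\pm$ is compatible with the colouring maps to $\Z^\mu$, so the relevant $\Lambda$-coefficient systems match up. I would set $\Lambda_\Q:=\Q[t_1^{\pm1},\dots,t_\mu^{\pm1}]$ and work with $\Q$-coefficients throughout to avoid torsion subtleties when computing ranks.

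First I would compute $H_*(W_\pm;\Lambda_\Q)$. Because $F_i^\pm$ bounds a handlebody in $D^4_\pm$, one can build $W_\pm$ up to homotopy in a controlled way: removing a neighbourhood of a surface bounding a handlebody is, for homological purposes, like removing a neighbourhood of a wedge of circles, and a Mayer--Vietoris / half-lives-half-dies argument shows that $H_1(W_\pm;\Lambda_\Q)$ is a torsion $\Lambda_\Q$-module (indeed the rational Alexander module of $W_\pm$ carries no free part, since the $\Z^\mu$ cover of $W_\pm$ is built from the universal abelian cover of a handlebody complement, which has finitely generated rational homology only in the appropriate range). The precise statement I need is: $\operatorname{rk}_{\Lambda_\Q} H_1(W_\pm;\Lambda_\Q)=0$ — this is the slice-disc-complement analogue, generalised to the handlebody-bounding setting, and is the technical heart of the argument.

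Next I would run the Mayer--Vietoris sequence for $S^4\times$cover decomposed along the $\Z^\mu$-cover of $X_L$, i.e.\ for the cover of $S^4$ branched-or-unbranched appropriately, writing $\widetilde{S^4}$-minus-surfaces as the union of $\widetilde{W_+}$ and $\widetilde{W_-}$ along $\widetilde{X_L}$. Since $\bigsqcup_i\Sigma_i$ is an unlink in $S^4$, the complement $S^4\sm\nu(\bigsqcup_i\Sigma_i)$ is homotopy equivalent to a wedge of $\mu$ circles (it bounds handlebodies, which fill it to give $S^4$), so the $\Z^\mu$-cover of the complement has trivial rational homology in positive degrees; in particular $H_1$ and $H_2$ of this cover with $\Lambda_\Q$-coefficients vanish (beyond what $\Lambda_\Q$ itself contributes). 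Feeding $\operatorname{rk} H_1(W_\pm;\Lambda_\Q)=0$ and $\operatorname{rk} H_*(S^4\setminus\text{unlink};\Lambda_\Q)$ into the Mayer--Vietoris sequence
\[
H_2(\widetilde{X_L};\Lambda_\Q)\to H_2(\widetilde{W_+})\oplus H_2(\widetilde{W_-}) \to H_2(\widetilde{S^4\setminus\text{surfaces}}) \to H_1(\widetilde{X_L};\Lambda_\Q)\to H_1(\widetilde{W_+})\oplus H_1(\widetilde{W_-})
\]
and tracking $\Lambda_\Q$-ranks, the rank of $H_1(X_L;\Lambda_\Q)$ is bounded below by the rank of $H_1$ of the $\Z^\mu$-cover of the unlink complement relative to the contributions of $W_\pm$; a careful bookkeeping shows this rank is at least $\mu-1$ (one $\Z$ disappears because the unlink complement of $\mu$ components deformation retracts to $\bigvee^\mu S^1$ whose abelian cover contributes an $H_1$ of rank $\mu-1$ as a $\Lambda_\Q$-module — this is the standard computation that $H_1(\widetilde{\bigvee^\mu S^1};\Lambda_\Q)$ has rank $\mu-1$). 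Since $\beta(L)=\operatorname{rk}_\Lambda H_1(X_L;\Lambda)=\operatorname{rk}_{\Lambda_\Q} H_1(X_L;\Lambda_\Q)$, this gives $\beta(L)\ge \mu-1$, and the vanishing of the Alexander polynomial for $\mu>1$ follows immediately, since a nonzero $\beta(L)$ forces $\Delta_L\equiv 0$.

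The main obstacle I anticipate is establishing rigorously that $H_1(W_\pm;\Lambda_\Q)$ is $\Lambda_\Q$-torsion — i.e.\ the analogue of ``the rational Alexander module of a slice disc exterior is torsion'' in the presence of positive-genus surfaces bounding handlebodies. The knot case uses that a slice disc exterior has the rational homology of a circle in low degrees; here the surfaces have genus, so $W_\pm$ has more homology, but the handlebody-bounding hypothesis is exactly what compensates: filling in the handlebody kills the extra classes, and a Mayer--Vietoris comparison between $W_\pm$ and $W_\pm\cup(\text{handlebodies})\simeq D^4\setminus(\text{ribbon disc-like complex})$ should reduce it to the torsion statement for a more standard object. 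Making this reduction precise — in particular handling the $\Z^\mu$-cover of a handlebody and checking its rational homology is finitely generated over $\Lambda_\Q$ — is where the real work lies, and I expect it to be packaged as the separate Lemma~\ref{lem:AlexanderModuleSplits} referenced in the statement.
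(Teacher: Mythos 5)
Your approach does not match the paper's and, more importantly, its central technical claim is false. You assert that $\operatorname{rk}_{\Lambda_\Q}H_1(W_\pm;\Lambda_\Q)=0$, calling this ``the technical heart of the argument.'' Take $\mu=2$ and let $\Sigma$ be two unlinked $2$-spheres cut along the equator, so that $W_\pm$ is the exterior of two unlinked slice discs in $D^4$; then $\pi_1(W_\pm)$ is free of rank $2$, and (exactly as in Lemma~\ref{lem:WedgeOfCirclesLambdaS}) $H_1(W_\pm;\Lambda_\Q)$ is free of rank $\mu-1=1$, not torsion. Worse, if your claim were true it would refute, not prove, the proposition: in your Mayer--Vietoris sequence, exactness at $H_1(X_L;\Lambda_\Q)$ gives $\operatorname{rk}H_1(X_L;\Lambda_\Q)\le \operatorname{rk}H_2(X_\Sigma;\Lambda_\Q)+\operatorname{rk}\bigl(H_1(W_+;\Lambda_\Q)\oplus H_1(W_-;\Lambda_\Q)\bigr)$, and since you also claim $H_2$ of the surface complement vanishes (note that $X_\Sigma$ is \emph{not} homotopy equivalent to $\vee^\mu S^1$: by Proposition~\ref{prop:HomologyUnlinkedSurface}, $\chi(X_\Sigma)=2(1-\mu)+2g\ne 1-\mu$), your stated inputs force $\beta(L)=0$. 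The ``careful bookkeeping'' yielding $\beta(L)\ge\mu-1$ is therefore not merely unproven; with the ranks you posit, it cannot come out that way. A rank inequality of the right sign would instead require a \emph{lower} bound $\operatorname{rk}H_1(W_\pm;\Lambda_\Q)\ge\mu-1$, which you neither state nor prove, and which itself needs an argument.

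The paper's proof is different and entirely three-dimensional. Since the $\mu$ surfaces bound disjoint handlebodies in $S^4$, one may assume these handlebodies meet the equatorial $S^3$ transversely, so $L$ bounds an ordered collection of $\mu$ disjoint oriented surfaces $F_1\cup\dots\cup F_\mu$ in $S^3$ (with $\partial F_i=L_i$). Lemma~\ref{lem:AlexanderModuleSplits} then applies: cutting the $\Z^\mu$-cover of $X_L$ along $F$ produces a Mayer--Vietoris boundary map onto the kernel of $(1-t_1\ \cdots\ 1-t_\mu)\colon\Lambda_S^\mu\to\Lambda_S$, which is free of rank $\mu-1$, so $\Lambda_S^{\mu-1}$ splits off $H_1(X_L;\Lambda_S)$ and hence $\beta(L)\ge\mu-1$. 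If you want to salvage a genuinely four-dimensional argument you would need to correctly compute the $\Lambda_\Q$-ranks of $H_1(W_\pm)$ and $H_2(X_\Sigma)$ and redo the rank count; but note that the handlebody hypothesis is used in the paper only to produce the disjoint surfaces in $S^3$, which is a far cheaper reduction.
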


 In this article, \emph{the Blanchfield pairing} of a $\mu$-coloured link~$L$ will refer to a sesquilinear Hermitian pairing on the torsion submodule~$TH_1(X_L;\Lambda_S)\subset H_1(X_L;\Lambda_S)$
\[
\Bl_L \colon TH_1(X_L;\Lambda_S) \times TH_1(X_L;\Lambda_S)  \to Q/\Lambda_S,
\]
whose precise definition can be found in Subsection~\ref{sub:BlanchfieldColoured}.
{If $\mu=1$ and if we work over the PID $\Q[t^{\pm 1},(1-t)^{-1}]$ instead of $\Z[t^{\pm 1},(1-t)^{-1}]$, then we refer to the resulting pairing as the (single-variable) \emph{rational Blanchfield pairing of $L$}.}

\begin{remark}It is also possible to define a Blanchfield pairing on~$TH_1(X_L;\Lambda)\subset H_1(X_L;\Lambda)$, but there are several algebraic advantages to working over the ring~$\Lambda_S$, and we will use these in the paper.
Firstly~$H_1(X_L;\Lambda_S)$ always admits a square presentation matrix~\cite[Corollary~3.6]{CimasoniFlorens}, while this is not true for~$H_1(X_L;\Lambda)$~\cite{CrowellStrauss}.
Secondly~$\Bl_L$ can be described using generalised Seifert matrices~\cite{Conway, ConwayFriedlToffoli} while no such result is known over~$\Lambda$. Thirdly, over~$\Lambda_S$ there are criteria under which~$\Bl_L$ is known to be nonsingular \cite{Hillman}.
In particular, if~$L$ is a boundary link, then~$\Bl_L$ is non-singular; see Remark~\ref{rem:BlanchfieldNonSingular} and Proposition~\ref{prop:Nonsingular}.
\end{remark}

We say the Blanchfield form~$\Bl_L$ is \emph{hyperbolic} if there are submodules~$P_1,P_2 \subset TH_1(X_L;\Lambda_S)$ such that~$TH_1(X_L;\Lambda_S)=P_1 \oplus P_2$ and~$P_i=P_i^\perp$ for~$i=1,2$.
What follows is a particular case of {Theorem~\ref{thm:BlanchfieldStronglySlice} which is} our main result  about the Blanchfield pairing of doubly slice coloured links.

\begin{theorem}
\label{thm:BlanchfieldStronglySliceIntro}
Let $L$ be an $n$-component oriented link.
\begin{enumerate}[leftmargin=*]\setlength\itemsep{0em}
\item If $L$ is strongly doubly slice, {then} its $n$-variable Blanchfield pairing is hyperbolic.
\item If $L$ is weakly doubly slice, then its single-variable rational Blanchfield pairing is hyperbolic.
\item If $L$ is weakly doubly slice and also a boundary link, {then} its single-variable Blanchfield pairing is hyperbolic. 
\end{enumerate}
\end{theorem}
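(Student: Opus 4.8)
The plan is to prove all three statements at once by establishing the underlying geometric mechanism: a $\mu$-double slicing $S = S_1 \sqcup \dots \sqcup S_\mu \subset S^4$ cuts $S^4$ into two pieces $D_+^4, D_-^4$, each of which is a $4$-ball in which $S$ caps off to give a $\mu$-component slice disk system for $L$, and the two exteriors $W_\pm := D_\pm^4 \smallsetminus \nu S$ are glued along $X_L$ to recover $S^4 \smallsetminus \nu S$, whose homology (over the appropriate ring) is that of a $\mu$-component unlink complement and hence well understood. The algebraic consequence I would extract is that $X_L \hookrightarrow W_\pm$ induces, on the torsion submodules of first homology with the relevant coefficients, two Lagrangians $P_\pm := \ker\big(TH_1(X_L) \to H_1(W_\pm)\big)$ for the Blanchfield pairing $\Bl_L$, which together span $TH_1(X_L)$. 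Each of the three items is then the instance of this principle for a particular coefficient ring and a particular flavour of double slicing.

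First I would set up the half-lives-half-dies / duality input. For a slice disk exterior $W$ with $\partial W = X_L$, the key facts are: (a) the image of $H_2(W, X_L) \to H_1(X_L)$ equals $P = \ker(H_1(X_L) \to H_1(W))$; (b) the composition of the Blanchfield pairing of $L$ with restriction to $P$ vanishes — i.e. $P \subseteq P^\perp$ — which is the standard computation that the Blanchfield self-pairing of $X_L$ dies when pushed into a null-cobordism, using the long exact sequence of $(W, X_L)$ together with Poincaré–Lefschetz duality and the universal coefficient spectral sequence over $\Lambda_S$ (resp. the PID $\Q[t^{\pm1},(1-t)^{-1}]$). This is essentially the argument that a slice knot has metabolic Blanchfield form; for links over $\Lambda_S$ I would cite the machinery already in the paper's references (Conway–Friedl–Toffoli, Hillman) and the square-presentation property $\cite{CimasoniFlorens}$ which guarantees $TH_1(X_L;\Lambda_S)$ behaves well. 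The reverse inclusion $P^\perp \subseteq P$ (so that $P = P^\perp$, giving a genuine Lagrangian rather than just an isotropic submodule) requires nonsingularity of $\Bl_L$ and a counting/linking argument; over the PID in item (ii) this is clean, and in item (iii) boundary-link-ness is exactly what is invoked to make $\Bl_L$ nonsingular (Remark~\ref{rem:BlanchfieldNonSingular}, Proposition~\ref{prop:Nonsingular}).

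Next I would show $P_+ \oplus P_- = TH_1(X_L)$, i.e. the two Lagrangians are complementary. Here I use the Mayer–Vietoris sequence for $S^4 \smallsetminus \nu S = W_+ \cup_{X_L} W_-$. In the strongly doubly slice case ($\mu = n$, $n$-variable coefficients) the complement $S^4 \smallsetminus \nu S$ of an $n$-component unlink of $2$-spheres is homotopy equivalent to a wedge of $n$ circles and $n$ copies of $S^2$, so $H_1(S^4\smallsetminus\nu S;\Lambda) \otimes_\Lambda \Lambda_S$ and $H_2$ are readily computed; feeding this into Mayer–Vietoris forces $P_+ \cap P_- = 0$ and $P_+ + P_- = TH_1(X_L;\Lambda_S)$ after tensoring to kill the free parts. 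In the weakly doubly slice case ($\mu = 1$) the sphere $S$ is a single unknotted $2$-sphere, its complement is $S^1 \times D^3$ up to homotopy, and working over $\Q[t^{\pm1},(1-t)^{-1}]$ the corresponding Alexander module vanishes, which again yields complementarity. Combining: $TH_1(X_L) = P_+ \oplus P_-$ with each $P_\pm = P_\pm^\perp$, which is precisely the definition of $\Bl_L$ being hyperbolic. Item (iii) is then just item (ii) upgraded: being a boundary link promotes the rational statement to an integral one over $\Z[t^{\pm1},(1-t)^{-1}]$, since nonsingularity of the integral $\Bl_L$ holds and the two integral Lagrangians $P_\pm$ are the preimages of the rational ones.

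I expect the main obstacle to be the linear-algebra bookkeeping over the (non-PID) ring $\Lambda_S$ in item (i): showing that the isotropic submodules $P_\pm$ are actually \emph{self}-annihilating (equal to their own perpendiculars, not merely contained in them) and that their direct sum is all of $TH_1(X_L;\Lambda_S)$ — rather than a finite-index submodule — requires care, because over $\Lambda_S$ one cannot simply count dimensions. The resolution is to exploit nonsingularity of $\Bl_L$ (available for strongly doubly slice links, since these are boundary links, cf.\ the remark after Theorem~\ref{thm:BlanchfieldStronglySliceIntro}) to convert "$P \subseteq P^\perp$ and $P_+ \cap P_- = 0$ and $P_+ + P_-$ has finite index" into "$P_\pm = P_\pm^\perp$ and $P_+ \oplus P_- = TH_1(X_L;\Lambda_S)$", by a standard argument with the induced pairing $TH_1/P_- \times P_- \to Q/\Lambda_S$. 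The genus-zero hypothesis enters only through the computation of $H_*(S^4 \smallsetminus \nu S)$; the general statement with surfaces of higher genus (Theorem~\ref{thm:BlanchfieldStronglySlice}) would replace "hyperbolic" with a bound on the size of an isotropic complement, but for the stated theorem the unknottedness makes the complement homology as simple as possible.
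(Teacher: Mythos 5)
Your proposal follows essentially the same route as the paper's proof of Theorem~\ref{thm:BlanchfieldStronglySlice}: split $S^4$ along the equator, glue the two slice-disc exteriors $W_\pm$ along $X_L$, use Mayer--Vietoris together with the splitting $H_1(X_L;\Lambda_S)=TH_1(X_L;\Lambda_S)\oplus\Lambda_S^{n-1}$ (available for boundary links, and automatic over the PID $\Q[t^{\pm1},(1-t)^{-1}]$) to get $TH_1=P_+\oplus P_-$ with $P_\pm=\ker(T\iota_\pm)$, and then show each $P_\pm$ is a metaboliser by duality; the genuinely delicate step, which you correctly locate but do not carry out, is that the long exact sequence of $(W_\pm,\partial W_\pm)$ restricts exactly to torsion submodules over $\Lambda_S$ (the paper's Claim, proved via a sharp $3\times3$ lemma). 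Two further remarks. First, your description of the exterior of an $n$-component unlink of $2$-spheres as a wedge of $n$ circles and $n$ copies of $S^2$ is incorrect: Alexander duality gives $H_2(S^4\setminus\nu S)\cong \widetilde{H}^1(S)=0$, and the relevant twisted $H_2$ vanishes as well; since the true computation is simpler than the one you posit, this slip is harmless for your Mayer--Vietoris step, but as written it would not ``force'' $P_+\cap P_-=0$. Second, where the paper establishes $P_\pm^\perp\subseteq P_\pm$ via a relative duality diagram for $(W_\pm,\partial W_\pm)$, you instead deduce it from nonsingularity of $\Bl_L$ combined with $TH_1=P_+\oplus P_-$ and $P_\pm\subseteq P_\pm^\perp$; this alternative finish is valid (write $x=x_++x_-\in P_+^\perp$ and observe $x_-$ annihilates all of $P_++P_-$, hence vanishes) and slightly slicker, with the required nonsingularity supplied by Proposition~\ref{prop:Nonsingular} in cases (i) and (iii) and by the rational nonsingularity statement in case (ii).
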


Since these conditions are difficult to apply in practice, we also formulate criteria that are easier to verify.
Let $R$ be a Noetherian ring that is a unique factorisation domain. The \emph{order} of an $R$-module~$H$ is the greatest common divisor of the ideal in $R$
generated by all~$m \times m$ minors of an~$m \times n$-presentation matrix for~$H$.
For instance, the (multivariable) Alexander polynomial of a coloured link $L$ is the order of~$H_1(X_L;\Lambda)$. Given $f=f(t_1,\ldots,t_\mu) \in \Q(t_1,\ldots,t_\mu)$, we write $\overline{f}$ for the rational function $f(t_1^{-1},\ldots,t_\mu^{-1})$.
The following result, which is a corollary of the proof of Theorem~\ref{thm:BlanchfieldStronglySliceIntro}, provides a restriction on the torsion submodule $TH_1(X_L;\Lambda_S)$ of the~$\Lambda_S$-Alexander module of a doubly slice link.

\begin{proposition}
\label{prop:ModuleSplit}

The following assertions hold:
\begin{enumerate}[leftmargin=*]\setlength\itemsep{0em}
\item if~$L$ is strongly doubly slice, then there exist submodules~$G_1,G_2 \subset TH_1(X_L;\Lambda_S)$ with~$\operatorname{Ord}(G_1)=\overline{\operatorname{Ord}(G_2)}$ such that
$$ TH_1(X_L;\Lambda_S)=G_1 \oplus G_2;$$
\item if $L$ is weakly doubly slice, then there exists a submodule $G \subset TH_1(X_L;\Q[t^{\pm 1},(1-t)^{-1}])$ such that
$$ TH_1(X_L;\Q[t^{\pm 1},(1-t)^{-1}])=G\oplus {\overline{G};}$$
\item if~$L$ is weakly doubly slice and also a boundary link, then there exist submodules~$G_1,G_2 \subset TH_1(X_L;\Z[t^{\pm 1}])$ with~$\operatorname{Ord}(G_1)=\overline{\operatorname{Ord}(G_2)}$ such that
$$ TH_1(X_L;\Z[t^{\pm 1}])=G_1 \oplus G_2.$$
\end{enumerate}

\end{proposition}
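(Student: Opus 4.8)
The plan is to obtain all three statements directly from Theorem~\ref{thm:BlanchfieldStronglySliceIntro} by translating the existence of a metabolizer for the Blanchfield pairing into a statement about orders. Throughout, write $R$ for the coefficient ring appropriate to the given case --- namely $\Lambda_S$ in~(i), the PID $\Q[t^{\pm 1},(1-t)^{-1}]$ in~(ii), and $\Z[t^{\pm 1}]$ in~(iii) --- each of which is a Noetherian unique factorisation domain carrying the involution $\overline{(\cdot)}$; let $Q$ denote its field of fractions (so $Q=\Q(t_1,\dots,t_\mu)$ in case~(i)) and set $T:=TH_1(X_L;R)$, equipped with its Blanchfield pairing $\Bl_L\colon T\times T\to Q/R$.

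I will use two routine facts. First, for any finitely generated torsion $R$-module $M$ one has $\operatorname{Ord}\big(\Hom_R(M,Q/R)\big)\doteq\operatorname{Ord}(M)$ (equality up to a unit): localising at a height-one prime $\mathfrak{p}$ of the UFD $R$ makes $R_{\mathfrak{p}}$ a discrete valuation ring, over which $\Hom_{R_{\mathfrak{p}}}(M_{\mathfrak{p}},Q/R_{\mathfrak{p}})\cong M_{\mathfrak{p}}$ by the structure theorem, and the order of a torsion module is recovered from the lengths of its localisations at the height-one primes. (Alternatively, in cases~(i) and~(iii) one may present $T$, hence the summands below, by a square generalised Seifert matrix and reduce the identity to a determinant computation; cf.\ the Remark after Theorem~\ref{thm:BlanchfieldStronglySliceIntro}.) Second, the adjoint of a sesquilinear Hermitian pairing $\lambda\colon M\times M\to Q/R$ is an $R$-linear map $M\to\overline{\Hom_R(M,Q/R)}$, so that $\overline{\Hom_R(M,Q/R)}$ has order $\overline{\operatorname{Ord}(M)}$.

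Now take $M=T$ and $\lambda=\Bl_L$. By Theorem~\ref{thm:BlanchfieldStronglySliceIntro}, $\Bl_L$ is hyperbolic over $R$, so $T=P_1\oplus P_2$ with $P_i=P_i^\perp$; put $G_i:=P_i$. Composing the adjoint restricted to $P_2$ with the restriction map $\overline{\Hom_R(T,Q/R)}\to\overline{\Hom_R(P_1,Q/R)}$ gives an $R$-linear map whose kernel is $P_1^\perp\cap P_2=P_1\cap P_2=0$; hence $P_2\hookrightarrow\overline{\Hom_R(P_1,Q/R)}$, and so $\operatorname{Ord}(G_2)$ divides $\overline{\operatorname{Ord}(P_1)}=\overline{\operatorname{Ord}(G_1)}$. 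The symmetric argument gives $\operatorname{Ord}(G_1)\mid\overline{\operatorname{Ord}(G_2)}$, and since $R$ is a UFD, applying $\overline{(\cdot)}$ to one of these divisibilities and combining yields $\operatorname{Ord}(G_1)\doteq\overline{\operatorname{Ord}(G_2)}$. This establishes~(i) and~(iii). For~(ii), where $R$ is a PID, the structure theorem gives $\Hom_R(P_1,Q/R)\cong P_1$, so the embedding above reads $P_2\hookrightarrow\overline{P_1}$; as $\operatorname{Ord}(P_2)\doteq\overline{\operatorname{Ord}(P_1)}=\operatorname{Ord}(\overline{P_1})$, its cokernel is a torsion $R$-module of trivial order, hence zero. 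Thus $P_2\cong\overline{P_1}$ and $T\cong P_1\oplus\overline{P_1}$, which is the asserted form with $G:=P_1$ (with $\overline{P_1}$ understood as the abstract conjugate module, exactly as in the knot case recalled in the introduction).

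Given Theorem~\ref{thm:BlanchfieldStronglySliceIntro}, the argument is essentially formal, and the points that need attention are bookkeeping: verifying the order identity over the two-dimensional rings $\Lambda_S$ and $\Z[t^{\pm 1}]$ appearing in~(i) and~(iii) (handled as above), and --- in~(iii) --- passing between the single-variable Blanchfield pairing over $\Z[t^{\pm 1},(1-t)^{-1}]$ furnished by Theorem~\ref{thm:BlanchfieldStronglySliceIntro} and the module $TH_1(X_L;\Z[t^{\pm 1}])$ named in the statement, using that for a boundary link the relevant torsion modules and the pairing on them are already defined over $\Z[t^{\pm 1}]$. The one genuinely substantive input, namely Theorem~\ref{thm:BlanchfieldStronglySliceIntro} itself, is assumed; everything else is the linking-form metabolizer yoga above.
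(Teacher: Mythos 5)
Your argument is correct, but the way you obtain the order equality is genuinely different from the paper's. The paper proves Proposition~\ref{prop:ModuleSplit} as a by-product of the \emph{proof} of Theorem~\ref{thm:BlanchfieldStronglySlice}: there the two summands are $\ker(T\iota_F)$ and $\ker(T\iota_G)$ for the two slice-disc exteriors, and the relation $\operatorname{Ord}(\ker(T\iota_F))=\overline{\operatorname{Ord}(\ker(T\iota_G))}$ is extracted from the left-hand column of the duality diagram, namely the geometric isomorphism $\ker(T\iota_F)\cong TH_2(W_F,\partial W_F;\Lambda_S)\cong\overline{\Hom_{\Lambda_S}(\ker(T\iota_G),Q/\Lambda_S)}\cong\overline{\operatorname{Ext}^1_{\Lambda_S}(\ker(T\iota_G),\Lambda_S)}$, together with a cited lemma computing the order of such an Ext module. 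You instead deduce the order equality purely algebraically from the \emph{statement} that $\Bl_L$ is hyperbolic: since $P_2\cap P_1^\perp=P_2\cap P_1=0$, the adjoint embeds $P_2$ into $\overline{\Hom_R(P_1,Q/R)}$, giving mutual divisibilities of orders; over the PID this upgrades to $P_2\cong\overline{P_1}$ by the trivial-order-cokernel argument. Both routes ultimately rest on the same localisation facts (orders computed at height-one primes, multiplicativity in short exact sequences, $\Hom$ commuting with localisation of finitely presented modules), but yours has the advantage of making the Proposition a formal consequence of hyperbolicity alone, independent of the particular metabolisers produced by the geometry; the paper's has the advantage of identifying the summands concretely with the torsion of the two disc exteriors. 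Your handling of case (iii) correctly defers to the mechanism of Remark~\ref{rem:FellerParkPowell} for moving between $\Z[t^{\pm 1}]$ and $\Z[t^{\pm 1},(1-t)^{-1}]$, which is exactly what the paper does. One cosmetic point: the clause ``so that $\overline{\Hom_R(M,Q/R)}$ has order $\overline{\operatorname{Ord}(M)}$'' follows from your first routine fact plus conjugation of the module structure, not from the $R$-linearity of the adjoint; the logic is fine but the phrasing conflates the two.
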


The obstruction given by Proposition~\ref{prop:ModuleSplit} is fairly computable. Indeed, for boundary links~\cite[Theorem 4.6]{Conway} shows that a presentation matrix for $TH_1(X_L;\Lambda_S)$ can be calculated by using generalised Seifert matrices, and we describe this in more detail in Subsections~\ref{sub:SeifertIntro} and~\ref{sub:Ccomplex}.

We speculate that a completely general coloured version of Theorem~\ref{thm:BlanchfieldStronglySliceIntro} might hold if we consider the Blanchfield form as a pairing on $\widehat{t}H_1(X_L;\Lambda_S)$, where given a~$\Lambda_S$-module~$M$, we write~$\widehat{t}M=TM/zM$ for the quotient of~$TM$ by its so-called \emph{maximal pseudo-null submodule}~$zM$; see Remark~\ref{rem:BlanchfieldNonSingular}.
We will not pursue this question here as the resulting coloured obstruction would be unwieldy.

Use $\Sigma_2(L)$ to denote the double branched cover of $S^3$ along $L$.
Arguments similar, but simpler, than those from the proof of Theorem~\ref{thm:BlanchfieldStronglySliceIntro} also give the following result, which appears in Proposition~\ref{prop:doublebranched}.
\begin{proposition}
\label{prop:LinkingFormIntro}
The linking form $\lambda_{\Sigma_2(L)} \colon TH_1(\Sigma_2(L)) \times TH_1(\Sigma_2(L)) \to \Q/\Z$ of a weakly doubly slice link $L $ is hyperbolic, and $TH_1(\Sigma_2(L))=G \oplus G$ for some finite abelian group $G$.
\end{proposition}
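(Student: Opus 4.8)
The plan is to deduce Proposition~\ref{prop:LinkingFormIntro} from the weakly doubly slice hypothesis by running the standard "slice disc exterior" argument, adapted to the branched-cover setting, exactly parallel to the proof of Theorem~\ref{thm:BlanchfieldStronglySliceIntro}(2) but over $\Z$ rather than over $\Q[t^{\pm1},(1-t)^{-1}]$. Suppose $L=\Sigma\cap S^3$ for an unknotted $2$-sphere (more generally genus-$g$ surface, though for the statement as given we may take $\Sigma$ a sphere) $\Sigma\subset S^4$ with $S^4=D^4_+\cup_{S^3}D^4_-$ and $\Sigma$ meeting $S^3$ transversely in $L$. Then $D_\pm:=\Sigma\cap D^4_\pm$ are slice surfaces for $L$ in $D^4_\pm$, and double-branched covering is functorial: let $W_\pm:=\Sigma_2(D^4_\pm)$ be the double cover of $D^4_\pm$ branched along $D_\pm$. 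These are compact oriented $4$-manifolds with $\partial W_\pm=\Sigma_2(L)=:Y$, and gluing them gives $\Sigma_2(S^4)=S^4$ (since $\Sigma$ is unknotted, its double branched cover is again $S^4$ — this is where unknottedness is used). So $W_+\cup_Y W_-=S^4$.

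The key homological step: a Mayer--Vietoris argument for $S^4=W_+\cup_Y W_-$ shows that the inclusion-induced maps combine to give, rationally, an isomorphism $H_1(Y;\Q)\cong (\ker(H_1(Y;\Q)\to H_1(W_+;\Q)))\oplus(\ker(H_1(Y;\Q)\to H_1(W_-;\Q)))$, and since $H_*(S^4;\Q)$ is trivial in degrees $1,2,3$ the two kernels are complementary; call them $P_+,P_-$. Over $\Z$ one gets that $TH_1(Y)=P_+\oplus P_-$ where $P_\pm=\ker(TH_1(Y)\to H_1(W_\pm;\Q))$, using that $Y$ is a rational homology sphere (its $H_1$ is all torsion, a standard fact for double branched covers of $S^3$ along links whose Alexander polynomial is appropriately nonzero — though for a weakly doubly slice link one must be slightly careful, as $\Delta_L$ may vanish; however $L$ is then in particular slice, and one can check $H_1(Y)$ is finite, or restrict attention to its torsion submodule throughout). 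The standard linking-form argument (as in Casson--Gordon, Kearton, etc.) then shows each $P_\pm$ is self-annihilating: $P_\pm=P_\pm^\perp$ with respect to $\lambda_Y$, because a class bounding rationally in $W_\pm$ has trivial linking with anything else bounding rationally in $W_\pm$, by the formula for the linking form in terms of the intersection form of a bounding $4$-manifold (Poincaré--Lefschetz duality on $W_\pm$). This gives hyperbolicity.

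For the refined statement $TH_1(Y)=G\oplus G$ (same summand twice, not just a metaboliser splitting), I would use the extra symmetry coming from the fact that both halves play the same role: there is an orientation-preserving involution of $S^4$ swapping $D^4_+$ and $D^4_-$ and preserving $\Sigma$ setwise (reflection in the equator $S^3$; this uses that we really have an unknotted sphere, equivalently that the two slice surfaces can be taken to be mirror halves of the same unknotted sphere — if not literally, one invokes that $\Sigma$ unknotted means its two $D^4$-halves are isotopic rel nothing). This induces an automorphism of $Y$ (the identity, up to isotopy, if we are careful — more precisely the involution restricts to an isotopically-trivial self-homeomorphism of $Y$ but swaps $W_+$ and $W_-$), hence an automorphism of $TH_1(Y)$ that is an isometry of $\pm\lambda_Y$ and carries $P_+$ to $P_-$. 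Composing the splitting $TH_1(Y)\cong P_+\oplus P_-$ with this isometry identifies $P_-\cong P_+$ as abelian groups, so setting $G:=P_+$ gives $TH_1(Y)\cong G\oplus G$ as claimed.

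The main obstacle I expect is handling the potential vanishing/degeneracy of the Alexander-type data for a merely \emph{weakly} doubly slice link — i.e.\ making sure $\Sigma_2(L)$ is a rational homology sphere (so that its $H_1$ is torsion and the linking form is defined on all of it, or at least that everything works cleanly on $TH_1$), and making sure the Mayer--Vietoris splitting is clean over $\Z$ on torsion and not just rationally. A secondary subtlety is the precise sense in which "$\Sigma$ unknotted" gives both $\Sigma_2(S^4)=S^4$ and an equator-swapping symmetry; I would want to phrase this carefully (perhaps: an unknotted sphere is isotopic to the standard one, whose double branched cover is manifestly $S^4$ with the reflection symmetry), and note that the genus-$g$ generalisation needed for the lower bound elsewhere would replace $S^4$ by a connected sum of $S^2\times S^2$'s and require tracking the resulting $b_2$, but that for Proposition~\ref{prop:LinkingFormIntro} the sphere case suffices. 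Everything else is the routine linking-form-from-a-bounding-$4$-manifold computation, which I would cite rather than reproduce, pointing to the analogous steps in the proof of Theorem~\ref{thm:BlanchfieldStronglySliceIntro}.
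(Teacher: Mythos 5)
Your first part (the hyperbolicity of $\lambda_{\Sigma_2(L)}$) follows essentially the paper's route: decompose $S^4$ into hemispheres, pass to the double branched covers $W_\pm$ of $D^4_\pm$ along the two halves of the unknotted sphere, use $\Sigma_2(S^4,\Sigma)\cong S^4$ and Mayer--Vietoris to split $TH_1(\Sigma_2(L))$ as a sum of the two kernels, and run the standard bounding-$4$-manifold argument to see each kernel is self-annihilating. One small simplification you are missing: the Mayer--Vietoris sequence for $S^4=W_+\cup_Y W_-$ already gives an isomorphism $H_1(Y)\cong H_1(W_+)\oplus H_1(W_-)$ \emph{over $\Z$} (since $H_1(S^4)=H_2(S^4)=0$), and this restricts to torsion subgroups because finitely generated abelian groups split; there is no need to worry about whether $\Sigma_2(L)$ is a rational homology sphere, since everything is phrased on $TH_1$ from the start.

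The genuine gap is in your argument for the refined statement $TH_1(\Sigma_2(L))=G\oplus G$. You posit an orientation-preserving involution of $(S^4,\Sigma)$ swapping the two hemispheres; no such symmetry exists in general. Reflection in the equatorial $S^3$ preserves $\Sigma$ setwise only if the two halves $D_+$ and $D_-$ are literally mirror images of one another, which is a very special doubly slice presentation --- in general the two slice surfaces are not even isotopic, let alone exchanged by an ambient involution fixing $L$. Unknottedness of $\Sigma$ gives you an isotopy to the standard sphere, but that isotopy need not respect the equator, so it does not produce an automorphism of $Y=\Sigma_2(L)$ carrying $P_+$ to $P_-$. (As a further warning sign, the equatorial reflection is orientation-\emph{reversing} on $S^4$.) The paper instead identifies the two summands by duality: writing $F,G$ for the two halves,
\[
TH_1(\Sigma_2(F))\cong TH_2(\Sigma_2(S),\Sigma_2(F))\cong TH_2(\Sigma_2(G),\Sigma_2(L))\cong TH^2(\Sigma_2(G))\cong \operatorname{Ext}^1_\Z\bigl(TH_1(\Sigma_2(G)),\Z\bigr)\cong TH_1(\Sigma_2(G)),
\]
using the long exact sequence of the pair in $\Sigma_2(S)\cong S^4$, excision, Poincar\'e--Lefschetz duality, the universal coefficient theorem, and $\operatorname{Ext}^1_\Z(T,\Z)\cong T$ for finite $T$. (Equivalently, since the linking form is nonsingular and $P_+=P_+^\perp$, its adjoint identifies $P_-\cong TH_1/P_+$ with $\operatorname{Hom}(P_+,\Q/\Z)\cong P_+$.) You should replace the symmetry argument by one of these.
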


\subsection{Generalised Seifert matrices of doubly slice links}
\label{sub:SeifertIntro}

Since doubly slice knots admit hyperbolic Seifert matrices, it is also natural to investigate constraints imposed by double sliceness on (generalisations of) the Seifert matrix.
For simplicity, we only state our results for strongly doubly slice links, referring to Section~\ref{sec:Seifert} for the general coloured case. 

Strongly doubly slice links are boundary links.
Associated to a boundary link~$L=K_1 \cup \ldots \cup K_n$ is a \emph{boundary Seifert matrix}~$A=(A_{ij})_{1 \leq i, j \leq n}$ which consists of~$n^2$ square matrices where~$A_{ii}$ is a Seifert matrix for~$K_i$ and~$A_{ij}^T=A_{ij}$~\cite{KoSeifert}.
Each of these matrices determines a bilinear pairing~$H_i \times H_j \to \Z$, and a boundary Seifert matrix is \emph{hyperbolic} if there are submodules~$G_1^{\pm},\ldots,G_n^{\pm}$ such that~$\operatorname{rk}(G_i^{\pm})=\frac{1}{2}\operatorname{rk}(H_i), H_i=G_i^{-} \oplus G_i^{+}$, and~$A$ vanishes on~$G_i^{\pm} \times G_j^{\pm}$.
When~$n=1$, this recovers the usual notion of a hyperbolic matrix.
A particular case of {Theorem~\ref{thm:HyperbolicSeifert}, which is} our first obstruction to double sliceness in terms of Seifert matrices{,} reads as follows:
\begin{theorem}
\label{thm:SeifertMatrixIntro}
A strongly doubly slice link admits a hyperbolic boundary Seifert matrix.
\end{theorem}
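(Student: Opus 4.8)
The plan is to mimic Sumners' argument for knots, lifted to the setting of a strongly doubly slice link. Suppose $L = K_1 \cup \dots \cup K_n$ is strongly doubly slice, so each $K_i$ is a cross section of an unknotted $2$-sphere $S_i \subset S^4$ and the $S_i$ are jointly unlinked, bounding disjoint handlebodies. Cutting $S^4$ along the equatorial $S^3$ splits each sphere $S_i$ into two slice disks $D_i^{\pm} \subset D^4_{\pm}$ for $K_i$, and since $S_i$ is unknotted each $D_i^{\pm}$ is a \emph{trivial} (unknotted) slice disk: it bounds a $3$-ball $B_i^{\pm}$ in $D^4_{\pm}$. The key geometric input is that, because the spheres are collectively unlinked, we may arrange the $3$-balls $B_i^{\pm}$ in each half to be disjoint. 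Pushing these $3$-balls (relative to their boundary disks) into the equator $S^3$ along a collar produces, on the one hand from the $B_i^+$ and on the other from the $B_i^-$, two disjoint collections of surfaces (here genus-$0$, i.e. $n$ disjoint disks-with-tubes; more precisely, Seifert-surface-like pieces) that $L$ bounds in $S^3$. The crux is to set this up so that the two resulting collections of Seifert surfaces for the boundary link are \emph{complementary}: their union, glued along $L$, is (a pushed-in copy of) the unlinked spheres.

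First I would recall the boundary-link structure: the components $K_i$ bound disjoint Seifert surfaces, and the handlebodies $B_i^{\pm}$ furnish two such systems $\Sigma_i^+$ and $\Sigma_i^-$ with $\partial \Sigma_i^{\pm} = K_i$. Fix one system, say $\{\Sigma_i^+\}$, and let $H_i = H_1(\Sigma_i^+)$; the boundary Seifert matrix $A = (A_{ij})$ is computed from push-offs of curves on $\Sigma_i^+$ linking curves on $\Sigma_j^+$. The subspace $G_i^- \subset H_i$ that I want is the kernel of the map $H_1(\Sigma_i^+) \to H_1(D^4_+ \setminus \bigsqcup_j B_j^+)$ induced by inclusion (equivalently, the subspace of classes that bound in the complementary $3$-ball $B_i^+$ pushed off the equator). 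Half-lives-half-dies for the $3$-manifolds $B_i^+$ (together with a standard excision/Mayer--Vietoris computation using that the $B_j^+$ are disjoint and that $D^4_+$ minus a collection of disjoint trivial $3$-balls has controlled homology) gives $\operatorname{rk}(G_i^-) = \tfrac12 \operatorname{rk}(H_i)$ and that $A$ vanishes on $G_i^- \times G_j^-$ for all $i,j$, since curves bounding in disjoint $3$-balls have zero linking with their push-offs and with each other's push-offs. Symmetrically, performing the same construction with the other half $D^4_-$ and the $B_i^-$ --- but comparing the resulting subspaces back inside the \emph{same} Seifert system (using that $\Sigma_i^+ \cup_{K_i} \Sigma_i^-$, suitably isotoped, is the pushed-in $S_i$, which is unknotted) --- produces complementary subspaces $G_i^+$ with $H_i = G_i^- \oplus G_i^+$ and $A$ vanishing on $G_i^+ \times G_j^+$. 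That these two Lagrangian-type subspaces are genuinely complementary (rather than merely each half-dimensional) is forced by the nonsingularity of $A + A^T$, i.e. the fact that $A$ is a Seifert matrix of an actual surface and so $A - A^T$ (or the symmetrized intersection form) is unimodular on each $H_i$.

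I expect the main obstacle to be the geometric bookkeeping that makes the two $3$-ball systems genuinely ``dual'': it is easy to get, from one half of $S^4$, a single system of half-dimensional isotropic subspaces $G_i^-$, but to also get the complementary $G_i^+$ one must know that the Seifert surface $\Sigma_i^+$ (coming from the $B_i^+$) and a Seifert surface $\Sigma_i^-$ (coming from the $B_i^-$) can be chosen so that $\Sigma_i^+ \cup \Sigma_i^-$ is isotopic to the unknotted $S_i$ and so that both systems are simultaneously unlinked across all $i$. This is where the hypothesis that the \emph{whole collection} of spheres is unlinked (not just each sphere individually unknotted) is essential; morally it is the link-theoretic upgrade of the fact that a trivial $2$-knot $\subset S^4$ cut by $S^3$ gives two trivial disks bounding two $3$-balls that together form the obvious $S^2 \times \{pt\} \subset S^2 \times D^2$ picture. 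Once this geometric input is in place, the algebra is the routine linear-algebra fact that two complementary half-dimensional subspaces on which a unimodular (anti)symmetric form vanishes give, in a suitable basis, the hyperbolic normal form; applied on each $H_i$ and respecting the off-diagonal vanishing $A(G_i^{\pm}, G_j^{\pm}) = 0$, this yields precisely a hyperbolic boundary Seifert matrix. I would also remark that the coloured generalisation (Theorem~\ref{thm:HyperbolicSeifert}) proceeds the same way, replacing boundary Seifert surfaces by $C$-complexes and boundary Seifert matrices by generalised Seifert matrices, with the unlinkedness of the coloured spheres providing the analogous complementary families of subspaces.
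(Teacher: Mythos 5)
Your overall strategy (use the $3$-balls bounded by the unlinked spheres, take kernels of inclusion-induced maps as isotropic families, and verify isotropy via surfaces in disjoint $3$-balls and push-offs) is the right one, and your isotropy argument for a single family is essentially the one the paper uses (following Ko). But the proposal has two genuine gaps. First, the claim that each half-disk $D_i^{\pm}=S_i\cap D^4_{\pm}$ ``bounds a $3$-ball $B_i^{\pm}$ in $D^4_{\pm}$'' is unjustified and, read literally, impossible unless $K_i$ is unknotted: a properly embedded $3$-ball in $D^4_{\pm}$ whose boundary contains $D_i^{\pm}$ would have to meet $S^3$ in a \emph{disk} bounded by $K_i$. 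What the hypothesis actually provides is a single $3$-ball $M_i\subset S^4$ with $\partial M_i=S_i$, meeting $S^3$ transversely in one Seifert surface $F_i$ for $K_i$ (plus possibly closed components, which the paper removes by a parity argument, Lemma~\ref{lem:partitionargument}); this $F_i$ divides $M_i$ into two compact $3$-manifolds $M_i^{\pm}$ which are \emph{not} balls. Because there is only one Seifert system $F=\bigcup_i F_i$, both isotropic families $B_i^{\pm}=\ker\bigl(H_1(F_i)\to H_1(M_i^{\pm})\bigr)$ live in the same $H_1(F_i)$ from the outset, and the ``dual systems'' identification problem you flag as the main obstacle simply does not arise; in your two-system setup ($\Sigma_i^+$ versus $\Sigma_i^-$) it does arise and is never resolved.

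Second, the assertion that complementarity of the two half-rank isotropic subspaces ``is forced by the nonsingularity of $A+A^T$'' is false as linear algebra: two Lagrangians of a unimodular (skew-)symmetric form need not be complementary (they may even coincide). Complementarity is the geometric heart of the proof, and the paper obtains it from the Mayer--Vietoris sequence
\[
0=H_2(M_i)\to H_1(F_i\cup S_i)\to H_1(M_i^+)\oplus H_1(M_i^-)\to H_1(M_i)=0,
\]
valid because $M_i$ is a $3$-ball; this identifies $B_i^{+}\cong H_1(M_i^-)$ and $B_i^{-}\cong H_1(M_i^+)$ and hence gives $H_1(F_i)=B_i^+\oplus B_i^-$ directly. (It is then a separate, easy observation --- Remark~\ref{rem:MetabolicIsMetabolic} --- that complementary isotropic summands of the nonsingular $A_{ii}-A_{ii}^T$ must each be of half rank, which is the direction opposite to the one you invoke.) You would need to replace both of these steps to have a complete proof.
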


A word of caution: Theorem~\ref{thm:SeifertBlanchfieldIntro} does not assert that \emph{every} boundary Seifert matrix for $L$ is hyperbolic.
A similar caveat holds for Theorem~\ref{thm:SeifertBlanchfieldIntro} below. Next,
we describe how Theorem~\ref{thm:SeifertMatrixIntro} can be used to recover some of our previous obstructions.
By~\cite[Lemma 1]{CimasoniPotential}, every~$\mu$-coloured link~$L$ bounds a $C$-complex, i.e.~a collection of Seifert surfaces~$F=F_1\cup \ldots \cup F_\mu$ that are either disjoint or intersect pairwise along clasp intersections; see Definition~\ref{def:CComplex} for details.
Following~\cite{Cooper,CooperThesis, CimasoniPotential}, given a sequence of signs~$\varepsilon=(\varepsilon_1,\ldots,\varepsilon_\mu)$, there is a \emph{generalised Seifert matrix}~$A^\varepsilon$ that is obtained by computing linking numbers of the form~$\ell k(i^\varepsilon(x),y)$, where~$x,y\in H_1(F)$ and~$i^\varepsilon(x)$ denotes the curve obtained from~$x$ by pushing it off~$F$ in the direction prescribed by~$\varepsilon$.
Summarising, a choice of a $C$-complex $F$ and of a basis for $H_1(F)$ leads to a collection of generalised Seifert matrices $\lbrace A^\varepsilon \rbrace$ for $L$.
One can then combine these~$2^\mu$ matrices to obtain a \emph{$C$-complex matrix}
$$ H(t_1,\ldots,t_\mu)=\sum_\eps\prod_{i=1}^\mu(1-t_i^{\eps_i})\,A^\eps.$$
When~$\mu=1$, a $C$-complex is a Seifert surface,~$A:=A^-$ is a Seifert matrix for the oriented link~$L$ (with~$A^+=A^T$) and a~$C$-complex matrix is~$(1-t^{-1})A+(1-t)A^T$.
Furthermore, generalised Seifert matrices (and the resulting $C$-complex matrices) provide a natural generalisation of the classical Seifert matrix: they can be used to calculate the Alexander module~\cite[Theorem 3.2]{CimasoniFlorens}, the potential function~\cite{CimasoniPotential}, the multivariable signature~\cite[Section 2]{CimasoniFlorens} and the Blanchfield form~\cite{ConwayFriedlToffoli,Conway}.

However, while the multivariable signature, the Alexander polynomial and the Blanchfield form all provide obstructions to sliceness (and now double sliceness), no obstruction in terms of {generalised Seifert matrices or} $C$-complex matrices was known until now.
As a fairly direct consequence of Theorem~\ref{thm:SeifertMatrixIntro}, 
Theorem~\ref{thm:SeifertBlanchfield} establishes a generalisation of the following result:

\begin{theorem}
\label{thm:SeifertBlanchfieldIntro}
Any strongly doubly slice link admits a {collection of  generalised Seifert matrices~$\lbrace A^\varepsilon \rbrace$ where~$A^\varepsilon$ is hyperbolic for each $\varepsilon$, and also admits a} hyperbolic $C$-complex matrix.
\end{theorem}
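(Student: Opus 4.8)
The plan is to read this off from Theorem~\ref{thm:SeifertMatrixIntro}, once we understand how the generalised Seifert matrices and the $C$-complex matrix of a strongly doubly slice link are assembled from its boundary Seifert matrix. So let $L=K_1\cup\dots\cup K_n$ be strongly doubly slice; then $L$ is a boundary link, and by Theorem~\ref{thm:SeifertMatrixIntro} we may fix a boundary Seifert surface $F=F_1\sqcup\dots\sqcup F_n$ together with a basis of $H_1(F)=\bigoplus_{i=1}^{n}H_i$, $H_i:=H_1(F_i)$, for which the associated boundary Seifert matrix $A=(A_{ij})_{1\le i,j\le n}$ is hyperbolic: there are splittings $H_i=G_i^{-}\oplus G_i^{+}$ with $\rk(G_i^{\pm})=\tfrac12\rk(H_i)$, adapted to the chosen basis, on which $A$ vanishes, i.e.\ $A$ is zero on $G_i^{s}\times G_j^{s}$ for every $s\in\{+,-\}$ and all $i,j$. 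Since the surfaces $F_i$ are pairwise disjoint, $F$ is in particular a $C$-complex for the $n$-coloured link $L$ (one with no clasp intersections), so the basis of $H_1(F)$ yields a collection of generalised Seifert matrices $\{A^{\varepsilon}\}_{\varepsilon\in\{\pm1\}^{n}}$ and a $C$-complex matrix $H(t_1,\dots,t_n)=\sum_{\varepsilon}\prod_{i=1}^{n}(1-t_i^{\varepsilon_i})\,A^{\varepsilon}$.

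The key point is the block structure of each $A^{\varepsilon}$ with respect to $H_1(F)=\bigoplus_i H_i$. If $x\in H_i$, $y\in H_j$ and $i\ne j$, then the push-off $i^{\varepsilon}(x)$ of $x$ off $F$ avoids a neighbourhood of $F_j$ because $F_i\cap F_j=\emptyset$, so $\ell k(i^{\varepsilon}(x),y)=\ell k(x,y)$; hence the $(i,j)$-block of $A^{\varepsilon}$ is the off-diagonal block $A_{ij}$ of the boundary Seifert matrix, independently of $\varepsilon$. If $i=j$, the $(i,i)$-block of $A^{\varepsilon}$ is the ordinary Seifert matrix of $F_i$ computed with the push-off direction $\varepsilon_i$, hence is $A_{ii}$ or $A_{ii}^{T}$ according to the sign of $\varepsilon_i$.

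Now put $G^{s}:=\bigoplus_{i=1}^{n}G_i^{s}$ for $s\in\{+,-\}$, so that $H_1(F)=G^{-}\oplus G^{+}$ and $\rk(G^{\pm})=\tfrac12\rk H_1(F)$. Since $A$ vanishes on every $G_i^{s}\times G_j^{s}$, so does its off-diagonal block $A_{ij}$, and since $A_{ii}$ vanishes on $G_i^{s}\times G_i^{s}$ so does $A_{ii}^{T}$. Combining this with the block description above, $A^{\varepsilon}$ vanishes on $G^{s}\times G^{s}$ for $s\in\{+,-\}$; that is, $A^{\varepsilon}$ is block antidiagonal with respect to the splitting of $H_1(F)$ into the complementary half-rank summands $G^{-}$ and $G^{+}$, so $A^{\varepsilon}$ is hyperbolic, for every $\varepsilon$. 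Because $H(t_1,\dots,t_n)$ is a $\Lambda$-linear combination of the matrices $A^{\varepsilon}$, it too vanishes on $G^{s}\times G^{s}$; being moreover Hermitian over $\Lambda$ (a standard property of $C$-complex matrices), it takes the block form $\left(\begin{smallmatrix}0&M\\\overline{M}^{T}&0\end{smallmatrix}\right)$ with respect to $G^{-}\oplus G^{+}$, so it is a hyperbolic $C$-complex matrix.

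I expect the only genuine content to be the middle step: checking that a boundary Seifert surface is a legitimate (clasp-free) $C$-complex and that the push-off direction is irrelevant to the off-diagonal linking numbers, so that the single hyperbolic decomposition of the boundary Seifert matrix provided by Theorem~\ref{thm:SeifertMatrixIntro} simultaneously certifies hyperbolicity of every $A^{\varepsilon}$ and of $H$. The coloured generalisation, Theorem~\ref{thm:SeifertBlanchfield}, does not admit this shortcut: there one must work with honest $C$-complexes carrying clasp intersections and keep track of the corresponding correction terms in the generalised Seifert matrices, which is where the real effort lies.
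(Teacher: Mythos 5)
Your proof is correct and follows essentially the same route as the paper's proof of Theorem~\ref{thm:SeifertBlanchfield}: a boundary Seifert surface is a clasp-free $C$-complex, the off-diagonal blocks of $A^\varepsilon$ are independent of $\varepsilon$ and equal $A_{ij}$, the diagonal blocks are $A_{ii}$ or $A_{ii}^T$, and the hyperbolic splitting supplied by Theorem~\ref{thm:SeifertMatrixIntro} therefore passes to every $A^\varepsilon$ and, by linearity, to the $C$-complex matrix $H$. One small correction to your closing remark: the coloured generalisation admits exactly the same shortcut --- a doubly slice $\mu$-coloured link bounds $\mu$ \emph{disjoint} surfaces, so the paper's proof of Theorem~\ref{thm:SeifertBlanchfield} also works with a clasp-free boundary Seifert surface; the only extra subtlety there is that the isotropic summands need not be half-rank, which is why one only gets ``doubly isotropic'' rather than ``hyperbolic'' for general colourings.
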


As we mentioned above, Theorem~\ref{thm:SeifertBlanchfieldIntro} allows us to recover particular cases of Theorems~\ref{thm:MainTheoremIntro} and~\ref{thm:BlanchfieldStronglySlice} (see Corollary~\ref{cor:seifertway}), but the result might also be of independent interest as we now outline.

\begin{remark}
\label{rem:SeifertIsCool}
While Levine's algebraic concordance group~\cite{LevineInvariants} has been generalised to boundary links~\cite{CappellShaneson,KoCS, SheihamThesis,Sheiham2,SheihamRanicki}, in general, link concordance {currently} lacks a meaningful notion of algebraic concordance.
In particular, there are no known restrictions induced by sliceness on generalised Seifert matrices, nor on $C$-complex matrices.
For this reason, it is encouraging that in the doubly slice case such results can be established.
Finally, note that this paper contains two proofs of Theorem~\ref{thm:BlanchfieldStronglySliceIntro}, one more algebraic and one relying on generalised Seifert surfaces.
We hope that these two 
{perspectives} will offer future insights into the possible definitions of algebraic (double) concordance of links.
\end{remark}

\subsection{Examples}
Here are some of the examples where we apply our results.
 Example~\ref{ex:12Butnot34} exhibits an oriented $4$-component link that is weakly doubly slice and $2$-doubly slice, but that is neither $3$-doubly slice nor strongly doubly slice, Example~\ref{ex:L10n36} shows the limitations of our methods by studying a weakly doubly slice link with vanishing torsion submodule of its $\Lambda_S$-Alexander module and vanishing multivariable signature. Proposition~\ref{prop:StronglyExamples} shows that there are no strongly doubly slice links with~$11$ or fewer crossings.
In Section~\ref{sec:weakly} we determine the weakly doubly slice status of all links with 9 or fewer crossings, with the exception of 3 links (with various orientations). This is mainly achieved using our abelian obstructions, but in some cases when our obstructions are ineffective we use more ad hoc arguments based on linking numbers. Of special interest is the case of the Borromean rings, which are not weakly doubly slice, and for which we use an entirely distinct argument that exploits the triple linking.
Finally, Example~\ref{ex:Orientations} exhibits an example of a $2$-component link that is weakly doubly slice with one quasi-orientation, but not with the other, thereby answering a question of McCoy and McDonald~\cite[Question 3]{McCoyMcDonald}.

\subsection*{Organisation}
In Section~\ref{sec:Twisted}, we review some general results we will require about the homology of abelian covers and twisted homology.
Section~\ref{sec:Signature} concerns multivariable signatures and we prove the lower bound on the doubly slice genus from Theorem~\ref{thm:MainTheoremIntro}.
Section~\ref{sec:Blanchfield} is about the Blanchfield pairing, and we prove it is hyperbolic for strongly doubly slice links.
In Section~\ref{sec:Seifert}, we turn to what can be said about various notions of Seifert matrix for a doubly slice link and use these methods to provide alternative proofs for some of the results so far.
In Section~\ref{sec:Examples}, we study examples.

\subsection*{Acknowledgments} The authors would like to thank Chris W.~Davis for helpful conversations and recalling an argument of Peter Teichner for us, which we used in Proposition~\ref{prop:borromean}. We thank
Peter Feller for suggesting Lemma \ref{lem:peterfeller} to us and for helpful conversations.
We thank Duncan McCoy and Clayton McDonald for drawing our attention to~\cite[Question 3]{McCoyMcDonald}. 
We thank the anonymous referee for helpful suggestions.
PO is supported by the SNSF Grant~181199.

\subsection*{Conventions}
Links are assumed to be oriented, unless otherwise stated. Given a ring $R$ with an involution $x \mapsto \overline{x}$ and a~$R$-module~$H$, we write~$\overline{H}$ for the~$R$-module whose underlying abelian group is~$H$ but with module structure given by~$p \cdot h=\overline{p}h$ for~$h \in H$ and~$p \in R$.
From now on, all manifolds are assumed to be compact, based and oriented.
An element ${\boldsymbol{\omega}} \in \mathbb{T}^\mu$ will always have coordinates denoted by ${\boldsymbol{\omega}}=(\omega_1,\ldots,\omega_\mu)$.
We work in the topological category with locally flat embeddings unless otherwise stated, but note this means our results will also hold in the smooth category.

\section{Twisted homology}
\label{sec:Twisted}

In this section, we review some facts about twisted homology.
Here, given a CW-pair~$(X,Y)$, a commutative domain~$R$ with involution and a~$(R,\Z[\pi_1(X)])$-module~$M$, twisted homology and cohomology refers to the~$R$-modules
\begin{align*}
H_*(X,Y;M)&=H_*\left( M \otimes_{\Z[\pi_1(X)]} C_*(\widetilde{X},\widetilde{Y} \right), \\
H^*(X,Y;M)&=H_*\left( \Hom_{\Z[\pi_1(X)]}(\overline{C_*(\widetilde{X},\widetilde{Y})},M)  \right),
\end{align*}
where~$p \colon \widetilde{X} \to X$ denotes the universal cover and~$\widetilde{Y}=p^{-1}(Y)$.
Throughout this article, we assume some familiarity with twisted homology, but refer to~\cite{KirkLivingston} for a general reference, or to~\cite[Chapter 5]{ConwayThesis} for a reference whose conventions match ours.
Subsection~\ref{sub:Lambda} collects facts about twisted homology with coefficients in~$M=R=\Z[t_1^{\pm 1},\ldots,t_\mu^{\pm 1},(1-t_1)^{-1},\ldots,(1-t_\mu)^{-1}]$, while Subsection~\ref{sub:Comega} is concerned with a coefficient system over~$R=\C$.

\subsection{Homology of free abelian covers}
\label{sub:Lambda}
Use~$\Lambda:=\Z[\Z^\mu]=\Z[t_1^{\pm 1},\ldots,t_\mu^{\pm 1}]$ to denote the ring of Laurent polynomials in~$\mu$ variables.
Let~$\Lambda_S$ be the ring obtained from~$\Lambda$ by localising the multiplicative set generated by the~$1-t_i$. 
Let~$(X,Y)$ be a CW pair with a map~$H_1(X) \to \Z^\mu$.
We can then consider the twisted homology modules~$H_i(X,Y;\Lambda)$ and~$H_i(X,Y;\Lambda_S)$.

\begin{remark}
\label{rem:TwistedEasier}
For~$R=\Lambda,\Lambda_S$, the homology~$R$-module~$H_*(X,Y;R)$ can equivalently be described as the homology of the chain complex~$R \otimes_\Lambda C_*(\widehat{X},\widehat{Y})$, where~$p \colon \widehat{X} \to X$ is the~$\Z^\mu$-cover of~$X$ and~$\widehat{Y}=p^{-1}(Y)$.
Since $\Lambda_S$ is flat over $\Lambda$, note also that $H_*(X,Y;\Lambda_S)=\Lambda_S \otimes_\Lambda H_*(X,Y;\Lambda)$.
\end{remark}

We recall the description of the~$0$-th homology module of a space~$X$ as well as the main simplification afforded by the use of~$\Lambda_S$ coefficients.

\begin{lemma}
\label{lem:H0}
Let~$X$ be a connected CW complex, and let~$\varphi \colon \pi_1(X) \to \Z^\mu$ be a homomorphism.
\begin{enumerate}[leftmargin=*]\setlength\itemsep{0em}
\item If~$\varphi$ is surjective, then~$H_0(X;\Lambda)=\Z$.
\item If~$z \in \im(\varphi)$ is non-trivial, then~$H_0(X;\Z[\Z^\mu][(z-1)^{-1}])=0$.
\item If~$\psi \colon \pi_1(X \times S^1) \to \Z^\mu$ is a homomorphism such that~$\pi_1(S^1) \to \pi_1(X \times S^1) \xrightarrow{\psi} \Z^\mu$ sends a generator to a non-trivial element~$z$ of~$\Z^\mu$, then~$H_*(X\times S^1;\Z[\Z^\mu][(z-1)^{-1}])=0$.
\end{enumerate}
\end{lemma}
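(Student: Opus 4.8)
The three parts of Lemma~\ref{lem:H0} are the standard computations of $H_0$ of an abelian cover, and I would prove them in the order given, since (iii) is deduced from (ii) together with a K\"unneth argument. Throughout, the key computational tool is that, for a connected CW complex $X$ with a homomorphism $\varphi\colon\pi_1(X)\to\Z^\mu$, one has
\[
H_0(X;\Z[\Z^\mu])=\Z[\Z^\mu]\otimes_{\Z[\pi_1(X)]}\Z = \Z[\Z^\mu]/J,
\]
where $J$ is the $\Z[\Z^\mu]$-submodule generated by the elements $\varphi(\gamma)-1$ for $\gamma\in\pi_1(X)$; equivalently $J$ is the ideal generated by $t-1$ as $t$ ranges over $\im(\varphi)$. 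This is just the usual presentation of $H_0$ as the coinvariants of the deck action on $C_0$ of the cover, which I would recall briefly using Remark~\ref{rem:TwistedEasier}.

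For (i), if $\varphi$ is surjective then $\im(\varphi)=\Z^\mu$, so $J$ is the augmentation ideal of $\Z[\Z^\mu]$, whence $H_0(X;\Lambda)=\Z[\Z^\mu]/J=\Z$. For (ii), pick a non-trivial $z\in\im(\varphi)$; then $z-1\in J$, so over the localisation $\Z[\Z^\mu][(z-1)^{-1}]$ the ideal $J$ contains the unit $z-1$ and hence is the whole ring. Since localisation is exact, $H_0(X;\Z[\Z^\mu][(z-1)^{-1}])=\bigl(\Z[\Z^\mu]/J\bigr)\otimes_{\Z[\Z^\mu]}\Z[\Z^\mu][(z-1)^{-1}]=0$.

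For (iii), set $R:=\Z[\Z^\mu][(z-1)^{-1}]$ and give $X\times S^1$ the $R$-coefficient system via $\psi$. The circle factor contributes a generator $s\in\pi_1(S^1)$ with $\psi(s)=z$ non-trivial, and the inclusion $X\times\{pt\}\hookrightarrow X\times S^1$ together with this circle gives a chain homotopy equivalence $C_*(\widehat{X\times S^1})\simeq C_*(\widehat X)\otimes_{\Z}C_*(\widetilde{S^1})$ of complexes of $R$-modules; concretely, the cellular chain complex of $X\times S^1$ with $R$-coefficients is the mapping cone of $(z\cdot\mathrm{id} - \mathrm{id})=(z-1)\colon C_*(X;R)\to C_*(X;R)$ (the endomorphism induced by the deck transformation corresponding to the $S^1$-factor acting on the $R$-coefficients of $X$). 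Since $z-1$ is a unit in $R$, this multiplication is an isomorphism of chain complexes, so its mapping cone is acyclic, i.e.\ $H_*(X\times S^1;R)=0$. Alternatively one invokes a K\"unneth spectral sequence, observing $H_*(S^1;\Z[z^{\pm1}][(z-1)^{-1}])=0$ by the argument of (ii), and that everything is flat over the PID in the circle variable; either route works. I would present the mapping-cone version as it is cleanest and avoids flatness bookkeeping.

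\textbf{Main obstacle.} The only genuinely delicate point is (iii): one must make sure that the $R$-module structure on $C_*(X;R)$ used in the mapping-cone description is the one twisted by $\psi|_{\pi_1(X)}$ (not the untwisted one), and that the cone differential is indeed multiplication by $z-1$ where $z=\psi(s)$ — this requires unwinding how $\psi$ restricts to the two factors of $\pi_1(X\times S^1)=\pi_1(X)\times\Z$. Once the bookkeeping of the coefficient systems is set up correctly, the acyclicity is immediate. No step requires more than elementary commutative algebra and the standard properties of cellular chain complexes of covers.
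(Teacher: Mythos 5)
Your proposal is correct and follows exactly the standard route that the paper itself only cites: parts (i) and (ii) are the coinvariants computation $H_0(X;M)=M/\langle \varphi(\gamma)-1\rangle$ from Hilton--Stammbach, and part (iii) is the mapping-cone/K\"unneth argument appearing in the cited lemma of Conway--Friedl--Toffoli. The paper gives no further details, so your write-up simply supplies the argument the references contain; no gaps.
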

\begin{proof}
The first two assertions follow from the usual computation of the~$0$-the twisted homology group~\cite[Chapter VI.3]{HiltonStammbach}.
The third statement is in~\cite[Lemma 2.2]{ConwayFriedlToffoli}.
\end{proof}

Let~$X$ be a CW complex, and let~$\varphi \colon \pi_1(X) \to \Z^\mu$ be a homomorphism.
In what follows, we will frequently refer to~$H_1(X;\Lambda)$ as the \emph{Alexander module of~$X$} and to~$H_1(X;\Lambda_S)$ as the \emph{$\Lambda_S$-Alexander module of~$X$}.
The next lemma describes the $\Lambda_S$-Alexander module of a space with free fundamental group; for instance the exterior of an unlinked surface in~$S^4$.
Here and in the sequel, $F^\mu$ denotes the free group on $\mu$ generators.

\begin{lemma}
\label{lem:WedgeOfCirclesLambdaS}
If~$X$ is a CW complex with~$\pi_1(X)={F^\mu}$, then~$H_1(X;\Lambda_S)$ is free of rank~$\mu-1$.
\end{lemma}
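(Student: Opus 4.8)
The plan is to reduce to the case where $X$ is a wedge of $\mu$ circles, then to compute directly from the cellular chain complex of the $\Z^\mu$-cover. First I would observe that $H_1(X;\Lambda_S)$ depends only on $\pi_1(X)$ together with the map $\pi_1(X)\to\Z^\mu$: indeed $H_*(X;\Lambda_S)$ is computed from $\Lambda_S\otimes_\Lambda C_*(\widehat X)$, and for any aspherical (or just $2$-coconnected) model the low-dimensional twisted homology is a functor of $\pi_1$. Since $\pi_1(X)=F^\mu$ is free, the standard classifying space $K(F^\mu,1)=\bigvee^\mu S^1$ has trivial $H_i$ for $i\geq 2$, so up to homotopy I may replace $X$ by $Y:=\bigvee^\mu S^1$; the map $\pi_1(X)\to\Z^\mu$ is necessarily (conjugate to) the abelianisation $F^\mu\to\Z^\mu$ sending the $i$-th generator to $t_i$, since any homomorphism realising the required covering must hit all of $\Z^\mu$, and for computing $H_1(X;\Lambda_S)$ I only need the induced map on $H_1$, which forces the statement up to changing basis in $\Z^\mu$.

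Next I would write down the cellular chain complex of $\widehat Y$, the $\Z^\mu$-cover of $\bigvee^\mu S^1$. As a $\Lambda$-complex it is the Koszul-type complex
\[
0 \longrightarrow \Lambda^\mu \xrightarrow{\ \partial\ } \Lambda \longrightarrow 0,
\]
concentrated in degrees $1$ and $0$, where $\partial(e_i)=t_i-1$ (the Fox derivative of the $i$-th generator). Tensoring with $\Lambda_S$ over $\Lambda$, we get $0\to\Lambda_S^\mu\xrightarrow{\partial_S}\Lambda_S\to 0$ with $\partial_S(e_i)=t_i-1$. Now $H_0(Y;\Lambda_S)=\Lambda_S/(t_1-1,\dots,t_\mu-1)$, which vanishes because each $t_i-1$ is a unit in $\Lambda_S$ (this is also immediate from Lemma~\ref{lem:H0}(ii)). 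Hence $\partial_S$ is surjective, so $H_1(Y;\Lambda_S)=\ker\partial_S$ fits into the short exact sequence
\[
0 \longrightarrow H_1(Y;\Lambda_S) \longrightarrow \Lambda_S^\mu \xrightarrow{\ \partial_S\ } \Lambda_S \longrightarrow 0.
\]
Since $\Lambda_S$ is a free (hence projective) $\Lambda_S$-module, this sequence splits, giving $\Lambda_S^\mu \cong H_1(Y;\Lambda_S)\oplus\Lambda_S$. Therefore $H_1(Y;\Lambda_S)$ is stably free of rank $\mu-1$.

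To finish I must upgrade ``stably free'' to ``free''. The cleanest route is to exhibit an explicit free basis of $\ker\partial_S$ rather than invoke a cancellation theorem. Since $t_\mu-1$ is a unit, set $u:=(t_\mu-1)^{-1}$; then the $\mu-1$ elements $v_i := e_i - (t_i-1)u\,e_\mu$ for $i=1,\dots,\mu-1$ all lie in $\ker\partial_S$, and one checks directly that together with $w:=u\,e_\mu$ (which satisfies $\partial_S(w)=1$) they form a $\Lambda_S$-basis of $\Lambda_S^\mu$: the change-of-basis matrix from $\{e_1,\dots,e_\mu\}$ to $\{v_1,\dots,v_{\mu-1},w\}$ is upper/lower triangular with units on the diagonal, hence invertible over $\Lambda_S$. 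Consequently $\{v_1,\dots,v_{\mu-1}\}$ is a free basis of $\ker\partial_S = H_1(Y;\Lambda_S)$, which is thus free of rank $\mu-1$.

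I expect the only real subtlety to be the first paragraph — justifying that passing from an arbitrary CW complex $X$ with $\pi_1(X)=F^\mu$ to the model $\bigvee^\mu S^1$ does not change $H_1(-;\Lambda_S)$, and that the coefficient system is the expected one. This is standard (use that $\bigvee^\mu S^1$ is aspherical, compare classifying maps, and note $H_1(-;\Lambda_S)$ only sees $\pi_1$ and the $2$-skeleton), but it is the step that needs to be stated carefully; the chain-level computation afterwards is completely routine.
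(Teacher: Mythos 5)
Your proposal is correct and follows essentially the same route as the paper: reduce to $Y=\bigvee^\mu S^1$ using that $H_1(-;\Lambda_S)$ depends only on $\pi_1$, identify $H_1$ with $\ker\partial$ in the chain complex $\Lambda^\mu\xrightarrow{(t_i-1)}\Lambda$ of the $\Z^\mu$-cover, and exhibit an explicit free basis of the kernel after localising. The only difference is cosmetic: the paper writes down the basis $(1-t_i)\widetilde{x}_{i+1}-(1-t_{i+1})\widetilde{x}_i$, whereas you use $e_i-(t_i-1)(t_\mu-1)^{-1}e_\mu$ with a clean triangular change-of-basis verification.
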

\begin{proof} 
The first twisted homology group of a space~$X$ only depends on~$\pi_1(X)$.
Since~$\pi_1(X)={F^\mu}$, we may therefore work with~$Y=\vee_{i=1}^{\mu} S^1$ instead of~$X$.
Use~$\widehat{Y}$ to denote the free abelian cover of~$Y$, so that~$H_1(Y;\Lambda_S)= \Lambda_S \otimes_\Lambda H_1(\widehat{Y})$; recall Remark~\ref{rem:TwistedEasier}.
Since~$Y$ is a graph, we have~$H_1(\widehat{Y})=\ker(\partial)$, where~$\partial$ is the boundary map of the chain complex
$$ 0 \to C_1(\widehat{Y}) \xrightarrow{\partial} C_0(\widehat{Y}) \to 0.$$
Endow~$Y$ with the cell structure with a single~$0$-cell~$z$ and~$\mu$~$1$-cells~$x_1,\ldots,x_\mu$.
Fix a lift~$\widetilde{z}$ of~$z$ to~$\widehat{Y}$ and lifts~$\widetilde{x}_i$ of the~$x_i$ with~$\widetilde{z}$ as a start point.
It follows that~$C_1(\widehat{Y})=\oplus_{i=1}^\mu \Lambda \widetilde{x}_i$, that~$C_0(\widehat{Y})=\Lambda \widetilde{z}$, and that the boundary map is given by~$\partial \widetilde{x}_i=(t_i-1)\widetilde{z}$.
Some linear algebra over~$\Lambda_S$ shows that a basis for~$\Lambda_S \otimes_\Lambda H_1(\widehat{Y})$ is given by~$(1-t_1)\widetilde{x}_2-(1-t_2)\widetilde{x}_1,\ldots,(1-t_{\mu-1})\widetilde{x}_\mu-(1-t_\mu)\widetilde{x}_{\mu-1}$.
This establishes that~$H_1(X;\Lambda_S)=H_1(Y;\Lambda_S)$ is free of rank~$\mu-1$.
\end{proof}

We now focus on link exteriors.
Given a~$\mu$-coloured link~$L$ with exterior~$X_L:=S^3 \setminus \nu L$, we consider the map~$\pi_1(X_L) \to \Z^\mu,\gamma \mapsto (\ell k(L_1,\gamma),\ldots,\ell k(L_\mu,\gamma))$.
The \emph{Alexander module of~$L$} is then the~$\Lambda$-module~$H_1(X_L;\Lambda)$.

\begin{remark}
\label{rem:NullityAtMost}
By applying~\cite[Proposition 2.11]{CochranOrrTeichner}, we deduce that for any $n$-component $\mu$-coloured link $L$, we have $\operatorname{rk}_\Lambda H_1(X_L;\Lambda) \leq n-1$.
\end{remark}

The next lemma is stated (for ordered links) in~\cite[Subsection~2.7]{Hillman}, but no proof is given so we offer one here. 
We remark that an alternative proof with a different flavour is implicit in~\cite[proof of Proposition~3.5.4]{ConwayThesis}.

\begin{lemma}
\label{lem:AlexanderModuleSplits}
Suppose~$L$ is a link that bounds an ordered collection~$F=F_1\cup\dots\cup F_\mu$ of disjoint oriented surfaces.
Then with respect to the associated~$\mu$-colouring,~$\Lambda_S^{\mu-1}$ is a direct summand of~$H_1(X_L;\Lambda_S)$. 
 When~$\mu=n$, it is moreover true that~$H_1(X_L;\Lambda_S)=TH_1(X_L;\Lambda_S) \oplus \Lambda_S^{n-1}$.
\end{lemma}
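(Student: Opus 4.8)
The statement to prove is: if $L$ bounds an ordered collection $F = F_1 \cup \dots \cup F_\mu$ of \emph{disjoint} oriented surfaces, then $\Lambda_S^{\mu-1}$ is a direct summand of $H_1(X_L;\Lambda_S)$; and moreover when $\mu = n$ one has the splitting $H_1(X_L;\Lambda_S) = TH_1(X_L;\Lambda_S) \oplus \Lambda_S^{n-1}$. The idea is to realise the free rank by a geometrically defined map whose target is controlled by Lemma~\ref{lem:WedgeOfCirclesLambdaS}, and which admits a section. First I would construct a degree-one-type map (or a suitable collapse map) $q \colon X_L \to Y$ where $\pi_1(Y) = F^\mu$, using the disjoint surfaces $F_i$: collapsing each $F_i$ and taking the associated quotient of $S^3$, or equivalently building a map $X_L \to \bigvee_{i=1}^\mu S^1$ that is compatible with the $\Z^\mu$-coloring homomorphism. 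Concretely, meridians of the $i$-th sublink map to the $i$-th circle; this is the standard map classifying the coloring, and since the $F_i$ are disjoint Seifert surfaces for the sublinks, Alexander duality / the usual Seifert-surface construction gives a map $X_L \to \bigvee_\mu S^1$ realising the coloring homomorphism on $\pi_1$. By Lemma~\ref{lem:WedgeOfCirclesLambdaS}, $H_1(Y;\Lambda_S)$ is free of rank $\mu - 1$, so the induced map $q_* \colon H_1(X_L;\Lambda_S) \to \Lambda_S^{\mu-1}$ has free target.

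Next I would produce a splitting of $q_*$. The natural candidate is a ``dual'' map going the other way, realised geometrically by the cores of the complementary handlebodies or, more elementarily, by explicit $1$-cycles in $X_L$: namely, the $\mu - 1$ loops of the form $\ell_{ij}$ that link $L_i$ and $L_j$ with opposite signs (push-offs built from arcs connecting the boundary components), which are exactly the geometric analogues of the basis vectors $(1-t_1)\widetilde{x}_2 - (1-t_2)\widetilde{x}_1, \dots$ appearing in the proof of Lemma~\ref{lem:WedgeOfCirclesLambdaS}. One checks these classes in $H_1(X_L;\Lambda_S)$ map under $q_*$ to the given free basis, hence $q_*$ is split surjective and $\Lambda_S^{\mu-1}$ is a direct summand. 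For the ``moreover'' statement when $\mu = n$: here $\beta(L) \le n - 1$ by Remark~\ref{rem:NullityAtMost}, so $H_1(X_L;\Lambda)$ has $\Lambda$-rank at most $n-1$; combined with the just-established free summand of rank $n-1$ in $H_1(X_L;\Lambda_S)$, the rank is exactly $n-1$, and the complementary summand $H_1(X_L;\Lambda_S)/\Lambda_S^{n-1}$ is therefore $\Lambda_S$-torsion. Since $\Lambda_S^{n-1}$ is free (hence projective), the torsion submodule $TH_1(X_L;\Lambda_S)$ injects into this torsion quotient, and a short diagram chase using the splitting shows $TH_1(X_L;\Lambda_S)$ is precisely a complement to $\Lambda_S^{n-1}$, giving $H_1(X_L;\Lambda_S) = TH_1(X_L;\Lambda_S) \oplus \Lambda_S^{n-1}$.

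**Main obstacle.** The delicate point is constructing the map $q \colon X_L \to \bigvee_\mu S^1$ realising the coloring homomorphism \emph{and} verifying that the explicit linking loops $\ell_{ij}$ actually hit a basis of the free $\Lambda_S$-module $H_1(\bigvee_\mu S^1;\Lambda_S)$ — i.e.\ matching the geometry to the linear algebra carried out in Lemma~\ref{lem:WedgeOfCirclesLambdaS}. The disjointness of the $F_i$ is essential here: it is what allows the collapse map to be defined consistently (each surface caps off one color's worth of meridians without interference), and it is the hypothesis that fails for general $C$-complexes. Making the section map well-defined on the $\Lambda_S$-level (rather than just $\Lambda$) uses flatness of $\Lambda_S$ over $\Lambda$ (Remark~\ref{rem:TwistedEasier}), so no further subtlety arises there. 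I would expect the rank-counting half (the ``moreover'' clause) to be routine once the summand is in hand, with Remark~\ref{rem:NullityAtMost} doing the essential work of pinning down the rank from above.
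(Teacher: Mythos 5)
Your proposal is correct in substance and reaches the lemma by a route that differs from the paper's in its homological bookkeeping, though the underlying geometry (the disjointness of the $F_i$) is exploited in the same way. The paper cuts the $\Z^\mu$-cover $\widehat{X}_L$ open along the lifts of $F$ and runs Mayer--Vietoris, obtaining a boundary map $\partial\colon H_1(\widehat{X}_L)\to H_0(F)\otimes_\Z\Lambda$ whose image is, by exactness, precisely $\ker\bigl(1-t_1\ \cdots\ 1-t_\mu\bigr)\cong\Lambda_S^{\mu-1}$ after localising; the splitting is then immediate from projectivity of the free image, with no need to exhibit a section. You instead push forward along the collapse map $q\colon X_L\to\bigvee_\mu S^1$ (the map $m_F$ the paper describes later, in Section~5.2) and must therefore \emph{prove} surjectivity of $q_*$ on $H_1(-;\Lambda_S)$ by hand. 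What your approach buys is that it isolates the role of Lemma~\ref{lem:WedgeOfCirclesLambdaS} cleanly and makes the free summand visibly functorial in the map to the wedge; what it costs is exactly the verification you flag as the main obstacle, which the paper's Mayer--Vietoris argument gets for free. (An alternative way to discharge that obstacle without explicit cycles: $q_*$ is surjective on $\pi_1$ since meridians hit conjugates of the generators, hence surjective on $H_1$ of the corresponding covers by Hurewicz, hence surjective after tensoring with $\Lambda_S$.) The ``moreover'' clause is handled identically in both arguments via Remark~\ref{rem:NullityAtMost}.

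One imprecision needs fixing. A loop that links $L_i$ and $L_j$ with opposite signs maps to $t_it_j^{-1}\neq 1$ under the colouring homomorphism, so it does not lift to a loop in the $\Z^\mu$-cover and does not define a class in $H_1(X_L;\Lambda_S)$. The classes that actually hit the basis $(1-t_j)\widetilde{x}_i-(1-t_i)\widetilde{x}_j$ of $H_1(\bigvee_\mu S^1;\Lambda_S)$ are (up to units) the cycles $(1-t_j)\widetilde{\mu}_i-(1-t_i)\widetilde{\mu}_j$ built from lifted meridian arcs based at a common lift of the basepoint, or equivalently the lifts of commutators of meridians of colours $i$ and $j$. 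With that substitution your section exists and the argument closes.
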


\begin{proof}
By tubing together components of the same colour, we can and will assume that $F_i$ is connected for all $i=1,\dots,\mu$. Thicken $F$ to $F\times[-1,1]$. The $\Z^\mu$-cover $p\colon \widehat{X}_L\to X_L$ associated to the $\mu$-colouring may be described as follows.

We abuse notation and write the intersection~$F\cap X_L$ from now on as~$F$. The space~$\widehat{X}_L\sm p^{-1}(F)$ is a disjoint union~$\bigsqcup_{a\in\Z^\mu}Y^a$, where~$p\colon Y^a\to X_L\sm F\times[-1,1]$ is a homeomorphism for all~$a\in\Z^\mu$.
Note that
\[
\partial\left( \cl(Y^a)\right)\cong \left(F\times\{-1,1\}\right)\,\cup\,\left(\partial X_L\sm F\times[-1,1]\right).
\]
Write~$\nu Y^a:=Y^a\cup F\times\left(\{-1\}\times(-\varepsilon,0]\right)\cup \left(F\times\{1\}\times[0,\varepsilon)\right)$, for an open collar in~$X_L$ along the~$F\times\{-1,1\}$ part of the boundary. 
A Mayer-Vietoris decomposition for the~$\Z^\mu$-cover~$\widehat{X}_L$ is then given by
\[
\bigsqcup_{a\in\Z^\mu}\bigsqcup_{i=1}^\mu F^a_i\times(-\varepsilon,\varepsilon)\xrightarrow{} \bigsqcup_{a\in\Z^\mu} \nu Y^a\to \widehat{X}_L.
\]
Writing $Y:=X_L\sm F\times[-1,1]$, the associated long exact sequence of~$\Lambda$-modules may be written~as
\[
\dots\xrightarrow{}
H_1(\widehat{X}_L)
\xrightarrow{\partial} H_0(F) \otimes_\Z \Lambda \xrightarrow{\varphi}  H_0(Y) \otimes_\Z \Lambda  \to \dots
\]
With respect to the obvious bases,~$\varphi$ is given by the matrix~$(1-t_1\,\,\,\,1-t_2\,\,\,\,\cdots\,\,\,\,1-t_\mu)$. Passing to the ring~$\Lambda_S$, this matrix is seen to have kernel the free module~$\Lambda_S^{\mu-1}$, similarly to the proof of Lemma \ref{lem:WedgeOfCirclesLambdaS}. As this kernel is the image of~$\partial$, the claimed result follows because free modules are projective and so~$\partial$ splits.

In the case that~$\mu=n$, write the decomposition so far achieved as~$H_1(X_L;\Lambda_S)\cong T\oplus \Lambda_S^{n-1}$. As the rank of~$H_1(X_L;\Lambda_S)$ is at most~$n-1$ for any $n$-component link (by Remark~\ref{rem:NullityAtMost}), the rank of~$T$ must be 0 and so~$T$ is torsion. It follows that~$T\cong TH_1(X_L;\Lambda_S)$.
\end{proof}

The next corollary shows that the splitting of Lemma~\ref{lem:AlexanderModuleSplits} into free and torsion parts holds for all colourings of a boundary link.
\begin{corollary}
\label{cor:SplitModuleColored}
If an $n$-component $\mu$-coloured link $L$ is also a boundary link, then 
\[
H_1(X_L;\Lambda_S)=TH_1(X_L;\Lambda_S) \oplus \Lambda_S^{n-1}
\]
\end{corollary}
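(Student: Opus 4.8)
The plan is to reduce the statement for an arbitrary $\mu$-colouring to the case $\mu=n$ of Lemma~\ref{lem:AlexanderModuleSplits}, using the fact that a boundary link splits along $n$ disjoint Seifert surfaces regardless of how we choose to colour it. First I would fix a boundary link $L=K_1\cup\dots\cup K_n$ together with a $\mu$-colouring $L=L_1\cup\dots\cup L_\mu$, and observe that since $L$ is a boundary link there is a collection $G=G_1\sqcup\dots\sqcup G_n$ of disjoint oriented Seifert surfaces with $\partial G_j=K_j$. This collection witnesses $L$ as bounding $n$ disjoint surfaces, so the case $\mu=n$ of Lemma~\ref{lem:AlexanderModuleSplits} applies to the \emph{uncoloured} link (i.e.\ with the finest colouring by individual components) and yields $H_1(X_L;\Lambda^{(n)}_S)=TH_1(X_L;\Lambda^{(n)}_S)\oplus (\Lambda^{(n)}_S)^{n-1}$, where I write $\Lambda^{(n)}=\Z[t_1^{\pm1},\dots,t_n^{\pm1}]$ and $\Lambda^{(n)}_S$ for its localisation to distinguish it from the $\mu$-variable ring $\Lambda=\Lambda^{(\mu)}$ associated to the given colouring.

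The next step is to relate the $\mu$-variable homology to the $n$-variable homology. The $\mu$-colouring corresponds to a surjection $\Z^n\twoheadrightarrow\Z^\mu$ (sending the generator dual to $K_j$ to the generator dual to the colour containing $K_j$), hence a ring map $\Lambda^{(n)}\to\Lambda$ that carries $1-t_j$ (for $K_j$ in colour $i$) to $1-t_i$, and therefore extends to $\Lambda^{(n)}_S\to\Lambda_S$. Thus $\Lambda_S$ becomes a $\Lambda^{(n)}_S$-algebra, and one checks (via the standard identification of twisted homology with coefficients in a group ring with the homology of the corresponding cover, cf.\ Remark~\ref{rem:TwistedEasier}) that $H_1(X_L;\Lambda_S)\cong \Lambda_S\otimes_{\Lambda^{(n)}_S} H_1(X_L;\Lambda^{(n)}_S)$. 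Since $\Lambda^{(n)}_S$ is flat over nothing in particular that we need, but base change is right exact and $H_1$ of a chain complex commutes with the flat ring map $\Lambda^{(n)}_S\to\Lambda_S$ once one knows this ring map is flat — which it is, being a localisation followed by the flat map $\Lambda^{(n)}_S\to\Lambda^{(n)}_S\otimes_{\Z[\Z^n]}\Z[\Z^\mu]$ corresponding to a split surjection of free abelian groups — applying $\Lambda_S\otimes_{\Lambda^{(n)}_S}(-)$ to the $\mu=n$ splitting gives $H_1(X_L;\Lambda_S)=(\Lambda_S\otimes TH_1(X_L;\Lambda^{(n)}_S))\oplus \Lambda_S^{n-1}$.

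It then remains to identify $\Lambda_S\otimes_{\Lambda^{(n)}_S} TH_1(X_L;\Lambda^{(n)}_S)$ with $TH_1(X_L;\Lambda_S)$, i.e.\ to show the base-changed module is exactly the torsion submodule. It is certainly a submodule, and it is torsion over $\Lambda_S$ because it is a quotient of a $\Lambda^{(n)}_S$-torsion module base-changed along a ring map (using that $\Lambda_S$ is a domain and a nonzerodivisor annihilating $TH_1(X_L;\Lambda^{(n)}_S)$ maps to a nonzero, hence nonzerodivisor, element of the domain $\Lambda_S$ — here one should take care that this image is nonzero, which holds since the map $\Lambda^{(n)}_S\to\Lambda_S$ sends nonzero elements to nonzero elements). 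Conversely the complementary summand is free of rank $n-1$, so it contains no torsion, and therefore all of $TH_1(X_L;\Lambda_S)$ lands in the first summand; this forces equality. The main obstacle I anticipate is precisely this last identification — verifying that base change along $\Lambda^{(n)}_S\to\Lambda_S$ preserves ``being the torsion submodule'' rather than merely ``being a torsion submodule''. This is where the flatness of the ring map and the fact that both rings are domains with the map injective on nonzero elements do the real work; one may alternatively sidestep it by noting that, having produced \emph{some} splitting $H_1(X_L;\Lambda_S)=T'\oplus\Lambda_S^{n-1}$ with $T'$ torsion, the argument at the end of the proof of Lemma~\ref{lem:AlexanderModuleSplits} (comparing with the rank bound of Remark~\ref{rem:NullityAtMost}) shows $T'=TH_1(X_L;\Lambda_S)$ directly.
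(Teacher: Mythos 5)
Your overall strategy is the same as the paper's: apply Lemma~\ref{lem:AlexanderModuleSplits} with $\mu=n$, base-change along the ring map $\Lambda_{n,S}:=\Z[t_1^{\pm1},\dots,t_n^{\pm1},(1-t_1)^{-1},\dots,(1-t_n)^{-1}]\to\Lambda_S$ induced by the colouring, and finish with the rank bound of Remark~\ref{rem:NullityAtMost}; your fallback for the last step (produce some splitting $T'\oplus\Lambda_S^{n-1}$ and argue as at the end of Lemma~\ref{lem:AlexanderModuleSplits}) is exactly what the paper does and is fine. The gap is the flatness claim on which your base-change step rests. For $\mu<n$ the surjection $\Z^n\twoheadrightarrow\Z^\mu$ has nontrivial kernel, so $\Z[\Z^n]\to\Z[\Z^\mu]$ is a surjection with nonzero kernel (generated by elements $g-1$ for $g$ in the kernel subgroup, e.g.\ $t_jt_k^{-1}-1$ when $K_j$ and $K_k$ share a colour), i.e.\ a proper quotient of a Noetherian domain; such a map is never flat (flat plus finitely presented would force the kernel ideal to be generated by an idempotent). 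Localising at the $1-t_i$ does not remove these kernel elements, so $\Lambda_{n,S}\to\Lambda_S$ is neither flat nor injective. The same misconception invalidates your parenthetical claim that this map ``sends nonzero elements to nonzero elements'', and hence your first argument that $\Lambda_S\otimes_{\Lambda_{n,S}}TH_1(X_L;\Lambda_{n,S})$ is torsion: in general a torsion module base-changed along a non-injective ring map need not stay torsion (e.g.\ $\Z[x,y]/(x)$ is torsion over $\Z[x,y]$, but its base change to $\Z[x,y]/(x)$ is free of rank one).

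Consequently the isomorphism $H_1(X_L;\Lambda_S)\cong\Lambda_S\otimes_{\Lambda_{n,S}}H_1(X_L;\Lambda_{n,S})$ does not follow from exactness of base change and must be justified differently. The paper obtains it from the universal coefficient spectral sequence $E^2_{p,q}=\operatorname{Tor}_p^{\Lambda_{n,S}}(H_q(X_L;\Lambda_{n,S}),\Lambda_S)$ converging to $H_{p+q}(X_L;\Lambda_S)$: the only terms that could interfere with $E^2_{0,1}=\Lambda_S\otimes H_1(X_L;\Lambda_{n,S})$ in total degree one are $\operatorname{Tor}_1(H_0(X_L;\Lambda_{n,S}),\Lambda_S)$ and the target of $d_2$ from $\operatorname{Tor}_2(H_0(X_L;\Lambda_{n,S}),\Lambda_S)$, and both vanish because $H_0(X_L;\Lambda_{n,S})=0$ by Lemma~\ref{lem:H0} --- this is precisely where the localisation at the $1-t_i$ earns its keep. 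With that step repaired, your rank-counting conclusion (the free summand already realises the maximal rank $n-1$, so the complementary summand is torsion and therefore equals $TH_1(X_L;\Lambda_S)$) goes through as in the paper.
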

\begin{proof}
In this proof, we temporarily set $\Lambda_{n,S}=\Z[t_1^{\pm 1},\ldots,t_n^{\pm 1},(1-t_1)^{-1},\ldots, (1-t_n)^{-1}]$.
By Proposition~\ref{prop:ModuleSplit}, we have the decomposition $H_1(X_L;\Lambda_{n,S})=TH_1(X_L;\Lambda_{n,S}) \oplus \Lambda_{n,S}^{n-1}$.
Tensoring the result with $\Lambda_S$ we obtain
\begin{equation}
\label{eq:Fromntomu}
\Lambda_S \otimes_{\Lambda_{n,S}} H_1(X_L;\Lambda_{n,S})  =\left( \Lambda_S \otimes_{\Lambda_{n,S}} TH_1(X_L;\Lambda_{n,S})  \right) \oplus \Lambda_{S}^{n-1}.
\end{equation}
We apply the universal coefficient spectral sequence with~$E^2_{p,q}=\operatorname{Tor}_p^{\Lambda_{n,S}}(H_q(X_L;\Lambda_{n,S}),\Lambda_S)$ and which converges to $H_{*}(X_L;\Lambda_S)$.
Since~$H_0(X_L;\Lambda_{n,S})=0$ by Lemma~\ref{lem:H0}, a computation using this spectral sequence shows that $H_1(X_L;\Lambda_S)=\Lambda_S \otimes_{\Lambda_{n,S}} H_1(X_L;\Lambda_{n,S})$.
Using Remark~\ref{rem:NullityAtMost}, it follows that both sides of~\eqref{eq:Fromntomu} have rank $n-1$.
In turn, this implies that $\Lambda_S \otimes_{\Lambda_{n,S}} TH_1(X_L;\Lambda_{n,S}) $ coincides with the torsion submodule of $H_1(X_L;\Lambda_S)$.
\end{proof}

\subsection{Homology with~$\C^{\boldsymbol{\omega}}$ coefficients}
\label{sub:Comega}
Given~${\boldsymbol{\omega}} \in \mathbb{T}_*^\mu:=(S^1 \setminus \lbrace 1 \rbrace)^\mu$, the map~$\Z^\mu \to \C,t_i \mapsto \omega_i$ endows~$\C$ with the structure of a~$\Lambda$-module, which we write~$\C^{\boldsymbol{\omega}}$ for emphasis.
Note that~$\C^{\boldsymbol{\omega}}$ is a module over~$\Lambda_S$; indeed none of the coordinates of~${\boldsymbol{\omega}}$ is equal to one.
Let~$(X,Y)$ be a~CW-pair with a homomorphism~$H_1(X) \to \Z^\mu$. 
Composing the induced map~$\Z[\pi_1(X)] \to \Lambda$ with the map~$\Lambda \to \C$ which evaluates~$t_i$ at~$\omega_i$ produces a morphism of rings with involutions.
We write~$H_i(X,Y;\C^{\boldsymbol{\omega}})$ for the resulting twisted homology~$\C$-vector spaces.
As in Remark~\ref{rem:TwistedEasier}, these vector spaces can equivalently be described via $\Z^\mu$-covers as the homology of $\C^{\boldsymbol{\omega}} \otimes_\Lambda C_*(\widehat{X},\widehat{Y})$.

\begin{lemma}
\label{lem:ComegaHomology}
Let~$(X,Y)$ be a CW pair, let~$\varphi \colon H_1(X) \to \Z^\mu=\langle t_1,\ldots,t_\mu \rangle$ be a homomorphism, and let~${\boldsymbol{\omega}} \in \mathbb{T}_*^\mu.$
The following assertions holds: 
\begin{enumerate}[leftmargin=*]\setlength\itemsep{0em}
\item if~$X$ is connected and at least one generator~$t_i$ is in the image of~$\varphi$, then 
\begin{align*}
H_0(X;\C^{\boldsymbol{\omega}})&=0, \\
H_1(X;\C^{\boldsymbol{\omega}})&=\C^{\boldsymbol{\omega}} \otimes_{\Lambda_S} H_1(X;\Lambda_S);
\end{align*}
\item evaluation produces an isomorphism~$H^k(X,Y;\C^{\boldsymbol{\omega}}) \cong \overline{\Hom_\C(H_k(X,Y;\C^{\boldsymbol{\omega}}),\C)}$.
\end{enumerate}
\end{lemma}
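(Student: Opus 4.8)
The plan is to prove the two assertions separately. For~(i) I would invoke the universal coefficient spectral sequence over~$\Lambda_S$ together with Lemma~\ref{lem:H0}; for~(ii) I would use the universal coefficient theorem for twisted cohomology, exploiting that~$\C$ is a field. Since~${\boldsymbol{\omega}} \in \mathbb{T}_*^\mu$, each~$1-\omega_i$ is a nonzero complex number, so the ring map~$\Lambda \to \C$, $t_i \mapsto \omega_i$, factors through~$\Lambda_S$; thus~$\C^{\boldsymbol{\omega}}$ is a~$\Lambda_S$-module. By Remark~\ref{rem:TwistedEasier}, the chain complex~$\C^{\boldsymbol{\omega}} \otimes_\Lambda C_*(\widehat{X})$ computing~$H_*(X;\C^{\boldsymbol{\omega}})$ equals~$\C^{\boldsymbol{\omega}} \otimes_{\Lambda_S} D_*$, where~$D_* := \Lambda_S \otimes_\Lambda C_*(\widehat{X})$ is a complex of free~$\Lambda_S$-modules with~$H_*(D_*) = H_*(X;\Lambda_S)$. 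I would then run the universal coefficient spectral sequence
\[
E^2_{p,q} = \Tor_p^{\Lambda_S}\!\left(H_q(X;\Lambda_S),\, \C^{\boldsymbol{\omega}}\right)\ \Longrightarrow\ H_{p+q}(X;\C^{\boldsymbol{\omega}}),
\]
just as in the proof of Corollary~\ref{cor:SplitModuleColored}. Since some generator~$t_i$ lies in the image of~$\varphi$ and~$t_i - 1$ is a unit in~$\Lambda_S$, Lemma~\ref{lem:H0} gives~$H_0(X;\Lambda_S) = 0$, so the entire bottom row~$E^2_{\bullet,0}$ vanishes; hence no nonzero differential enters or leaves the spots~$(0,0)$ and~$(0,1)$, yielding~$H_0(X;\C^{\boldsymbol{\omega}}) = E^2_{0,0} = 0$ and~$H_1(X;\C^{\boldsymbol{\omega}}) = E^2_{0,1} = \C^{\boldsymbol{\omega}} \otimes_{\Lambda_S} H_1(X;\Lambda_S)$, as required. (For~$H_0$ one can argue more directly: since~$X$ is connected it is the module of coinvariants~$(\C^{\boldsymbol{\omega}})_{\pi_1(X)}$, on which~$g-1$ acts invertibly whenever~$\varphi(g) = t_i$.)

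For assertion~(ii), write~$\pi = \pi_1(X)$ and~$P_* := C_*(\widetilde{X},\widetilde{Y})$, a chain complex of free~$\Z[\pi]$-modules. Unwinding the definitions in Section~\ref{sec:Twisted},~$H^k(X,Y;\C^{\boldsymbol{\omega}})$ is the cohomology of~$\Hom_{\Z[\pi]}(\overline{P_*},\C^{\boldsymbol{\omega}})$ while~$H_k(X,Y;\C^{\boldsymbol{\omega}})$ is the homology of~$\C^{\boldsymbol{\omega}} \otimes_{\Z[\pi]} P_*$. Because~$\C^{\boldsymbol{\omega}}$ is one-dimensional over~$\C$, the natural evaluation map
\[
\Hom_{\Z[\pi]}\!\left(\overline{P_*},\, \C^{\boldsymbol{\omega}}\right)\ \longrightarrow\ \overline{\Hom_\C\!\left(\C^{\boldsymbol{\omega}} \otimes_{\Z[\pi]} P_*,\, \C\right)}
\]
is an isomorphism on each free summand~$\Z[\pi]$ of~$P_*$ — both sides becoming a product of copies of~$\C$ indexed by the relevant cells — and is therefore an isomorphism of cochain complexes; the overline records the conjugation of the~$\pi$-action introduced by~$\C$-linear duality. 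Finally, as~$\C$ is a field, the functor~$\Hom_\C(-,\C)$ is exact and hence commutes with passage to homology, so the cohomology of the right-hand side is~$\overline{\Hom_\C(H_k(X,Y;\C^{\boldsymbol{\omega}}),\C)}$, which is the claim. (One could also simply cite the universal coefficient theorem for local coefficients over a field, as in~\cite{KirkLivingston} or~\cite[Chapter~5]{ConwayThesis}.)

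Neither step presents a real obstacle. The only points requiring mild care are ensuring that the vanishing~$H_0(X;\Lambda_S) = 0$ is genuinely an~$\Lambda_S$-level statement (so that the spectral sequence argument in~(i) closes, exactly as in Corollary~\ref{cor:SplitModuleColored}), and the bookkeeping of the involution on~$\Z[\pi]$ — and hence of the overlines — in the Hom--tensor adjunction used in~(ii). Both are routine.
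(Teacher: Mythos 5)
Your proposal is correct. Note that the paper does not actually prove this lemma: it simply cites \cite[Lemmas 2.3 and 2.6]{ConwayNagelToffoli}, so what you have written is a self-contained argument where the authors chose to defer to a reference. Both halves of your argument are sound and are the standard ones. For (i), the key points all check out: $\C^{\boldsymbol{\omega}}$ is a $\Lambda_S$-module precisely because each $\omega_i\neq 1$; the complex $\Lambda_S\otimes_\Lambda C_*(\widehat X)$ is free over $\Lambda_S$, so the universal coefficient spectral sequence applies; and the hypothesis that some $t_i$ lies in $\im(\varphi)$ combined with Lemma~\ref{lem:H0}(ii) kills $H_0(X;\Lambda_S)$ and hence the entire bottom row $E^2_{*,0}$, which forces $E^\infty_{0,0}=0$ and $E^\infty_{0,1}=E^2_{0,1}=\C^{\boldsymbol{\omega}}\otimes_{\Lambda_S}H_1(X;\Lambda_S)$ with no extension problem. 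This mirrors exactly the spectral-sequence computation the paper itself performs in Corollary~\ref{cor:SplitModuleColored}. For (ii), the cochain-level evaluation isomorphism onto $\overline{\Hom_\C(\C^{\boldsymbol{\omega}}\otimes_{\Z[\pi]}P_*,\C)}$ followed by exactness of $\Hom_\C(-,\C)$ is the correct mechanism, and you rightly flag that the only subtlety is the involution bookkeeping (the conjugation appearing because the representation is unitary, so the $\C$-dual of $\C^{\boldsymbol{\omega}}$ is the conjugate module). The one thing you might add for completeness is a sentence verifying convergence of the spectral sequence (immediate, since the complex is bounded below and consists of free modules), but this is cosmetic.
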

\begin{proof}
The proofs can be found in~\cite[Lemmas 2.3 and 2.6]{ConwayNagelToffoli}.
\end{proof}

The next lemma focuses on spaces with free fundamental group.

\begin{lemma}
\label{lem:HomologicalAlgebra}
Let~$X$ be a CW complex with~$\pi_1(X)={F^\mu}$.
For~${\boldsymbol{\omega}} \in \mathbb{T}_*^\mu$, we have
$$ H_1(X;\C^{\boldsymbol{\omega}})=\C^{\mu-1}.$$
\end{lemma}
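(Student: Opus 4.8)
The plan is to reduce to the case $X=\vee_{i=1}^\mu S^1$, exactly as in the proof of Lemma~\ref{lem:WedgeOfCirclesLambdaS}, since $H_1(X;\C^{\boldsymbol\omega})$ depends only on $\pi_1(X)$. First I would observe that $\pi_1(X)=F^\mu$ means at least one generator $t_i$ lies in the image of $\varphi\colon H_1(X)\to\Z^\mu$ (in fact all do, up to the standard abelianisation), so Lemma~\ref{lem:ComegaHomology}(1) applies and gives $H_1(X;\C^{\boldsymbol\omega})=\C^{\boldsymbol\omega}\otimes_{\Lambda_S}H_1(X;\Lambda_S)$. Then I would invoke Lemma~\ref{lem:WedgeOfCirclesLambdaS}, which says $H_1(X;\Lambda_S)$ is free of rank $\mu-1$ over $\Lambda_S$. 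Tensoring a free module of rank $\mu-1$ with $\C^{\boldsymbol\omega}$ over $\Lambda_S$ yields $\C^{\mu-1}$, which is the claim.

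Alternatively, and perhaps more cleanly, I would give a direct computation mirroring the proof of Lemma~\ref{lem:WedgeOfCirclesLambdaS}: take $Y=\vee_{i=1}^\mu S^1$ with one $0$-cell $z$ and $1$-cells $x_1,\dots,x_\mu$, so that the cellular chain complex of the $\Z^\mu$-cover, after applying $\C^{\boldsymbol\omega}\otimes_\Lambda-$, becomes
\[
0\to \bigoplus_{i=1}^\mu \C\,\widetilde{x}_i \xrightarrow{\partial} \C\,\widetilde{z}\to 0,
\]
with $\partial\widetilde{x}_i=(\omega_i-1)\widetilde{z}$. Since ${\boldsymbol\omega}\in\mathbb{T}_*^\mu$ means every $\omega_i\neq 1$, the map $\partial$ is surjective, so $H_0=0$ and $H_1=\ker\partial$ has dimension $\mu-1$ by rank-nullity.

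The computation here is entirely routine; there is no real obstacle. The only point requiring a moment's care is confirming that the hypothesis ${\boldsymbol\omega}\in\mathbb{T}_*^\mu$ (as opposed to $\mathbb{T}^\mu$) is what makes $\C^{\boldsymbol\omega}$ a $\Lambda_S$-module and makes each $\omega_i-1$ invertible, which is exactly why the rank drops to $\mu-1$ rather than potentially jumping. I would write the proof in the short form, simply citing Lemmas~\ref{lem:ComegaHomology} and~\ref{lem:WedgeOfCirclesLambdaS}, since both the needed inputs are already in place.

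\begin{proof}
As the first twisted homology of $X$ depends only on $\pi_1(X)$, and since $\pi_1(X)=F^\mu$ surjects onto $\Z^\mu$ so that every generator $t_i$ lies in the image of $\varphi$, Lemma~\ref{lem:ComegaHomology}~(1) gives
\[
H_1(X;\C^{\boldsymbol{\omega}})=\C^{\boldsymbol{\omega}} \otimes_{\Lambda_S} H_1(X;\Lambda_S).
\]
By Lemma~\ref{lem:WedgeOfCirclesLambdaS}, the $\Lambda_S$-module $H_1(X;\Lambda_S)$ is free of rank $\mu-1$. Since $\C^{\boldsymbol{\omega}}$ is a $\Lambda_S$-module (as no coordinate of ${\boldsymbol{\omega}} \in \mathbb{T}_*^\mu$ equals $1$), tensoring a free $\Lambda_S$-module of rank $\mu-1$ with $\C^{\boldsymbol{\omega}}$ yields $\C^{\mu-1}$, which is the claim.
\end{proof}
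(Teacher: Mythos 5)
Your proof is correct and follows exactly the same route as the paper's: combine Lemma~\ref{lem:ComegaHomology}~(1) with Lemma~\ref{lem:WedgeOfCirclesLambdaS} and tensor the free rank-$(\mu-1)$ module down to $\C^{\mu-1}$. Your extra remark that the generators $t_i$ lie in the image of $\varphi$ (so that Lemma~\ref{lem:ComegaHomology} applies) is a small point the paper leaves implicit, and your alternative direct cellular computation is a valid, equally short substitute.
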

\begin{proof}
We saw in Lemma~\ref{lem:ComegaHomology} that~$ H_1(X;\C^{\boldsymbol{\omega}})=\C^{\boldsymbol{\omega}} \otimes_{\Lambda_S} H_1(X;\Lambda_S),$ while Lemma~\ref{lem:WedgeOfCirclesLambdaS} established that~$H_1(X;\Lambda_S)=\Lambda_S^{\mu-1}$. 
The result follows by combining these two facts.
\end{proof}

We conclude this section by studying the~$\C^{\boldsymbol{\omega}}$-twisted homology of link exteriors in~$S^3$ and surface exteriors in~$S^4$.
As in Subsection, \ref{sub:Lambda}, given a~$\mu$-coloured link~$L \subset S^3$, we consider the map~$\pi_1(X_L) \to \Z^\mu, \gamma \mapsto (\ell k(L_1,\gamma),\ldots,\ell k(L_\mu,\gamma))$.
After fixing~${\boldsymbol{\omega}} \in \mathbb{T}_*^\mu$, we therefore obtain the~$\C$-vector space~$H_1(X_L;\C^{\boldsymbol{\omega}})$.
For later use, we record the following fact about boundary links.

\begin{proposition}
\label{prop:TwistedHomologyBoundaryLink}

Fix $1\leq \mu \leq n$ and ${\boldsymbol{\omega}} \in \mathbb{T}_*^\mu$.
If~$L$ is an $n$-component link that bounds a ordered collection~$F=F_1\cup\dots\cup F_\mu$ of disjoint oriented surfaces, then
\begin{enumerate}[leftmargin=*]\setlength\itemsep{0em}
\item $b_1^{{\boldsymbol{\omega}}}(X_L) \geq \mu-1$;
\item if $L$ is a boundary link, $b_1^{{\boldsymbol{\omega}}}(X_L) \geq n-1$ with equality if $\Delta_L^{tor}({\boldsymbol{\omega}}) \neq 0$.
\end{enumerate}

\end{proposition}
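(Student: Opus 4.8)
The plan is to run the same Mayer--Vietoris argument used in the proof of Lemma~\ref{lem:AlexanderModuleSplits}, but with $\C^{\boldsymbol\omega}$-coefficients rather than $\Lambda_S$-coefficients, and then to feed the outcome into standard Euler-characteristic and duality bookkeeping. First I would reduce to the case where each $F_i$ is connected by tubing together components of the same colour, exactly as in Lemma~\ref{lem:AlexanderModuleSplits}; this does not change $\pi_1(X_L)$ and hence does not change $H_1(X_L;\C^{\boldsymbol\omega})$. Then, cutting $X_L$ along the thickened surface $F\times[-1,1]$ as in that proof, I obtain the same Mayer--Vietoris sequence, now tensored with $\C^{\boldsymbol\omega}$ over $\Lambda$. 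Since $\C^{\boldsymbol\omega}$ is flat over $\Lambda_S$ (it is a field) and $\Lambda_S$ is flat over $\Lambda$, the relevant stretch of the sequence reads
\[
H_1(X_L;\C^{\boldsymbol\omega}) \xrightarrow{\partial} H_0(F)\otimes_\Z\C^{\boldsymbol\omega} \xrightarrow{\varphi} H_0(Y)\otimes_\Z\C^{\boldsymbol\omega},
\]
where now $\varphi$ is given by the $1\times\mu$ row $(1-\omega_1\ \cdots\ 1-\omega_\mu)$. Because each $\omega_i\neq 1$, this row has rank $1$, so its kernel is $\C^{\mu-1}$; since the kernel is the image of $\partial$, we get the surjection $H_1(X_L;\C^{\boldsymbol\omega})\twoheadrightarrow \C^{\mu-1}$, which immediately gives $b_1^{\boldsymbol\omega}(X_L)\geq \mu-1$. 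This proves~(i).

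For~(ii), I would first apply~(i) with $\mu=n$ to the boundary link $L$ (with its finest colouring), giving $b_1^{\boldsymbol\omega}(X_L)\geq n-1$ — but to invoke this I must be careful that here $\boldsymbol\omega\in\mathbb{T}_*^\mu$ rather than $\mathbb{T}_*^n$, so I would instead directly run the cut-along-$F$ argument with $F=F_1\cup\dots\cup F_n$ the Seifert surfaces coming from the boundary link structure, pushing each $t_i$ to $\omega_{c(i)}$ where $c$ is the colouring; the resulting matrix for $\varphi$ is again a row of entries $1-\omega_j$, at least one of which is nonzero, so its kernel has dimension $n-1$ and $b_1^{\boldsymbol\omega}(X_L)\geq n-1$. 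For the equality statement, I would pass to the $\Lambda_S$-picture: Corollary~\ref{cor:SplitModuleColored} gives $H_1(X_L;\Lambda_S)=TH_1(X_L;\Lambda_S)\oplus\Lambda_S^{n-1}$, and then Lemma~\ref{lem:ComegaHomology}(i) identifies $H_1(X_L;\C^{\boldsymbol\omega})$ with $\C^{\boldsymbol\omega}\otimes_{\Lambda_S}H_1(X_L;\Lambda_S)$, so
\[
b_1^{\boldsymbol\omega}(X_L) = (n-1) + \dim_\C\bigl(\C^{\boldsymbol\omega}\otimes_{\Lambda_S} TH_1(X_L;\Lambda_S)\bigr).
\]
The extra term vanishes precisely when $\boldsymbol\omega$ is not a zero of the order of the torsion module $TH_1(X_L;\Lambda_S)$, i.e.~when $\Delta_L^{tor}(\boldsymbol\omega)\neq 0$: indeed a torsion $\Lambda_S$-module $T$ has $\C^{\boldsymbol\omega}\otimes_{\Lambda_S}T=0$ exactly when the evaluation at $\boldsymbol\omega$ of (a generator of) $\operatorname{Ord}(T)$ is a unit, which is a standard consequence of the structure of finitely presented modules over $\Lambda_S$ together with the fact that $\C^{\boldsymbol\omega}$ is a residue field of $\Lambda_S$ at the maximal ideal $(t_1-\omega_1,\dots,t_\mu-\omega_\mu)$.

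The main obstacle I anticipate is purely bookkeeping rather than conceptual: making sure the flatness and base-change claims are legitimate (in particular that $\C^{\boldsymbol\omega}$ really is an $\Lambda_S$-algebra, which holds since no $\omega_i=1$, as noted in Subsection~\ref{sub:Comega}), and correctly relating the vanishing of $\C^{\boldsymbol\omega}\otimes_{\Lambda_S}TH_1$ to the non-vanishing of $\Delta_L^{tor}(\boldsymbol\omega)$ when $TH_1(X_L;\Lambda_S)$ need not be cyclic — here one reduces to the cyclic case via the structure theorem over the (non-PID, for $\mu>1$) ring $\Lambda_S$ by instead using that $\operatorname{Ord}$ is multiplicative in short exact sequences and that $\dim_\C(\C^{\boldsymbol\omega}\otimes_{\Lambda_S} T)>0$ iff $\operatorname{Ord}(T)$ lies in the maximal ideal corresponding to $\boldsymbol\omega$. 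Everything else follows the template already established in Lemmas~\ref{lem:WedgeOfCirclesLambdaS} and~\ref{lem:AlexanderModuleSplits}.
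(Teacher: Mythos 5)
Your argument for item (i) and for the inequality in item (ii) is sound and is essentially the paper's argument: the paper obtains the splitting $H_1(X_L;\Lambda_S)=M\oplus\Lambda_S^{\mu-1}$ from Lemma~\ref{lem:AlexanderModuleSplits} and tensors with $\C^{\boldsymbol{\omega}}$ via Lemma~\ref{lem:ComegaHomology}, whereas you rerun the same Mayer--Vietoris computation directly with $\C^{\boldsymbol{\omega}}$-coefficients; these are interchangeable.

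There is, however, a genuine gap in your proof of the equality statement in item (ii). You assert that for a torsion $\Lambda_S$-module $T$ one has $\dim_\C(\C^{\boldsymbol{\omega}}\otimes_{\Lambda_S}T)>0$ if and only if $\operatorname{Ord}(T)$ lies in the maximal ideal corresponding to $\boldsymbol{\omega}$, and you propose to justify this by ``reducing to the cyclic case via the structure theorem.'' No such structure theorem exists over $\Lambda_S$ for $\mu>1$, and the claimed criterion is false: the pseudo-null module $T=\Lambda_S/(t_1-\omega_1,t_2-\omega_2)$ has $\operatorname{Ord}(T)=\gcd(t_1-\omega_1,\,t_2-\omega_2)=1$, which is a unit, yet $\C^{\boldsymbol{\omega}}\otimes_{\Lambda_S}T=\C\neq 0$. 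More generally, for a non-square $m\times k$ presentation matrix the order is the gcd of the $k\times k$ minors, and this gcd can be nonzero at $\boldsymbol{\omega}$ while every individual minor vanishes there, so non-vanishing of the order does not force the evaluated matrix to have full rank. The implication you actually need ($\Delta_L^{tor}(\boldsymbol{\omega})\neq 0\Rightarrow\C^{\boldsymbol{\omega}}\otimes TH_1=0$) therefore does not follow from order considerations alone.

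The missing ingredient is exactly where the boundary-link hypothesis enters: for a boundary link, $TH_1(X_L;\Lambda_{n,S})$ admits a \emph{square} presentation matrix $A$ (see the proof of \cite[Theorem~4.6]{Conway}; equivalently, boundary links have no pseudo-null part, cf.\ Remark~\ref{rem:BlanchfieldNonSingular}), so that $\det(A)=\Delta_L^{tor}$ and $A(\boldsymbol{\omega})$ is a square presentation matrix for $\C^{\boldsymbol{\omega}}\otimes_{\Lambda_{n,S}}TH_1(X_L;\Lambda_{n,S})$, which then vanishes precisely when $\det(A(\boldsymbol{\omega}))=\Delta_L^{tor}(\boldsymbol{\omega})\neq 0$. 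One further bookkeeping point: $\Delta_L^{tor}$ is the order of the $n$-variable module $TH_1(X_L;\Lambda_{n,S})$, so you should carry out this step over $\Lambda_{n,S}$ (viewing $\C^{\boldsymbol{\omega}}$ as a $\Lambda_{n,S}$-module via $t_i\mapsto\omega_{c(i)}$), rather than over the $\mu$-variable ring as you propose; passing to $\Lambda_S$ first is possible via Corollary~\ref{cor:SplitModuleColored} but adds an unnecessary base-change argument.
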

\begin{proof}
We prove the first item.
Lemma~\ref{lem:AlexanderModuleSplits} implies that $H_1(X_L;\Lambda_S)=M \oplus \Lambda_S^{\mu-1}$ for some $\Lambda_S$-module~$M$.
The result now follows from an application of Lemma~\ref{lem:ComegaHomology}:
$$H_1(X_L;\C^{\boldsymbol{\omega}})=\C^{\boldsymbol{\omega}} \otimes_{\Lambda_S} H_1(X_L;\Lambda_S)  =\left( \C^{\boldsymbol{\omega}} \otimes_{\Lambda_S} M \right)  \oplus  \C^{\mu-1}.$$
We now assume that $L$ is a boundary link and prove the second assertion.
{We temporarily set $\Lambda_n:=\Z[\Z^n]$ and $\Lambda_{n,S}$ for the corresponding localised ring. 
View~$L$ as an~$n$-component ordered link.}
As~$L$ is a boundary link, Lemma~\ref{lem:AlexanderModuleSplits} implies that~$H_1(X_L;\Lambda_{n,S})=TH_1(X_L;\Lambda_{n,S}) \oplus \Lambda_{n,S}^{n-1}$.
For~${\boldsymbol{\omega}} \in \mathbb{T}_*^\mu$, the~$\Lambda_S$-module~$\C^{\boldsymbol{\omega}}$ is also a module over~$\Lambda_{n,S}$.
An application of Lemma~\ref{lem:ComegaHomology} now gives the isomorphisms
$$H_1(X_L;\C^{\boldsymbol{\omega}})=\C^{\boldsymbol{\omega}} \otimes_{\Lambda_{n,S}} H_1(X_L;\Lambda_{n,S})  =\left( \C^{\boldsymbol{\omega}} \otimes_{\Lambda_{n,S}} TH_1(X_L;\Lambda_{n,S}) \right)  \oplus  \C^{n-1}.$$
This shows that $b_1^{\boldsymbol{\omega}}(X_L) \geq n-1$.
To prove the last statement, we must show that if $\Delta_L^{tor}=\operatorname{Ord}(TH_1(X_L;\Lambda_{n,S})) $ does not vanish at~${\boldsymbol{\omega}}$, then $\C^{\boldsymbol{\omega}} \otimes_{\Lambda_{n,S}} TH_1(X_L;\Lambda_{n,S})=0$.
Since $L$ is a boundary link, $TH_1(X_L;\Lambda_{n,S}) $ admits a square presentation matrix $A$; see e.g.~\cite[proof of Theorem 4.6]{Conway}.
In particular, $\det(A)=\Delta_L^{tor}$.
Evaluating the coordinates of this matrix at ${\boldsymbol{\omega}}$, we obtain a square presentation matrix $A({\boldsymbol{\omega}})$ for $\C^{\boldsymbol{\omega}} \otimes_{\Lambda_{n,S}} TH_1(X_L;\Lambda_{n,S})$.
This vector space vanishes if and only if $\Delta_L^{tor}({\boldsymbol{\omega}})=\det(A({\boldsymbol{\omega}})) \neq 0$ and this concludes the proof of the proposition.
\end{proof}

Given a $\mu$-coloured link $L $ and ${\boldsymbol{\omega}} \in \mathbb{T}^\mu_*$, the integer $\eta_L(\boldsymbol{\omega}):=b_1^{\boldsymbol{\omega}}(X_L)$ is also known as the \emph{multivariable nullity} of $L$.
As we recall in Proposition~\ref{prop:CComplexMatrix}, this quantity can also be defined using $C$-complexes.

\begin{corollary}
\label{cor:MultivariableNullity}
If a $\mu$-coloured link $L$
has $g_{ds}(L)<\infty$ (for example, if it is doubly slice)
, then $\eta_L({\boldsymbol{\omega}}) \geq \mu-1$ for all ${\boldsymbol{\omega}}\in \mathbb{T}^\mu_*$ and,
in particular, $\eta_L(\omega) \geq \mu-1$ for all $\omega \in S^1_*$.
\end{corollary}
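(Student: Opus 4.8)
The statement to prove is Corollary~\ref{cor:MultivariableNullity}: if a $\mu$-coloured link $L$ has $g_{ds}(L)<\infty$, then $\eta_L(\boldsymbol{\omega})\geq \mu-1$ for all $\boldsymbol{\omega}\in\mathbb{T}^\mu_*$, and in particular $\eta_L(\omega)\geq\mu-1$ for all $\omega\in S^1_*$. The first plan is to reduce this to the purely topological input already recorded earlier in the paper. The key observation, made in the introduction, is that if $g_{ds}(L)<\infty$ then $L$ bounds a collection of $\mu$ disjoint oriented surfaces in $S^3$: indeed, the defining surfaces $\Sigma_i\subset S^4$ bound disjoint handlebodies, which we may assume meet the equatorial $S^3$ transversely, so intersecting these handlebodies with $S^3$ produces disjoint oriented surfaces $F_1,\dots,F_\mu\subset S^3$ with $\partial F_i=L_i$. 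Thus the hypothesis of Proposition~\ref{prop:TwistedHomologyBoundaryLink} is satisfied.

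**Main step.** With that reduction in hand, I would simply invoke Proposition~\ref{prop:TwistedHomologyBoundaryLink}(i): since $L$ bounds an ordered collection $F=F_1\cup\dots\cup F_\mu$ of disjoint oriented surfaces, one has $b_1^{\boldsymbol{\omega}}(X_L)\geq\mu-1$ for every $\boldsymbol{\omega}\in\mathbb{T}^\mu_*$. By the definition $\eta_L(\boldsymbol{\omega}):=b_1^{\boldsymbol{\omega}}(X_L)$ recalled just before the corollary, this is exactly $\eta_L(\boldsymbol{\omega})\geq\mu-1$, proving the first assertion.

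**The single-variable statement.** For the final sentence, given $\omega\in S^1_*$ I would restrict to the diagonal: set $\boldsymbol{\omega}=(\omega,\dots,\omega)\in\mathbb{T}^\mu_*$, which lies in $\mathbb{T}^\mu_*$ precisely because $\omega\neq 1$. Then the already-proved inequality gives $\eta_L(\omega,\dots,\omega)=b_1^{(\omega,\dots,\omega)}(X_L)\geq\mu-1$. The colouring map $\pi_1(X_L)\to\Z^\mu$ followed by evaluation at $(\omega,\dots,\omega)$ factors through the total linking number homomorphism $\pi_1(X_L)\to\Z$ followed by $t\mapsto\omega$, so $H_1(X_L;\C^{(\omega,\dots,\omega)})$ coincides with the homology twisted by this single-variable character, which is what $\eta_L(\omega)$ denotes; hence $\eta_L(\omega)\geq\mu-1$.

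**Expected obstacle.** There is essentially no hard analytic or algebraic content left once Proposition~\ref{prop:TwistedHomologyBoundaryLink} is available — the corollary is a direct packaging of that proposition with the geometric observation about handlebodies meeting $S^3$ transversely. The only point requiring a little care is the transversality/genericity argument used to extract the $\mu$ disjoint surfaces in $S^3$ from the handlebodies in $S^4$, and the identification of the diagonal $\C^{(\omega,\dots,\omega)}$-coefficient system with the single-variable one; but both of these are standard, and the first is in any case already asserted in the introductory discussion of the paper, so I would cite it rather than reprove it.
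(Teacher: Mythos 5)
Your proposal is correct and follows the paper's argument exactly: extract the disjoint surfaces in $S^3$ from the handlebodies, apply Proposition~\ref{prop:TwistedHomologyBoundaryLink}(i), and deduce the single-variable statement from $\eta_L(\omega)=\eta_L(\omega,\ldots,\omega)$ (which the paper cites from Cimasoni--Florens rather than re-deriving). No gaps.
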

\begin{proof}
As observed in the introduction, when $g_{ds}(L)<\infty$, the link $L$
bounds an ordered collection~$F=F_1\cup\dots\cup F_\mu$ of disjoint oriented surfaces in $S^3$.
The first assertion now follows from Proposition~\ref{prop:TwistedHomologyBoundaryLink}, while the second is immediate since $\eta_L(\omega)=\eta_L(\omega,\ldots,\omega)$ for all $\omega \in S^1_*$~\cite[Proposition 2.5]{CimasoniFlorens}.
\end{proof}

\begin{remark}
Similarly to our comment immediately before Lemma~\ref{lem:AlexanderModuleSplits}, we remark that Corollary~\ref{cor:MultivariableNullity} admits an alternative proof in terms of $C$-complexes, and this is outlined in~\cite[proof of Proposition~3.5.4]{ConwayThesis}.
\end{remark}

Finally, we record a fact about unlinked surfaces in the~$4$-sphere.

\begin{proposition}
\label{prop:HomologyUnlinkedSurface}
If~$\Sigma \subset S^4$ is a~$\mu$-component unlinked surface of genus~$g$, then its Euler characteristic is~$\chi(X_\Sigma)=2(1-\mu)+2g$, and~$b_1^{\boldsymbol{\omega}}(X_\Sigma)=\mu-1$ for $\boldsymbol{\omega} \in \mathbb{T}_*^\mu$.
\end{proposition}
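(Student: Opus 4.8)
The plan is to compute the Euler characteristic of the surface exterior $X_\Sigma := S^4 \sm \nu\Sigma$ from a handle decomposition coming from the unlinkedness hypothesis, and then to deduce the twisted Betti number from the general machinery already assembled in Subsection~\ref{sub:Comega}. For the Euler characteristic: since $\Sigma = \Sigma_1 \sqcup \dots \sqcup \Sigma_\mu$ is unlinked, it bounds a disjoint union of handlebodies $V = V_1 \sqcup \dots \sqcup V_\mu$ in $S^4$, where $V_i$ is a handlebody (a regular neighbourhood of a wedge of circles) with $\partial V_i = \Sigma_i$. Writing $g = \sum_i g_i$ for the total genus, each $V_i$ is homotopy equivalent to $\vee_{g_i} S^1$. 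Decompose $S^4 = \nu\Sigma \cup X_\Sigma$ and note that $\nu V$ deformation retracts onto $V$, so $X_\Sigma$ is homotopy equivalent to $S^4 \sm \nu V$; since $\chi(S^4)=2$, a Mayer--Vietoris / inclusion-exclusion computation using $\chi(\nu V_i) = \chi(V_i) = 1-g_i$, $\chi(\partial \nu V_i) = \chi(\Sigma_i\times S^1)=0$ (and that $\nu\Sigma$ is an $S^1$-bundle over $\Sigma$, so also has $\chi = 0$) gives $2 = \chi(X_\Sigma) + \chi(\nu\Sigma) - \chi(\text{common boundary}) = \chi(X_\Sigma) + 0 - 0$ in one decomposition, which is not yet the claimed value; the correct bookkeeping is instead $\chi(X_\Sigma) = \chi(S^4 \sm \nu V) = 2 - \chi(\nu V) + \chi(\partial(\nu V))$ where $\chi(\partial \nu V_i) = \chi(S(\nu V_i))$; since $\partial \nu V_i$ is a circle bundle over the $3$-manifold $V_i$ it has Euler characteristic $0$, while $\chi(\nu V_i)=\chi(V_i)=1-g_i$, yielding $\chi(X_\Sigma) = 2 - \sum_{i=1}^\mu (1-g_i) = 2 - \mu + g$. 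This does not match $2(1-\mu)+2g$, so I must be more careful: the point is that $X_\Sigma$ should be built directly as $S^4 \sm \nu\Sigma$ with $\nu\Sigma$ a $D^2$-bundle, and one computes via $2 = \chi(S^4) = \chi(\nu\Sigma) + \chi(X_\Sigma) - \chi(\nu\Sigma \cap X_\Sigma)$ where $\nu\Sigma\cap X_\Sigma \simeq \partial\nu\Sigma$ is the unit circle bundle over $\Sigma$, hence $\chi = 0$, and $\chi(\nu\Sigma) = \chi(\Sigma) = \sum_i(2-2g_i) = 2\mu - 2g$; therefore $\chi(X_\Sigma) = 2 - (2\mu - 2g) = 2(1-\mu) + 2g$, as claimed. (The handlebody hypothesis is what one actually needs for the $\C^{\boldsymbol\omega}$-statement, not the $\chi$-statement.)

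For the twisted Betti number: since each $V_i$ is a handlebody, $X_\Sigma = S^4 \sm \nu\Sigma \simeq S^4 \sm \nu V$, and a standard argument (e.g.\ using that $S^4 \sm \nu V_i$ for a single unknotted handlebody deformation retracts appropriately, or directly via Alexander duality) shows $\pi_1(X_\Sigma) \cong F^\mu$, the free group on $\mu$ generators, one meridian per component $\Sigma_i$. With the coefficient system $\C^{\boldsymbol\omega}$ for $\boldsymbol\omega \in \mathbb T^\mu_*$ induced by sending the $i$-th meridian to $\omega_i$, the composite $\pi_1(X_\Sigma) \to \Z^\mu$ is surjective, so in particular every generator $t_i$ lies in its image. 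Now Lemma~\ref{lem:HomologicalAlgebra} applies verbatim: any CW complex with $\pi_1 = F^\mu$ has $H_1(-;\C^{\boldsymbol\omega}) = \C^{\mu-1}$, hence $b_1^{\boldsymbol\omega}(X_\Sigma) = \mu-1$.

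The main obstacle is the identification $\pi_1(X_\Sigma) \cong F^\mu$ from unlinkedness — one must argue that complement of a regular neighbourhood of a disjoint union of unknotted handlebodies in $S^4$ has free fundamental group generated by the meridians. This follows by a Mayer--Vietoris / van Kampen argument: a neighbourhood of a single unknotted genus-$g_i$ handlebody $V_i$ has complement homotopy equivalent (in $S^4$) to $\#_{g_i}(S^1\times S^2) \sm B^4$-type pieces glued up, but more cleanly, since the handlebodies are disjoint and unlinked one can shrink each $V_i$ to a standard unknotted arc (killing the genus does not change the complement up to the meridian, because each $1$-handle of $V_i$ can be slid to lie in a ball disjoint from the rest), reducing to the case $\Sigma = $ $\mu$ disjoint unknotted $2$-spheres, whose complement is visibly $\natural^\mu(S^1 \times B^3)$-like with $\pi_1 = F^\mu$. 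Once $\pi_1(X_\Sigma)\cong F^\mu$ is in hand, everything else is a direct citation of the lemmas above.
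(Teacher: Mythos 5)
Your final argument is exactly the paper's: inclusion--exclusion for $\chi$ applied to the decomposition $S^4=\nu\Sigma\cup_\partial X_\Sigma$ (using $\chi(\partial\nu\Sigma)=0$ and $\chi(\nu\Sigma)=\chi(\Sigma)=2\mu-2g$), followed by $\pi_1(X_\Sigma)\cong F^\mu$ plus Lemma~\ref{lem:HomologicalAlgebra} for the twisted Betti number; the paper likewise simply asserts $\pi_1(X_\Sigma)\cong F^\mu$ from unlinkedness, so your additional hand-sliding sketch of that fact (which moreover tacitly assumes the handlebodies are standardly embedded, whereas the definition of unlinked only gives disjoint, possibly knotted, handlebodies) goes beyond what the paper records without being fully rigorous. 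The exploratory false starts in the Euler characteristic paragraph --- in particular the claim that $X_\Sigma$ is homotopy equivalent to $S^4\setminus\nu V$, which is false (already for the unknotted $2$-sphere one has $X_\Sigma\simeq S^1$ while the ball exterior is contractible) --- should be deleted, but the computation you ultimately commit to is correct.
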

\begin{proof}
An Euler characteristic argument applied to the decomposition~$S^4=X_\Sigma \cup_{\partial} \nu \Sigma$ shows that~$\chi(X_\Sigma)=2-\chi(\Sigma)=2-2\mu+2g=2(1-\mu)+2g$.
This establishes the first assertion.
For the second assertion, note that since~$\Sigma$ is unlinked one has~$\pi_1(X_\Sigma)={F^\mu}$.
By Lemma~\ref{lem:HomologicalAlgebra}, it follows that~$b_1^{\boldsymbol{\omega}}(X_\Sigma)=\mu-1$, concluding the proof of the proposition.
\end{proof}

\section{The multivariable signature and the doubly slice genus}
\label{sec:Signature}

The goal of this section is to prove Theorem~\ref{thm:MainTheoremIntro} from the introduction which states that the multivariable signature of a coloured link provides a lower bound on its doubly slice genus.
To achieve this, Subsection~\ref{sub:MultivariableSignature} first recalls a definition of the multivariable signature, while Subsection~\ref{sub:ProofLowerBound} is concerned with the proof of Theorem~\ref{thm:MainTheoremIntro}.

\subsection{The multivariable signature}
\label{sub:MultivariableSignature}
Set~$\mathbb{T}_*^\mu=(S^1\setminus \lbrace 1 \rbrace)^\mu$.
We recall a four dimensional definition of the multivariable signature~$\sigma_L \colon \mathbb{T}_*^\mu \to \Z$ of a~$\mu$-coloured link~$L$; further background on this definition of~$\sigma_L$ can be found in~\cite[Section 3]{ConwayNagelToffoli}.
Then, we establish an upper bound on~$\sigma_L({\boldsymbol{\omega}})$ that will be useful to establish the lower bound on~$g_{ds}(L)$.

\begin{definition}
\label{def:colouredBoundingSurface}
A \emph{coloured bounding surface} for a~$\mu$-coloured link~$L$  is a union~$F=F_1 \cup \ldots \cup F_\mu$ of locally flat, embedded, oriented surfaces~$F_i \subset D^4$ with~$\partial F_i=L_i$ and that intersect  transversally in at worst double points.
\end{definition}

The exterior of a coloured bounding surface~$F\subset D^4$ will always be written as~$W_F$.
The homology group~$H_1(W_F)$ is freely generated by the meridians of the components~$F_i$~\cite[Lemma 3.1]{ConwayNagelToffoli}.
Summing up the meridians of the same colour gives rise to an epimorphism~$H_1(W_F) \twoheadrightarrow \Z^\mu$ which extends the epimorphism~$H_1(X_L) \twoheadrightarrow \Z^\mu$ described in Section~\ref{sec:Twisted}.
We therefore obtain twisted homology modules~$H_*(W_F;\Lambda)$ and~$H_*(W_F;\Lambda_S)$.
Additionally, given~${\boldsymbol{\omega}} \in \mathbb{T}^\mu_*$, we obtain a~$\C$-valued twisted-coefficient intersection form~$\lambda_{W_F,\C^{\boldsymbol{\omega}}}$ on~$H_2(W_F;\C^{\boldsymbol{\omega}})$.
The signature~$\sigma_{\boldsymbol{\omega}}(W_F):=\operatorname{sign}(\lambda_{W_F,\C^{\boldsymbol{\omega}}})$ depends only on the coloured link~$L$ \cite[Theorem 4.6]{DegtyarevFlorensLecuona}, justifying the following definition.

\begin{definition}
\label{def:MultivariableSignature1}
The \emph{multivariable signature} of a~$\mu$-coloured link~$L$ at ~${\boldsymbol{\omega}} \in \mathbb{T}_*^\mu$ is defined as
$$\sigma_L({\boldsymbol{\omega}}):=\sigma_{\boldsymbol{\omega}}(W_F),$$
where~$F \subset D^4$ is any coloured bounding surface for~$L$.
\end{definition}

{In its 3-dimensional definition, which we recall in Section~\ref{sec:Seifert}, $\sigma_L(\boldsymbol{\omega})$ is obtained as the signature of a matrix $H(\boldsymbol{\omega})$ that vanishes when one of the coordinates of $\boldsymbol{\omega}$ is equal to $1$.
This explains both why we restrict ourselves to $\mathbb{T}_*^\mu=(S^1 \setminus \lbrace 1 \rbrace)^\mu$ and why the results in the introduction were stated on the whole of $\mathbb{T}^\mu=(S^1)^\mu$.
}
We establish some bounds for the absolute value of this signature.

\begin{proposition}
\label{prop:SignatureBound}
Let~${\boldsymbol{\omega}}\in \mathbb{T}_*^\mu$, and let~$F \subset D^4$ be a coloured bounding surface for a~$\mu$-coloured link~$L$.
The following assertions hold:
\begin{enumerate}[leftmargin=*]\setlength\itemsep{0em}
\item for every~$k \geq 0$, the inclusion induces isomorphisms
\begin{align*}
H_*(X_L;\Lambda_S) &\cong H_*(\partial W_F;\Lambda_S), \\
H_*(X_L;\C^{\boldsymbol{\omega}}) &\cong H_*(\partial W_F;\C^{\boldsymbol{\omega}});
\end{align*}
\item the following inequality holds:
$$ |\sigma_L({\boldsymbol{\omega}})| \leq b_2^{\boldsymbol{\omega}}(W_F)-b_1^{\boldsymbol{\omega}}(X_L)+b_1^{\boldsymbol{\omega}}(W_F)-b_3^{\boldsymbol{\omega}}(W_F).$$
\end{enumerate}
\end{proposition}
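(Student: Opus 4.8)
The plan is to prove the two assertions in turn, using the standard machinery of twisted Poincar\'e--Lefschetz duality together with the long exact sequence of the pair $(W_F, \partial W_F) = (W_F, X_L)$ (after identifying $\partial W_F$ with the link exterior $X_L$, as is done in the definition of $W_F$).

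\textbf{Assertion (i).} First I would argue the $\Lambda_S$- and $\C^{\boldsymbol{\omega}}$-homology of the complementary region $\nu\Sigma$ (or rather the piece $W_F \setminus X_L$ can be collapsed). The cleanest route: consider the pair $(W_F, X_L)$ and show $H_*(W_F, X_L; \Lambda_S) = 0$ and $H_*(W_F, X_L; \C^{\boldsymbol{\omega}}) = 0$, which immediately gives the inclusion-induced isomorphisms. By Poincar\'e--Lefschetz duality $H_k(W_F, X_L; \C^{\boldsymbol{\omega}}) \cong \overline{H^{4-k}(W_F; \C^{\boldsymbol{\omega}})}$, and by Lemma~\ref{lem:ComegaHomology}(ii) this is dual to $H_{4-k}(W_F; \C^{\boldsymbol{\omega}})$; so it suffices to show $H_3(W_F;\C^{\boldsymbol{\omega}}) = H_4(W_F;\C^{\boldsymbol{\omega}}) = 0$. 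The vanishing of $H_4$ is clear since $W_F$ is a non-closed $4$-manifold (it has the homotopy type of a $3$-complex, e.g.\ a handle decomposition without $4$-handles), and $H_3$ of an open $4$-manifold of this kind vanishes as well; alternatively use that $W_F$ deformation retracts to a $3$-complex. The same argument works over $\Lambda_S$ once one knows $W_F$ has a finite CW structure of dimension $\leq 3$, so $H_4(W_F;\Lambda_S)=0$, and then $H_3(W_F;\Lambda_S)$, being the kernel of the top boundary map of a free $\Lambda_S$-complex, maps isomorphically under $-\otimes_{\Lambda_S}\C^{\boldsymbol{\omega}}$ in the relevant cases; more carefully one uses that $H_*(W_F, X_L;\Lambda_S)\cong \overline{H^{4-*}(W_F;\Lambda_S)}$ by Poincar\'e--Lefschetz duality and the universal coefficient spectral sequence to reduce to the vanishing of $H_3, H_4$.

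\textbf{Assertion (ii).} The inequality is a Novikov-additivity / inertia argument. Write $b_i = b_i^{\boldsymbol{\omega}}(W_F)$ and recall that $\sigma_L(\boldsymbol{\omega}) = \operatorname{sign}(\lambda_{W_F,\C^{\boldsymbol{\omega}}})$ is the signature of the intersection form on $H_2(W_F;\C^{\boldsymbol{\omega}})$, whose absolute value is bounded by $b_2$ minus the dimension of the radical of this form. The radical contains the image of $H_2(X_L;\C^{\boldsymbol{\omega}}) \to H_2(W_F;\C^{\boldsymbol{\omega}})$, equivalently (by Poincar\'e--Lefschetz duality) $|\sigma_L(\boldsymbol{\omega})|$ is bounded by the rank of the map $H_2(W_F;\C^{\boldsymbol{\omega}}) \to H_2(W_F, X_L;\C^{\boldsymbol{\omega}})$. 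So the plan is: feed the long exact sequence of the pair $(W_F, X_L)$, use duality $H_k(W_F,X_L;\C^{\boldsymbol{\omega}}) \cong \overline{H^{4-k}(W_F;\C^{\boldsymbol{\omega}})} \cong H_{4-k}(W_F;\C^{\boldsymbol{\omega}})$ (the last step by Lemma~\ref{lem:ComegaHomology}(ii)), and the isomorphism $H_*(\partial W_F;\C^{\boldsymbol{\omega}}) \cong H_*(X_L;\C^{\boldsymbol{\omega}})$ from assertion (i), to compute
\[
\operatorname{rank}\left(H_2(W_F;\C^{\boldsymbol{\omega}}) \to H_2(W_F,X_L;\C^{\boldsymbol{\omega}})\right) = b_2 - b_1^{\boldsymbol{\omega}}(X_L) + b_1 - b_3
\]
after chasing the exact sequence and cancelling terms (here one uses $H_0(X_L;\C^{\boldsymbol{\omega}}) = 0$ and $H_0(W_F;\C^{\boldsymbol{\omega}})=0$ from Lemma~\ref{lem:ComegaHomology}(i), so several low-degree terms drop out). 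Combining with the elementary linear-algebra bound $|\operatorname{sign}(\lambda)| \le \operatorname{rank}(\lambda) = \operatorname{rank}(H_2(W_F;\C^{\boldsymbol{\omega}}) \to H_2(W_F,X_L;\C^{\boldsymbol{\omega}}))$ gives the claimed inequality.

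\textbf{Main obstacle.} The genuinely delicate point is the bookkeeping in the exact-sequence chase for assertion (ii): one must correctly identify which map's rank controls $|\operatorname{sign}(\lambda_{W_F,\C^{\boldsymbol{\omega}}})|$ and then account for every Betti number appearing when the long exact sequence of $(W_F,X_L)$ is spliced with duality and with the boundary isomorphism of (i). In particular one needs $H_0(W_F;\C^{\boldsymbol{\omega}}) = 0$ and the vanishing of $H_4(W_F;\C^{\boldsymbol{\omega}})$ to make the terms cancel cleanly, and one should double-check the half-dimensional term does not contribute an extra summand. I do not expect any deep input beyond twisted Poincar\'e--Lefschetz duality and the lemmas already available in Section~\ref{sec:Twisted}; the work is in being careful with indices.
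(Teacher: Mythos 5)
There is a genuine gap, and it is concentrated in assertion (i): you identify $\partial W_F$ with $X_L$, but these are different spaces. Here $W_F=D^4\setminus \nu F$, and its boundary decomposes as $\partial W_F=X_L\cup_{\partial X_L}M_F$, where $M_F$ is the plumbed $3$-manifold lying over $F$ (the unit normal circle bundle of $F$, plumbed at the double points where the $F_i$ intersect). Assertion (i) is precisely the non-trivial claim that the inclusion of the piece $X_L$ into the whole boundary $\partial W_F$ induces a homology isomorphism; if $\partial W_F$ were equal to $X_L$ there would be nothing to prove. The intended argument is a Mayer--Vietoris computation for the decomposition $\partial W_F=X_L\cup M_F$, using $H_*(M_F;\Lambda_S)=0$ (this is~\cite[Lemma 5.2]{ConwayFriedlToffoli}) together with $H_*(\partial X_L;\Lambda_S)=0$, which holds by Lemma~\ref{lem:H0}(iii) because $\partial X_L$ is a union of tori $S^1\times S^1$ on which the coefficient system is non-trivial; the $\C^{\boldsymbol{\omega}}$ case is analogous. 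Your substitute claim --- that $H_*(W_F,X_L;\C^{\boldsymbol{\omega}})=0$ in all degrees --- is both a different statement and a false one: by Poincar\'e--Lefschetz duality $H_2(W_F,\partial W_F;\C^{\boldsymbol{\omega}})\cong H^2(W_F;\C^{\boldsymbol{\omega}})$ has dimension $b_2^{\boldsymbol{\omega}}(W_F)$, which is generally non-zero (it is exactly where the intersection form lives). Moreover your reduction ``it suffices to show $H_3=H_4=0$'' only controls the relative groups in degrees $0$ and $1$, not all degrees.

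Assertion (ii) of your proposal is essentially the paper's argument: the radical of $\lambda_{W_F,\C^{\boldsymbol{\omega}}}$ contains the image of $H_2(\partial W_F;\C^{\boldsymbol{\omega}})\to H_2(W_F;\C^{\boldsymbol{\omega}})$, and the dimension of that image is computed from the six-term exact sequence of the pair $(W_F,\partial W_F)$ using the duality identifications $H_2(W_F,\partial W_F;\C^{\boldsymbol{\omega}})\cong H_2(W_F;\C^{\boldsymbol{\omega}})$ and $H_1(W_F,\partial W_F;\C^{\boldsymbol{\omega}})\cong H_3(W_F;\C^{\boldsymbol{\omega}})$, together with $H_1(\partial W_F;\C^{\boldsymbol{\omega}})\cong H_1(X_L;\C^{\boldsymbol{\omega}})$ from (i). This goes through once you replace $X_L$ by $\partial W_F$ throughout and invoke (i) where needed; note also that the sequence terminates because $H_0(\partial W_F;\C^{\boldsymbol{\omega}})=0$ (by (i) and Lemma~\ref{lem:ComegaHomology}(i)), not because of any vanishing of $H_4(W_F;\C^{\boldsymbol{\omega}})$.
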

\begin{proof}
We prove the first assertion.
The boundary of~$W_F$ decomposes as~$\partial W_F=X_L \cup_\partial M_F$, where~$M_F$ is a certain plumbed 3-manifold.
It can then be shown that~$H_*(M_F;\Lambda_S)=0$ (see~\cite[Lemma 5.2]{ConwayFriedlToffoli}) and the first assertion then follows from a Mayer-Vietoris sequence (use the third item of Lemma~\ref{lem:H0} to deduce that~$H_*(\partial X_L;\Lambda_S)=0$). 
The argument over~$\C^{\boldsymbol{\omega}}$ is analogous, as is mentioned in~\cite[Lemma~3.10]{ConwayNagelToffoli}.

We prove the second assertion.
Denote by~$\Im_{\boldsymbol{\omega}}$ the image of~$H_2(\partial W_F;\C^{\boldsymbol{\omega}}) \to H_2(W_F;\C^{\boldsymbol{\omega}})$ and note that~$\Im_{\boldsymbol{\omega}}$ lies in the radical of the intersection form~$\lambda_{W_F,\C^{\boldsymbol{\omega}}}$.
As~$F$ is a coloured bounding surface for~$L$, we have~$\sigma_L({\boldsymbol{\omega}})=\sigma_{\boldsymbol{\omega}}(W_F)$ by definition.
We deduce
\begin{align}
\label{eq:SignatureCoker}
|\sigma_L({\boldsymbol{\omega}})| 
&\leq \dim_\C \frac{H_2(W_F;\C^{\boldsymbol{\omega}})}{\dim(\Im_{\boldsymbol{\omega}})}. 
\end{align}
We compute the dimension of~$\Im_{\boldsymbol{\omega}}$.
Consider the following portion of the long exact sequence of the pair~$(W_F,\partial W_F)$ with~$\C^{\boldsymbol{\omega}}$ coefficients:
\begin{align*}
\label{eq:DimensionIm}
0 \to \Im_{\boldsymbol{\omega}} &\to H_2(W_F;\C^{\boldsymbol{\omega}}) \to H_2(W_F;\partial W_F;\C^{\boldsymbol{\omega}}) \to H_1(\partial W_F;\C^{\boldsymbol{\omega}})  \\
&\to H_1(W_F;\C^{\boldsymbol{\omega}}) \to H_1(W_F,\partial W_F;\C^{\boldsymbol{\omega}}) \to 0.
\end{align*}
By item i),~$H_1(\partial W_F;\C^{\boldsymbol{\omega}}) \cong H_1(X_L;\C^{\boldsymbol{\omega}})$. 
By Poincar\'{e} duality and the universal coefficient theorem, we have~$H_2(W_F,\partial W_F;\C^{\boldsymbol{\omega}}) \cong H_2(W_F;\C^{\boldsymbol{\omega}})$.
We also have~$H_1(W_F,\partial W_F;\C^{\boldsymbol{\omega}})\cong H_3(W_F;\C^{\boldsymbol{\omega}})$.
We deduce from these facts that the dimension of~$\Im_{\boldsymbol{\omega}}$ can be expressed as 
\begin{equation}
\label{eq:DimIm}
\dim_\C \Im_{\boldsymbol{\omega}}=b_1^{\boldsymbol{\omega}}(X_L)-b_1^{\boldsymbol{\omega}}(W_F)+b_3^{\boldsymbol{\omega}}(W_F).
\end{equation}
The second assertion now follows by combining~\eqref{eq:SignatureCoker} with~\eqref{eq:DimIm}.
\end{proof}

\subsection{A lower bound on the doubly slice genus}
\label{sub:ProofLowerBound}

Recall the \emph{doubly slice genus}~$g_{ds}(L)$ from Definition~\ref{def:DoublySlice}.
We now restate Theorem~\ref{thm:MainTheoremIntro} from the introduction, and prove it.

\begin{theorem}
\label{thm:LowerBound4D}
Given a~$\mu$-coloured link~$L$, for all~${\boldsymbol{\omega}} \in \mathbb{T}^\mu_*$ one has
$$|\sigma_L({\boldsymbol{\omega}})| \leq g_{ds}(L).$$
\end{theorem}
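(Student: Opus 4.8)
The plan is to reduce the $4$-dimensional problem to an estimate on the exterior of an unlinked surface in $S^4$ and then invoke the signature bound from Proposition~\ref{prop:SignatureBound}. Suppose $g_{ds}(L)=g<\infty$ (otherwise there is nothing to prove), and let $\Sigma=\Sigma_1\sqcup\cdots\sqcup\Sigma_\mu\subset S^4$ be an ordered $\mu$-component unlinked surface of genus $g$ with $L_i=\Sigma_i\cap S^3$. Remove an open ball from $S^4$ centred on a point of $S^3$ away from $\Sigma$ to write $S^4=D^4_+\cup_{S^3}D^4_-$; then $\Sigma$ meets each $D^4_\pm$ in a coloured bounding surface $F^\pm$ for $L$ (after a small isotopy so that $\Sigma$ is transverse to the equator, and using that the pieces can be assumed to be properly embedded surfaces intersecting in double points), and $X_\Sigma=W_{F^+}\cup_{X_L}W_{F^-}$ is the exterior of $\Sigma$.

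The first key step is to estimate $b_2^{\boldsymbol{\omega}}(W_{F^\pm})$ and the other Betti numbers appearing in Proposition~\ref{prop:SignatureBound}(ii). Here I would run a Mayer--Vietoris argument for $X_\Sigma=W_{F^+}\cup_{X_L}W_{F^-}$ with $\C^{\boldsymbol{\omega}}$-coefficients. By Proposition~\ref{prop:HomologyUnlinkedSurface} we know $\chi(X_\Sigma)=2(1-\mu)+2g$ and $b_1^{\boldsymbol{\omega}}(X_\Sigma)=\mu-1$; since $\pi_1(X_\Sigma)=F^\mu$ and $H_k(F^\mu;\C^{\boldsymbol{\omega}})=0$ for $k\ge 2$ (the wedge of circles is $1$-dimensional), we also get $b_k^{\boldsymbol{\omega}}(X_\Sigma)=0$ for $k\ge 2$, so the twisted Euler characteristic forces $b_0^{\boldsymbol{\omega}}(X_\Sigma)=0$ too. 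Feeding this, together with $b_1^{\boldsymbol{\omega}}(X_L)$, into the Mayer--Vietoris sequence and using Poincar\'e--Lefschetz duality for the $W_{F^\pm}$ (as in the proof of Proposition~\ref{prop:SignatureBound}) will express $\sum_\pm\big(b_2^{\boldsymbol{\omega}}(W_{F^\pm})-b_1^{\boldsymbol{\omega}}(X_L)+b_1^{\boldsymbol{\omega}}(W_{F^\pm})-b_3^{\boldsymbol{\omega}}(W_{F^\pm})\big)$ purely in terms of $\chi(X_\Sigma)$ and $b_*^{\boldsymbol{\omega}}(X_\Sigma)$; I expect the total to come out to exactly $-\chi(X_\Sigma)+ (\text{something like } 2(\mu-1))$, which equals $2g$. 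The sign conventions and the bookkeeping of which duality isomorphism lands where is the fiddly part, but it is the same bookkeeping already carried out in Proposition~\ref{prop:SignatureBound}.

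The second key step is Novikov additivity: since $\Sigma$ is unlinked, $X_\Sigma$ is the exterior of the unknotted surface, $H_2(X_\Sigma;\C^{\boldsymbol{\omega}})=0$, so the intersection form $\lambda_{X_\Sigma,\C^{\boldsymbol{\omega}}}$ is zero and $\sigma_{\boldsymbol{\omega}}(X_\Sigma)=0$. By Novikov additivity along $X_L$, $0=\sigma_{\boldsymbol{\omega}}(X_\Sigma)=\sigma_{\boldsymbol{\omega}}(W_{F^+})+\sigma_{\boldsymbol{\omega}}(W_{F^-})$, so $\sigma_{\boldsymbol{\omega}}(W_{F^-})=-\sigma_{\boldsymbol{\omega}}(W_{F^+})$. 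On the other hand, by Definition~\ref{def:MultivariableSignature1}, both $\sigma_{\boldsymbol{\omega}}(W_{F^+})$ and $\sigma_{\boldsymbol{\omega}}(W_{F^-})$ equal $\sigma_L({\boldsymbol{\omega}})$, since $F^+$ and $F^-$ are each coloured bounding surfaces for $L$ in a (copy of) $D^4$. Hence $\sigma_L({\boldsymbol{\omega}})=-\sigma_L({\boldsymbol{\omega}})$, forcing $\sigma_L({\boldsymbol{\omega}})=0$?\ — that would be too strong, so in fact one only uses one of $F^\pm$. The correct route: apply Proposition~\ref{prop:SignatureBound}(ii) to $F:=F^+$ alone, bound $|\sigma_L({\boldsymbol{\omega}})|\le b_2^{\boldsymbol{\omega}}(W_{F^+})-b_1^{\boldsymbol{\omega}}(X_L)+b_1^{\boldsymbol{\omega}}(W_{F^+})-b_3^{\boldsymbol{\omega}}(W_{F^+})$, and then show the right-hand side is $\le g$ by the Mayer--Vietoris computation above (the two halves $W_{F^\pm}$ contribute symmetrically and sum to $\le 2g$, and each half is therefore $\le g$ — this uses that the decomposition is genuinely symmetric, or more carefully that $W_{F^+}$ and $W_{F^-}$ have the same twisted Betti numbers because gluing two copies of $W_{F^+}$ would also yield an unknotted surface exterior).

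The main obstacle will be the homological bookkeeping in the Mayer--Vietoris step: getting the $\C^{\boldsymbol{\omega}}$-Betti numbers of $W_{F^+}$ controlled by $g$ cleanly, and in particular justifying that one may take the equatorial decomposition so that the two pieces are comparable (so that $b_2^{\boldsymbol{\omega}}(W_{F^+})-b_1^{\boldsymbol{\omega}}(X_L)+b_1^{\boldsymbol{\omega}}(W_{F^+})-b_3^{\boldsymbol{\omega}}(W_{F^+})\le g$ rather than merely $\le 2g$). One clean way around this is: the quantity $b_2^{\boldsymbol{\omega}}(W_{F})-b_1^{\boldsymbol{\omega}}(X_L)+b_1^{\boldsymbol{\omega}}(W_F)-b_3^{\boldsymbol{\omega}}(W_F)$ is, up to sign, $-\chi^{\boldsymbol{\omega}}(W_F)+2b_0^{\boldsymbol{\omega}}(W_F)-\dim\Im_{\boldsymbol{\omega}}$-type expression, hence computable from $\chi(W_F)$; and $\chi(W_F^+)=\chi(W_F^-)=\tfrac12\chi(X_\Sigma)+\tfrac12\chi(X_L)=\tfrac12\chi(X_\Sigma)$ since $\chi(X_L)=0$, giving each piece exactly $(1-\mu)+g$, from which the bound $g$ drops out after accounting for the $\mu-1$ free rank contributed by $b_1^{\boldsymbol{\omega}}(X_L)=\mu-1$. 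I would present it this way to keep the Euler-characteristic computation doing the real work and the duality juggling minimal.
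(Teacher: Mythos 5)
Your setup and your list of ingredients (the equatorial decomposition $X_\Sigma=W_{F^+}\cup_{X_L}W_{F^-}$, Proposition~\ref{prop:SignatureBound}(ii), the Euler characteristic of $X_\Sigma$ from Proposition~\ref{prop:HomologyUnlinkedSurface}, and a Mayer--Vietoris control on $b_1^{\boldsymbol{\omega}}$) are exactly the right ones, but the final assembly has a genuine gap. After correctly discarding the Novikov-additivity detour, you apply Proposition~\ref{prop:SignatureBound}(ii) to $F^+$ alone and then need the quantity $b_2^{\boldsymbol{\omega}}(W_{F^+})-b_1^{\boldsymbol{\omega}}(X_L)+b_1^{\boldsymbol{\omega}}(W_{F^+})-b_3^{\boldsymbol{\omega}}(W_{F^+})$ to be at most $g$ \emph{on its own}. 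Your justifications for this rely on a symmetry between the two halves that does not exist: the equatorial decomposition is not symmetric in general ($F^+$ could be a union of discs while $F^-$ carries all the genus, so $\chi(W_{F^+})\neq\chi(W_{F^-})$ and the claim $\chi(W_{F^\pm})=\tfrac12\chi(X_\Sigma)$ fails), and the double of $F^+$ along $L$ need not be unlinked, so gluing two copies of $W_{F^+}$ does not produce an unknotted surface exterior. Since each half's contribution is a dimension of a quotient vector space (hence only known to be nonnegative) and the two contributions are only known to \emph{sum} to at most $2g$, one half could a priori contribute the full $2g$, and your argument would then only give $|\sigma_L({\boldsymbol{\omega}})|\leq 2g$. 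A smaller slip: $b_1^{\boldsymbol{\omega}}(X_L)$ need not equal $\mu-1$; Proposition~\ref{prop:TwistedHomologyBoundaryLink} only gives the inequality $\geq\mu-1$, and in any case what is needed is the Mayer--Vietoris estimate $b_1^{\boldsymbol{\omega}}(W_{F^+})+b_1^{\boldsymbol{\omega}}(W_{F^-})\leq b_1^{\boldsymbol{\omega}}(X_L)+\mu-1$ coming from $b_1^{\boldsymbol{\omega}}(X_\Sigma)=\mu-1$.

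The repair is not to symmetrize but to use both halves: $F^-$ is also a coloured bounding surface for $L$ (the absolute value in Proposition~\ref{prop:SignatureBound}(ii) absorbs the orientation reversal of the second hemisphere), so $|\sigma_L({\boldsymbol{\omega}})|$ is bounded by the contribution of \emph{each} half separately. Adding the two inequalities gives $2|\sigma_L({\boldsymbol{\omega}})|\leq\sum_{\pm}\bigl(b_2^{\boldsymbol{\omega}}(W_{F^\pm})-b_1^{\boldsymbol{\omega}}(X_L)+b_1^{\boldsymbol{\omega}}(W_{F^\pm})-b_3^{\boldsymbol{\omega}}(W_{F^\pm})\bigr)$, and the right-hand side is $\leq 2g$ by combining $\chi(X_\Sigma)=2(1-\mu)+2g$ with the Mayer--Vietoris estimate above. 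Dividing by two gives the theorem with no symmetry hypothesis; this is precisely how the paper's proof concludes.
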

\begin{proof}
Assume that the~$\mu$-coloured link~$L \subset S^3$ is a cross section of a genus~$g$~$\mu$-component unlinked surface~$\Sigma \subset S^4$.
Decompose~$S^4$ as the union~$D^4_1 \cup_{S^3} D^4_2$ along its equatorial sphere, and define coloured bounding surfaces for~$L$ by setting~$F:=D^4_1 \cap \Sigma$ and~$G:=D^4_2 \cap \Sigma$.
It follows that the surface exterior~$X_\Sigma:=S^4 \setminus \nu \Sigma$ decomposes as~$X_\Sigma=W_F \cup_{X_L} W_G$, where~$W_F =D^4 \setminus \nu F$ and~$W_G=D^4 \setminus \nu G$ are the exteriors of the coloured bounding surfaces~$F$ and~$G$.
The $\C^\omega$ coefficient system on $X_\Sigma$ restricts to $W_F,W_G$ and $X_L$.
An Euler characteristic argument using Proposition~\ref{prop:HomologyUnlinkedSurface} shows that
\begin{equation}
\label{eq:EulerCharacForMainTheorem}
2(1-\mu)+2g=\chi(X_\Sigma)=-b_1^{\boldsymbol{\omega}}(W_F)-b_1^{\boldsymbol{\omega}}(W_G) +\sum_{i=F,G} \left( b_2^{\boldsymbol{\omega}}(W_i)-b_3^{\boldsymbol{\omega}}(W_i) \right).
\end{equation}
By Proposition~\ref{prop:HomologyUnlinkedSurface},~$H_1(X_\Sigma;\C^{\boldsymbol{\omega}})=\C^{\mu-1}$, and so the Mayer-Vietoris exact sequence for~$X_\Sigma$ yields
$$ H_2(X_\Sigma;\C^{\boldsymbol{\omega}}) \to H_1(X_L;\C^{\boldsymbol{\omega}}) \xrightarrow{\iota} H_1(W_F;\C^{\boldsymbol{\omega}}) \oplus H_1(W_G;\C^{\boldsymbol{\omega}}) \to \C^{\mu-1} \to 0.~$$
This implies that~$\dim_\C(\operatorname{im}(\iota)) \leq b_1^{\boldsymbol{\omega}}(X_L)$ and we deduce the inequality
\begin{equation}
\label{eq:InequalityForMainTheorem}
b_1^{\boldsymbol{\omega}}(W_F)+b_1^{\boldsymbol{\omega}}(W_G) \leq b_1^{\boldsymbol{\omega}}(X_L)+\mu-1.
\end{equation}
Applying the bound on~$\sigma_L({\boldsymbol{\omega}})$ from Proposition~\ref{prop:SignatureBound} twice (once for~$F$ and once for~$G$) and adding the results together, we obtain the following inequality:
\[
2|\sigma_L({\boldsymbol{\omega}})|  \leq -2b_1^{\boldsymbol{\omega}}(X_L)+( b_1^{\boldsymbol{\omega}}(W_F) +b_1^{\boldsymbol{\omega}}(W_G))+ \sum_{i=F,G} \left(b_2^{\boldsymbol{\omega}}(W_i)-b_3^{\boldsymbol{\omega}}(W_i) \right).
\]
Combining this inequality with~\eqref{eq:EulerCharacForMainTheorem} and (\ref{eq:InequalityForMainTheorem}) gives
\begin{align*}
2|\sigma_L({\boldsymbol{\omega}})| 
&\leq -2b_1^{\boldsymbol{\omega}}(X_L)+( b_1^{\boldsymbol{\omega}}(W_F) +b_1^{\boldsymbol{\omega}}(W_G))+ \sum_{i=F,G} \left(b_2^{\boldsymbol{\omega}}(W_i)-b_3^{\boldsymbol{\omega}}(W_i) \right) \\
& \leq -2b_1^{\boldsymbol{\omega}}(X_L)+2(b_1^{\boldsymbol{\omega}}(X_L)+\mu-1)+2(1-\mu)+2g\\
& =2g.
\end{align*}
Dividing both sides of this inequality by two yields the required result.
\end{proof}

\section{The Blanchfield pairing of doubly slice links}
\label{sec:Blanchfield}

In this section we will prove that the Blanchfield pairing of a strongly doubly slice link is hyperbolic, as stated in Theorem~\ref{thm:BlanchfieldStronglySliceIntro} from the introduction.

\subsection{The Blanchfield pairing of a coloured link}\label{sub:BlanchfieldColoured}

We first recall the definition of the Blanchfield pairing of a coloured link; references include~\cite{Hillman,BorodzikFriedlPowell,Conway}. Recall the rings~$\Lambda_S$ and~$Q$ defined in Subsection \ref{sec:introblanchfield}.
Let~$L=L_1 \cup \dots \cup L_\mu$ be a coloured link.
Consider the composition 
\begin{align*}
\widehat{\Bl}_L \colon  TH_1(X_L;\Lambda_S) \,
 & \xrightarrow{(i)} TH_1(X_L,\partial X_L;\Lambda_S)  \\
 & \xrightarrow{(ii)}\, \ker(H^2(X_L;\Lambda_S) \to H^2(X_L;Q))  \\
  & \xrightarrow{(iii)}   \frac{H^1(X_L;Q/\Lambda_S)}{\ker( H^1(X_L;Q/\Lambda_S) \stackrel{\text{BS}}{\to} H^2(X_L;\Lambda_S) )} \\
 & \xrightarrow{(iv)}  \overline{\Hom_{\Lambda_S}(TH_1(X_L;\Lambda_S),Q/\Lambda_S)}
\end{align*}
where the maps are as follows:
\begin{enumerate}[leftmargin=*]\setlength\itemsep{0em}
\item[$(i)$] The map~$H_1(X_L;\Lambda_S) \to H_1(X_L,\partial X_L;\Lambda_S)$ induced by the inclusion is an isomorphism (by the third item of Lemma~\ref{lem:H0}). Passing to the torsion submodules gives the first map, which is an isomorphism.
\item[$(ii)$] Since~$Q$ is flat over~$\Lambda_S$, we deduce that~$\ker(H^2(X_L;\Lambda_S) \to H^2(X_L;Q))=TH^2(X_L;\Lambda_S)$ and thus Poincar\'e duality induces the second map, which is an isomorphism.
\item[$(iii)$] The third map is the isomorphism obtained by considering the Bockstein exact seqeunce
\[
 \ldots \to H^1(X_L;Q) \to  H^1(X_L;Q/\Lambda_S) \xrightarrow{\operatorname{BS}} H^2(X_L;\Lambda_S)  \to H^2(X_L;Q) \to
\]
Indeed,~$TH^2(X_L;\Lambda_S)=\ker(H^2(X_L;\Lambda_S) \to H^2(X_L;Q))$ is equal to~$\im(BS) \cong \frac{H^1(X_L;Q/\Lambda_S)}{\ker(\text{BS})}$. 
\item[$(iv)$]  The fourth map is given by evaluation, and to justify it is well-defined we must show that elements of~$\ker(BS)$ evaluate to zero on elements of~$TH_1(X_L;\Lambda_S)$. Since~$\ker(BS)=\im(H^1(X_L;Q) \to H^1(X_L;Q/\Lambda_S))$, elements of~$\ker(BS)$ are represented by cocycles which factor through~$Q$-valued homomorphisms. Since~$Q$ is a field, these latter cocycles vanish on torsion elements, and thus so do the elements of~$\ker(BS)$.
\end{enumerate}

\begin{definition}
\label{def:BlanchfieldLinks}
The \textit{Blanchfield pairing} of a coloured link~$L$ is the pairing
$$ \Bl_L \colon  TH_1(X_L;\Lambda_S) \times  TH_1(X_L;\Lambda_S) \rightarrow Q/\Lambda_S ~$$
defined by~$\Bl_L (a,b)=\widehat{\Bl}_L(b)(a).$
\end{definition}

It follows from the definition of~$\widehat{\Bl}_L$ that the Blanchfield pairing is sesquilinear over~$\Lambda_S$, in the sense that~$\Bl_L(pa,qb)=p\Bl_L(a,b)\overline{q}$ for any~$a,b$ in~$TH_1(X_L;\Lambda_S)$ and any~$p,q$ in~$\Lambda_S.$
The Blanchfield pairing is also known to be Hermitian; see e.g.~\cite[Corollary 1.2]{Conway}.

\begin{remark}
\label{rem:BlanchfieldNonSingular}
The Blanchfield pairing of an ordered link~$L$ need not be nonsingular.
However, the Blanchfield pairing is known to be non-singular on~$\widehat{t}H_1(X_L;\Lambda_S)$.
Here, given a~$\Lambda_S$-module~$M$, we write 
$\widehat{t}M=TM/zM$ for the quotient of~$TM$ by its so-called \emph{maximal pseudo-null submodule}~$zM$~\cite[p.~30]{Hillman}.

In fact, when~$L$ is a boundary link, one has~$\widehat{t}H_1(X_L;\Lambda_S)=TH_1(X_L;\Lambda_S)$~\cite[Lemma~2.2]{Hillman}.
In particular, the Blanchfield pairing of a boundary link is non-singular. We prefer to give a more direct proof of this fact as follows.
\end{remark}

\begin{proposition}
\label{prop:Nonsingular}

If the $\Lambda_S$-Alexander module of a $\mu$-coloured link $L$ admits a direct sum decomposition of the form $H_1(X_L;\Lambda_S)=TH_1(X_L;\Lambda_S) \oplus \Lambda_S^b$ for some $b$, then the Blanchfield pairing of~$L$ is non-singular.

\end{proposition}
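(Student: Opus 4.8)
The plan is to show that the adjoint map $\widehat{\Bl}_L$ from Subsection~\ref{sub:BlanchfieldColoured} is an isomorphism, since by definition the non-singularity of $\Bl_L$ is exactly the assertion that $\widehat{\Bl}_L$ is an isomorphism. The maps $(i)$, $(ii)$ and $(iii)$ in that four-step composite are always isomorphisms, so the entire task reduces to proving that the evaluation map $(iv)$,
\[
\frac{H^1(X_L;Q/\Lambda_S)}{\ker(\operatorname{BS})} \longrightarrow \overline{\Hom_{\Lambda_S}(TH_1(X_L;\Lambda_S),Q/\Lambda_S)},
\]
is an isomorphism. Throughout, the hypothesis $H_1(X_L;\Lambda_S)=TH_1(X_L;\Lambda_S)\oplus\Lambda_S^b$ is the only extra input, and the point will be that the free summand accounts exactly for the difference between $H^1(X_L;Q/\Lambda_S)$ and $\overline{\Hom_{\Lambda_S}(TH_1(X_L;\Lambda_S),Q/\Lambda_S)}$.

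First I would run the cohomology universal coefficient spectral sequence $E_2^{p,q}=\overline{\Ext^p_{\Lambda_S}(H_q(X_L;\Lambda_S),N)}\Rightarrow H^{p+q}(X_L;N)$ for the coefficient modules $N=Q$ and $N=Q/\Lambda_S$; this is legitimate since $X_L$ is a compact manifold and the cellular chain complex of its $\Z^\mu$-cover is a finitely generated free $\Lambda_S$-complex. Because $H_0(X_L;\Lambda_S)=0$ by Lemma~\ref{lem:H0}, the whole $q=0$ row of each spectral sequence vanishes, so there is no extension problem and no surviving differential in total degree $1$; this yields isomorphisms
\[
H^1(X_L;Q/\Lambda_S)\cong\overline{\Hom_{\Lambda_S}(H_1(X_L;\Lambda_S),Q/\Lambda_S)},\qquad H^1(X_L;Q)\cong\overline{\Hom_{\Lambda_S}(H_1(X_L;\Lambda_S),Q)}
\]
which, by naturality of the spectral sequence in $N$, intertwine the coefficient-change map $H^1(X_L;Q)\to H^1(X_L;Q/\Lambda_S)$ with post-composition along the surjection $Q\twoheadrightarrow Q/\Lambda_S$. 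Under these identifications $(iv)$ becomes the restriction map $\Hom_{\Lambda_S}(H_1(X_L;\Lambda_S),Q/\Lambda_S)\to\Hom_{\Lambda_S}(TH_1(X_L;\Lambda_S),Q/\Lambda_S)$ along $TH_1(X_L;\Lambda_S)\hookrightarrow H_1(X_L;\Lambda_S)$, passed to the quotient by $\ker(\operatorname{BS})=\im\big(H^1(X_L;Q)\to H^1(X_L;Q/\Lambda_S)\big)$, using exactness of the Bockstein sequence.

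Next I would plug in the decomposition $H_1(X_L;\Lambda_S)=TH_1(X_L;\Lambda_S)\oplus\Lambda_S^b$. Applying $\Hom_{\Lambda_S}(-,Q)$ kills the torsion summand, since a torsion module has no nonzero homomorphism to the torsion-free module $Q$, giving $\Hom_{\Lambda_S}(H_1(X_L;\Lambda_S),Q)\cong Q^b$; applying $\Hom_{\Lambda_S}(-,Q/\Lambda_S)$ gives $\Hom_{\Lambda_S}(TH_1(X_L;\Lambda_S),Q/\Lambda_S)\oplus(Q/\Lambda_S)^b$, with $(Q/\Lambda_S)^b$ precisely the kernel of the restriction map (this restriction is split surjective because $TH_1(X_L;\Lambda_S)$ is a direct summand). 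Since $Q\twoheadrightarrow Q/\Lambda_S$ is onto, the image of $Q^b$ inside $(Q/\Lambda_S)^b$ is everything, so $\ker(\operatorname{BS})$ equals exactly the kernel of the restriction map. Hence restriction descends to an isomorphism $\frac{H^1(X_L;Q/\Lambda_S)}{\ker(\operatorname{BS})}\xrightarrow{\cong}\overline{\Hom_{\Lambda_S}(TH_1(X_L;\Lambda_S),Q/\Lambda_S)}$; that is, $(iv)$ is an isomorphism and therefore so is $\widehat{\Bl}_L$, which is the claim.

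The part that will require the most care is the homological bookkeeping around the spectral sequence: verifying that once $H_0(X_L;\Lambda_S)=0$ every entry $E_2^{p,q}$ that could contribute to, or support a differential into or out of, total degree $1$ (in particular $E_2^{1,0}$ and $E_2^{2,0}$) vanishes, so that the $q=1$ line survives untouched; and checking the two naturality statements, namely that the coefficient map $Q\to Q/\Lambda_S$ really does induce post-composition on the $\Hom$ side and that the geometric map $(iv)$ really does correspond to restriction along $TH_1(X_L;\Lambda_S)\hookrightarrow H_1(X_L;\Lambda_S)$. Everything else is formal algebra over the Noetherian domain $\Lambda_S$, using only that $Q$ is its field of fractions and that $\Lambda_S^b$ is free.
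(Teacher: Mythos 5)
Your proposal is correct and follows essentially the same route as the paper's proof: both reduce non-singularity to showing the evaluation map $(iv)$ is an isomorphism, invoke the universal coefficient spectral sequence together with $H_0(X_L;\Lambda_S)=0$ to identify the relevant cohomology groups with $\Hom$-modules, and then use the splitting $H_1(X_L;\Lambda_S)=TH_1(X_L;\Lambda_S)\oplus\Lambda_S^b$ to compute that $\ker(\operatorname{BS})$, i.e.\ the image of $H^1(X_L;Q)$, coincides exactly with the kernel $(Q/\Lambda_S)^b$ of the restriction to the torsion summand. The paper merely packages this same computation as an exactness check on the bottom row of a commutative ladder followed by the five lemma, so the difference is organisational rather than mathematical.
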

\begin{proof}
We have already argued that the maps~$(i),(ii)$ and~$(iii)$ in the definition of the Blanchfield pairing are isomorphisms.
We therefore only need to prove that the evaluation map~$(iv)$ is an isomorphism.
Given a~$\Lambda_S$-module~$H$, set~$H^*:=\overline{\Hom_{\Lambda_S}(H,\Lambda_S)},H^{\#}:=\overline{\Hom_{\Lambda_S}(H,Q)}$ as well as~$H^\vee:=\overline{\Hom_{\Lambda_S}(H,Q/\Lambda_S)}$.
Set~$X:=X_L$, consider the following commutative diagram:
\[
\xymatrix@R0.5cm@C0.5cm{
0\ar[r]&H^1(X;\Lambda_S)\ar[r]\ar[d]^{\ev}_\cong&
H^1(X;Q)\ar[r]\ar[d]^{\ev}_\cong&
H^1(X;Q/\Lambda_S)\ar[r]\ar[d]^{\ev}_\cong&
H^1(X;Q/\Lambda_S)/\ker(BS) \ar[r]\ar[d]^{\ev}&
 0 \\
0\ar[r]& H_1(X;\Lambda_S)^* \ar[r]&
H_1(X;\Lambda_S)^{\#} \ar[r]^-{\iota_Q^{Q/\Lambda_S}}&H_1(X;\Lambda_S)^\vee \ar[r]^{\operatorname{incl}^\vee}&TH^1(X;\Lambda_S)^\vee \ar[r] & 0. \\
}
\]
Since~$H_0(X_L;\Lambda_S)=0$, the universal coefficient spectral sequence gives the three vertical isomorphisms. The top row is exact, arising from the Bockstein long exact sequence. We will prove that the bottom row is also exact, for then 
the five lemma
shows the right-most vertical arrow is an isomorphism. 
This vertical arrow is the map labelled~$(iv)$ in the definition of~$\widehat{\Bl}_L$ so this will complete the proof of the proposition.

Set~$H:=H_1(X_L;\Lambda_S)$. 
By assumption, we have~$H/TH\cong~\Lambda_S^{b}$, so~$\operatorname{Ext}_{\Lambda_S}^1(H/TH,Q/ \Lambda_S)=0$, and hence~$\operatorname{incl}^\vee$ is surjective.
Since all the evaluation maps but one are isomorphisms, it only remains to prove exactness at~$H^1(X;\Lambda_S)^\vee$, i.e.~to establish
\begin{equation}
\label{eq:WantExact}
 \im(i_Q^{Q/\Lambda_S})=\ker(\operatorname{incl}^\vee).
 \end{equation}
By our assumption on the coloured link~$L$, we have~$H=TH \oplus \Lambda_S^{b}$ and so~$H^*=\Lambda_S^{b}$ and~$H^{\#}=Q^{ b }$ so that~$\im(\iota_Q^{Q/\Lambda_S})=\overline{\Hom_{\Lambda_S}(\Lambda_S^{ b },Q/\Lambda_S)}$.
We now describe~$\ker(\operatorname{incl}^\vee)$. 
Apply~$\overline{\Hom_{\Lambda_S}(-,Q/\Lambda_S)}$ to the short exact sequence~$0 \to TH \xrightarrow{\operatorname{incl}} H \to H/TH \to 0$.
As~$H/TH=\Lambda_S^{ b }$, the result is also~$\ker(\operatorname{incl}^\vee)=\overline{\Hom_{\Lambda_S}(\Lambda_S^{ b },Q/\Lambda_S)}$, verifying~\eqref{eq:WantExact} as required.
\end{proof}

Combining Corollary~\ref{cor:SplitModuleColored} and Proposition~\ref{prop:Nonsingular}, we obtain the following.

\begin{corollary}
\label{cor:Nonsingular}
If $L$ is an $n$-component boundary link that is endowed with a $\mu$-colouring, then the Blanchfield pairing $\Bl_L$ is non-singular.
\end{corollary}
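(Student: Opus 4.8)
The plan is simply to feed the output of Corollary~\ref{cor:SplitModuleColored} into Proposition~\ref{prop:Nonsingular}. First I would recall that, since $L$ is an $n$-component boundary link equipped with a $\mu$-colouring, Corollary~\ref{cor:SplitModuleColored} provides a direct sum decomposition
\[
H_1(X_L;\Lambda_S)=TH_1(X_L;\Lambda_S) \oplus \Lambda_S^{n-1}
\]
of the $\Lambda_S$-Alexander module into its torsion submodule and a free complement. This is precisely the hypothesis required by Proposition~\ref{prop:Nonsingular}, taking $b=n-1$. Hence that proposition applies verbatim and shows that the evaluation map labelled~$(iv)$ in the definition of $\widehat{\Bl}_L$ is an isomorphism; since the maps $(i)$, $(ii)$, $(iii)$ were already observed to be isomorphisms, the composite $\widehat{\Bl}_L$ is an isomorphism, which is exactly the statement that $\Bl_L$ is non-singular.

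There is essentially no obstacle at this stage: all the genuine work has been done, on the one hand in establishing Corollary~\ref{cor:SplitModuleColored} (which rests on Lemma~\ref{lem:AlexanderModuleSplits} and Proposition~\ref{prop:ModuleSplit}), and on the other hand in proving Proposition~\ref{prop:Nonsingular} via the five-lemma argument on the commutative ladder comparing the Bockstein long exact sequence with its evaluated dual. The only point worth noting is that the colouring in Corollary~\ref{cor:SplitModuleColored} is arbitrary, so the conclusion indeed holds for every $\mu$-colouring of the boundary link, as claimed; one could also phrase the statement directly via Remark~\ref{rem:BlanchfieldNonSingular}, since for a boundary link $\widehat{t}H_1(X_L;\Lambda_S)=TH_1(X_L;\Lambda_S)$, but the route through Proposition~\ref{prop:Nonsingular} keeps the paper self-contained.
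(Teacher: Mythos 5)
Your proposal is correct and is exactly the paper's argument: the paper derives this corollary in one line by combining Corollary~\ref{cor:SplitModuleColored} with Proposition~\ref{prop:Nonsingular}, precisely as you do with $b=n-1$. Nothing further is needed.
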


We conclude with a remark specific to the one-variable setting.
Given an oriented link, the exact same construction as in Definition~\ref{def:BoundaryLink} produces a \emph{rational Blanchfield pairing}
$$\Bl_L^\Q \colon TH_1(X_L;\Q[t^{\pm 1},(1-t)^{-1}]) \times TH_1(X_L;\Q[t^{\pm 1},(1-t)^{-1}]) \to \Q(t)/\Q[t^{\pm 1}].$$
Since the ring $\Q[t^{\pm 1},(1-t)^{-1}]$ is obtained by localising the PID $\Q[t^{\pm 1}]$, it is itself a PID.
It follows that $H_1(X_L;\Q[t^{\pm 1},(1-t)^{-1}])$ splits into a free and torsion summand, and therefore the exact same proof as the one of Proposition~\ref{prop:Nonsingular} yields the following result.
\begin{corollary}
\label{cor:Nonsingular}
If $L$ is an oriented link, then the rational Blanchfield pairing $\Bl_L^\Q$ is non-singular.
\end{corollary}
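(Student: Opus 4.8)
The plan is to transcribe the proof of Proposition~\ref{prop:Nonsingular} into the one-variable rational setting, so that the only real work is checking that its hypotheses and homological inputs survive the change of coefficient ring. Write $R:=\Q[t^{\pm 1},(1-t)^{-1}]$, let $\pi_1(X_L)\to\Z$ be the total linking number homomorphism (so that every meridian maps to $1$), and recall that $R$ is a localisation of the PID $\Q[t^{\pm 1}]$, hence is itself a PID. Consequently every finitely generated $R$-module, and in particular $H:=H_1(X_L;R)$, is the direct sum of its torsion submodule and a free module, $H=TH\oplus R^{b}$; note that here, unlike in the multivariable case, this splitting is automatic and requires neither Corollary~\ref{cor:SplitModuleColored} nor the hypothesis that $L$ be a boundary link. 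This decomposition is precisely the input required by Proposition~\ref{prop:Nonsingular}.

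Next I would verify that the four-step construction of $\widehat{\Bl}_L$ from Subsection~\ref{sub:BlanchfieldColoured} runs over $R$ with $\Q(t)$ in place of $Q$. The usual computation of $H_0$ gives $H_0(X_L;R)=0$, since $t-1$ is a unit in $R$ and $t$ lies in the image of $\pi_1(X_L)\to\Z$; likewise $H_*(\partial X_L;R)=0$ by (the argument of) Lemma~\ref{lem:H0}(iii). Given these vanishings, steps $(i)$--$(iii)$ are isomorphisms for exactly the same formal reasons as before: step $(i)$ from the long exact sequence of $(X_L,\partial X_L)$; step $(ii)$ from flatness of $\Q(t)$ over $R$ together with Poincar\'{e} duality; and step $(iii)$ from the Bockstein sequence of $0\to R\to\Q(t)\to\Q(t)/R\to 0$. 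It then remains only to prove that the evaluation map $(iv)$ is an isomorphism, for which I would reproduce the commutative diagram and five lemma argument of Proposition~\ref{prop:Nonsingular}: the universal coefficient spectral sequence supplies the three left-hand evaluation isomorphisms (using $H_0(X_L;R)=0$), the top row is Bockstein-exact, and the bottom row is exact because $H/TH\cong R^{b}$ gives $\Ext^1_R(H/TH,\Q(t)/R)=0$ and an explicit identification $\im(\iota_{\Q(t)}^{\Q(t)/R})=\ker(\operatorname{incl}^\vee)=\overline{\Hom_R(R^{b},\Q(t)/R)}$. The five lemma then forces $(iv)$, and hence $\Bl_L^\Q$, to be non-singular.

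I expect the genuine (though minor) obstacle to be purely bookkeeping: confirming that each ingredient borrowed from the $\Lambda_S$-setting — the vanishing of $H_0(X_L;R)$ and of $H_*(\partial X_L;R)$, the flatness of $\Q(t)$ over $R$, the shape of the universal coefficient spectral sequence, and the relevant $\Ext$-vanishing — really does hold over $R$. One should also reconcile the target groups: the construction above naturally outputs a pairing valued in $\Q(t)/R$, whereas the statement uses $\Q(t)/\Q[t^{\pm 1}]$. These match on torsion $R$-modules because the kernel $R/\Q[t^{\pm 1}]$ of the surjection $\Q(t)/\Q[t^{\pm 1}]\twoheadrightarrow\Q(t)/R$ is $(1-t)$-primary, while every finitely generated torsion $R$-module $M$ is annihilated by some element of $\Q[t^{\pm 1}]$ coprime to $1-t$ (as $1-t$ acts invertibly on it), so that $\overline{\Hom_R(M,\Q(t)/R)}\cong\overline{\Hom_{\Q[t^{\pm 1}]}(M,\Q(t)/\Q[t^{\pm 1}])}$.
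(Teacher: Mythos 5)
Your proposal is correct and is essentially the paper's own argument: the paper likewise observes that $\Q[t^{\pm 1},(1-t)^{-1}]$ is a PID (being a localisation of $\Q[t^{\pm 1}]$), so that $H_1(X_L;\Q[t^{\pm 1},(1-t)^{-1}])$ automatically splits into torsion plus free, and then invokes the proof of Proposition~\ref{prop:Nonsingular} verbatim. Your additional remark reconciling the targets $\Q(t)/R$ and $\Q(t)/\Q[t^{\pm 1}]$ is a detail the paper leaves implicit, but it does not change the route.
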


We conclude with one final remark about boundary links in the case $\mu=1$.
\begin{remark}
\label{rem:FellerParkPowell}
If $L$ is an oriented boundary link and $\mu=1$, then multiplication by $(t-1)$ induces an isomorphism on $TH_1(X_L;\Z[t^{\pm 1}])$~\cite[Lemma 3.3]{FellerParkPowell}.
In particular, in this case, the chain map $C_*(\widehat{X}_L) \to \Z[t^{\pm 1},(1-t)^{-1}] \otimes_{\Z[t^{\pm 1}]} C_*(\widehat{X}_L)$ induces an isomorphism
\[
TH_1(X_L;\Z[t^{\pm 1}]) \cong TH_1(X_L;\Z[t^{\pm 1},(1-t)^{-1}])
\]
that preserves the Blanchfield forms.
\end{remark}

\subsection{The Blanchfield pairing of a strongly doubly slice link is hyperbolic}

We are now able to prove Theorem~\ref{thm:BlanchfieldStronglySliceIntro} and Proposition~\ref{prop:ModuleSplit} from the introduction. 
\begin{theorem}
\label{thm:BlanchfieldStronglySlice}

Let $L$ be an $n$-component boundary link that is endowed with a $\mu$-colouring.
If $L$ is doubly slice as a $\mu$-coloured link, then
\begin{enumerate}[leftmargin=*]\setlength\itemsep{0em}
\item the Blanchfield pairing of $L$ is hyperbolic;
\item for some $\Lambda_S$-submodules $G_1,G_2 \subset TH_1(X_L;\Lambda_S)$ with $\operatorname{Ord}(G_1)=\overline{\operatorname{Ord}(G_2)}$, the torsion submodule of the~$\Lambda_S$-Alexander module of $L$ splits as $TH_1(X_L;\Lambda_S)=G_1 \oplus G_2.$
\end{enumerate}
By Remark~\ref{rem:FellerParkPowell}, for $\mu=1$, these results hold not only over $\Z[t^{\pm 1},(1-t)^{-1}]$, but also over $\Z[t^{\pm 1}]$.

Furthermore, for any weakly doubly slice oriented link $L$, the rational Blanchfield form $\Bl_L^\Q$ is hyperbolic and $TH_1(X_L;\Q[t^{\pm 1},(1-t)^{-1}])=G\oplus {\overline{G}}$ for some $\Q[t^{\pm 1},(1-t)^{-1}]$-module $G$.
\end{theorem}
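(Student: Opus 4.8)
The plan is to adapt the classical argument of Kearton and Levine that a doubly slice knot has hyperbolic Blanchfield pairing, run with $\Lambda_S$ coefficients and with comparisons of orders in place of finite cardinality counts. Decompose $S^4 = D^4_1 \cup_{S^3} D^4_2$ along the equatorial $S^3$ that contains $L$, let $D_i := S \cap D^4_i$ be the two halves of the $\mu$-component unlink $S$ witnessing the double slicing, and set $W_i := D^4_i \setminus \nu D_i$. Then the unlink exterior decomposes as $X_S := S^4 \setminus \nu S = W_1 \cup_{X_L} W_2$, glued along the link exterior $X_L$, and we pull back to all of these spaces the $\Lambda_S$ coefficient system determined by the $\mu$-colouring. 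Since $S$ is unlinked, $\pi_1(X_S) = F^\mu$, so Lemma~\ref{lem:WedgeOfCirclesLambdaS} gives $H_1(X_S;\Lambda_S) \cong \Lambda_S^{\mu-1}$ free, Lemma~\ref{lem:H0} kills $H_0(X_S;\Lambda_S)$ and $H_0(X_L;\Lambda_S)$, and filling in the $\Lambda_S$-acyclic solid tori coming from $\nu D_i$ identifies $H_*(\partial W_i;\Lambda_S) \cong H_*(X_L;\Lambda_S)$ (as for $\partial W_F$ in Proposition~\ref{prop:SignatureBound}). This package is the homological fuel for the whole argument.

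First I would establish the splitting of assertion (2). Set $G_i := \ker\bigl(TH_1(X_L;\Lambda_S) \to H_1(W_i;\Lambda_S)\bigr)$. Since $L$ is a boundary link, Corollary~\ref{cor:SplitModuleColored} realises $TH_1(X_L;\Lambda_S)$ as an honest direct summand of $H_1(X_L;\Lambda_S)$; feeding the (controlled) homology of $X_S$ into the Mayer--Vietoris sequence of $X_S = W_1 \cup_{X_L} W_2$ then produces, on the torsion part, the internal direct sum $TH_1(X_L;\Lambda_S) = G_1 \oplus G_2$. Concretely, $G_1 \cap G_2$ is contained in the image of $H_2(X_S;\Lambda_S) \to H_1(X_L;\Lambda_S)$ (which has to be shown to meet the torsion submodule trivially), while $G_1 + G_2 = TH_1(X_L;\Lambda_S)$ because the cokernel of $H_1(X_L;\Lambda_S) \to H_1(W_1;\Lambda_S) \oplus H_1(W_2;\Lambda_S)$ embeds into the free module $\Lambda_S^{\mu-1}$.

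Next, each $G_i$ is self-annihilating. The inclusion $G_i \subseteq G_i^\perp$ is the usual ``one side bounds'' diagram chase: via Poincar\'{e}--Lefschetz duality for $W_i$ and the long exact sequence of $(W_i,\partial W_i)$, the pairing $\Bl_L$ vanishes on the image of $H_2(W_i,\partial W_i;\Lambda_S) \to H_1(X_L;\Lambda_S)$, inside which $G_i$ sits by the identification $H_*(\partial W_i;\Lambda_S) \cong H_*(X_L;\Lambda_S)$. To promote the inclusion to equality, use that $\Bl_L$ is non-singular (Corollary~\ref{cor:Nonsingular}, valid since $L$ is a boundary link): writing $H := TH_1(X_L;\Lambda_S)$, the adjoint of $\Bl_L$ identifies $G_i^\perp \cong \overline{\Hom_{\Lambda_S}(H/G_i,Q/\Lambda_S)}$, so $\operatorname{Ord}(G_i^\perp) \doteq \overline{\operatorname{Ord}(H/G_i)} = \overline{\operatorname{Ord}(G_{3-i})}$ by the splitting just obtained. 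From $G_1 \subseteq G_1^\perp$ and $G_2 \subseteq G_2^\perp$ one then gets $\operatorname{Ord}(G_1)\mid\overline{\operatorname{Ord}(G_2)}$ and $\overline{\operatorname{Ord}(G_2)}\mid\operatorname{Ord}(G_1)$, hence $\operatorname{Ord}(G_1) \doteq \overline{\operatorname{Ord}(G_2)}$ — this is assertion (2) — and thus $\operatorname{Ord}(G_i) \doteq \operatorname{Ord}(G_i^\perp)$. Plugging this into $0 \to G_i \to G_i^\perp \to G_i^\perp/G_i \to 0$ shows $G_i^\perp/G_i$ has unit order, so it is pseudo-null, so it vanishes because it injects into $H/G_i \cong G_{3-i} \subseteq H = \widehat{t}H_1(X_L;\Lambda_S)$ (Remark~\ref{rem:BlanchfieldNonSingular}), which has no non-zero pseudo-null submodule. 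Hence $G_i = G_i^\perp$ and $\Bl_L$ is hyperbolic, proving (1). For $\mu = 1$ the $\Z[t^{\pm 1}]$-refinement is immediate from Remark~\ref{rem:FellerParkPowell}, which supplies a form-preserving isomorphism $TH_1(X_L;\Z[t^{\pm 1}]) \cong TH_1(X_L;\Z[t^{\pm 1},(1-t)^{-1}])$.

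For the weakly doubly slice statement the same machine runs more cleanly over the PID $\Lambda_S^\Q := \Q[t^{\pm 1},(1-t)^{-1}]$. Now $S$ is a single unknotted $2$-sphere, $\pi_1(X_S) = \Z$, the infinite cyclic cover of $X_S$ is $\Q$-acyclic, and inverting $1-t$ makes $H_*(X_S;\Lambda_S^\Q)$ vanish in every degree; Mayer--Vietoris then gives $H_1(X_L;\Lambda_S^\Q) \cong H_1(W_1;\Lambda_S^\Q) \oplus H_1(W_2;\Lambda_S^\Q)$ outright, and restricting to torsion submodules (which split off over a PID) yields $TH_1(X_L;\Lambda_S^\Q) = G \oplus G'$ with $G,G'$ the kernels as before. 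The isotropy and order arguments carry over verbatim ($\Bl_L^\Q$ is non-singular by Corollary~\ref{cor:Nonsingular}, as $\Lambda_S^\Q$ is a PID), so $G = G^\perp$, $G' = G'^\perp$ and $\Bl_L^\Q$ is hyperbolic; moreover over a PID any torsion module $M$ satisfies $\overline{\Hom_{\Lambda_S^\Q}(M,Q/\Lambda_S^\Q)} \cong \overline{M}$ as modules, so non-singularity gives $G' \cong \overline{\Hom_{\Lambda_S^\Q}(G,Q/\Lambda_S^\Q)} \cong \overline{G}$ and hence $TH_1(X_L;\Lambda_S^\Q) = G \oplus \overline{G}$. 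The step I expect to be the real obstacle is the first one: pinning down the $\Lambda_S$-homology of the unlink exterior $X_S$ (and implicitly of the half-exteriors $W_i$) tightly enough that the Mayer--Vietoris bookkeeping genuinely produces the internal direct sum $TH_1(X_L;\Lambda_S) = G_1 \oplus G_2$ — in the multivariable case $H_*(X_S;\Lambda_S)$ need not be concentrated in low degrees, whereas in the single-variable case localisation collapses everything to $\Q$-acyclicity and the bookkeeping becomes transparent. Everything downstream is then formal, given the non-singularity results already established in the paper.
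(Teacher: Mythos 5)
Your proposal is correct and its skeleton coincides with the paper's: the same decomposition $S^4=D^4_1\cup_{S^3}D^4_2$, the same Mayer--Vietoris sequence over $\Lambda_S$ (fed by Lemma~\ref{lem:WedgeOfCirclesLambdaS} and Corollary~\ref{cor:SplitModuleColored}) producing $TH_1(X_L;\Lambda_S)=G_1\oplus G_2$ with $G_i$ the kernels into the half-exteriors, and the same Poincar\'e--Lefschetz mechanism for $G_i\subseteq G_i^\perp$. Where you genuinely diverge is the reverse inclusion. The paper proves $G_i^\perp\subseteq G_i$ by first establishing, via a ``sharp $3\times 3$ lemma'', that $TH_2(W_i,\partial W_i;\Lambda_S)\to TH_1(\partial W_i;\Lambda_S)\to TH_1(W_i;\Lambda_S)\to 0$ is exact, and then chasing a duality diagram whose left-hand vertical maps are all isomorphisms; the order statement $\operatorname{Ord}(G_1)=\overline{\operatorname{Ord}(G_2)}$ is extracted afterwards from $\ker(T\iota_F)\cong\overline{\operatorname{Ext}^1_{\Lambda_S}(\ker(T\iota_G),\Lambda_S)}$. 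You instead prove the order statement first, from non-singularity of $\Bl_L$ (Corollary~\ref{cor:Nonsingular}) together with $\operatorname{Ord}(\overline{\Hom(M,Q/\Lambda_S)})\doteq\overline{\operatorname{Ord}(M)}$, deduce $\operatorname{Ord}(G_i^\perp)\doteq\operatorname{Ord}(G_i)$ by two-way divisibility, and kill $G_i^\perp/G_i$ because it is pseudo-null yet embeds in $H/G_i\cong G_{3-i}\subseteq TH_1(X_L;\Lambda_S)=\widehat{t}H_1(X_L;\Lambda_S)$ (Remark~\ref{rem:BlanchfieldNonSingular}). This buys you assertion (2) en route rather than as an afterthought, at the cost of two inputs the paper does not invoke at that stage: multiplicativity of orders in short exact sequences of torsion modules, and Hillman's vanishing of pseudo-null submodules for boundary links. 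One caveat: your isotropy step ``$\Bl_L$ vanishes on the image of $H_2(W_i,\partial W_i;\Lambda_S)$'' silently requires lifting an element of $\ker(T\iota_i)$ to a \emph{torsion} class in $H_2(W_i,\partial W_i;\Lambda_S)$, which is precisely the content of the paper's Claim; you have not avoided that exactness statement, only relocated it. The weakly doubly slice case is handled essentially identically in both arguments, with the PID $\Q[t^{\pm1},(1-t)^{-1}]$ trivialising the pseudo-null issues and upgrading $\operatorname{Ord}(G_1)=\overline{\operatorname{Ord}(G_2)}$ to $G_1\cong\overline{G_2}$.
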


Note in particular, the hypotheses of Theorem \ref{thm:BlanchfieldStronglySlice} are satisfied when $L$ is a strongly doubly slice link and when $L$ is a weakly doubly slice boundary link.

\begin{proof}
Assume that the coloured link~$L \subset S^3$ is a cross section of an unlink~$\Sigma \subset S^4$.
Decompose~$S^4$ as the union~$D^4_1 \cup_{S^3} D^4_2$ along its equatorial sphere, and define coloured bounding surfaces for~$L$ by setting~$F:=D^4_1 \cap \Sigma$ and~$G:=D^4_2 \cap \Sigma$.
It follows that the unlink exterior~$X_\Sigma:=S^4 \setminus \nu \Sigma$ decomposes as~$X_\Sigma=W_F \cup_{X_L} W_G$, where~$W_F =D^4 \setminus \nu F$ and~$W_G=D^4 \setminus \nu G$ are the exteriors of the coloured bounding surfaces~$F$ and~$G$.
We therefore obtain the following Mayer-Vietoris exact sequence with~$\Lambda_S$ coefficients:
\begin{equation}
\label{eq:MVLambdaS}
0 \to H_1(X_L;\Lambda_S) \xrightarrow{\bsm \iota_F \\ \iota_G \esm} H_1(W_F;\Lambda_S) \oplus H_1(W_G;\Lambda_S) \to H_1(X_\Sigma;\Lambda_S) \to 0.
\end{equation}
Since $L$ is a boundary link, we know from Corollary~\ref{cor:SplitModuleColored} that~$H_1(X_L;\Lambda_S)=TH_1(X_L;\Lambda_S) \oplus \Lambda_S^{n-1}$.
The same conclusion holds over the PID $\Q[t^{\pm 1},(1-t)^{-1}]$ for any oriented link $L$.
We also know from Lemma~\ref{lem:WedgeOfCirclesLambdaS} that~$H_1(X_\Sigma;\Lambda_S)=\Lambda_S^{\mu-1}$.
Consequently the short exact sequence in~\eqref{eq:MVLambdaS} splits and, in particular we deduce that  
\begin{align}
\label{eq:SplittingFG}
TH_1(X_L;\Lambda_S)&\cong TH_1(W_F;\Lambda_S) \oplus TH_1(W_G;\Lambda_S),
\end{align}
Here note that since~$H_1(X_L;\Lambda_S)$ splits into free and torsion parts, so does~$ H_1(W_F;\Lambda_S) \oplus  H_1(W_G;\Lambda_S)$ and therefore so do~$H_1(W_F;\Lambda_S)$ and~$ H_1(W_G;\Lambda_S)$.
From now on, we will write~$T\iota_F$ and~$T\iota_G$ for the restriction of~$\iota_F$ and~$\iota_G$ to the torsion submodule of the~$\Lambda_S$-Alexander module of~$L$:
\begin{align*}
T\iota_F \colon TH_1(X_L;\Lambda_S) \to TH_1(W_F;\Lambda_S), \\
T\iota_G \colon TH_1(X_L;\Lambda_S) \to TH_1(W_G;\Lambda_S).
\end{align*}
We now have~$TH_1(X_L;\Lambda_S)=\ker(T\iota_F) \oplus \ker(T\iota_G)$, and our objective is to show that these two kernels are metabolisers for the Blanchfield pairing~$\Bl_L$: this will show that~$\Bl_L$ is hyperbolic.

We prove that~$\ker(T \iota_F)$ is a metaboliser of~$\Bl_L$; the proof for~$\ker(T\iota_G)$ is identical.
We follow the strategy from~\cite[Theorem 4.4 and Lemma 4.5]{CochranOrrTeichner}, applying it in a similar way to \cite[Proposition 2.10]{KimNew}.

\begin{claim}
\label{claim:Inj}
The following sequence is exact:
\[
TH_2(W_F,\partial W_F;\Lambda_S) \xrightarrow{\partial} TH_1(\partial W_F;\Lambda_S) \xrightarrow{T\iota_F}  TH_1(W_F;\Lambda_S) \to 0.
\]
\end{claim}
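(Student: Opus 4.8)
The plan is to deduce this short exact sequence from the long exact sequence of the pair $(W_F, \partial W_F)$ with $\Lambda_S$-coefficients, combined with the Poincar\'e--Lefschetz duality and universal coefficient facts already established in the paper. First I would write down the relevant portion of the long exact sequence,
\[
H_2(W_F;\Lambda_S) \to H_2(W_F,\partial W_F;\Lambda_S) \xrightarrow{\partial} H_1(\partial W_F;\Lambda_S) \xrightarrow{\iota_F} H_1(W_F;\Lambda_S) \to H_1(W_F,\partial W_F;\Lambda_S),
\]
and recall that $\partial W_F = X_L \cup_\partial M_F$ with $H_*(M_F;\Lambda_S)=0$, so the inclusion-induced map $H_1(X_L;\Lambda_S) \cong H_1(\partial W_F;\Lambda_S)$ is an isomorphism (this is item (i) of Proposition~\ref{prop:SignatureBound}, or rather its $\Lambda_S$-analogue). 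The key input is that, by Poincar\'e--Lefschetz duality and the universal coefficient spectral sequence, $H_1(W_F,\partial W_F;\Lambda_S) \cong \overline{H^3(W_F;\Lambda_S)}$, and since $W_F$ is homotopy equivalent to a $2$-complex (it is the exterior of a surface in $D^4$, deformation retracting onto a $2$-complex), one has $H^3(W_F;\Lambda_S)=0$; hence $\iota_F$ itself is surjective onto $H_1(W_F;\Lambda_S)$, not merely its torsion part.

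Next I would pass to torsion submodules. Surjectivity of $\iota_F \colon H_1(\partial W_F;\Lambda_S) \to H_1(W_F;\Lambda_S)$ does not automatically give surjectivity of the restriction $T\iota_F$ to torsion submodules, so here I would use the splitting established in the proof: we have $H_1(\partial W_F;\Lambda_S) \cong H_1(X_L;\Lambda_S) = TH_1(X_L;\Lambda_S) \oplus \Lambda_S^{n-1}$ and likewise $H_1(W_F;\Lambda_S) = TH_1(W_F;\Lambda_S) \oplus (\text{free})$, with the splitting on $W_F$ compatible with the splitting on $X_L$ via the Mayer--Vietoris decomposition~\eqref{eq:SplittingFG}. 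In particular $\iota_F$ carries torsion to torsion and respects the free/torsion decomposition, so the surjectivity of $\iota_F$ forces surjectivity of $T\iota_F$ on torsion submodules. This gives the right-hand zero in the claimed sequence.

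For exactness at the middle term, I would again invoke that $\operatorname{im}(\partial) = \ker(\iota_F)$ from the long exact sequence, and observe that $\ker(\iota_F)$ is automatically a submodule of $TH_1(\partial W_F;\Lambda_S)$: indeed, $\ker(\iota_F)$ is the image of $\partial$, and the source $H_2(W_F,\partial W_F;\Lambda_S) \cong \overline{H^2(W_F;\Lambda_S)}$; passing to torsion submodules, $\operatorname{im}(\partial) = \operatorname{im}(\partial|_{TH_2})$ because the free part of $H_2(W_F,\partial W_F;\Lambda_S)$ maps into the free part of $H_1(\partial W_F;\Lambda_S)$ (which injects into the free part of $H_1(W_F;\Lambda_S)$ by compatibility of splittings), hence dies under $\partial$. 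So $\operatorname{im}(T\partial) = \operatorname{im}(\partial) = \ker(\iota_F) = \ker(T\iota_F)$, giving exactness at $TH_1(\partial W_F;\Lambda_S)$. The main obstacle I anticipate is bookkeeping the compatibility of the various free/torsion splittings so that restricting the long exact sequence to torsion submodules stays exact — this is not formal in general (torsion submodule is only left exact), and one genuinely needs the splitting coming from $H_1(X_\Sigma;\Lambda_S)$ being free together with $H^3(W_F;\Lambda_S)=0$ to make it work. Once those two facts are in hand, the claim follows by a diagram chase.
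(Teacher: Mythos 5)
Your overall strategy---restricting the long exact sequence of the pair $(W_F,\partial W_F)$ to torsion submodules---matches the paper's, and your derivation of the surjectivity of $T\iota_F$ is fine: the splitting \eqref{eq:SplittingFG} already exhibits $TH_1(W_F;\Lambda_S)$ as a direct summand of the image of $TH_1(X_L;\Lambda_S)$ under $(\iota_F,\iota_G)$, so $T\iota_F$ is onto with no further input. However, your preliminary claim that $H_1(W_F,\partial W_F;\Lambda_S)\cong \overline{H^3(W_F;\Lambda_S)}=0$ because ``$W_F$ deformation retracts onto a $2$-complex'' is unjustified: a compact $4$-manifold with boundary is in general only homotopy equivalent to a $3$-complex, and the paper's own Proposition~\ref{prop:SignatureBound} carries the term $b_3^{\boldsymbol{\omega}}(W_F)$ around precisely because the dual group $H_1(W_F,\partial W_F)$ need not vanish for the exterior of a locally flat surface in $D^4$. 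Fortunately this is only used for a stronger statement than you need.

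The genuine gap is at the middle term. What must be shown is that every \emph{torsion} class $x\in\ker(\iota_F)$ admits a \emph{torsion} preimage under $\partial$, and your argument for this via free/torsion bookkeeping does not go through: for $\mu>1$ the ring $\Lambda_S$ is not a PID, so $H_2(W_F,\partial W_F;\Lambda_S)$ has no canonical free summand to speak of; even granting a splitting, there is no reason $\partial$ should carry a free complement into the free summand of $H_1(\partial W_F;\Lambda_S)$; and $\iota_F$ need not be injective on the free summand of $H_1(X_L;\Lambda_S)$, since the Mayer--Vietoris sequence only gives injectivity of $(\iota_F,\iota_G)$ jointly. (Likewise $\ker(\iota_F)\subseteq TH_1(\partial W_F;\Lambda_S)$ is not automatic, though it is also not needed.) The missing ingredient is the geometric input the paper uses: since $H_2(X_\Sigma;Q)=0$, the Mayer--Vietoris sequence for $X_\Sigma=W_F\cup_{X_L}W_G$ shows $H_2(\partial W_F;Q)\to H_2(W_F;Q)$ is surjective, hence $H_2(W_F,\partial W_F;Q)\to H_1(\partial W_F;Q)$ is \emph{injective}. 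Because $Q$ is flat over $\Lambda_S$, the torsion submodule is exactly the kernel of the change-of-coefficients map to $Q$-homology, so this injectivity forces any $\partial$-preimage of a torsion class to be torsion; this is precisely the content of the ``sharp $3\times 3$ lemma'' step in the paper's proof, and without it the torsion row of the ladder need not be exact.
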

\begin{proof}[Proof of claim]
Observe that~\eqref{eq:SplittingFG} shows~$T\iota_F$ is surjective. We must show exactness at~$TH_1(\partial W_F;\Lambda_S)$.
We first assert that~$H_2(W_F,\partial W_F;Q) \to H_1(\partial W_F;Q)$ is injective. 
Since~$H_2(X_\Sigma;Q)=0$ (as~$X_\Sigma$ is a connected sum of $S^1 \# D^3$.)
using the Mayer-Vietoris sequence for~$X_\Sigma=W_F\cup_{X_L} W_G$, we know that the inclusion induced map~$H_2(X_L;Q) \twoheadrightarrow H_2(W_F;Q) \oplus H_2(W_G;Q)$ is surjective. 
Since we have~$H_1(\partial W_F;Q)=H_i(X_L;Q)$ by Proposition~\ref{prop:SignatureBound}, we deduce that~$H_2(\partial W_F;Q) \twoheadrightarrow H_2(W_F;Q)$ is surjective.
By exactness, this implies that~$H_2(W_F;Q) \to H_2(W_F,\partial W_F;Q)$ is the zero map.
By exactness, this implies that~$H_2(W_F;\partial W_F;Q) \to H_1(\partial W_F;Q)$ is injective, as asserted

As in~\cite[proof of Theorem 1.1]{BorodzikFriedlPowell} we consider the following commutative diagram with exact columns:
$$
\xymatrix@R0.5cm{
&0\ar[d]&0\ar[d]&  0\ar[d] \\
&TH_2(W_F,\partial W_F;\Lambda_S)\ar[d]\ar[r]&TH_1(\partial W_F;\Lambda_S) \ar[d]\ar[r]&  TH_1(W_F;\Lambda_S)\ar[d] \\
&H_2(W_F,\partial W_F;\Lambda_S)\ar[d]\ar[r]&H_1(\partial W_F;\Lambda_S)\ar[d]\ar[r] &  H_1(W_F;\Lambda_S)\ar[d] \\
0\ar@{->}[r]&H_2(W_F,\partial W_F;Q)\ar[r]&H_1(\partial W_F;Q) \ar[r]&  H_1(W_F;Q). \\
}
$$
The middle row is exact and the bottom row is exact thanks to the assertion.
By the ``sharp~$3 \times 3$ lemma"~\cite[Lemma 2]{FHH}, it follows that the top row is exact, establishing the claim.
\end{proof}

We wish to define a pairing~$\Bl_F\colon TH_1(W_F;\Lambda_S)\times TH_2(W_F,\partial W_F;\Lambda_S)\to Q/\Lambda_S$. For this, consider the following commutative diagram. The pairing we desire is the adjoint of the leftmost column. 
\[
\xymatrix@R0.7cm{
TH_2(W_F,\partial W_F;\Lambda_S) \ar[r]^-{\partial}\ar[d]_-\cong^-{\PD}& TH_1(\partial W_F;\Lambda_S)\ar[r]^-{T\iota_F}\ar[d]_-\cong^-{\PD}&TH_1(W_F;\Lambda_S)\ar[r]&0  \\
TH^2(W_F;\Lambda_S) \ar[d]^-{\operatorname{BS}^{-1}}_-\cong \ar[r]^-{\iota_F^*}& TH^2(\partial W_F;\Lambda_S) \ar[d]^-{\operatorname{BS}^{-1}}_-\cong \\
\frac{TH^1(W_F;Q/\Lambda_S)}{\ker\operatorname{BS}} \ar[d]^-{\ev}_-\cong \ar[r]^-{\iota_F^*}& \frac{TH^1(\partial W_F;Q/\Lambda_S)}{\ker\operatorname{BS}}  \ar[d]^-{\ev}_-\cong  \\
\overline{\Hom_{\Lambda_S}(TH_1(W_F;\Lambda_S),Q/\Lambda_S)}  \ar[r]^-{T\iota_F^\vee} & \overline{\Hom_{\Lambda_S}(TH_1(\partial W_F;\Lambda_S),Q/\Lambda_S)}. 
}
\]
The top row is the exact sequence from Claim~\ref{claim:Inj}.
We briefly justify this various isomorphisms in the diagram. 
The vertical maps~$\operatorname{BS}^{-1}$ are defined as the map~$(iii)$ in the definition of~$\widehat{\Bl}_L$ and are justified to be isomorphisms similarly.
The bottom vertical map in the central column is justified to be an isomorphism similarly to the proof of Proposition~\ref{prop:Nonsingular}. Similarly, the proof that the bottom left evaluation map is an isomorphism uses the splitting~$H_1(W_F;\Lambda_S)=TH_1(W_F;\Lambda_S) \oplus \Lambda_S^b$ for some~$b$; just as in the proof of Proposition~\ref{prop:Nonsingular}. We omit the repetition of the details.

Now set~$P:=\ker(T\iota_F)=\im(\partial)$. We claim that~$P=P^\perp$.
The inclusion~$P \subset P^\perp$ follows from the commutativity of the diagram above: given~$\partial(z) \in \im(\partial)=P$ and~$x \in \ker(T\iota_F)=P$, one has~$\Bl_L(x,\partial(z))=\operatorname{Bl}_F(T\iota_F(x),z)=0$.
It remains to prove the reverse inclusion, namely~$P^\perp \subset P$.
Let~$x \in P^\perp$ so that~$\Bl_L(x)(p)=0$ for all~$p \in P$.
In other words,~$\Bl(x)$ defines an element in~$\overline{\Hom(TH_1(\partial W_F;\Lambda_S)/P,Q/\Lambda_S )}$.
By Claim~\ref{claim:Inj}, we know that~$T\iota_F$ is surjective and therefore, since~$P=\ker(T\iota_F)$, it induces an isomorphism~$TH_1(\partial W_F;\Lambda_S)/P \cong TH_1(W_F;\Lambda_S)$.
It follows that we obtain an isomorphism
$$T\iota_F^\vee \colon \overline{\Hom(TH_1(W_F;\Lambda_S),Q/\Lambda_S )} \xrightarrow{\cong} \overline{\Hom(TH_1(\partial W_F;\Lambda_S)/P,Q/\Lambda_S )}.$$
Thus~$\Bl(x)$ lives in the image of~$T\iota^\vee_F$.
By the commutativity of diagram above (and especially the fact that all of the left vertical maps are isomorphisms), it follows that~$x \in \im(\partial)=P$.

Summarising, we have proved that~$\ker(T\iota_F)=\ker(T\iota_F)^\perp$.
The proof that~$\ker(T\iota_G)=\ker(T\iota_G)^\perp$ is identical and we have therefore established that~$\ker(T\iota_F)$ and~$\ker(T\iota_G)$ are metabolisers of~$\Bl_L$ with~$\ker(T\iota_F) \oplus \ker(T\iota_G)=TH_1(X_L;\Lambda_S)$.
This concludes the proof that~$\Bl_L$ is hyperbolic.

It remains to prove that $\operatorname{Ord}(\ker(T \iota_F))=\overline{\operatorname{Ord}(\ker(T \iota_G)})$.
First, note that the isomorphism displayed in~\eqref{eq:SplittingFG} implies that $\ker(T\iota_G)=TH_1(W_F;\Lambda_S)$.
The left vertical column in the previous commutative diagram then shows that 
\begin{align*}
\ker(T \iota_F)
&=TH_2(W_F,\partial W_F;\Lambda_S)
\cong \overline{\Hom_{\Lambda_S}(TH_1(W_F;\Lambda_S),Q/\Lambda_S)} \\
&\cong \overline{\Hom_{\Lambda_S}(\ker(T\iota_G),Q/\Lambda_S)}.
\end{align*}
As $\ker(T\iota_G)$ is torsion, we deduce that $\overline{\Hom_{\Lambda_S}(\ker(T\iota_G),Q/\Lambda_S)}$ is isomorphic to $\overline{\operatorname{Ext}^1_{\Lambda_S}(\ker(T\iota_G),\Lambda_S)}$.
By~\cite[Lemma 15.7 items (3) and (4)]{FriedlNagelOrsonPowell}, one deduces that the order of this latter module equals $\overline{\operatorname{Ord}(\ker(T\iota_G))}$; here note that~\cite[Lemma 15.7]{FriedlNagelOrsonPowell} is stated for $\Lambda$, but the proofs go through for any localisation of $\Lambda$.

The exact same proof works for $\Q[t^{\pm 1},(1-t)^{-1}]$ in place of $\Lambda_S$, but now since $\Q[t^{\pm 1},(1-t)^{-1}]$ is a PID, $\operatorname{Ord}(\ker(T \iota_F))=\overline{\operatorname{Ord}(\ker(T \iota_G)})$ implies that~$\ker(T \iota_F)=\overline{\ker(T \iota_G)}$.
This concludes the proof of the theorem. 
\end{proof}

The same methods as those used in the proof of Theorem~\ref{thm:BlanchfieldStronglySlice} can also be applied to the linking form on the double branched cover of $L$.
The linking form of a compact, connected, oriented $3$-manifold $M$ is a symmetric bilinear pairing $\lambda_M \colon TH_1(M) \times TH_1(M) \to \Q/\Z$.
When $M$ is closed, $\lambda_M$ is nonsingular, for instance by using the same arguments as in Proposition~\ref{prop:Nonsingular} with $\Z$ and~$\Q$ in place of $\Lambda_S$ and~$Q$.
In particular, the $2$-fold branched cover along a link $L$ comes with a non-singular linking form $\lambda_{\Sigma_2(L)}$.
The next result proves Proposition~\ref{prop:LinkingFormIntro} from the introduction.

\begin{proposition}\label{prop:doublebranched}
If $L$ is weakly doubly slice, then $\lambda_{\Sigma_2(L)}$ hyperbolic.
Furthermore, for some finite group $G$ we have 
$TH_1(\Sigma_2(L))=G \oplus G.$
\end{proposition}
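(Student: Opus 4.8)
The plan is to mirror the proof of Theorem~\ref{thm:BlanchfieldStronglySlice}, replacing infinite cyclic covers by double branched covers and the coefficient pair $(\Lambda_S,Q)$ by $(\Z,\Q)$. Decompose $S^4=D^4_1\cup_{S^3}D^4_2$ along its equator, and suppose $L=\Sigma\cap S^3$ for a locally flat unknotted $2$-sphere $\Sigma\subset S^4$. Set $F:=\Sigma\cap D^4_1$ and $G:=\Sigma\cap D^4_2$, locally flat surfaces in $D^4$ with $\partial F=\partial G=L$, and let $V_F:=\Sigma_2(D^4,F)$ and $V_G:=\Sigma_2(D^4,G)$ be the double covers branched along $F$ and $G$ with respect to the $\Z/2$-cover sending every meridian to the generator. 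These are compact topological $4$-manifolds with $\partial V_F=\partial V_G=\Sigma_2(L)$, and the double cover of $S^4$ branched along $\Sigma$ decomposes as $V_F\cup_{\Sigma_2(L)}V_G$. The first key point, and the only genuinely new ingredient compared with the proof of Theorem~\ref{thm:BlanchfieldStronglySlice}, is that since $\Sigma$ is unknotted this branched cover is again $S^4$: writing $S^4=\partial(D^3\times D^2)=(S^2\times D^2)\cup_{S^2\times S^1}(D^3\times S^1)$ with $\Sigma=S^2\times\{0\}$, the double branched cover replaces $S^2\times D^2$ by itself, replaces $D^3\times S^1$ by its connected double cover (again $D^3\times S^1$), and reassembling yields $S^4$.

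With the decomposition $S^4=V_F\cup_{\Sigma_2(L)}V_G$ in hand, the proof is a line-by-line translation of that of Theorem~\ref{thm:BlanchfieldStronglySlice}. First, the Mayer--Vietoris sequence with $\Z$ coefficients, together with $H_2(S^4)=H_1(S^4)=0$, yields an isomorphism $H_1(\Sigma_2(L))\cong H_1(V_F)\oplus H_1(V_G)$; restricting to torsion subgroups (the torsion functor being additive on finitely generated abelian groups) gives $TH_1(\Sigma_2(L))\cong TH_1(V_F)\oplus TH_1(V_G)$, hence a splitting $TH_1(\Sigma_2(L))=\ker(T\iota_F)\oplus\ker(T\iota_G)$, where $T\iota_F$ and $T\iota_G$ denote the inclusion-induced maps to $TH_1(V_F)$ and $TH_1(V_G)$. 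Next, the same sequence with $\Q$ coefficients shows $H_2(\Sigma_2(L);\Q)\to H_2(V_F;\Q)$ is onto, hence $H_2(V_F,\partial V_F;\Q)\to H_1(\partial V_F;\Q)$ is injective; the sharp $3\times 3$ lemma~\cite[Lemma 2]{FHH} then yields, exactly as in Claim~\ref{claim:Inj}, exactness of
\[
TH_2(V_F,\partial V_F)\xrightarrow{\partial}TH_1(\partial V_F)\xrightarrow{T\iota_F}TH_1(V_F)\to 0 .
\]
One then assembles the commutative diagram built from Poincar\'e--Lefschetz duality $TH_2(V_F,\partial V_F)\cong TH^2(V_F)$, the Bockstein isomorphism $TH^2(V_F)\cong TH^1(V_F;\Q/\Z)/\ker(\operatorname{BS})$ and the evaluation isomorphism $TH^1(V_F;\Q/\Z)/\ker(\operatorname{BS})\xrightarrow{\cong}\Hom(TH_1(V_F),\Q/\Z)$ (an isomorphism by the argument of Proposition~\ref{prop:Nonsingular}, which simplifies over the PID $\Z$ since $H_1(V_F)$ automatically splits into free and torsion summands). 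Taking the adjoint of its left-hand column defines a pairing $\Bl_F\colon TH_1(V_F)\times TH_2(V_F,\partial V_F)\to\Q/\Z$ with $\lambda_{\Sigma_2(L)}(x,\partial z)=\Bl_F(T\iota_F x,z)$, and commutativity of the diagram forces $P:=\ker(T\iota_F)=\operatorname{im}(\partial)$ to satisfy $P=P^\perp$ with respect to $\lambda_{\Sigma_2(L)}$, exactly as in the proof of Theorem~\ref{thm:BlanchfieldStronglySlice}; the identical argument applies to $\ker(T\iota_G)$. Hence $\lambda_{\Sigma_2(L)}$ is hyperbolic.

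Finally, for the splitting $TH_1(\Sigma_2(L))=G\oplus G$ it suffices to use the general fact that any hyperbolic non-singular symmetric linking form on a finite abelian group $A=P_1\oplus P_2$ (with $P_i=P_i^\perp$) satisfies $P_1\cong P_2$: the restriction of the form to $P_1\times P_2$ is non-degenerate, hence a perfect pairing of finite abelian groups, so $P_1\cong\Hom(P_2,\Q/\Z)\cong P_2$. Applying this with $A=TH_1(\Sigma_2(L))$, $P_1=\ker(T\iota_F)$ and $P_2=\ker(T\iota_G)$ gives $TH_1(\Sigma_2(L))=G\oplus G$ with $G:=\ker(T\iota_F)$. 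The main obstacle is thus the identification of $\Sigma_2(S^4,\Sigma)$ with $S^4$ for unknotted $\Sigma$; everything downstream is the same diagram chase as in Theorem~\ref{thm:BlanchfieldStronglySlice}, simplified by the absence of an involution and by the vanishing of all relevant ranks. A minor point to keep in mind is that $F$ and $G$ may be disconnected when $L$ has several components, which is harmless for the branched-cover construction.
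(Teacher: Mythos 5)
Your proof is correct and follows the same overall architecture as the paper's: decompose $S^4$ along the equator, pass to double branched covers, identify $\Sigma_2(S^4,\Sigma)$ with $S^4$, and run the Mayer--Vietoris/metaboliser argument of Theorem~\ref{thm:BlanchfieldStronglySlice} over $(\Z,\Q)$. Two points differ in a worthwhile way. First, you justify the identification $\Sigma_2(S^4,\Sigma)\cong S^4$ via the decomposition $S^4=\partial(D^3\times D^2)=(S^2\times D^2)\cup(D^3\times S^1)$, correctly flagging that unknottedness of $\Sigma$ (not merely that it is a single sphere) is what makes this work; the paper only asserts the identification. Second, for the isomorphism $\ker(T\iota_F)\cong\ker(T\iota_G)$ you argue purely algebraically: the non-singular hyperbolic linking form restricts to a perfect pairing $P_1\times P_2\to\Q/\Z$, so $P_1\cong\Hom(P_2,\Q/\Z)\cong P_2$ by finiteness. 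The paper instead proves this topologically, via the chain of isomorphisms $TH_1(\Sigma_2(F))\cong TH_2(S^4,\Sigma_2(F))\cong TH_2(\Sigma_2(G),\Sigma_2(L))\cong TH^2(\Sigma_2(G))\cong\operatorname{Ext}^1_\Z(TH_1(\Sigma_2(G)),\Z)\cong TH_1(\Sigma_2(G))$ (long exact sequence, excision, duality, universal coefficients). Your route is shorter and isolates the algebraic content (any non-singular hyperbolic linking form on a finite group has isomorphic complementary metabolisers); the paper's route has the minor advantage of not needing to invoke non-singularity of $\lambda_{\Sigma_2(L)}$ at that stage, though the paper establishes non-singularity anyway, so both are available.
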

\begin{proof}
Assume that the ordered link~$L \subset S^3$ is a cross section of a $2$-sphere~$S \subset S^4$.
Decompose~$S^4$ as the union~$D^4_1 \cup_{S^3} D^4_2$ along its equatorial sphere, and define coloured bounding surfaces for~$L$ by setting~$F:=D^4_1 \cap S$ and~$G:=D^4_2 \cap S$.
It follows that $ \Sigma_2(S)=\Sigma_2(F) \cup_{\Sigma_2(L)} \Sigma_2(G)$, where $\Sigma_2(F)$ and $\Sigma_2(G)$ respectively denote the $2$-fold covers of $D^4$ branched along $F$ and $G$.
Since $S$ consists of a single $2$-sphere, we deduce that the 2-fold branched cover $\Sigma_2(S)$ is diffeomorphic to $S^4.$
A Mayer-Vietoris argument therefore shows that the inclusion induced maps induce the following isomorphism:
\begin{equation}
\label{eq:MVLambdaS}
\begin{pmatrix} \iota_F \\ \iota_G \end{pmatrix}  \colon H_1(\Sigma_2(L)) \xrightarrow{\cong} H_1(\Sigma_2(F)) \oplus H_1(\Sigma_2(G)).
\end{equation}
Recall that every abelian group splits as a free part and a torsion part.
Consequently the previous isomorphism from~\eqref{eq:MVLambdaS} restricts to an isomorphism on torsion summands:
\begin{align*}
\begin{pmatrix} T\iota_F \\ T\iota_G \end{pmatrix}  \colon TH_1(\Sigma_2(L)) \xrightarrow{\cong} TH_1(\Sigma_2(F)) \oplus TH_1(\Sigma_2(G)).
\end{align*}
Here, we wrote~$T\iota_F$ and~$T\iota_G$ for the restriction of~$\iota_F$ and~$\iota_G$ to the torsion subgroups.
The same argument as in the proof of Theorem~\ref{thm:BlanchfieldStronglySlice}, now shows that $TH_1(\Sigma_2(F))=\ker(T\iota_G)$ and $TH_1(\Sigma_2(G))=\ker(T\iota_F)$ are metabolisers for the linking form~$\lambda_{\Sigma_2(L)}$, and this shows that~$\lambda_{\Sigma_2(L)}$ is hyperbolic.

It remains to show that $TH_1(\Sigma_2(F))$ and $TH_1(\Sigma_2(G))$ are isomorphic abelian groups.
This will follow from the following sequence of isomorphisms
\begin{align*}
TH_1(\Sigma_2(F)) 
&\xleftarrow{\partial,\cong} TH_2(\Sigma_2(S),\Sigma_2(F)) \\
&\cong TH_2(\Sigma_2(G),\Sigma_2(L)) \\
&\cong TH^2(\Sigma_2(G)) \\
&\cong \operatorname{Ext}^1_\Z(TH_1(\Sigma_2(G)),\Z) \\
&\cong TH_1(\Sigma_2(G)).
\end{align*}
The first isomorphism comes from the long exact sequence of the pair (using that $\Sigma_2(S) \cong S^4$), the second is excision, the third is Poincar\'{e}-Lefschetz duality, the fourth is the universal coefficient theorem and the last follows from properties of the Ext functor.
\end{proof}

\section{The Seifert surface approach}
\label{sec:Seifert}

The goal of this section is to prove the general coloured version of Theorem~\ref{thm:SeifertMatrixIntro} from the introduction about Seifert matrices of doubly slice links.
In Subsections~\ref{sub:BoundarySeifertMatricesAlg} and~\ref{sub:BoundarySeifertSurfaces} we describe a coloured version of the boundary Seifert matrices of~\cite{KoSeifert} and how such a collection of matrices can be associated to a \emph{boundary} coloured link.
In Subsection~\ref{sub:DoublyMetabolic}, we prove that if a coloured link is doubly slice then it admits a \emph{doubly isotropic} coloured boundary Seifert matrix.
In Subsection~\ref{sub:Ccomplex}, we show how this result can be used to recover results about multivariable invariants.

\subsection{Boundary Seifert matrices}
\label{sub:BoundarySeifertMatricesAlg}
A matrix~$A$ over~$\Z$ is a called a \emph{Seifert matrix for a knot} if~$A-A^T$ is invertible. 
A matrix~$A$ over~$\Z$ is a \emph{Seifert matrix for a link} if~$A-A^T$ is congruent to the block sum of a nonsingular matrix and a~$0$ matrix.

\begin{definition}
\label{def:BoundarySeifertMatrix}
A matrix~$A=(A_{ij})_{1 \leq i , j \leq \mu}$ with entries given by a collection of~$\mu^2$ matrices~$A_{ij}$ over~$\Z$ is called a \emph{coloured boundary Seifert matrix} if for some~$a_i\in\mathbb{N}$ we have
\begin{enumerate}[leftmargin=*]\setlength\itemsep{0em}
\item for~$i=1,\ldots,\mu$, the matrix~$A_{ii}$ is a 
{size}~$a_i$ Seifert matrix for a link;
\item for~$i \neq j$, the matrix~$A_{ij}$ is an~$(a_i \times a_j)$-matrix with~$A_{ij}=A_{ji}^T$.
\end{enumerate}
If the~$A_{ii}$ are Seifert matrices for knots, then we call~$A$ a \emph{boundary Seifert matrix.}
A coloured boundary Seifert matrix~$A=(A_{ij})$ has an associated collection of~$\mu^2$ bilinear pairings between based free~$\Z$-modules~$H_i$ of rank~$a_i$, namely as~$ A_{ij} \colon H_i \times H_j \to \Z.$
\end{definition}

In the case where the~$A_{ii}$ are Seifert matrices for knots, Definition~\ref{def:BoundarySeifertMatrix} recovers the notion of a boundary Seifert matrix due to Ko~\cite[p. 668]{KoSeifert}.

\begin{definition}
\label{def:Metaboliser}
Let~$A=(A_{ij})_{1 \leq i , j \leq \mu}$ be a coloured boundary Seifert matrix with associated pairings~$A_{ij} \colon H_i \times H_j \to~\Z$.
\begin{enumerate}[leftmargin=*]\setlength\itemsep{0em}
\item An \emph{isotropic family} for~$A$ is a collection~$G=\{G_i\}_{1 \leq i \leq \mu}$ direct summands~$G_i\subset H_i$,
such that for all~$i,j$ the restriction of~$A_{ij}$ to~$G_i\times G_j$ is the trivial pairing.
\item If~$A$ admits two isotropic families~$G^\pm$ such that~$G_i^- \oplus G_i^+=H_i$ for each~$i$, then we call~$A$ \emph{doubly isotropic}.
\end{enumerate}
\end{definition}

\begin{remark}
\label{rem:MetabolicIsMetabolic}
We make some comments justifying the terminology.
\begin{enumerate}[leftmargin=*]\setlength\itemsep{0em}
\item An \emph{isotropic submodule} for a pairing~$A \colon \Z^{2g} \times \Z^{2g} \to \Z$ is a direct summand~$G \subset H$ such that the restriction of~$A$ to~$G$ is the trivial pairing;~$G$ is a \emph{metaboliser} if, additionally,~$G$ is half rank. 
If~$G=\lbrace G_i \rbrace_{1 \leq i \leq \mu}$ is an isotropic family for a coloured boundary Seifert matrix~$A=(A_{ij})$, then~$G_1 \oplus \ldots \oplus G_\mu$ is an isotropic submodule for~$A$.
\item A pairing~$A \colon \Z^{2g} \times \Z^{2g} \to \Z$ is called \emph{hyperbolic} if it admits two metabolisers~$G^{\pm}$ such that~$H=G^-\oplus G^+$; if the~$G^{\pm}$ are merely isotropic, then we call~$A$ \emph{doubly isotropic}.
Note that the diagonal blocks of a doubly isotropic matrix might in general not be of the same size; it is for this reason we chose to introduce the new terminology `doubly isotropic'.
If~$G^{\pm}=\lbrace G_i^{\pm} \rbrace_{1 \leq i \leq \mu}$ are isotropic families for a coloured boundary Seifert matrix~$A=(A_{ij})$, as in the second item of Definition~\ref{def:Metaboliser}, then~$A$ is doubly isotropic.
\item We observe that if a boundary Seifert matrix is doubly isotropic then for each~$i$, the matrix~$A_{ii}$ is in fact hyperbolic. To see this, consider that if a pairing~$A$ on~$H$ is doubly isotropic with respect to a decomposition~$G^-\oplus G^+\cong H$, then so is~$A-A^T$. But by definition~$A-A^T$ is nonsingular when~$A$ comes from a Seifert matrix for a knot. An isotropic summand of a nonsingular pairing can be at most half rank. As the ranks of~$G^+$ and~$G^-$ add up to the rank of~$H$, they each have exactly half rank and so~$A$ is hyperbolic.
\item If a boundary Seifert matrix $A$ is doubly isotropic, via $G_1^{\pm},\ldots, G_n^{\pm}$ then, as an integral matrix, it is hyperbolic, with orthogonal metabolisers $G^{\pm}:=G_1^\pm \oplus \ldots \oplus G_n^\pm$.
To see, this combine the first and third items above.
\item
According to our convention, every coloured boundary Seifert matrix admits an isotropic family, namely the \emph{trivial collection} $G=\{G_i\}$ where $G_i=\{0\}\subset H_i$ for all $i$. Complementary isotropic families arise naturally in the study of doubly slice links (see Theorem~\ref{thm:HyperbolicSeifert}), and in practice it is certainly possible that one of these naturally occurring families is the trivial collection.
\end{enumerate}
\end{remark}

\subsection{Boundary Seifert surfaces}
\label{sub:BoundarySeifertSurfaces}

In this subsection, we introduce the natural coloured generalisation of the notion of a boundary link.
We then show how to obtain a coloured boundary Seifert matrix for this generalisation.

\begin{definition}
\label{def:BoundaryLink}
A~$\mu$-coloured link~$L=L_1 \cup \ldots \cup L_\mu$ is \emph{boundary} if there are disjoint Seifert surfaces~$F_1,\ldots,F_\mu$
containing no closed components (but possibly disconnected),
such that $\partial F_i=L_i$. 
The collection~$F=F_1 \sqcup \ldots \sqcup F_\mu$ of Seifert surfaces is called a \emph{boundary Seifert surface for~$L$}.
\end{definition}

For~$\mu=n$, Definition~\ref{def:BoundaryLink} recovers the usual notion of a boundary link, while for~$\mu=1$ in fact any oriented link satisfies the definition.
In our context, the key observation is that when a coloured link is doubly slice, then it is boundary in the sense above.

\begin{remark}As the reader familiar with boundary links might expect, Definition \ref{def:BoundaryLink} has a more algebraic characterisation as follows. Let $X_L$ be a $\mu$-coloured link together with a homomorphism $\phi\colon\pi_1(X_L)\to F\langle s_1,\dots,s_\mu\rangle$, the free group on $\mu$ letters, that for each $i$ sends every $i$-coloured meridian to $s_i$. This homomorphism determines a map $X_L\to\vee_{i=1}^\mu(S^1)_i$, up to homotopy. For each $i$ choose a regular point $x_i\in(S^1)_i$, away from the wedge point, and take a transverse preimage. This will determine a boundary Seifert surface $F_\phi$ for $L$.

Conversely, given a boundary Seifert surface $F$ write $m_F\colon X_L\to \vee_i (S^1)_i$ for the continuous map given by
\[
\operatorname{int}(F_i)\times[-1,1]\xrightarrow{\operatorname{pr}_2}[-1,1]\xrightarrow{e} (S^1)_i,
\]
where $e(x):=\exp(\pi i(x+1))$, and sending $X_L\sm  (\operatorname{int}(F_i)\times[-1,1])$ to the basepoint. The induced map on fundamental groups determines a homomorphism~$\phi$ of the type described above, and such that $F=F_\phi$ when we choose the points $x_i$ to be $m_F(F_i\times\{0\})$.
\end{remark}

Next, we describe how to associate a coloured boundary Seifert matrix (as in Definition~\ref{def:BoundarySeifertMatrix}) to a boundary~$\mu$-coloured link.
Let~$F=F_1\sqcup \ldots \sqcup F_\mu$ be a boundary Seifert surface for a boundary~$\mu$-coloured link~$L=L_1 \cup \ldots \cup L_\mu$.
Pushing curves off~$F$ in the negative normal direction then produces a homomorphism~$i_- \colon H_1(F) \to H_1(S^3 \setminus F)$. 
The assignment
$$\theta(x,y):=\lk(i_-(x),y)$$ gives rise to a pairing on~$H_1(F)$ and to a \textit{coloured boundary Seifert matrix for~$L$} (the details are identical to \cite[p.670]{KoSeifert} where the~$\mu=n$ case is treated).
Since~$H_1(F)$ decomposes as the direct sum of the~$H_1(F_i)$, by choosing bases for the surfaces~$F_i$, the restriction of~$\theta$ to~$H_1(F_i) \times H_1(F_j)$ produces matrices~$A_{ij}$. 
For~$i \neq j$, these matrices satisfy~$A_{ij}=A_{ji}^T$, while ~$A_{ii}$ is a Seifert matrix for the sublink~$L_i$. 
It follows that~$A$ is a coloured boundary Seifert matrix for~$L$ in the sense of Definition~\ref{def:BoundarySeifertMatrix}.

\subsection{Doubly slice links have doubly isotropic Seifert matrices}
\label{sub:DoublyMetabolic}

In this subsection, we prove the general coloured version of Theorem~\ref{thm:SeifertMatrixIntro} from the introduction: we show that doubly slice coloured links have doubly isotropic coloured boundary Seifert matrices.

We begin with a technical lemma that will help us avoid a thorny issue related to the fact that our definition of boundary Seifert surfaces does not permit closed components.

\begin{lemma}\label{lem:partitionargument}
Suppose $M\subset S^4$ is an embedded $3$-ball meeting an equatorial $S^3$ nontrivially and transversely. Write $M\cap S^3=F\cup S$, where $S$ is closed and $F$ has no closed components. Then there exist submanifolds $M_\pm\subset M$ such that $M_+\cap M_-=F$ and $M=M_+\cup_F M_-$.
\end{lemma}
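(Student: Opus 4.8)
The plan is to cut $M$ along the surface $Q:=M\cap S^3$ and run a checkerboard-type $2$-colouring of the resulting pieces, but with the closed components $S$ treated as ``invisible walls'' across which the two colours get reglued. First I would record the geometry: transversality of $M$ (and of $\partial M$) with $S^3$, together with triviality of the normal bundle of $S^3$ in $S^4$, shows that $Q=M\cap S^3$ is a compact, properly embedded, two-sided surface in the $3$-ball $M$, whose boundary $\partial Q=\partial F=\partial M\cap S^3$ (the components of $S$ being closed). Since $M$ is a ball, $H^1(M;\Z/2\Z)=0$, so the mod-$2$ Poincar\'e dual of every component $Q_j$ of $Q$ vanishes; hence every loop in $M$ crosses $Q_j$ transversely in an even number of points. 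In particular each $Q_j$ separates $M$, and the two co-oriented local sides of $Q_j$ at any point lie in distinct components of $M\sm Q$.

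Next I would define the two sides. Fix a component $U_0$ of $M\sm Q$; for each component $U$ of $M\sm Q$, choose a path in $M$ from $U_0$ to $U$ transverse to $Q$ and let $\varepsilon(U)\in\Z/2\Z$ be the number of its intersection points with $F$, taken modulo $2$. This is independent of the path: the difference of two such counts is the number of times a loop crosses $F$, which is even because, by the previous paragraph, the loop crosses each component of $F$ an even number of times. Let $M_+$ and $M_-$ be the closures in $M$ of the unions of those components $U$ with $\varepsilon(U)=0$ and $\varepsilon(U)=1$ respectively. These are compact codimension-zero submanifolds of $M$ (with corners along $\partial F$, which may be rounded if a smooth decomposition is wanted), and $M_+\cup M_-=M$ because $M\sm Q$ is dense in $M$.

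Finally I would verify that $M_+\cap M_-=F$. A point $x$ in a component $Q_j$ of $Q$ lies in $M_+\cap M_-$ precisely when the two co-oriented local sides of $Q_j$ at $x$ — which, as noted, are different components of $M\sm Q$ — are assigned different values of $\varepsilon$. Passing from one local side of $Q_j$ to the other changes the $F$-crossing parity by $1$ if $Q_j\subseteq F$ and by $0$ if $Q_j\subseteq S$. Hence $\varepsilon$ differs across the two sides of each component of $F$ and agrees across the two sides of each component of $S$, giving $M_+\cap M_-=F$ and $M=M_+\cup_F M_-$, as required. (Equivalently, $\varepsilon$ is the $2$-colouring of the dual graph of the decomposition of $M$ along $Q$ after contracting the edges dual to $S$; that graph is a tree exactly because every component of $Q$ separates $M$.)

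The step I expect to be the crux is the well-definedness of $\varepsilon$, and it is also what makes the lemma non-obvious: the naive decomposition $M=(M\cap D^4_1)\cup(M\cap D^4_2)$, for the two $4$-balls $D^4_1,D^4_2$ bounded by $S^3$, has its two pieces meeting along the whole of $Q$ rather than along $F$, since the closed components of $S$ — spheres and closed higher-genus surfaces floating inside $M$ — also separate $M$. It is precisely the fact that $\varepsilon$ counts only $F$-crossings, combined with $H^1(M;\Z/2\Z)=0$ forcing loops to meet each individual component of $Q$ evenly, that lets us reglue the two pieces across $S$ while keeping $F$ as the separating wall.
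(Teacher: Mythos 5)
Your proof is correct, and its overall strategy matches the paper's: both define $M_\pm$ by the parity of the number of $F$-crossings of a path from a complementary region to a fixed reference region, and both then check that this parity flips across $F$. The genuine difference is in how the well-definedness of the parity --- which you rightly identify as the crux --- is established. The paper places the reference region on $\partial M\cong S^2$, pushes the comparison loop into $\partial M$ by a homotopy rel endpoints, and argues combinatorially that the resulting arc meets $L=\partial F$ evenly because the regions $X_\pm=D^4_\pm\cap\partial M$ alternate along it. You instead stay in the interior: since $M$ is a ball, $H^1(M;\Z/2)=0$, so each two-sided, properly embedded component of $Q=M\cap S^3$ is met evenly by every transverse loop, and summing over the components of $F$ gives the required parity at once. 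Your route avoids the regular homotopy to the boundary and yields the stronger per-component statement, at the cost of a small duality input; the paper's argument is more hands-on and essentially two-dimensional. A further, cosmetic, difference is that you cut $M$ along all of $Q=F\cup S$ and then note that $\varepsilon$ is constant across $S$, whereas the paper cuts along $F$ only; the resulting decompositions coincide. (Only your closing parenthetical overreaches slightly: separation of $M$ by each component of $Q$ gives that the dual graph is bipartite, which is all you use; the tree claim is not needed.)
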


\begin{proof} Use the equator $S^3$ to write a decomposition $S^4=D_+^4\cup_{S^3}D^4_-$ into hemispheres. Write $L=\partial F\subset \partial M\cong S^2$. Observe that $S^2\cong \partial M=X_+\cup_L X_-$ where $X_\pm:=D_\pm^4\cap \partial M$.
Note that as $\partial M\cong S^2$ meets $S^3$ transversely, no connected component of $X_+\subset S^2$ shares a boundary component with any another component of $X_+$.
For each connected component of $M\sm F$ and of $\partial M\sm L$ choose a basepoint. Let $R$ be a connected component of $M\sm F$ and $T$ be a connected component of $\operatorname{int}(X_+)$. Pick an embedded path $\gamma$ from the basepoint of $R$ to the basepoint of $T$ that is transverse to $F$. Define $n(\gamma,T):=|\gamma\cap F|\in\Z$. We claim this number modulo 2 is independent of $\gamma$ and $T$. To see this, suppose $\gamma'$ and $T'$ are two other such choices. The concatenation $\gamma''$ of $\gamma^{-1}$ and $\gamma'$ is an embedded path from $T$ to $T'$. By a regular homotopy relative to the endpoints, move $\gamma''$ to an embedded path $\sigma\subset\partial M$ that is transverse to $L$. We calculate modulo 2
\[
n(\gamma,T)+n(\gamma',T')\equiv |\gamma''\cap F|\equiv |\sigma\cap L|\pmod 2.
\]
To compute the latter, define $\sigma_\pm:=\sigma\cap X_\pm$ and consider the decomposition $\sigma=\sigma_+\cup\sigma_-$. Recall no region of $X_+$ borders another region of $X_+$, so the division of $\sigma$ must alternate from $+$ to $-$. As the division of $\sigma$ starts and ends in a $+$ region, we deduce $|\sigma\cap L|$ is even.
This shows $n(\gamma, T)$ modulo 2 is independent of $\gamma$ and $T$. We denote this quantity by $n(R)\in\{0,1\}$. Define
\[
M_+:=\operatorname{cl}\left(\textstyle{\bigcup_{n(R)=0}R}\right),\qquad M_-:=\operatorname{cl}\left(\textstyle{\bigcup_{n(R)=1}R}\right).
\]
It is clear that $M_+\cap M_-=F$ and $M=M_+\cup_F M_-$.

\end{proof}

With this lemma we are able to prove the following.

\begin{theorem} 
\label{thm:HyperbolicSeifert}
Every $\mu$-coloured doubly slice link admits a doubly isotropic coloured boundary Seifert matrix.
\end{theorem}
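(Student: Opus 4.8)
The plan is to extract a boundary Seifert surface from the double slicing and to read off two complementary isotropic families from a Mayer--Vietoris decomposition of the bounding $3$-balls. Since $L$ is doubly slice with respect to the given $\mu$-colouring, there is a $\mu$-component unlink $S=S_1\sqcup\dots\sqcup S_\mu\subset S^4$ of $2$-spheres with $L_i=S_i\cap S^3$; being an unlink of $2$-spheres, $S$ bounds pairwise disjoint locally flat $3$-balls $M_1,\dots,M_\mu\subset S^4$. First I would isotope $\bigsqcup_i M_i$ so that each $M_i$ meets the equatorial $S^3$ transversely, so that $M_i\cap S^3$ is a surface with $\partial(M_i\cap S^3)=S_i\cap S^3=L_i$; writing it as $F_i\cup S_i'$ with $S_i'$ closed and $F_i$ containing no closed components, we get $\partial F_i=L_i$. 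As the $F_i$ lie in the disjoint $M_i$, the surface $F:=F_1\sqcup\dots\sqcup F_\mu$ is a boundary Seifert surface for $L$, and a choice of basis of $H_1(F)=\bigoplus_i H_1(F_i)$ produces a coloured boundary Seifert matrix $A=(A_{ij})$ for $L$ as in Subsection~\ref{sub:BoundarySeifertSurfaces}. What remains is to produce complementary isotropic families.

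Next, for each $i$ I would apply Lemma~\ref{lem:partitionargument} to $M_i$ to obtain $3$-manifolds $(M_i)_\pm$ with $(M_i)_+\cap(M_i)_-=F_i$ and $M_i=(M_i)_+\cup_{F_i}(M_i)_-$, where moreover $(M_i)_+$ and $(M_i)_-$ approach $F_i$ from opposite sides of $S^3$. Since $M_i\cong B^3$ has $H_1(M_i)=H_2(M_i)=0$, the Mayer--Vietoris sequence of this decomposition yields an isomorphism $H_1(F_i)\xrightarrow{\cong}H_1((M_i)_+)\oplus H_1((M_i)_-)$; hence $G_i^-:=\ker\!\big(H_1(F_i)\to H_1((M_i)_+)\big)$ and $G_i^+:=\ker\!\big(H_1(F_i)\to H_1((M_i)_-)\big)$ are complementary free direct summands with $G_i^-\oplus G_i^+=H_1(F_i)$. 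After adjusting the chosen basis of $H_1(F_i)$ to be compatible with this splitting, the families $G^\pm:=\{G_i^\pm\}_{1\le i\le\mu}$ become the candidate isotropic families for $A$ in the sense of Definition~\ref{def:Metaboliser}.

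The final step is to check that $A_{ij}$ restricts to the trivial pairing on $G_i^\varepsilon\times G_j^\varepsilon$ for each $\varepsilon$ and all $i,j$; by symmetry it suffices to treat $\varepsilon=-$. Given $x\in G_i^-$ and $y\in G_j^-$, choose $2$-chains $\Sigma_x\subset(M_i)_+$, $\Sigma_y\subset(M_j)_+$ with $\partial\Sigma_x=x$ and $\partial\Sigma_y=y$. Then $A_{ij}(x,y)=\lk_{S^3}(i_-(x),y)$, and, using the hemispherical decomposition $S^4=D^4_+\cup_{S^3}D^4_-$ and the fact that $(M_i)_+$ meets a neighbourhood of $F_i$ on the $D^4_+$-side, this linking number can be evaluated as an algebraic intersection number in $D^4_+$ between pushed-off copies of $\Sigma_x$ and $\Sigma_y$. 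When $i\neq j$ these copies are disjoint because $M_i\cap M_j=\emptyset$; when $i=j$ the copy bounded by $i_-(x)$ may furthermore be displaced off $(M_i)_+$ in the normal direction to $F_i$ picked out by the negative push-off, so it is again disjoint from $\Sigma_y\subset(M_i)_+$. In either case the intersection number, and hence $A_{ij}(x,y)$, vanishes; this is the coloured analogue of the computations of Sumners~\cite{Sumners} for knots and of Ko~\cite{KoSeifert} for $\mu=n$. This shows $G^-$ and $G^+$ are isotropic families with $G_i^-\oplus G_i^+=H_1(F_i)$, i.e.\ $A$ is doubly isotropic.

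I expect the main obstacle to be this last linking-number computation, and specifically the bookkeeping forced by the closed components $S_i'$: since $(M_i)_+$ need not lie entirely in $D^4_+$ (it can reach across $S^3$ through closed surfaces it contains), one must argue that only a collar of $F_i$ inside $(M_i)_+$ is relevant to the computation, which is precisely what Lemma~\ref{lem:partitionargument} is designed to make possible. By contrast, the colour bookkeeping and the use of disjointness of the $M_i$ for the off-diagonal terms are new features of the coloured setting but present no real difficulty once the $3$-ball decomposition is in place.
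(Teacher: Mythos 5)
Your proposal is correct and follows essentially the same route as the paper's proof: extract the boundary Seifert surface from disjoint $3$-balls bounded by the unlink, split each ball via Lemma~\ref{lem:partitionargument}, obtain the complementary summands $G_i^\pm$ from the Mayer--Vietoris isomorphism $H_1(F_i)\cong H_1(M_i^+)\oplus H_1(M_i^-)$, and verify isotropy by computing the Seifert pairing as an intersection count of surfaces in $S^4$ pushed off the $M_i^\pm$, exactly as in Ko's argument. The subtlety you flag at the end --- that $(M_i)_+$ need not lie in a single hemisphere, so only a collar of $F_i$ is relevant to the linking-number computation --- is precisely what Lemma~\ref{lem:partitionargument} together with the citation of Ko's proof is there to handle in the paper as well.
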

\begin{proof}
Let $M_1,\dots,M_\mu\subset S^4$ be disjoint $3$-balls so that, for each $i$, the surface $M_i\cap S^3=F_i\cup S_i\subset S^3$ is such that $\partial F_i=L_i$ and $S_i$ is closed. For each $i$, use Lemma \ref{lem:partitionargument} to obtain a division~$M_i=M_i^+\cup_{F_i} M_i^-$ so that $M_i^+\cap M_i^-=F_i$.
Exactly as in~\cite[proof of Lemma 3.3]{KoSeifert}, we argue that
$B^+_i:=\ker(H_1(F_i)\to H_1(M^+_i))$
is isotropic (the argument for the ``$-$'' version is entirely similar). For each~$i$, choose any splitting~$H_i\cong B^+_i\oplus C_i^+$, where
$H_i:=H_1(F_i)$.
Then if~$\alpha_i\in B_i^+$ and~$\beta_j\in B_j^+$ the value~$A_{ij}(\alpha_i,\beta_j)$ may be computed as follows. Let~$P_i\subset M_i^+$ and~$Q_j\subset M_j^+$ be surfaces bounded by~$\alpha_i$ and~$\beta_j$. The value of~$A_{ij}(\alpha_i,\beta_j)$ is then the algebraic intersection count in~$S^4$ between~$P_i$ and a push-off of~$Q_j$ in the positive normal direction of~$M_j^+$. 
When~$i\neq j$ the fact that~$M_i^+$ is disjoint from $M_j^+$ (as they are subsets of distinct $3$-balls $M_i$ and $M_j$) implies this intersection is empty. When~$i=j$, consider that a positive push-off of $M_i^+$ does not meet $M_i^+$ as the normal bundle is trivial. Hence~$P_i$ does not intersect the push-off of~$Q_j$. Thus we have shown each of $\{B_i^+\}$ and $\{B_i^-\}$ is an isotropic family.

Now for each~$i$, there is a Mayer-Vietoris sequence
\[
0=H_2(M_i)\to H_1(F_i\cup S_i)\to H_1(M^+_i)\oplus H_1(M^-_i)\to H_1(M_i)=0,
\]
 so the inclusions induce isomorphisms
 $H_1(F_i)\cong H_1(M^+_i)\oplus H_1(M^-_i)$. 
Under this isomorphism, we see that now~$B_i^+\cong H_1(M_i^-)$ and~$B_i^-\cong H_1(M_i^+)$, so that $H_1(F_i)\cong B^+_i\oplus B^-_i$. Thus the coloured boundary Seifert matrix $A$, corresponding to $\bigcup_iF_i$, is doubly isotropic.
\end{proof}
\color{black}

Combining Proposition \ref{thm:HyperbolicSeifert}
with Remark~\ref{rem:MetabolicIsMetabolic}, items iii) and iv), we obtain:

\begin{corollary}\label{cor:hyperbolicseifertmatrix}
A strongly doubly slice link admits a boundary Seifert matrix~$A$ such that each matrix~$A_{ii}$ is hyperbolic, and $A$ is hyperbolic as an integral matrix.
\end{corollary}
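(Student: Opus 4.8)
The plan is to obtain this as the $\mu=n$ case of Theorem~\ref{thm:HyperbolicSeifert} together with the bookkeeping recorded in Remark~\ref{rem:MetabolicIsMetabolic}. First I would observe that an $n$-component strongly doubly slice link $L$ is the same thing as an $n$-coloured doubly slice link, where the colouring assigns each component its own colour: the $n$ unlinked $2$-spheres realising the strong double slicing induce precisely this ordered partition of the components. Since every colour class $L_i$ is then a single knot, the coloured boundary Seifert matrix $A=(A_{ij})$ produced by Theorem~\ref{thm:HyperbolicSeifert} has each diagonal block $A_{ii}$ equal to a Seifert matrix for the knot $L_i$; in particular $A_{ii}-A_{ii}^T$ is unimodular, so $A$ is a \emph{boundary} Seifert matrix in the sense of Definition~\ref{def:BoundarySeifertMatrix}, not merely a coloured one.

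Next I would feed the doubly isotropic conclusion of Theorem~\ref{thm:HyperbolicSeifert} into Remark~\ref{rem:MetabolicIsMetabolic}. The theorem provides isotropic families $G^\pm=\{G_i^\pm\}_{1\le i\le n}$ for $A$ with $G_i^-\oplus G_i^+=H_i$ for every $i$. By item iii) of Remark~\ref{rem:MetabolicIsMetabolic}, the nonsingularity of $A_{ii}-A_{ii}^T$ forces each $G_i^\pm$ to be exactly half the rank of $H_i$, so each $A_{ii}$ is hyperbolic. By item iv) of the same remark, the direct sums $G^{\pm}:=G_1^{\pm}\oplus\dots\oplus G_n^{\pm}$ are complementary metabolisers for $A$ regarded as an integral bilinear pairing on $H_1\oplus\dots\oplus H_n$, so $A$ is hyperbolic as an integral matrix. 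Combining these two statements yields the corollary.

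There is no substantive difficulty in this argument; the only point requiring care is the identification in the first paragraph, namely checking that for the $n$-colouring the output of Theorem~\ref{thm:HyperbolicSeifert} genuinely satisfies the hypotheses (Seifert matrices for knots on the diagonal) needed for items iii) and iv) of Remark~\ref{rem:MetabolicIsMetabolic} to apply, rather than the weaker hypotheses available for a general coloured boundary Seifert matrix. One might also remark, for completeness, that strongly doubly slice links are indeed boundary links, so that a boundary Seifert surface and hence the matrix $A$ exist in the first place.
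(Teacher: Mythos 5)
Your proposal is correct and follows exactly the paper's route: the corollary is obtained by applying Theorem~\ref{thm:HyperbolicSeifert} in the case $\mu=n$ (where each colour class is a single knot, so the output is a genuine boundary Seifert matrix) and then invoking items iii) and iv) of Remark~\ref{rem:MetabolicIsMetabolic}. The care you take in checking that the diagonal blocks are Seifert matrices for knots, so that the nonsingularity of $A_{ii}-A_{ii}^T$ upgrades the isotropic summands to metabolisers, is precisely the content the paper delegates to that remark.
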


\subsection{Doubly slice links have doubly isotropic~$C$-complex matrices}
\label{sub:Ccomplex}

In this subsection, we show how {Theorem}~\ref{thm:HyperbolicSeifert} gives information about abelian invariants of doubly slice coloured links, thus recovering a subset of the results from Theorems~\ref{thm:LowerBound4D} and~\ref{thm:BlanchfieldStronglySlice}.
To achieve this, we first recall the notion of a~$C$-complex~\cite{Cooper,CooperThesis} and the 3-dimensional interpretation of the multivariable signature~\cite[Section 2]{CimasoniFlorens}.
\medbreak

\begin{definition}
\label{def:CComplex}
A \emph{$C$-complex} for a~$\mu$-coloured link~$L=L_1\cup\dots\cup L_\mu$ is a union~$F=F_1\cup\dots\cup F_\mu$ of surfaces in~$S^3$ such that:
\begin{enumerate}[leftmargin=*]\setlength\itemsep{0em}
\item$F_i$ is a Seifert surface for the sublink~$L_i$ (possibly disconnected but with no closed components);
\item$F_i\cap F_j$ is either empty or a union of clasps for all~$i\neq j$;
\item$F_i\cap F_j\cap F_k$ is empty for all~$i,j,k$ pairwise distinct.
\end{enumerate}
\end{definition}

Every coloured link admits a~$C$-complex~\cite[Lemma 1]{CimasoniPotential}.
Note that for~$\mu=1$, a~$C$-complex for a~$1$-coloured link~$L$ is a Seifert surface for~$L$.
Furthermore, a boundary Seifert surface for a boundary coloured  link~$L$ (in the sense of Definition~\ref{def:BoundaryLink}) is a~$C$-complex for~$L$.

\begin{figure}[h!tb]
\begin{tikzpicture}
\begin{scope}[scale=0.8]
\tikzset{
    partial ellipse/.style args={#1:#2:#3}{
        insert path={+ (#1:#3) arc (#1:#2:#3)}
    }
}

% the interiors
\draw[thick, draw opacity = 0, black, fill=white, fill opacity =1] (3,0) [partial ellipse=180:270:3.9 and 1.5] -- (3,0);
\draw[thick, draw opacity = 0, cyan, fill=cyan, fill opacity =0.4] (3,0) [partial ellipse=180:270:3.9 and 1.5] -- (3,0);
\draw[line width =0.001mm, draw opacity = 0.4, cyan] (0.7,0) -- (3,0);
\draw[ thick, draw opacity = 0, black, fill=white, fill opacity =1] (-3,0) [partial ellipse=-110:90:3.7 and 1.3];
\draw[ thick, draw opacity = 0, black, fill=cyan, fill opacity =0.9] (-3,0) [partial ellipse=-110:90:3.7 and 1.3];
\draw[thick, draw opacity = 0, black, fill=white, fill opacity =1] (3,0) [partial ellipse=90:180:3.9 and 1.5] -- (3,0);
\draw[thick, draw opacity = 0, black, fill=cyan, fill opacity =0.4] (3,0) [partial ellipse=90:180:3.9 and 1.5] -- (3,0);

%the blue boundary
\draw[thick, black] (3,0) [partial ellipse=90:180.8:3.9 and 1.5];
\draw[ thick, black, dashed, draw opacity = 0.5] (3,0) [partial ellipse=182:213:3.9 and 1.5];
\draw[thick, black] (3,0) [partial ellipse=216.5:270:3.9 and 1.5];

%the red boundary
\draw[thick] (-3,0) [partial ellipse=-110:0.9:3.7 and 1.3];
\draw[thick, dashed, draw opacity = 0.5] (-3,0) [partial ellipse=3:40:3.7 and 1.3];
\draw[ thick] (-3,0) [partial ellipse=43:90:3.7 and 1.3];

\draw[very thick, black] (-3.9,-0.5)
to [out=-26, in=-120] (-2.7,-.2) 
to [out=50, in=-180] (-0.9,0) 
to [out=0, in=-180] (0.7,0) 
to [out=0, in=160] (1.9,0.2)
to [out=-20, in=190] (2.5,-0.1)
to [out=10, in=-180] (3,0);
\end{scope}

\end{tikzpicture}
\caption{An arc, forming part of a 1-cycle, running through a clasp intersection.}
\label{fig:clasp}
\end{figure}

Given a~$C$-complex~$F$ for a coloured link~$L$ and a sequence~$\eps=(\eps_1,\dots,\eps_\mu)$ of~$\pm 1$'s, define a map~$i^\eps\colon H_1(F)\to H_1(S^3\setminus F)$ as follows. Any homology class in~$H_1(F)$ can be represented by an oriented cycle~$x$ which behaves as
illustrated in Figure~\ref{fig:clasp} whenever crossing a clasp. 
Define~$i^\eps([x])$ as the class of the~$1$-cycle obtained by pushing~$x$ in the~$\eps_i$-normal direction off~$F_i$ for~$i=1,\dots,\mu$.
The \emph{generalised Seifert form} associated to~$F$ and~$\varepsilon$ is then defined as
\begin{align*}
\alpha^\eps\colon H_1(F)\times H_1(F)&\to\Z \\
\quad(x,y)&\mapsto\ell k(i^\eps(x),y).
\end{align*}
Fix a basis of~$H_1(F)$ and denote by~$A^\eps$ the matrix of~$\alpha^\eps$. 
The resulting~$2^\mu$ matrices are called \emph{generalised Seifert matrices} for the coloured link~$L$.
Note that since~$A^{-\eps}=(A^\eps)^T$ for all~$\eps$, there are only~$2^{\mu-1}$ matrices to calculate in practice.
\begin{definition}
\label{def:CcomplexMatrix}
Given a~$\mu$-coloured link~$L$, a choice of~$C$-complex~$F$, and a choice of basis for~$H_1(F)$, the \emph{$C$-complex matrix} is
$$ H(t_1,\ldots,t_\mu)=\sum_\eps\prod_{i=1}^\mu(1-t_i^{\eps_i})\,A^\eps$$
where the~$A^\varepsilon$ are the generalised Seifert matrices described above.
\end{definition}

When~$\mu=1$, the definition of a~$C$-complex is that of a Seifert surface, and a~$C$-complex matrix is one of the form~$(1-t^{-1})A+(1-t)A^T$, where~$A$ is a Seifert matrix for the oriented link~$L$.

The original definition of the multivariable signature~$\sigma_L \colon \mathbb{T}_*^\mu \to \Z$ was in terms of~$C$-complex matrices~\cite[Subsection 2.2]{CimasoniFlorens}, not the way it is stated in Definition \ref{def:MultivariableSignature1}.
The connection between the definition we gave and the original definition is as follows, and was proved in~\cite[Proposition~1.1]{ConwayNagelToffoli}.

\begin{proposition}
\label{prop:CComplexMatrix}
Let~$H:=H_F(t_1,\ldots,t_\mu)$ be an~$(n \times n)$~$C$-complex matrix coming from the choice of a~$C$-complex~$F$ for a coloured link~$L$. Write~$\beta_0(F)$ for the number of connected components of~$F$. For any~$(\omega_1,\ldots,\omega_\mu) \in \mathbb{T}^\mu_*$, the multivariable signature and nullity of~$L$ can be described as
\begin{align*}
\sigma_L(\omega_1,\ldots,\omega_\mu)&=\operatorname{sign}(H_F(\omega_1,\ldots,\omega_\mu)), \\
\eta_L(\omega_1,\ldots,\omega_\mu)&=\operatorname{null}(H_F(\omega_1,\ldots,\omega_\mu))+\beta_0(F)-1.
\end{align*}
\end{proposition}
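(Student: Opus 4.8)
\textbf{Proof proposal for Proposition~\ref{prop:CComplexMatrix}.}

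The plan is to reconcile the $4$-dimensional definition of the multivariable signature (Definition~\ref{def:MultivariableSignature1}) with the $3$-dimensional definition in terms of $C$-complex matrices, and likewise for the nullity. Since the statement is attributed to~\cite[Proposition~1.1]{ConwayNagelToffoli}, the approach is to reduce to that reference by exhibiting a coloured bounding surface built directly out of the $C$-complex $F$, and then compute both invariants on its exterior.

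First I would push the $C$-complex $F=F_1\cup\dots\cup F_\mu$ into $D^4$: isotope each $F_i$ rel boundary into the interior of a collar $S^3\times[0,1]\subset D^4$ so that the clasp intersections $F_i\cap F_j$ become honest transverse double points in $D^4$, producing a coloured bounding surface $F'\subset D^4$ for $L$ in the sense of Definition~\ref{def:colouredBoundingSurface}. The key point is then to identify, for $\boldsymbol{\omega}\in\mathbb{T}_*^\mu$, the twisted intersection form $\lambda_{W_{F'},\C^{\boldsymbol{\omega}}}$ on $H_2(W_{F'};\C^{\boldsymbol{\omega}})$ with the Hermitian form presented by $H_F(\boldsymbol{\omega})$. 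This is a Mayer--Vietoris computation: $W_{F'}$ is built from the exterior of $F$ in $S^3$ crossed with an interval, glued along copies of $F\times(-\varepsilon,\varepsilon)$, and a standard handle-decomposition / half-lives-half-dies argument shows $H_2(W_{F'};\C^{\boldsymbol{\omega}})$ is presented by the matrix $\sum_\varepsilon\prod_i(1-\omega_i^{\varepsilon_i})A^\varepsilon = H_F(\boldsymbol{\omega})$, with the intersection pairing given by $H_F(\boldsymbol{\omega})+\overline{H_F(\boldsymbol{\omega})}^T$ up to the conventions already fixed so that it equals (a Hermitianisation of) $H_F(\boldsymbol{\omega})$. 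Taking signatures gives $\sigma_L(\boldsymbol{\omega})=\operatorname{sign}(\lambda_{W_{F'},\C^{\boldsymbol{\omega}}})=\operatorname{sign}(H_F(\boldsymbol{\omega}))$.

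For the nullity, I would use Proposition~\ref{prop:SignatureBound}(i) and Lemma~\ref{lem:ComegaHomology}: the inclusion induces an isomorphism $H_1(X_L;\C^{\boldsymbol{\omega}})\cong H_1(\partial W_{F'};\C^{\boldsymbol{\omega}})$, so $\eta_L(\boldsymbol{\omega})=b_1^{\boldsymbol{\omega}}(X_L)$ equals the dimension of the radical of $\lambda_{W_{F'},\C^{\boldsymbol{\omega}}}$ corrected by the contribution of the connected components of $F$. Concretely, the nullity of the matrix $H_F(\boldsymbol{\omega})$ computes $\dim H_1$ of the $\Z^\mu$-cover piece, but the presentation $2$-complex underlying $W_{F'}$ has $\beta_0(F)$ more $0$-cells than a connected model, which accounts for the $+\beta_0(F)-1$ correction term exactly as in~\cite[Section~2]{CimasoniFlorens}. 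Tracking the Euler-characteristic bookkeeping from Proposition~\ref{prop:HomologyUnlinkedSurface}-style arguments yields $\eta_L(\boldsymbol{\omega})=\operatorname{null}(H_F(\boldsymbol{\omega}))+\beta_0(F)-1$.

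The main obstacle will be matching sign and involution conventions precisely enough that $\operatorname{sign}$ of the twisted intersection form literally equals $\operatorname{sign}(H_F(\boldsymbol{\omega}))$ rather than its negative or the signature of a congruent-but-not-equal Hermitian matrix; this is exactly the bookkeeping carried out in~\cite[Proposition~1.1]{ConwayNagelToffoli}, so in the write-up I would state that the identification of $\lambda_{W_{F'},\C^{\boldsymbol{\omega}}}$ with the Hermitianisation of $H_F(\boldsymbol{\omega})$ is established there and invoke it, supplying only the brief Mayer--Vietoris sketch above for the reader's benefit. The nullity clause then follows formally from Proposition~\ref{prop:SignatureBound}(i) together with the connectedness bookkeeping, and no genuinely new input is needed.
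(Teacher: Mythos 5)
Your proposal is correct and matches the paper's treatment: the paper gives no proof of Proposition~\ref{prop:CComplexMatrix} at all, simply citing \cite[Proposition~1.1]{ConwayNagelToffoli}, and your sketch (pushing the $C$-complex into $D^4$ to obtain a coloured bounding surface, identifying the twisted intersection form on $H_2(W_{F'};\C^{\boldsymbol{\omega}})$ with the Hermitian matrix $H_F(\boldsymbol{\omega})$, and reading off the nullity from $b_1^{\boldsymbol{\omega}}(X_L)$ with the $\beta_0(F)-1$ correction) is exactly the content of that reference. The only minor quibble is that no ``Hermitianisation'' is needed: since $A^{-\varepsilon}=(A^\varepsilon)^T$, the matrix $H_F(\boldsymbol{\omega})$ is already Hermitian for $\boldsymbol{\omega}\in\mathbb{T}^\mu_*$.
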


The attentive reader will notice that in Theorem~\ref{thm:MainTheoremIntro} from the introduction, the multivariable signature was defined on the whole torus~$\mathbb{T}^\mu$, while in the remainder of the paper, we worked with~$\mathbb{T}^\mu_*$.
We record a quick comment about this fact.
\begin{remark}
\label{rem:VariableEquals1}
If one takes the~$C$-complex approach to multivariable invariants, then the multivariable signature can be defined on the whole of~$\mathbb{T}^\mu$.
Indeed, since~$H(\boldsymbol{\mathbb{\omega}})$ is defined at each~$\boldsymbol{\omega}  \in \mathbb{T}^\mu$, and vanishes on~$\mathbb{T}^\mu  \setminus \mathbb{T}^\mu_*$, it is possible to take as a convention that~$\sigma_L$ vanishes as soon as one~$\omega_i$ vanishes; this is what we implicitly did in the introduction.
Another interpretation of~$\sigma_L$ on~$\mathbb{T}^\mu  \setminus \mathbb{T}^\mu_*$ can be found in~\cite[Remark 3.6]{DegtyarevFlorensLecuonaSplice}, but we will not pursue this thread further.
Finally, note that throughout the article we always think of the multivariable nullity as being defined on~$\mathbb{T}^\mu_*$.
\end{remark}

While it is always possible to use a~$C$-complex matrix to compute the Blanchfield form of a coloured link (see \cite[Theorem~1.1]{Conway}), it is shown in \cite[Theorem 4.6]{Conway} that in the case of boundary links a particularly clean presentation of the Blanchfield form is possible.
We now use ideas from the proof of that theorem to show that doubly slice links admit doubly isotropic~$C$-complex matrices; this will prove Theorem~\ref{thm:SeifertBlanchfieldIntro} from the introduction.

\begin{theorem}
\label{thm:SeifertBlanchfield}
Any doubly slice coloured link admits a {collection of  generalised Seifert matrices~$\lbrace A^\varepsilon \rbrace$ where~$A^\varepsilon$ is doubly isotropic for each $\varepsilon$, and also admits a} doubly isotropic~$C$-complex matrix.
Furthermore, a strongly doubly slice link admits a {collection of  generalised Seifert matrices~$\lbrace A^\varepsilon \rbrace$ where~$A^\varepsilon$ is hyperbolic for each $\varepsilon$, and also admits a} hyperbolic $C$-complex matrix.
\end{theorem}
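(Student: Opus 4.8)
The plan is to derive this from Theorem~\ref{thm:HyperbolicSeifert} by a direct translation into the language of generalised Seifert matrices and $C$-complexes. Applying that theorem, a doubly slice $\mu$-coloured link $L$ bounds a boundary Seifert surface $F=F_1\sqcup\dots\sqcup F_\mu$ together with a basis of $H_1(F)=\bigoplus_{i=1}^\mu H_1(F_i)$ for which the associated coloured boundary Seifert matrix $A=(A_{ij})$ is doubly isotropic, via isotropic families $G^\pm=\{G_i^\pm\}$ with $G_i^-\oplus G_i^+=H_1(F_i)$ for every $i$. Since the $G_i^\pm$ are direct summands and isotropy of a submodule does not depend on the chosen basis, I would first replace the given basis of each $H_1(F_i)$ by one compatible with the splitting $H_1(F_i)=G_i^-\oplus G_i^+$, and then reorder the whole basis of $H_1(F)$ so that it lists a basis of $G^-:=\bigoplus_i G_i^-$ followed by a basis of $G^+:=\bigoplus_i G_i^+$. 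Note that a boundary Seifert surface is a $C$-complex for $L$ (cf.\ the discussion after Definition~\ref{def:CComplex}), so $F$ is a legitimate input for the constructions of Subsection~\ref{sub:Ccomplex}.

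The key step is to identify the generalised Seifert matrices $A^\varepsilon$ of $F$ viewed as a $C$-complex. Here $F$ has \emph{no} clasp intersections, so the push-off maps $i^\varepsilon$ decouple: a $1$-cycle supported on $F_i$ is moved only by the $\varepsilon_i$-component of the push-off, and remains disjoint from any cycle on $F_j$ for $j\neq i$. Hence for $x\in H_1(F_i)$ and $y\in H_1(F_j)$ one has $\alpha^\varepsilon(x,y)=\ell k(x,y)$ when $i\neq j$, independently of $\varepsilon$, while for $i=j$ the form $\alpha^\varepsilon(x,y)$ is the ordinary Seifert form of $F_i$ if $\varepsilon_i=-1$ and its transpose if $\varepsilon_i=+1$. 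In matrix terms (this is essentially the computation underlying \cite[Theorem~4.6]{Conway}), in our chosen basis $A^\varepsilon$ is the block matrix whose $(i,j)$-block is $A_{ij}$ for $i\neq j$ and whose $(i,i)$-block is $A_{ii}$ or $A_{ii}^T$ according to the sign $\varepsilon_i$. I expect this identification of the $A^\varepsilon$ — keeping careful track, in the absence of clasps, of which push-off directions actually affect which cycles — to be the main technical point; everything afterwards is bookkeeping with isotropic families.

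Granting this description, the remaining arguments are routine. Since $A_{ij}$ restricts to the trivial pairing on $G_i^\delta\times G_j^\delta$ for all $i,j$ and all $\delta\in\{+,-\}$, and the restriction of a transpose of such a block is again trivial, the families $\{G_i^-\}$ and $\{G_i^+\}$ are isotropic for \emph{every} $A^\varepsilon$; as $G_i^-\oplus G_i^+=H_1(F_i)$, each $A^\varepsilon$ is therefore doubly isotropic. With the basis grouped as above, each $A^\varepsilon$ takes the block form $\svec{0 & B^\varepsilon\\ C^\varepsilon & 0}$ with respect to the decomposition $H_1(F)=G^-\oplus G^+$, and hence so does the $C$-complex matrix $H(t_1,\dots,t_\mu)=\sum_\varepsilon\prod_{i=1}^\mu(1-t_i^{\varepsilon_i})\,A^\varepsilon$, with respect to the induced decomposition $(\Lambda\otimes_\Z G^-)\oplus(\Lambda\otimes_\Z G^+)$ of its underlying free $\Lambda$-module. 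Thus $H$ is a doubly isotropic $C$-complex matrix for $L$, which proves the first assertion.

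For the strongly doubly slice case one has $\mu=n$, so each $F_i$ is a Seifert surface for a knot and $A$ is a boundary Seifert matrix; by item iii) of Remark~\ref{rem:MetabolicIsMetabolic} the doubly isotropic hypothesis then forces each $A_{ii}$ to be hyperbolic, i.e.\ $\operatorname{rank} G_i^-=\operatorname{rank} G_i^+$ for all $i$. Hence $G^-$ and $G^+$ are complementary, isotropic, half-rank summands of $H_1(F)$, so each $A^\varepsilon$ is hyperbolic, and the two square diagonal blocks of $H(t_1,\dots,t_\mu)$ are of equal size and vanish, exhibiting $H$ as a hyperbolic $C$-complex matrix. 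This would complete the proof.
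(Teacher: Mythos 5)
Your proposal is correct and follows essentially the same route as the paper's proof: both view the boundary Seifert surface supplied by Theorem~\ref{thm:HyperbolicSeifert} as a clasp-free $C$-complex, identify the blocks of each $A^\varepsilon$ as $A_{ij}$ off the diagonal and $A_{ii}$ or $A_{ii}^T$ on the diagonal, and then conclude double isotropy of every $A^\varepsilon$ and of $H(t_1,\dots,t_\mu)$ from the isotropic families $\{G_i^\pm\}$, with the hyperbolic case for $\mu=n$ handled via items iii) and iv) of Remark~\ref{rem:MetabolicIsMetabolic}. The only cosmetic difference is that you reorder the basis to exhibit the off-diagonal block form explicitly, whereas the paper writes out the $C$-complex matrix with its $u\overline{u}$ factors and cites the computation from \cite[Theorem 4.6]{Conway}.
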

\begin{proof}
Let~$F$ be a boundary Seifert surface for a~$\mu$-coloured link~$L$, and let~$A=(A_{ij})$ be a coloured boundary Seifert matrix associated to~$F$.
Viewing~$F$ as a~$C$-complex for~$L$, we recall how~$A$ can be used to describe a~$C$-complex matrix for~$L$, generalising the argument for the~$\mu=n$ case from~\cite[Theorem 4.6]{Conway}.

If~$i \neq j$, since~$F$ is a coloured boundary Seifert surface,~$A_{ij}^\varepsilon$ is independent of~$\varepsilon$ and is equal to the block~$A_{ij}$ of the coloured boundary Seifert matrix~$A$. 
Similarly, for each~$\varepsilon$ with~$\varepsilon_i=-1$, the restriction of~$A^\varepsilon$ to~$H_1(F_i) \times H_1(F_i)$ is equal to the block~$A_{ii}$ (for~$\varepsilon_i$, it equals~$A_{ii}^T$). 
Combining Theorem~\ref{thm:HyperbolicSeifert} and Remark~\ref{rem:MetabolicIsMetabolic}, it follows that~$A^\varepsilon$ is doubly isotropic for each~$\varepsilon$.
The additional statement for~$\mu=n$ follows as in Corollary~\ref{cor:hyperbolicseifertmatrix}.

We now prove the assertions about the $C$-complex matrices.
Let~$H_i=(1-t_i)A_{ii}^T+(1-t_i^{-1})A_{ii}$ denote the corresponding~$C$-complex matrix for the sublink~$L_i$ and let~$u$ denote~$\prod_{j=1}^\mu(1-t_j)$.
Using Definition~\ref{def:CcomplexMatrix} and arguing as in~\cite[Proof of Theorem 4.6]{Conway}, we see that the~$C$-complex matrix~$H$ associated to~$F$ is 
$$
\begin{pmatrix}
u\overline{u}(1-t_1)^{-1}(1-t_1^{-1})^{-1}H_1 &u\overline{u} A_{12} & \ldots & u\overline{u} A_{1{\mu}} \\
u\overline{u} A_{21} & u\overline{u}(1-t_2)^{-1}(1-t_2^{-1})^{-1}H_2 & \ldots & u\overline{u} A_{2{\mu}} \\
\vdots & \ddots & \ddots& \vdots \\
u\overline{u} A_{{\mu}1} & u\overline{u} A_{{\mu}2} & \ldots & u\overline{u}(1-t_{{\mu}})^{-1}(1-t_{{\mu}}^{-1})^{-1}H_{{\mu}}
\end{pmatrix}.$$
Since~$L$ is doubly slice, each~$A_{ij}$ is doubly isotropic by {Theorem}~\ref{thm:HyperbolicSeifert}.
Viewing the~$A_{ij}$ as pairings~$A_{ij} \colon H_i \times H_j \to \Z$, this means that there are submodules~$G_i^{\pm}$ for~$i=1,\ldots,\mu$ such that~$H_i=G_i^- \oplus G_i^+$ and~$A_{ij}$ vanishes on~$G_i \times G_j$.
Arguing as in the second item of Remark~\ref{rem:MetabolicIsMetabolic}, this implies that~$H$ is itself doubly isotropic: the two isotropic submodules are given by~$\oplus_{i=1}^\mu G_i^{-}$ and~$\oplus_{i=1}^\mu G_i^{+}$.
In the case that~$\mu=n$, this implies that~$H$ is hyperbolic; recall the fourth item of Remark~\ref{rem:MetabolicIsMetabolic}.
\end{proof}

As a corollary, we can give alternative proofs of our multivariable obstructions to a link being doubly slice, first stated in Theorems~\ref{thm:LowerBound4D} and~\ref{thm:BlanchfieldStronglySlice}.
\begin{corollary}\label{cor:seifertway}
Let~$L$ be a~$\mu$-coloured link.
\begin{enumerate}[leftmargin=*]\setlength\itemsep{0em}
\item If~$L$ is doubly slice, then~$\sigma_L({\boldsymbol{\omega}})=0$ for all~${\boldsymbol{\omega}}\in \mathbb{T}_*^\mu$.
\item If~$L$ is strongly doubly slice, then~$\Bl_L$ is hyperbolic.
\end{enumerate}
\end{corollary}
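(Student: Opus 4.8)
The plan is to obtain both items purely from the $C$-complex material of Section~\ref{sec:Seifert}, with no return to the $4$-manifold arguments. The ingredients are Theorem~\ref{thm:SeifertBlanchfield} (a doubly slice coloured link has a doubly isotropic $C$-complex matrix, and a strongly doubly slice link a hyperbolic one), Proposition~\ref{prop:CComplexMatrix} (which equates $\sigma_L(\boldsymbol{\omega})$ with $\operatorname{sign}H_F(\boldsymbol{\omega})$), and~\cite[Theorem~4.6]{Conway} (for a boundary link a $C$-complex matrix is a square presentation matrix for $TH_1(X_L;\Lambda_S)$ carrying the Blanchfield form).

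For item (i), I would first invoke Theorem~\ref{thm:SeifertBlanchfield} to get a $C$-complex $F$ with a doubly isotropic $C$-complex matrix $H=H_F(t_1,\dots,t_\mu)$; the two isotropic families give an integral splitting $H_1(F)=G^-\oplus G^+$. Changing to a $\Z$-basis of $H_1(F)$ adapted to this splitting only modifies $H$ by a constant integral congruence, so it does not affect $\operatorname{sign}H(\boldsymbol{\omega})$ for any $\boldsymbol{\omega}$; and the Hermitian symmetry $\overline{H}^T=H$ enjoyed by every $C$-complex matrix, together with the vanishing of $H$ on $G^-\times G^-$ and $G^+\times G^+$, then forces $H=\svec{0 & B \\ \overline{B}^T & 0}$ for some matrix $B$ over $\Lambda_S$. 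Since the entries of $B$ are Laurent polynomials over $\Q$ and $\overline{\omega_i}=\omega_i^{-1}$, evaluating at any $\boldsymbol{\omega}\in\mathbb{T}_*^\mu$ yields the Hermitian complex matrix $H(\boldsymbol{\omega})=\svec{0 & B(\boldsymbol{\omega}) \\ B(\boldsymbol{\omega})^* & 0}$, whose spectrum is symmetric about the origin; hence $\operatorname{sign}H(\boldsymbol{\omega})=0$, and $\sigma_L(\boldsymbol{\omega})=0$ by Proposition~\ref{prop:CComplexMatrix}.

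For item (ii), note that a strongly doubly slice link is the $n$-colouring of a boundary link, so by Theorem~\ref{thm:SeifertBlanchfield} it has a \emph{hyperbolic} $C$-complex matrix; after the same adapted basis change this becomes $H=\svec{0 & B \\ \overline{B}^T & 0}$ with $B$ now a \emph{square} matrix over $\Lambda_S$, and $\det H\neq0$ since $TH_1(X_L;\Lambda_S)$ is torsion. By~\cite[Theorem~4.6]{Conway} the pair $(TH_1(X_L;\Lambda_S),\Bl_L)$ is isometric to $\operatorname{coker}(H)$ with its standard linking form $([x],[y])\mapsto\overline{x}^T H^{-1}y$; so it splits as $\operatorname{coker}(B)\oplus\operatorname{coker}(\overline{B}^T)=:G_1\oplus G_2$, and since $H^{-1}=\svec{0 & (\overline{B}^T)^{-1} \\ B^{-1} & 0}$ is again block-antidiagonal, $\Bl_L$ vanishes on $G_1\times G_1$ and on $G_2\times G_2$. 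To upgrade this to $G_i=G_i^\perp$ I would use that $\Bl_L$ is non-singular (Corollary~\ref{cor:Nonsingular}): for $m\in G_1^\perp$, writing $m=g_1+g_2$ with $g_i\in G_i$, the functional $\widehat{\Bl}_L(g_2)$ vanishes on $G_1$ (because $g_2=m-g_1\in G_1^\perp$, using $g_1\in G_1\subseteq G_1^\perp$) and on $G_2$ (isotropy), hence is zero, so $g_2=0$ and $m\in G_1$; symmetrically $G_2=G_2^\perp$. Thus $\Bl_L$ is hyperbolic.

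The conceptual content is entirely in Theorem~\ref{thm:SeifertBlanchfield}, so I expect the only real work here to be bookkeeping: matching the precise conventions of the linking-form formula in~\cite[Theorem~4.6]{Conway} (placement of the involution and transpose, and any sign), and verifying carefully that the basis adapted to the two isotropic families really kills both diagonal blocks of $H$ over $\Lambda_S$. The latter reduces to the fact that the diagonal block $u\overline{u}(1-t_i)^{-1}(1-t_i^{-1})^{-1}H_i$ restricts to $0$ on $G_i^\pm\times G_i^\pm$, i.e. to the vanishing there of both $A_{ii}$ and $A_{ii}^T$, which is exactly the double isotropy supplied by Theorem~\ref{thm:HyperbolicSeifert}; the passage from isotropy to self-annihilation in item (ii) is routine given non-singularity.
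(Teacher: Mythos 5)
Your proposal is correct and follows essentially the same route as the paper: item (i) is obtained from Theorem~\ref{thm:SeifertBlanchfield} and Proposition~\ref{prop:CComplexMatrix} by reducing to the vanishing of the signature of a doubly isotropic Hermitian matrix (which the paper isolates as Lemma~\ref{lem:peterfeller}, proved by a block reduction rather than your spectral-symmetry observation), and item (ii) is obtained by the same reduction via \cite[Theorem~4.6]{Conway} to the standard fact that a hyperbolic matrix with non-zero determinant presents a hyperbolic linking form, for which the paper cites \cite[Proposition~C.1]{FriedlThesis} where you instead verify $G_i=G_i^\perp$ directly from non-singularity. These are only cosmetic differences in how the two small algebraic facts are checked.
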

\begin{proof}
We recalled in the first item of Proposition~\ref{prop:CComplexMatrix} that~$\sigma_L(\omega_1,\ldots,\omega_\mu)=\sign H(\omega_1,\ldots,\omega_\mu)$ for any~$C$-complex matrix~$H$.
Since~$L$ is doubly slice, it admits a doubly isotropic~$C$-complex matrix (by Theorem~\ref{thm:SeifertBlanchfield}), and the first statement therefore 
reduces to the known fact that the signature of a doubly isotropic complex Hermitian matrix vanishes. We verify this algebraic fact below in Lemma~\ref{lem:peterfeller}.

We prove the second statement.
Assume that~$L$ is strongly doubly slice, so that~$L$ is a boundary link.
Looking at~\cite[proof of Theorem 4.6]{Conway}, we obtain the following: for~$\mu=n$, the~$C$-complex matrix~$H$ described during the proof of Theorem~\ref{thm:SeifertBlanchfield} {(whose size we denote $m$)} is invertible over~$Q$, presents~$TH_1(X_L;\Lambda_S)$ and~$\Bl_L$ is isometric to the pairing 
\[
\begin{array}{rcl}
\lambda_H \colon \Lambda_S^{{m}}/H^T\Lambda_S^{{m}} \times  \Lambda_S^{{m}}/H^T\Lambda_S^{{m}} &\to &Q/\Lambda_S \\ 
([x],[y]) &\mapsto &y^T H^{-1} \overline{x}.
\end{array}
\]
The statement therefore reduces to proving the following algebraic statement: if~$H$ is a hyperbolic matrix over~$\Lambda_S$ with non-zero determinant, then~$\lambda_H$ is a hyperbolic linking form. The proof of this is standard in the theory of linking forms; for instance the metabolisers are obtained as in~\cite[Proposition C.1]{FriedlThesis} and they will clearly be complementary direct summands. This concludes the proof of the corollary.
\end{proof}

It remains to prove the following algebraic lemma, which was used in the proof of Corollary~\ref{cor:seifertway}.

\begin{lemma}\label{lem:peterfeller} Every doubly isotropic complex Hermitian matrix has vanishing signature.
\end{lemma}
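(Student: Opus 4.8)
The plan is to put such a matrix into a normal form and read the signature off from it. First I would unwind the definition: a doubly isotropic complex Hermitian $m\times m$ matrix $H$ is the Gram matrix of a Hermitian form on $\mathbb{C}^m$ for which there is a decomposition $\mathbb{C}^m=V_+\oplus V_-$ into two subspaces on each of which the form restricts to zero. Choosing a basis of $\mathbb{C}^m$ obtained by concatenating a basis of $V_+$ with one of $V_-$, and writing $P$ for the corresponding change-of-basis matrix, the signature is unchanged when $H$ is replaced by $P^*HP$ (Sylvester's law of inertia: it is congruence, not similarity, that is used here). By isotropy the two diagonal blocks of $P^*HP$, of sizes $\dim V_+$ and $\dim V_-$, vanish, and Hermitian symmetry then forces $P^*HP=\svec{0 & B\\ B^* & 0}$ for some matrix $B$ (rectangular in general, since $V_+$ and $V_-$ need not have equal dimension --- this is precisely why the hypothesis is phrased with ``isotropic'' rather than ``metaboliser'').

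It then remains to observe that any Hermitian matrix of the shape $\svec{0 & B\\ B^* & 0}$ has vanishing signature, because its spectrum is symmetric about the origin. Indeed, if $\lambda\neq 0$ and $\svec{x\\ y}$ is a $\lambda$-eigenvector, so that $By=\lambda x$ and $B^*x=\lambda y$, then $\svec{x\\ -y}$ is a $(-\lambda)$-eigenvector, and $\svec{x\\ y}\mapsto\svec{x\\ -y}$ is a linear isomorphism from the $\lambda$-eigenspace onto the $(-\lambda)$-eigenspace. Hence the positive and negative eigenvalues of $\svec{0 & B\\ B^* & 0}$ agree in number counted with multiplicity, so its signature is $0$; alternatively one could feed a singular value decomposition $B=U\Sigma V^*$ into the unitary congruence by $\operatorname{diag}(U,V)$ and reduce to a block sum of hyperbolic $2\times 2$ blocks $\svec{0 & \sigma\\ \sigma & 0}$ together with zero rows and columns.

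I do not anticipate a genuine obstacle: the whole argument is elementary linear algebra. The only two points to keep straight are that $B$ need not be square, and that the passage to the normal form must be carried out by congruence so that the signature is preserved; once those are in place, the eigenvalue-pairing observation completes the proof.
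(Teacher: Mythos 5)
Your proof is correct and follows essentially the same route as the paper: both pass by congruence (using the two complementary isotropic subspaces) to a Hermitian matrix of the form $\left(\begin{smallmatrix} 0 & B \\ B^* & 0\end{smallmatrix}\right)$ and then observe that such a matrix has zero signature. The only difference is in the last step, where the paper normalises $B$ further to an identity block plus zeros, whereas you invoke the symmetry of the spectrum under $\lambda\mapsto-\lambda$ (your SVD alternative is in fact the paper's normalisation); both finishes are valid.
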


\begin{proof} Let~$A$ be such a matrix and let~$G^-$ and~$G^+$ be the complementary isotropic submodules. Choosing bases for~$G^-$ and~$G^+$, there is a block decomposition
\[
A\sim\left(\begin{matrix} 0 & \overline{C} \\ C^T & 0\end{matrix}\right)
\]
where~$C$ is a complex matrix and~$\overline{C}$ denotes the complex conjugate. We assume the bases are ordered so that~$\overline{C}$ determines a linear map~$G^-\to G^+$. Choose a new basis for~$G^-$ extending a basis for~$\ker(\overline{C})\subset G^-$ and choose a new basis for~$G^+$ extending a basis for~$\im(\overline{C})$. Order the new bases so that with respect to them this linear map is
\[
\left(\begin{matrix} \overline{D} & 0 \\ 0& 0\end{matrix}\right)\colon G^-\to G^+
\]
where~$\overline{D}$ is invertible.
In fact, by an additional congruence, $D$ can be assumed to be the identity matrix.
 We thus have congruences
\[
A\sim\left(\begin{matrix} 
0 & 0 & \overline{D} & 0 \\
0 & 0 & 0 & 0 \\
D^T & 0 & 0 & 0\\
0 & 0 & 0 & 0 \\
\end{matrix}\right)
\sim
\left(\begin{matrix} 0 & \overline{D} \\ D^T & 0\end{matrix}\right)\oplus \left(\begin{matrix} 0 & 0 \\ 0 & 0\end{matrix}\right),
\]
and the latter clearly has vanishing signature.
\end{proof}

\section{Examples}
\label{sec:Examples}

In this section we will present some applications of the obstructions developed in this article. We introduce some useful terminology. 

\begin{definition}A \emph{quasi-orientation} for an unoriented link~$L$ is an equivalence class of orientations for~$L$, where two orientations are equivalent if they differ by reversing the orientation on each component of the link.
\end{definition}

If a link~$L$ is weakly doubly slice with a given orientation then the link with the reversed orientation is also weakly doubly slice (this is easily seen by reversing the orientation for the unknotted~$2$-sphere of which~$L$ is a cross-section). So given an unoriented link, the task of checking which orientations correspond to weak double slicings reduces to checking one orientation per quasi-orientation class.

\begin{remark} There is a coloured version of this concept that we won't utilise, but we mention for completeness. Given an ordered partition of a link~$L=L_1\cup\dots\cup L_\mu$, one could define a \emph{coloured quasi-orientation} with respect to the ordered partition to be an equivalence class of orientations for~$L$ where equivalence is the transitive closure of the relation determined by an overall orientation reversal on~$L_j$, for some~$1\leq j\leq \mu$. If a~$\mu$-coloured link~$L$ is doubly slice then it is doubly slice with respect to the colouring determined by any orientation in the coloured quasi-orientation class.
\end{remark}

For the convenience of the reader, we summarise our obstructions:
a strongly doubly slice link must  have~$\sigma_L \equiv 0$ (which implies that~$\sigma_L^{LT} \equiv 0$), Alexander nullity~$\beta(L)=\beta_0(L)-1$ (which implies that the multivariable Alexander polynomial vanishes identically:~$\Delta_L \equiv 0$ for~$\beta_0(L)>1$) as well as~$TH_1(X_L;\Lambda_S)=G_1 \oplus G_2$ with~$\operatorname{Ord}(G_1)=\overline{\operatorname{Ord}(G_2)}$; it must also be a slice boundary link with doubly slice components. In particular, a
strongly doubly slice link must have vanishing linking matrix.
A weakly doubly slice link must have~$TH_1(\Sigma_2(L))=G \oplus G$ for some abelian group~$G$ (thus the link determinant must be a square number), vanishing Levine-Tristram signature and~$TH_1(X_L;\Q[t^{\pm 1},(1-t)^{-1}])=G \oplus \overline{G}$ for some~$\Q[t^{\pm 1},(1-t)^{-1}]$-module~$G$.

On the constructive side, we recall a geometric operation originally described in~\cite[Lemma 4.9.2]{IssaThesis}. An \emph{$(a,b)$-tangle}~$T$ is a collection of properly embedded oriented arcs in~$D^2\times[0,1]$ with boundary consisting of~$a$ points in~$D^2\times\{0\}$ and~$b$ points in~$D^2\times\{1\}$. Suppose~$D^2\times[0,1]\subset S^3$ intersects a link~$L$ in an~$(a,b)$-tangle~$T$ and consider the tangle replacement operation depicted in Figure~\ref{fig:folding}.
  \begin{figure}
  \begin{center}
    \begin{tikzpicture}
    \node[inner sep=0pt] at (0,0)
    {\includegraphics[width=0.9\textwidth]{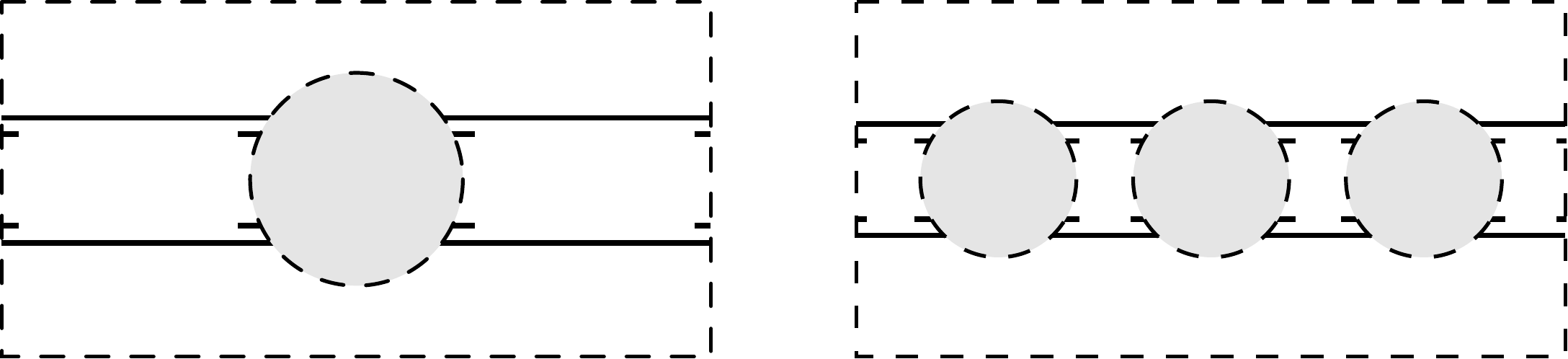}};
    
    \node[inner sep=0pt] at (-6.35,0.2) {\scalebox{1.8}{$\vdots$}};
    \node[inner sep=0pt] at (-2.4,0.2) {\scalebox{1.8}{$\vdots$}};
    \node[inner sep=0pt] at (-4.9,0.2) {\scalebox{1.8}{$\vdots$}};
    \node[inner sep=0pt] at (-0.95,0.2) {\scalebox{1.8}{$\vdots$}};
        \node[inner sep=0pt] at (-3.65,0) {\scalebox{1.5}{$T$}};
                \node[inner sep=0pt] at (-6.95,0) {\scalebox{1}{$a$}};
                        \node[inner sep=0pt] at (-0.35,0) {\scalebox{1}{$b$}};
        
            \node[inner sep=0pt] at (0.9,0.15) {\scalebox{1.5}{$\vdots$}};
    \node[inner sep=0pt] at (2.75,0.15) {\scalebox{1.5}{$\vdots$}};
        \node[inner sep=0pt] at (4.58,0.15) {\scalebox{1.5}{$\vdots$}};
          \node[inner sep=0pt] at (6.45,0.15) {\scalebox{1.5}{$\vdots$}};
        \node[inner sep=0pt] at (1.86,0) {\scalebox{1.2}{$T$}};
         \node[inner sep=0pt] at (3.7,0.05) {\scalebox{1.2}{$\widetilde{T}$}};
        \node[inner sep=0pt] at (5.5,0) {\scalebox{1.2}{$T$}};
             \node[inner sep=0pt] at (0.9,-0.75) {\scalebox{1}{$a$}};
                        \node[inner sep=0pt] at (2.75,-0.75) {\scalebox{1}{$b$}};
                             \node[inner sep=0pt] at (4.58,-0.75) {\scalebox{1}{$a$}};
                        \node[inner sep=0pt] at (6.45,-0.75) {\scalebox{1}{$b$}};
        
    \end{tikzpicture}
    \end{center}
  \caption{\label{fig:folding} Issa's folding construction. Left: A dashed rectangle representing~$D^2\times[0,1]$ and containing the~$(a,b)$-tangle~$T$. Right: A dashed rectangle representing~$D^2\times[0,1]$ and containing the replacement~$(a,b)$-tangle. Here,~$\widetilde{T}$ is the effect of reflecting~$T$ in a vertical plane, then changing all the crossings and reversing the orientation.}
  \end{figure}
  Observe that if~$L$ is a~$\mu$-coloured link then the effect of the tangle replacement operation inherits a natural orientation and~$\mu$-colouring; cf.~Figure~\ref{fig:nm}. The following statement is moreover true.

\begin{lemma}\label{lem:folding}Suppose~$L$ is a~$\mu$-coloured doubly slice link and~$D^2\times[0,1]\subset S^3$ intersects~$L$ in an~$(a,b)$-tangle~$T$. Then the effect of the tangle replacement operation depicted in Figure \ref{fig:folding} is also a~$\mu$-coloured doubly slice link.
\end{lemma}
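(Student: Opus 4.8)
The plan is to adapt to the coloured setting the argument used for knots in \cite[Lemma~4.9.2]{IssaThesis}, carrying the colouring along throughout. Write $B=D^2\times[0,1]$ for the tangle ball, write $T^{\mathrm{fold}}=T\cdot\widetilde T\cdot T$ for the replacement tangle and $L^{\mathrm{fold}}$ for the resulting link. The first point to record is a property of $\widetilde T$ that a straightforward inspection of its defining recipe (reflect in a vertical plane, change all crossings, reverse orientations) makes transparent: up to reversing orientations, $\widetilde T$ is the image of $T$ under an orientation-preserving self-homeomorphism of $B$ interchanging the two faces $D^2\times\{0\}$ and $D^2\times\{1\}$; equivalently, $\widetilde T$ is the ``mirror--flip'' of $T$. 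Since mirror--flipping is an involution, the last two blocks of $T^{\mathrm{fold}}$ form a $(b,b)$-tangle $\widetilde T\cdot T=\widetilde T\cdot(\text{mirror--flip of }\widetilde T)$, that is, a tangle stacked on its own mirror--flip. I would use the fact that such a tangle is \emph{slice by explicit disks with standard exterior}: the ``zip-up'' sliding $\widetilde T$ into its mirror--flip across their common face sweeps out a system of disks $R\subset B\times[0,1]$ with $R\cap(B\times\{0\})=\widetilde T\cdot T$, with $R\cap(B\times\{1\})$ the trivial $(b,b)$-tangle, with $R\cap(\partial B\times[0,1])$ a product, and with $(B\times[0,1])\smallsetminus\nu R$ homeomorphic to the exterior of the disks of the trivial $(b,b)$-tangle. (This is the tangle incarnation of the classical fact that $K\#\overline K$ is doubly slice, and the relevant disks are those implicit in \cite{IssaThesis}.) Reversing the depth coordinate produces a mirror copy $R'\subset B\times[-1,0]$, and $R\cup_{\widetilde T\cdot T}R'$ is an unknotted, unlinked system of disks in $B\times[-1,1]$ meeting the slice $B\times\{0\}$ exactly in $\widetilde T\cdot T$.

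Next I would set up the four-dimensional picture. Since $L$ is $\mu$-coloured doubly slice, fix an ordered unlinked system of unknotted $2$-spheres $S=S_1\sqcup\dots\sqcup S_\mu\subset S^4$ with $S_i\cap S^3=L_i$; writing $S^4=D^4_+\cup_{S^3}D^4_-$, each $S_i\cap D^4_\pm$ is a slice disk for $L_i$. After an isotopy, arrange that $S$ equals the product $L\times(-1,1)$ inside a bicollar $S^3\times(-1,1)$ of the equator, with $D^4_+$ corresponding to positive depth; in particular $S\cap(B\times(-1,1))=T\times(-1,1)$. Define $S'$ to agree with $S$ outside $B\times(-1,1)$, and inside $B\times(-1,1)$ to be the surface obtained from the curtain $T\times(-1,1)$ by keeping the product along the first $T$-block of $T^{\mathrm{fold}}$, inserting the disk system $R$ (pushed into $B\times[0,1)$) so that the $\widetilde T\cdot T$-block is resolved to the trivial tangle before depth reaches $1$, and inserting the mirror system $R'$ into $B\times(-1,0]$. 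Then $S'\cap S^3=L^{\mathrm{fold}}$, the depth-$0$ slice being $T\cdot\widetilde T\cdot T$ and the slices near depth $\pm1$ being (a tangle isotopic rel boundary to) $T$, matching $S$ outside the bicollar. The construction respects the partition of strands into colours — each strand retains its colour and $\widetilde T$ is coloured by the mirror--reverse rule — so $S'=S_1'\sqcup\dots\sqcup S_\mu'$ induces exactly the colouring on $L^{\mathrm{fold}}$ described above.

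Finally I would verify three things. First, $S'$ is again a union of $\mu$ $2$-spheres: inside the bicollar a union of rectangles has been excised and a union of disks with the same boundary glued in (one disk per arc of $T^{\mathrm{fold}}$, and $T^{\mathrm{fold}}$ has the same number of arcs as $T$), so neither the component count nor the genus changes. Second, each $S_i'\cap D^4_\pm$ is a slice disk for $L_i^{\mathrm{fold}}$: it is assembled from the original slice disk outside $B$, from product rectangles, and from the disk system $R$ (resp.\ $R'$), all glued along matching boundary arcs, hence a disk — a short Euler-characteristic check. Third, and this is the step I expect to be the main obstacle, $S'$ is again unlinked. Here the standardness of $R\cup_{\widetilde T\cdot T}R'$ is used: arranging the disjoint $3$-balls $M_i$ bounded by the $S_i$ to be products in the bicollar, one modifies each $M_i$ inside the bicollar by attaching a regular neighbourhood of the inserted disks of $R\cup R'$; because these disks are unknotted and unlinked with standard exterior, the attached pieces are $3$-balls glued along squares, so the result is a system of disjoint $3$-balls $M_i'$ with $\partial M_i'=S_i'$. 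Hence $\{S_i'\}$ is an unlink and $L^{\mathrm{fold}}$ is doubly slice with respect to the induced colouring; everything can be carried out in the locally flat category. The one place a complete write-up must be careful is making precise the sense in which the explicit ``folding'' disks $R,R'$ have standard exterior; the rest is routine bookkeeping.
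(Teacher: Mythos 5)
Your proposal takes a genuinely different route from the paper. The paper's entire proof is a one-line citation: the statement is an immediate corollary of \cite[Lemma 2.1]{McCoyMcDonald}, whose mechanism is to keep the unlinked spheres $S_1\sqcup\dots\sqcup S_\mu$ fixed and instead replace the equatorial $S^3$ by an ambiently isotopic $3$-sphere that is ``folded'' inside the bicollar $B\times[-1,1]$ of the tangle ball; this folded equator meets the product region $T\times[-1,1]$ in $T\cdot\widetilde T\cdot T$, and applying the ambient isotopy that unfolds it carries $S$ to a new unlinked sphere system whose genuine equatorial cross-section is the folded link. The colouring is carried along trivially. You do the dual thing: fix the equator and rebuild the spheres by excising $T\times(-1,1)$ and gluing in explicit disk systems $R,R'$. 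In principle these are equivalent up to ambient isotopy, but your version shifts all the work onto constructing $R,R'$ and proving the result is still unlinked.

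That is where the gap is, and you correctly identify it as ``the main obstacle'' without closing it. The assertion that $\widetilde T\cdot T$ bounds a disk system $R$ with \emph{standard exterior}, and that $R\cup_{\widetilde T\cdot T}R'$ is an unknotted, unlinked disk system in $B\times[-1,1]$, is exactly the content that needs proof; the justification you offer --- ``the classical fact that $K\#\overline K$ is doubly slice'' --- is not a fact. It is a well-known open question whether $K\#-K$ is always doubly slice (the zip-up/ribbon disks certainly exist, but their \emph{double} being unknotted is the whole problem), so this cannot be the model for your standardness claim. The claim you actually need is true, but only because the inserted pieces are, by construction, the image of a product under the ambient isotopy of $S^4$ that folds the equator over $B$ --- and once you say that, the by-hand construction of $R$ and $R'$ is superfluous and you have rediscovered the McCoy--McDonald argument. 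As written, the proposal either rests on an unproven (indeed open) statement or silently presupposes the isotopy-of-the-equator argument it was meant to replace; the remaining bookkeeping (component counts, Euler characteristics, colourings) is fine.
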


\begin{proof} This is an immediate corollary of \cite[Lemma 2.1]{McCoyMcDonald}.
\end{proof}

We will refer to the tangle replacement operation depicted in Figure \ref{fig:folding} as \emph{Issa's folding construction} for reasons that become clear if one reads the proof of \cite[Lemma 2.1]{McCoyMcDonald} (which we do not reproduce here). We present a basic application of the folding construction, for later use.

\begin{example}\label{ex:folding}Consider the pretzel link~$P(a_1,a_2,\dots,a_k)$. For any~$1\leq j\leq k$, by choosing~$T$ to be the~$(2,2)$-tangle described by isolating the~$a_j$ half-twists region and performing Issa's folding construction we obtain~$P(a_1,\dots,a_{j-1},a_j,-a_j,a_j,a_{j+1},\dots a_k)$. This immediately provides for~$\mu$-double sliceness of various pretzel links via Lemma \ref{lem:folding}. For example,~$P(a)$ is the unknot for all~$a\in\Z$ and hence, by iterating the observation above, we have that~$P(a,-a,a,-a,\dots, a)$ is weakly doubly slice for all~$a$, with respect to the unique quasi-orientation class inheritable from~$P(a)$ under the folding construction. It is not necessarily true that other quasi-orientations of these pretzel links are weakly doubly slice; see e.g.~the link L6n1 in Section~\ref{sec:weakly} below. Similarly,~$P(a,-a)$ is the two-component unlink for all~$a\in\Z$ and hence~$P(a,-a,a,-a,\dots, a, -a)$ is~$2$-doubly slice with respect to any orientation and~$2$-colouring inheritable from the~$2$-coloured unlink (hence this link is also weakly doubly slice with respect to these orientations). Both of these facts were partially derived by Donald using a different method~\cite[Corollary 2.7]{Donald}.
\end{example}

\subsection{Links that are~$m$-doubly slice but not~$(m+1)$-doubly slice.}

\begin{example}
\label{ex:12Butnot34}
Consider the~$4$-component link~$L$ in Figure \ref{fig:nm} (right), obtained by applying Issa's folding construction to the tangle in the link in Figure \ref{fig:nm} (left). As the~$2$-component 2-coloured unlink is doubly slice, so~$L$ is~$\mu$-doubly slice for~$\mu=1,2$.
\begin{figure}[h!tb]
\begin{tikzpicture}

\begin{scope}[scale=0.9, shift={(-1,-0.2)}]

%bottom box
\draw[thick, blue] (0.8,1.3)
to [out=-90, in=-90] (2.2,1.3);

%top box
\draw[thick, blue] (3,2.5)
to [out=180, in=0] (2,2.5)
to [out=180, in=90] (0.8,1.9);
\draw[thick, blue] (0.8,1.9)
to [out=-90, in=90] (0.8,1.65)
to [out=-90, in=90] (0.8,1.5);
\draw[thick, blue] (2.2,1.9)
to [out=-90, in=90] (2.2,1.65)
to [out=-90, in=90] (2.2,1.5);
\draw[line width=1.5mm, white] (0,2.5)
to [out=0, in=180] (0.8,2.5)
to [out=0, in=90] (2.2,1.9);
\draw[thick, blue] (0,2.5)
to [out=0, in=180] (0.8,2.5)
to [out=0, in=90] (2.2,1.9);

\draw[thick, red] (0,1.4)
to [out=0, in=180] (1,1.4)
to [out=0, in=-90] (1.2,1.5)
to [out=90, in=0] (0.9,1.6);
\draw[thick, red] (0.7,1.6)
to [out=180, in=0] (0,1.6);

\draw[thick, red] (3,1.4)
to [out=180, in=0] (2,1.4)
to [out=180, in=-90] (1.8,1.5)
to [out=90, in=180] (2.1,1.6);
\draw[thick, red] (2.3,1.6)
to [out=0, in=180] (3,1.6);

%big box
\draw[very thick, black, dashed] (0,0.7) rectangle (3,3);

%tangle closure
\draw[thick, blue] (0,2.5)
to [out=180, in=-90] (-0.2,3)
to [out=90, in=180] (0,3.2)
to [out=0, in=180] (3,3.2)
to [out=0, in=90] (3.2,3)
to [out=-90, in=0] (3,2.5);
\draw[thick, blue] (1.4,3.3) -- (1.5,3.2) -- (1.4,3.1);

\draw[thick, red] (3, 1.4)
to [out=0, in=90] (3.2, 1.2)
to [out=-90, in=90] (3.2, 0.8)
to [out=-90, in=0] (2.8, 0.5)
to [out=180, in=0] (0.2, 0.5)
to [out=180, in=-90] (-0.2, 0.8)
to [out=90, in=-90] (-0.2, 1.2)
to [out=90, in=180] (0, 1.4);

\draw[thick, red] (3, 1.6)
to [out=0, in=90] (3.4, 1.4)
to [out=-90, in=90] (3.4, 0.6)
to [out=-90, in=0] (2.8, 0.3)
to [out=180, in=0] (0.2, 0.3)
to [out=180, in=-90] (-0.4, 0.6)
to [out=90, in=-90] (-0.4, 1.4)
to [out=90, in=180] (0, 1.6);
\draw[thick, red] (1.6,0.2) -- (1.5,0.3) -- (1.6,0.4);

\end{scope}

\begin{scope}[scale=0.8, shift={(5,0)}]

%bottom box
\draw[thick, blue] (0.8,1.3)
to [out=-90, in=-90] (2.2,1.3);

%top box
\draw[thick, blue] (3,2.5)
to [out=180, in=0] (2,2.5)
to [out=180, in=90] (0.8,1.9);
\draw[thick, blue] (0.8,1.9)
to [out=-90, in=90] (0.8,1.65)
to [out=-90, in=90] (0.8,1.5);
\draw[thick, blue] (2.2,1.9)
to [out=-90, in=90] (2.2,1.65)
to [out=-90, in=90] (2.2,1.5);
\draw[line width=1.5mm, white] (0,2.5)
to [out=0, in=180] (0.8,2.5)
to [out=0, in=90] (2.2,1.9);
\draw[thick, blue] (0,2.5)
to [out=0, in=180] (0.8,2.5)
to [out=0, in=90] (2.2,1.9);

\draw[thick, red] (0,1.4)
to [out=0, in=180] (1,1.4)
to [out=0, in=-90] (1.2,1.5)
to [out=90, in=0] (0.9,1.6);
\draw[thick, red] (0.7,1.6)
to [out=180, in=0] (0,1.6);

\draw[thick, red] (3,1.4)
to [out=180, in=0] (2,1.4)
to [out=180, in=-90] (1.8,1.5)
to [out=90, in=180] (2.1,1.6);
\draw[thick, red] (2.3,1.6)
to [out=0, in=180] (3,1.6);

\draw[thick, red] (2.6,1.7) -- (2.7,1.6) -- (2.6,1.5);

%tangle closure
\draw[thick, blue] (0,2.5)
to [out=180, in=-90] (-0.2,3)
to [out=90, in=180] (0,3.2)
to [out=0, in=180] (9,3.2)
to [out=0, in=90] (9.2,3)
to [out=-90, in=0] (9,2.5);

\draw[thick, red] (9, 1.4)
to [out=0, in=90] (9.2, 1.2)
to [out=-90, in=90] (9.2, 0.8)
to [out=-90, in=0] (8.8, 0.5)
to [out=180, in=0] (0.2, 0.5)
to [out=180, in=-90] (-0.2, 0.8)
to [out=90, in=-90] (-0.2, 1.2)
to [out=90, in=180] (0, 1.4);

\draw[thick, red] (9, 1.6)
to [out=0, in=90] (9.4, 1.4)
to [out=-90, in=90] (9.4, 0.6)
to [out=-90, in=0] (8.8, 0.3)
to [out=180, in=0] (0.2, 0.3)
to [out=180, in=-90] (-0.4, 0.6)
to [out=90, in=-90] (-0.4, 1.4)
to [out=90, in=180] (0, 1.6);

\end{scope}

\begin{scope}[xscale=-1, scale=0.8, shift={(-11,0)}]

%bottom box
\draw[thick, blue] (0.8,1.3)
to [out=-90, in=-90] (2.2,1.3);

%top box
\draw[thick, blue] (3,2.5)
to [out=180, in=0] (2,2.5)
to [out=180, in=90] (0.8,1.9);
\draw[thick, blue] (0.8,1.9)
to [out=-90, in=90] (0.8,1.65)
to [out=-90, in=90] (0.8,1.5);
\draw[thick, blue] (2.2,1.9)
to [out=-90, in=90] (2.2,1.65)
to [out=-90, in=90] (2.2,1.5);
\draw[line width=1.5mm, white] (0,2.5)
to [out=0, in=180] (0.8,2.5)
to [out=0, in=90] (2.2,1.9);
\draw[thick, blue] (0,2.5)
to [out=0, in=180] (0.8,2.5)
to [out=0, in=90] (2.2,1.9);

\draw[thick, red] (0,1.4)
to [out=0, in=180] (1,1.4)
to [out=0, in=-90] (1.2,1.5)
to [out=90, in=0] (0.9,1.6);
\draw[thick, red] (0.7,1.6)
to [out=180, in=0] (0,1.6);

\draw[thick, red] (3,1.4)
to [out=180, in=0] (2,1.4)
to [out=180, in=-90] (1.8,1.5)
to [out=90, in=180] (2.1,1.6);
\draw[thick, red] (2.3,1.6)
to [out=0, in=180] (3,1.6);

\end{scope}

%arrows
\begin{scope}[xscale=1, scale=0.8, shift={(8,0)}]

\draw[thick, red] (2.6,1.7) -- (2.7,1.6) -- (2.6,1.5);

\draw[thick, blue] (1.4,3.3) -- (1.5,3.2) -- (1.4,3.1);
\draw[thick, red] (1.6,0.2) -- (1.5,0.3) -- (1.6,0.4);

\end{scope}

\begin{scope}[scale=0.8, shift={(11,0)}]

%bottom box
\draw[thick, blue] (0.8,1.3)
to [out=-90, in=-90] (2.2,1.3);

%top box
\draw[thick, blue] (3,2.5)
to [out=180, in=0] (2,2.5)
to [out=180, in=90] (0.8,1.9);
\draw[thick, blue] (0.8,1.9)
to [out=-90, in=90] (0.8,1.65)
to [out=-90, in=90] (0.8,1.5);
\draw[thick, blue] (2.2,1.9)
to [out=-90, in=90] (2.2,1.65)
to [out=-90, in=90] (2.2,1.5);
\draw[line width=1.5mm, white] (0,2.5)
to [out=0, in=180] (0.8,2.5)
to [out=0, in=90] (2.2,1.9);
\draw[thick, blue] (0,2.5)
to [out=0, in=180] (0.8,2.5)
to [out=0, in=90] (2.2,1.9);

\draw[thick, red] (0,1.4)
to [out=0, in=180] (1,1.4)
to [out=0, in=-90] (1.2,1.5)
to [out=90, in=0] (0.9,1.6);
\draw[thick, red] (0.7,1.6)
to [out=180, in=0] (0,1.6);

\draw[thick, red] (3,1.4)
to [out=180, in=0] (2,1.4)
to [out=180, in=-90] (1.8,1.5)
to [out=90, in=180] (2.1,1.6);
\draw[thick, red] (2.3,1.6)
to [out=0, in=180] (3,1.6);

\end{scope}

\begin{scope}[scale=0.8, shift={(5,0)}]
%big box
\draw[very thick, black, dashed] (0,0.7) rectangle (3,3);
\end{scope}
\begin{scope}[scale=0.8, shift={(8,0)}]
%big box
\draw[very thick, black, dashed] (0,0.7) rectangle (3,3);
\end{scope}
\begin{scope}[scale=0.8, shift={(11,0)}]
%big box
\draw[very thick, black, dashed] (0,0.7) rectangle (3,3);
\end{scope}

\end{tikzpicture}
\caption{The~$2$-coloured link~$L$ (right) for Example \ref{ex:12Butnot34} built using Issa's folding construction on the boxed tangle in the~$2$-coloured unlink (left).}
\label{fig:nm}
\end{figure}
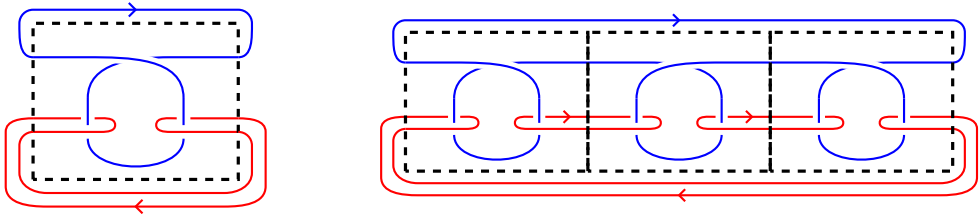
Since~$L$ has vanishing pairwise linking numbers, its strong sliceness cannot immediately be ruled out using this easy check.
However a computation shows that its Alexander nullity equals~$1$, ruling out~$\mu$-double sliceness for~$\mu=3,4$; recall Proposition~\ref{prop:AlexanderNullity}.
In fact this nullity obstruction shows that~$L$ is neither strongly slice nor a boundary link.
\end{example}

\begin{example}
\label{ex:L10n36}
Consider the link~$L=\text{L10n36}$. It is a 2-component link with one unknotted component and component whose knot type is~$3_1\#-3_1$, both of which are doubly slice and do not link each other.
One can show that~$L$ is isotopic to the pretzel link~$P(2,-2,3,-3)$ which was proved to be weakly doubly slice by Donald~\cite[Proposition 2.10]{Donald}; in particular Corollary~\ref{cor:WeaklyDoublySlice} ensures that~$\sigma_L^{LT} \equiv 0$.
A~$C$-complex computation shows that~$H_1(X_L;\Lambda_S)=\Lambda_S$ so that~$\Delta_L=0,\Delta_L^{tor}=1$.
Since~$\sigma_L^{LT} \equiv 0$, it follows from~\cite[Theorem 4.1]{CimasoniFlorens} that~$\sigma_L\equiv 0$ and therefore all our obstructions to being strongly doubly slice are inconclusive.
However~$L$ is not strongly doubly slice since it is not a boundary link~\cite[Section 6]{Crowell}.
\end{example}

\subsection{Strongly doubly slice links with up to 11 crossings}

In order to detect potential strongly doubly slice links, we first use LinkInfo~\cite{LinkInfo} to list low crossing links with vanishing Murasugi signature~$\sigma^{LT}_L(-1)$, linking matrix and Alexander polynomial; the Murasugi nullity~$\eta_L(-1)$ must also be at least~$b_0(L)-1$ (the values of the Levine-Tristram signature and nullity at~$\omega \neq -1$ are not listed).

\begin{proposition}
\label{prop:StronglyExamples}
Among all prime oriented non-trivial links with~$11$ or fewer crossings, there are no strongly doubly slice links.
\end{proposition}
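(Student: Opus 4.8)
The plan is to combine the necessary conditions for strong double sliceness recalled just above the statement (vanishing Murasugi signature $\sigma_L^{LT}(-1)$, vanishing linking matrix, vanishing multivariable Alexander polynomial, and Murasugi nullity $\eta_L(-1)\geq b_0(L)-1$, together with the stronger requirement from Corollary~\ref{cor:hyperbolicseifertmatrix} and Proposition~\ref{prop:ModuleSplit}) with a finite computer search through the link tables. First I would use \texttt{LinkInfo}~\cite{LinkInfo} to extract the (finite) list of prime non-trivial links with at most $11$ crossings whose Murasugi signature, linking matrix and multivariable Alexander polynomial all vanish, and whose Murasugi nullity is at least $b_0(L)-1$; this is exactly the preliminary filtering described in the paragraph preceding the proposition. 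This reduces the problem to a short list of candidate links.

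Next, for each candidate on the list I would apply further obstructions until the list is exhausted. The cleanest extra filter is that a strongly doubly slice link is a boundary link whose components are themselves doubly slice knots; in particular each component must be algebraically doubly slice (hyperbolic Seifert form, vanishing Levine--Tristram signature and Alexander polynomial), so any candidate with a non-trivial component invariant is immediately eliminated. For the survivors I would invoke Proposition~\ref{prop:ModuleSplit}(i): the torsion of the $\Lambda_S$-Alexander module must split as $G_1\oplus G_2$ with $\operatorname{Ord}(G_1)=\overline{\operatorname{Ord}(G_2)}$, and since the condition ``$L$ is a boundary link'' must also hold, one can compute a presentation of $TH_1(X_L;\Lambda_S)$ from a $C$-complex (using~\cite[Theorem~4.6]{Conway}) and check the splitting condition, or directly test whether $L$ is even a boundary link (the remark after Definition~\ref{def:BoundaryLink} and the discussion of $\beta(L)$ give a practical criterion). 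In the low-crossing range these checks can be done by hand or with a short computation, and in the handful of remaining borderline cases one falls back on ad hoc arguments with linking numbers, the nullity bound $\eta_L(\omega)\geq b_0(L)-1$ for \emph{all} $\omega\in S^1_*$ (Corollary~\ref{cor:MultivariableNullity}), or the linking form on the double branched cover (Proposition~\ref{prop:doublebranched}, which forces $TH_1(\Sigma_2(L))=G\oplus G$).

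The main obstacle I expect is not conceptual but organisational: assembling the candidate list reliably and then dispatching each one, since the obstructions in this paper are necessary but not obviously sufficient, so it is conceivable that \texttt{LinkInfo} data alone does not immediately kill every candidate. In practice I anticipate that the combination of ``must be a boundary link'' (very restrictive for $\leq 11$ crossings, as boundary links of small crossing number are rare and well understood) together with ``components doubly slice'' leaves essentially nothing, and the few genuine edge cases — for instance links built from doubly slice components with vanishing linking numbers, such as the ones appearing in Examples~\ref{ex:12Butnot34} and~\ref{ex:L10n36} — are precisely where the Alexander nullity obstruction $\beta(L)=\beta_0(L)-1$ (Proposition~\ref{prop:AlexanderNullity}) or the failure of the boundary link property (as in Example~\ref{ex:L10n36} via~\cite{Crowell}) finishes the job. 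The proof is therefore a bookkeeping exercise: list, filter, and eliminate.
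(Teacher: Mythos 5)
Your proposal follows essentially the same route as the paper: filter the LinkInfo tables by vanishing Murasugi signature, linking matrix, Alexander polynomial and the nullity bound (which leaves exactly L10n32, L10n36 and L11n247), then eliminate the survivors using precisely the three obstructions you name — a non-doubly-slice component (L10n32 contains the Stevedore knot $6_1$), the double branched cover condition $TH_1(\Sigma_2(L))=G\oplus G$ (L11n247 has $TH_1(\Sigma_2(L))=\Z_{17}$), and the failure of the boundary link property (L10n36, as in Example~\ref{ex:L10n36}). The plan is correct and matches the paper's proof.
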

\begin{proof}
Among all oriented prime links with~$11$ or fewer crossings there are only three with vanishing Murasugi signature, linking matrix and Alexander polynomial and whose Murasugi nullity is at least~$b_0(L)-1$: the 2-component links L10n32, L10n36 and L11n247; each time for all orientations.

However, L10n32 has a non doubly slice knot as a component (a Stevedore's knot~$6_{1}$) so it cannot be strongly doubly slice,~$L=\text{L11n247}$ has~$TH_1(\Sigma_2(L))=\Z_{17}$ so it is not even weakly doubly slice (by Proposition~\ref{prop:LinkingFormIntro}), and we already mentioned in Example~\ref{ex:L10n36} that the link L10n36 is not strongly doubly slice.
All three of these arguments hold regardless of the orientations.
\end{proof}

\subsection{Weakly doubly slice links with up to 9 crossings}\label{sec:weakly}

We now consider the weakly doubly slice status of links of 9 or fewer crossings. We determine the status for all such links except three 9-crossing links that stubbornly resist our efforts:

\begin{question}\label{q:dontknow}
Using the notation of LinkInfo, are the following links weakly doubly slice?
\[
\text{L9a53 (all orientations)},\,\, \text{L9n21}\{0,0\},\,\, \text{L9n21}\{1,0\},\,\, \text{L9n21}\{1,1\},\,\, \text{L9n25 (all orientations)}.
\]
\end{question}
The vast majority of links with 9 or fewer crossings are seen to be not weakly doubly slice using our most basic abelian invariants. For the handful of cases for which this is not true we either use ad hoc arguments to show they are weakly doubly slice or apply linking numbers to show they are not. The case of the Borromean rings is particularly interesting and for this we use an entirely different argument to show it is not weakly doubly slice.

First, we list the weakly doubly slice links we found.

\begin{proposition}\label{prop:weaklyexamples} Using the notation of LinkInfo, the following quasi-oriented links are weakly doubly slice:
\[
\begin{array}{llllll}
\text{L6n1}\{0,0\}, &\text{L6n1}\{1,0\}, &\text{L6n1}\{1,1\}, &&&\\
\text{L8n8}\{0,0,0\}, &\text{L8n8}\{1,0,0\}, &\text{L8n8}\{0,1,0\}, &\text{L8n8}\{1,1,0\}, &\text{L8n8}\{0,0,1\},& \text{L8n8}\{1,1,1\}.\\
\end{array}
\]
\end{proposition}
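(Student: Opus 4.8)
The plan is to realise every link in the list, with its prescribed orientation, as the output of Issa's folding construction applied to a weakly doubly slice link, and then to invoke Lemma~\ref{lem:folding}. The only weakly doubly slice inputs we shall need are the unknot and the $2$-component unlink, viewed as the pretzel links $P(2)$ and $P(2,-2)$ as in Example~\ref{ex:folding}.

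First I would pin down the identifications with pretzel links: I expect $\mathrm{L6n1}=P(2,-2,2)$ and $\mathrm{L8n8}=P(2,-2,2,-2)$ (these match the crossing numbers and component counts), a fact one can confirm either by an explicit sequence of Reidemeister moves or against LinkInfo's tabulated equivalences. Granting these, Example~\ref{ex:folding} already shows that $P(2,-2,2)$ arises from the unknot $P(2)$ by folding the central twist region, and that $P(2,-2,2,-2)$ arises from the unlink $P(2,-2)$ by folding; hence by Lemma~\ref{lem:folding} the former is weakly doubly slice and the latter is $2$-doubly slice, and therefore weakly doubly slice (tube the spheres together, as noted in the introduction).

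The substance of the proof is then orientation bookkeeping. Each folding produces one specific orientation on the output link, and by the remark preceding Proposition~\ref{prop:weaklyexamples} reversing the orientation of \emph{all} components preserves weak double sliceness; so each folding settles an entire quasi-orientation class. To reach every label in the list I would exhaust the remaining quasi-orientation classes by: (i) folding a different twist region of the relevant pretzel; (ii) passing to the mirror image; and (iii) exploiting symmetries of $P(2,-2,2)$ and $P(2,-2,2,-2)$ that permute or reverse components, each time tracking how the construction --- which reflects, changes crossings, and reverses the orientation of the sub-tangle $\widetilde T$ --- acts on the orientation of the diagram. One then matches the resulting orientations to LinkInfo's conventions, checking that the classes $\{0,0\}\sim\{1,1\}$ and $\{1,0\}$ of $\mathrm{L6n1}$, and the four classes $\{0,0,0\}\sim\{1,1,1\}$, $\{1,0,0\}$, $\{0,1,0\}$, $\{1,1,0\}\sim\{0,0,1\}$ of $\mathrm{L8n8}$, are each realised.

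I expect the main obstacle to be precisely this last step: verifying that the orientations \emph{naturally inherited} from the folding constructions, possibly after applying mirrors and link symmetries, really do cover all the listed LinkInfo orientation labels, rather than only the most obvious ones. This will require a careful diagrammatic analysis of how orientations propagate through the replacement tangle $T\widetilde T T$, combined with the quasi-orientation reduction; by contrast, weak double sliceness itself is immediate from Lemma~\ref{lem:folding} once the combinatorial identifications with the pretzel links are in hand.
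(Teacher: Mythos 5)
Your overall strategy --- identify L6n1 and L8n8 with the pretzel links obtained by Issa's folding construction from the unknot and the $2$-component unlink, then invoke Lemma~\ref{lem:folding} and Example~\ref{ex:folding} --- is exactly the paper's. The difference, and the problem, lies in how you handle the orientations, which you yourself flag as ``the main obstacle'' and then do not resolve.

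The paper's proof disposes of the orientation issue with a single observation: \emph{all} of the listed quasi-orientations of L6n1 coincide up to isotopy and reordering of the components, and likewise for L8n8. (Weak double sliceness is a property of the underlying oriented but unordered link, so reordering components is free; the symmetries of these pretzel links then identify the various LinkInfo labels.) Consequently only one quasi-orientation per link needs to be exhibited as an output of the folding construction, namely the one inherited from $P(\mp 2)$ resp.\ $P(\mp 2,\pm 2)$ as in Example~\ref{ex:folding}. Your proposal instead treats the labels as potentially distinct classes --- you enumerate $\{0,0\}\sim\{1,1\}$ versus $\{1,0\}$ for L6n1, and four classes for L8n8 --- and plans to realise each one separately by folding different twist regions, taking mirrors, and chasing orientations through the replacement tangle. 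That programme is not carried out, and it is not clear it would succeed as stated: a single folding of a given pretzel produces only the orientations inheritable from the input link, and without the isotopy/reordering identification you would genuinely need to verify, diagram by diagram, that every listed label is hit. So as written the proposal is a correct framework with the decisive step missing; the fix is the symmetry observation above, which collapses the case analysis to the single computation already done in Example~\ref{ex:folding}.

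One minor point: you write $P(2,-2,2)$ and $P(2,-2,2,-2)$ where the paper has $P(-2,2,-2)$ and $P(-2,2,-2,2)$. Since these differ by a mirror, you would need to check which mirror LinkInfo's L6n1 and L8n8 actually are; this is part of the ``combinatorial identification'' you defer, but it matters for matching the orientation labels.
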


\begin{proof}Up to isotopy and reordering of the components the listed quasi-orientation for L6n1 coincide. In fact~$\text{L6n1}\{0,0\}$ is the pretzel link~$P(-2,2,-2)$ with the orientation inherited from Issa's folding construction applied to the unknot as in Example \ref{ex:folding}, so this link is weakly doubly slice.

Similarly, up to isotopy and reordering of the components, the listed quasi-orientations L8n8 coincide. In fact~$\text{L8n8}\{0,0,0\}$ is the pretzel link~$P(-2,2,-2, 2)$ with an orientation inheritable from Issa's folding construction applied to the 2-component unlink as in Example \ref{ex:folding}, so this link is weakly doubly slice.
\end{proof}

We will now obstruct all remaining links from being weakly doubly slice. We record the following observation about linking numbers; cf.~\cite[Lemma~3.1]{McCoyMcDonald}.

\begin{lemma}\label{lem:linkingnumbers}Let~$L$ be a link that is an equatorial cross section of some surface $\Sigma \hookrightarrow S^4$.
\begin{enumerate}[leftmargin=*]\setlength\itemsep{0em}
\item If~$L$ has two components, then~$\ell k(L_1,L_2)=0$.
\item If~$L$ has 3 components, then for some ordering~$\ell k(L_1,L_2)=-\ell k(L_1,L_3)=\ell k(L_2,L_3)$.
\end{enumerate}
\end{lemma}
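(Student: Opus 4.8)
The plan is to mimic the hemisphere decomposition used in the proofs of Theorems~\ref{thm:LowerBound4D} and~\ref{thm:BlanchfieldStronglySlice}. Write $S^4=D^4_+\cup_{S^3}D^4_-$ and, after a small isotopy, assume $\Sigma\pitchfork S^3$, so that $F:=\Sigma\cap D^4_+$ and $G:=\Sigma\cap D^4_-$ are surfaces in the two $4$-balls with $\partial F=\partial G=L$. Since $\Sigma$ is unlinked it bounds a disjoint union of handlebodies $V_1,\dots,V_\mu\subset S^4$, and we may take these transverse to $S^3$ as well; then $W_c:=V_c\cap S^3$ are pairwise disjoint compact surfaces in $S^3$ with $\partial W_c=\Sigma_c\cap S^3$. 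Thus each colour class of components of $L$ bounds a Seifert surface in $S^3$, and these Seifert surfaces are mutually disjoint.

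The first consequence is that the linking number between components of $L$ carrying different colours vanishes: if the colours $c(i)\neq c(j)$ then $L_i$ is disjoint from $W_{c(j)}$, whence $\sum_{c(k)=c(j)}\ell k(L_i,L_k)=\ell k\big(L_i,\partial W_{c(j)}\big)=L_i\cdot W_{c(j)}=0$. In particular, if the two components of a $2$-component link $L$ lie on different surfaces, or if the three components of a $3$-component link $L$ lie on three different surfaces, the statements follow at once (in the latter case all pairwise linking numbers are $0$, which satisfies the asserted relation).

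The remaining case, which is where the real content lies, is when several components of $L$ lie on a single connected component $\Sigma_0$ of $\Sigma$ (in particular the case $\mu=1$, relevant to weakly doubly slice links). Here I would exploit unlinkedness to conclude, via Lemma~\ref{lem:WedgeOfCirclesLambdaS} and Proposition~\ref{prop:HomologyUnlinkedSurface}, that $\pi_1(X_\Sigma)$ is free of rank $\mu$, so $H_1(X_\Sigma)\cong\Z^\mu$ with the $c$-th summand generated by the meridian $m_c$ of $\Sigma_c$. Under the inclusion $X_L\hookrightarrow X_\Sigma$ the meridian of $L_i$ maps to $m_{c(i)}$, while the $0$-framed longitude satisfies $[\lambda_i]\mapsto\sum_{j\neq i}\ell k(L_i,L_j)\,m_{c(j)}$. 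The key claim is that, for the components lying on a common $\Sigma_0$, all but at most one of the corresponding $\lambda_i$ become nullhomologous in $X_\Sigma$: granting this, a $2$-component $L$ on one surface gives $\ell k(L_1,L_2)=0$, and a $3$-component $L$ on one surface gives $\ell k(L_1,L_2)=-\ell k(L_1,L_3)=\ell k(L_2,L_3)$ after reordering so that $\lambda_1$ and $\lambda_3$ are among those that become nullhomologous (the elementary linear algebra is as in the discussion above).

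To prove the key claim I would use the Mayer--Vietoris decomposition $X_\Sigma=W_F\cup_{X_L}W_G$, where $W_F=D^4_+\setminus\nu F$ and $W_G=D^4_-\setminus\nu G$, together with the fact that, because $\Sigma$ bounds the handlebodies $V_c$, the surface $F$ (resp.\ $G$) cobounds, rel~$L$, with the Seifert surface $V_c\cap S^3$ a $3$-manifold $V_c\cap D^4_+$ (resp.\ $V_c\cap D^4_-$) inside the corresponding $4$-ball. This codimension-one ``membrane'' is what pins down the images of the $[\lambda_i]$ in $H_1(W_F)$ and $H_1(W_G)$, hence in $H_1(X_\Sigma)$. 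Making precise the sense in which the pieces $F$ and $G$ frame the components of $L$, and extracting from it the vanishing of the longitudes, is the main obstacle; this is essentially \cite[Lemma~3.1]{McCoyMcDonald}, whose argument adapts to the present, slightly more general, setting.
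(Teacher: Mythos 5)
There is a genuine gap: your whole argument funnels into the ``key claim'' that, for the components of $L$ lying on a common connected component $\Sigma_0$ of $\Sigma$, all but at most one of the longitudes $\lambda_i$ die in $H_1(X_\Sigma)$ --- and you leave this unproved, calling it ``the main obstacle'' and deferring to a citation. That claim is the entire content of the lemma, and the strategy you sketch for it (Mayer--Vietoris plus the membranes $V_c\cap D^4_\pm$) cannot succeed as described, because it never uses that $\Sigma_0$ is a \emph{sphere}. Indeed the key claim is false for a torus: push the obvious annulus bounded by the Hopf link into $D^4_+$ and double it to get an unknotted torus in $S^4$ whose equatorial cross-section is the Hopf link. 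Every ingredient you invoke is present there --- the torus bounds a handlebody, which meets $S^3$ in a Seifert surface, and the membranes $V\cap D^4_\pm$ exist --- yet both longitudes map to $\pm m\neq 0$ and $\ell k(L_1,L_2)=\pm 1$. (Relatedly, the unlinkedness you import is neither a hypothesis of the lemma nor the relevant one; what is actually needed, and what the applications in Section~\ref{sec:weakly} supply, is that $\Sigma$ is a $2$-sphere.)

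The missing ingredient is elementary and combinatorial, and it is the whole of the paper's proof: the $2$ or $3$ disjoint circles of $L$ cut the sphere $\Sigma$ into pieces whose dual adjacency graph is a tree, $2$-coloured according to the hemisphere $D^4_\pm$ containing each piece. For two components one of $F=\Sigma\cap D^4_+$, $G=\Sigma\cap D^4_-$ is therefore connected and the other is two disjoint pieces, one per component; for three components either one side is connected and the other is three disjoint pieces, or each side splits as (one boundary circle)$\,\sqcup\,$(two boundary circles). Since sublinks bounding disjoint surfaces in a $4$-ball have vanishing total linking number, these configurations give relations such as $\ell k(L_1,L_2\cup L_3)=0$ and $\ell k(L_1\cup L_2,L_3)=0$, which rearrange to the stated identities. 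Your closing linear algebra is consistent with this, but without the component count on the sphere you have no way to establish which longitudes vanish, so the proof is incomplete.
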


\begin{proof}Denote by~$F$ and~$G$ the two surfaces into which the knotted~$\Sigma$ is divided by the equatorial~$S^3\subset S^4$.

For the first item, consider that one of the surfaces~$F$ or~$G$ must be connected.
Hence the other surface
has two disjoint components which implies~$\ell k(L_1,L_2)=0$.

For the second item, we must consider two possible configurations for~$\{F,G\}$ depicted in Figure~\ref{fig:wiggle}. 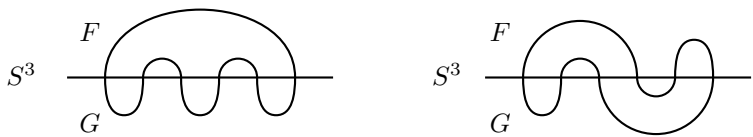
\begin{figure}[h!tb]
\begin{tikzpicture}

% left one
\draw[thick, black] (-5,0) -- (-1.5,0);

\draw[thick, black] (-4.5,0)
to [out=up, in=left] (-3.25,0.9)
to [out=right, in=up] (-2,0);

\draw[thick, black] (-4,0)
to [out=up, in=left] (-3.75,0.25)
to [out=right, in=up] (-3.5,0);

\draw[thick, black] (-3,0)
to [out=up, in=left] (-2.75,0.25)
to [out=right, in=up] (-2.5,0);

\draw[thick, black] (-4.5,0)
to [out=down, in=left] (-4.25,-0.5)
to [out=right, in=down] (-4,0);

\draw[thick, black] (-3.5,0)
to [out=down, in=left] (-3.25,-0.5)
to [out=right, in=down] (-3,0);

\draw[thick, black] (-2.5,0)
to [out=down, in=left] (-2.25,-0.5)
to [out=right, in=down] (-2,0);

\node[inner sep=0pt] at (-5.6,0.05) {$S^3$};
\node[inner sep=0pt] at (-4.7,0.6) {$F$};
\node[inner sep=0pt] at (-4.7,-0.6) {$G$};

\begin{scope}[shift={(-1,0)}]
% right one
\draw[thick, black] (5,0) -- (1.5,0);

\draw[thick, black] (3.5,0)
to [out=up, in=right] (2.75,0.75)
to [out=left, in=up] (2,0);

\draw[thick, black] (4,0)
to [out=down, in=right] (3.75,-0.25)
to [out=left, in=down] (3.5,0);

\draw[thick, black] (3,0)
to [out=up, in=right] (2.75,0.25)
to [out=left, in=up] (2.5,0);

\draw[thick, black] (4.5,0)
to [out=down, in=right] (3.75,-0.75)
to [out=left, in=down] (3,0);

\draw[thick, black] (4.5,0)
to [out=up, in=right] (4.25,0.5)
to [out=left, in=up] (4,0);

\draw[thick, black] (2.5,0)
to [out=down, in=right] (2.25,-0.5)
to [out=left, in=down] (2,0);

\node[inner sep=0pt] at (1,0.05) {$S^3$};
\node[inner sep=0pt] at (1.7,0.6) {$F$};
\node[inner sep=0pt] at (1.7,-0.6) {$G$};
\end{scope}

\end{tikzpicture}
\caption{The two configurations for a~$2$-sphere in~$S^4$ meeting an equatorial~$S^3$ in a 3-component link.}
\label{fig:wiggle}
\end{figure}
In the left-hand image we suppose one surface,~$F$ say, is connected. This implies~$G$ must consist of 3 disjoint surfaces
and again this implies all pairwise linking must vanish, so the claimed statement follows. Now suppose neither of~$F$ or~$G$ is connected. Then each of~$F$ and~$G$ consists of the disjoint union of a surface with two boundary components and of a surface with a single boundary component,
as depicted in Figure~\ref{fig:wiggle} (right). With the ordering depicted, we now see that
\[\begin{array}{rcccl}
0&=&\ell k(L_1, L_2\cup L_3)&= & \ell k(L_1,L_2) +\ell k(L_1,L_3),\\
0&=&\ell k(L_1\cup L_2,L_3)&= & \ell k(L_1,L_3) +\ell k(L_2,L_3).
\end{array}
\]
Rearranging, the result follows.
\end{proof}

\begin{proposition}\label{prop:weaklystatus}
A link with 9 or fewer crossings is one of the links listed in Proposition \ref{prop:weaklyexamples}, the Borromean rings (with some orientation), one of the links listed in Question \ref{q:dontknow}, or is not weakly doubly slice.
\end{proposition}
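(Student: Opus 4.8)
The plan is to prove Proposition~\ref{prop:weaklystatus} by a systematic census: enumerate all prime links with $9$ or fewer crossings (using LinkInfo~\cite{LinkInfo}), then apply our abelian obstructions — the Levine--Tristram signature vanishing from Corollary~\ref{cor:WeaklyDoublySlice}, the square-determinant/linking-form condition from Proposition~\ref{prop:LinkingFormIntro}, and the rational Blanchfield splitting $TH_1(X_L;\Q[t^{\pm 1},(1-t)^{-1}]) = G \oplus \overline{G}$ from Theorem~\ref{thm:BlanchfieldStronglySlice} — to rule out the overwhelming majority. First I would observe that split links reduce to their prime factors and that, by the discussion preceding Proposition~\ref{prop:weaklyexamples}, one need only check one orientation per quasi-orientation class. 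Then I would work through the finite list of candidates that survive the easiest invariant checks (vanishing Murasugi signature $\sigma_L^{LT}(-1)$ and square determinant), handling them case by case.

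For the remaining handful of links not dispatched by signatures and the double branched cover linking form, I would split into two groups. The first group consists of links for which the rational Blanchfield pairing is nontrivial and not hyperbolic — here a $C$-complex computation (as in Example~\ref{ex:L10n36}) produces a presentation matrix for $TH_1(X_L;\Q[t^{\pm 1},(1-t)^{-1}])$, and one checks directly that it does not split as $G \oplus \overline{G}$; for instance if the order is a power of an irreducible $p(t)$ with $p \not\doteq \overline{p}$ and the module is cyclic, no such splitting can exist. The second group consists of links where all our module-theoretic obstructions vanish but the link can still be excluded using linking numbers via Lemma~\ref{lem:linkingnumbers}: any weakly doubly slice $2$-component link has vanishing linking number, and any weakly doubly slice $3$-component link must, after reordering, satisfy $\ell k(L_1,L_2) = -\ell k(L_1,L_3) = \ell k(L_2,L_3)$. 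Scanning the linking matrices in LinkInfo eliminates every candidate whose pairwise linking numbers violate these constraints.

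The Borromean rings require a separate treatment, to be carried out in a subsequent proposition (the text flags that "we use an entirely different argument that exploits the triple linking"): all pairwise linking numbers vanish, the Alexander polynomial and signatures vanish, and the double branched cover has trivial torsion, so abelian invariants are useless; instead one uses the nonvanishing Milnor triple linking number $\bar\mu_{123} = \pm 1$, which obstructs the link from bounding disjoint surfaces in $S^3$ in the way forced by being a cross section (cf.\ the argument attributed to Teichner in Proposition~\ref{prop:borromean}). Finally, the three exceptional families L9a53, L9n21, L9n25 are collected into Question~\ref{q:dontknow} as genuinely open; the proof of the present proposition simply notes that these are precisely the cases where every obstruction in our toolbox returns an inconclusive answer, so they are excluded from the statement rather than resolved.

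The main obstacle is not any single deep argument but the bookkeeping: one must be confident the enumeration is exhaustive, that orientation/quasi-orientation reductions have been applied correctly, and that the $C$-complex computations of the Blanchfield module are accurate for the borderline links. A secondary difficulty is that for a few links with trivial abelian invariants but "interesting" geometry (beyond the Borromean rings), one may need an ad hoc argument — typically a linking-number count after identifying which surface components can be connected, as in the proof of Lemma~\ref{lem:linkingnumbers} — and verifying that such arguments indeed cover every remaining case is where the care is needed. I expect the proof to read as a short dispatch of the generic case followed by an itemized list of the special links with the specific obstruction cited for each.
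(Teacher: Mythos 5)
Your proposal is correct and follows essentially the same route as the paper: filter the LinkInfo census by vanishing Murasugi signature and square determinant, dispatch the surviving candidates (L8a19, L8n3, L9a46, L9a48) with the linking-number constraints of Lemma~\ref{lem:linkingnumbers}, handle the last stragglers (L9a45) via the double branched cover condition $TH_1(\Sigma_2(L))=G\oplus G$ of Proposition~\ref{prop:doublebranched}, treat the Borromean rings separately by the $\tau$/triple-linking argument, and leave the three resistant links to Question~\ref{q:dontknow}. The only cosmetic difference is that you also invoke the rational Blanchfield splitting, which the paper does not end up needing for any link in this range.
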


\begin{proof}A weakly doubly slice link has vanishing Murasugi signature and the determinant must be a square number. Aside from the links listed in Proposition \ref{prop:weaklyexamples}, Borromean rings and the links listed in Question \ref{q:dontknow}, the following are the only links with 9 or fewer crossings, square determinant and vanishing Murasugi signature, according to LinkInfo:
\[
\begin{array}{llllll}
\text{L8a19}\{0,0\}, &\text{L8a19}\{1,1\}, &\text{L8n3}\{1,0\}, &\text{L8n3}\{0,1\},&&\\
\text{L9a45}\{0,0\}, &\text{L9a45}\{1,0\}, &\text{L9a45}\{0,1\}, &&&\\
\text{L9a46}\{0,0\},&\text{L9a46}\{1,1\},&\text{L9a48}\{1,0\}, &\text{L9a48}\{1,0\}. &&
\end{array}
\]
The following table of pairwise linking numbers (for some ordering of the link components) shows, using Lemma~\ref{lem:linkingnumbers}, that the links in the table are not weakly doubly slice.

\begin{center}
\begin{tabular}{|c|c|c|c|}
\hline
Link & $\ell k(L_1,L_2)$ & $\ell k(L_1,L_3)$ & $\ell k(L_2,L_3)$ \\
\hline
\hline
\text{L8a19}\{0,0\} &-1&1&0\\
\text{L8a19}\{1,1\}&1&-1&0\\
\text{L8n3}\{1,0\}&1&-1&2\\
\text{L8n3}\{0,1\}&-1&1&2\\
\text{L9a46}\{0,0\}&0&0&-1\\
\text{L9a46}\{1,1\}&0&0&-1\\
\text{L9a48}\{1,0\}&1&-1&-2\\
\text{L9a48}\{0,1\}&1&-1&-2\\
\hline
\end{tabular}
\end{center}
The remaining links to rule out are the three quasi-orientation possibilities for L9a45. Let $L$ denote L9a45 with any of the orientations listed. We compute that $H_1(\Sigma_2(L))\cong\Z/2\oplus \Z/18$ and hence by Proposition~\ref{prop:doublebranched}, these links are not weakly doubly slice.
\end{proof}

We now treat the case of the Borromean rings. This link resists all our abelian invariants as the next example shows.
  \begin{example}[Borromean rings] \label{ex:borromean}
For the Borromean rings~$L=L6a4$, and for any orientation, some calculations show that~$\sigma_L^{LT}\equiv 0$, $H_1(X_L;\Q[t^{\pm 1},(1-t)^{-1}])=0$ and~$H_1(\Sigma_2(L))=\Z_4 \oplus \Z_4$; additionally the linking matrix of~$L$ is identically zero.
  \end{example}
  
However, the Borromean rings are \emph{not} weakly doubly slice. This will follow from the much stronger result below, which is known to experts but does not appear in print as far as we can tell. We are grateful to Peter Teichner for the argument in the proof of the following proposition.

\begin{proposition}\label{prop:borromean} The Borromean rings (with any orientation) do not bound a properly embedded connected genus 0 surface in~$D^4$.
\end{proposition}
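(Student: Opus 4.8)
The plan is to exploit the fact that the Borromean rings $B$ have nonvanishing triple linking number $\bar\mu_{123}(B)=\pm1$ (equivalently, a nontrivial triple Massey product in $H^*(X_B)$), and to show that a connected genus-$0$ surface bounding $B$ in $D^4$ would force $\bar\mu_{123}(B)=0$.

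Here is the setup I would use. Suppose $P\subset D^4$ is properly embedded, connected, oriented, of genus $0$, with $\partial P=B=B_1\cup B_2\cup B_3$; then $P$ is a pair of pants, so $\pi_1(P)$ is free of rank $2$ and in a suitable generating set two of the boundary curves are free generators while the third is their product (up to inverse). Let $W=D^4\setminus\nu P$. Using that $P$ is connected, a Mayer--Vietoris computation gives $H_1(W)\cong\Z$, generated by the meridian of $P$, so all three meridians of $B$ become conjugate in $\pi_1(W)$; moreover, since $\wedge^2 H_1(W)=0$, the lower central series of $\pi_1(W)$ stabilises at $\pi_1(W)_2$. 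I would then transport the algebraic description of $\bar\mu_{123}$ through the inclusion $X_B\hookrightarrow W$: writing $G=\pi_1(X_B)$, vanishing of the pairwise linking numbers gives Milnor's isomorphism $G/G_3\cong F_3/(F_3)_3$ (with $F_3$ free on meridians $m_1,m_2,m_3$), and the first longitude has class $\pm[m_2,m_3]$ in $G_2/G_3\cong\wedge^2\Z^3$ with coefficient $\bar\mu_{123}(B)$, so $\ell_1\notin G_3$. The heart of the argument is to play the single relation $\partial_1\partial_2\partial_3=1$ among the boundary curves of the pair of pants $P$ (with $\partial_1,\partial_2$ free) against this commutator shape of $\ell_1$, in order to conclude that the class of $\ell_1$ dies, i.e.\ $\bar\mu_{123}(B)=0$ --- a contradiction.

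The subtle point, and the step that needs the most care, is that naively passing to $\pi_1(W)$ throws away exactly the information we want (everything in $\pi_1(W)_{\ge2}$ is killed), so the argument must be run in the Milnor group of the link, or in a fixed truncation of the lower central series, where the pair-of-pants relation $\partial_1\partial_2\partial_3=1$ can be combined with $\ell_1=[m_2,m_3]$ to force the longitude into $G_3$. Equivalently, one can phrase everything $4$--dimensionally: $0$--framed surgery on $B$ yields the $3$--torus, the surface $P$ caps off (through the attached $2$--handles) to an embedded $2$--sphere with trivial normal bundle in the resulting simply connected $4$--manifold, and one attempts to derive the contradiction from the homology of the complement of that sphere together with $\partial Z=T^3$. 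In either incarnation the essential input is the nonvanishing of $\bar\mu_{123}$ together with the genus-$0$ hypothesis, and making that obstruction actually bite is the crux of the proof.
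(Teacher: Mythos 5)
There is a genuine gap, and it sits exactly where you locate ``the heart of the argument'': the step in which the pair-of-pants relation $\partial_1\partial_2\partial_3=1$ is played against $\ell_1\equiv[m_2,m_3]$ to force $\bar\mu_{123}(B)=0$ is never carried out, and you concede as much (``making that obstruction actually bite is the crux of the proof''). Moreover, the structural facts you yourself record suggest this route cannot be closed up in the form proposed. Because $P$ is connected, $H_1(W)\cong\Z$ and all three meridians become conjugate in $\pi_1(W)$; the inclusion $X_B\hookrightarrow W$ induces the sum map $\Z^3\to\Z$ on $H_1$, so Stallings' theorem does not identify nilpotent quotients, and (as you note) $\wedge^2 H_1(W)=0$, so no truncation of the lower central series of $\pi_1(W)$ retains the class $[m_2,m_3]$ you need. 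Likewise the Milnor group taken with respect to the single conjugacy class of meridians of $P$ collapses. In short, $\bar\mu_{123}$ is an obstruction to bounding \emph{disjoint} surfaces; once the surface is connected, the primary nilpotent-quotient theory you invoke sees nothing, and the correct obstruction is a \emph{secondary}, mod--$2$ invariant. (Heuristically: a pair of pants amounts to fusing the three components by bands into a slice knot, and what survives of $\bar\mu_{123}$ in the Arf invariant of such a fusion is only its mod--$2$ reduction --- which happens to be $1$ for the Borromean rings, but your argument as written would ``prove'' the integral statement, which is too strong.)

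For comparison, the paper's proof (due to Teichner) is four-dimensional and secondary in exactly this sense: it caps the hypothetical genus--$0$ surface $F$ off with the three standard immersed disc fillings of the Borromean rings from Conant--Schneiderman--Teichner, obtaining a generically immersed $s$-characteristic $2$-sphere $S\subset S^4$ with two oppositely signed self-intersections, and computes the Schneiderman--Teichner invariant $\tau(S)=1\in\Z/2$ from the explicit framed Whitney disc in their Figure~1; since $\pi_2(S^4)=0$ is represented by an embedded sphere, $\tau$ must vanish, giving the contradiction. If you want to rescue your approach, the invariant to reach for is this $\tau$ (equivalently an Arf-type invariant of a band fusion), i.e.\ the mod--$2$ shadow of $\bar\mu_{123}$ realised by counting intersections with Whitney discs, not a lower-central-series computation in $\pi_1$ of the surface complement.
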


The argument requires us to recall some more tools from $4$-manifold topology and in particular will use the~$\tau$ invariant of an immersed $2$-sphere in a $4$-manifold. We refer the reader to~\cite{SchneidermanTeichner} for a detailed account of the $\tau$ invariant, which we only briefly discuss now.

A generically immersed sphere~$S$ in a topological~$4$-manifold~$W$ is said to be \emph{$s$-characteristic} if~$S\cdot S'\equiv S'\cdot S'\pmod{2}$ for every generically immersed sphere~$S'$ in~$W$, where~$\cdot$ denotes the algebraic intersection number. Now suppose~$W$ is simply connected and that~$S$ is a generically immersed~$s$-characteristic~$2$-sphere in~$M$ such that the signed count of the self-intersections of $S$ is 0.
In this situation the following definition of the~$\tau$ invariant may be used~\cite[Remark~5]{SchneidermanTeichner}. Choose a set~$\{W_i\}$ of framed, generically immersed Whitney discs in~$M$, pairing the self-intersection points of~$S$, and such that each~$W_i$ intersects~$S$ transversely in double points in the interior of~$\{\mathring{W_i}\}$. Given such a set, define a signed intersection count
\[
\tau(S,\{W_i\}):=\sum_i S\cdot \mathring{W_i} \pmod{2}.
\]
The value of~$\tau(S,\{W_i\})\in \Z/2$ does not depend on choices of pairing of double points, Whitney arcs, or Whitney discs~\cite[Theorem~1]{SchneidermanTeichner}. Thus define~$\tau(S):=\tau(S,\{W_i\})$ for any such choice. 
Note that if $S$ is embedded then clearly $\tau(S)=0$ as we choose the Whitney disc collection to be empty. We recall as well that $\tau(S)$ is an invariant of the homotopy class $[S]\in\pi_2(M)$ of a generically immersed $2$-sphere (computable from homotopy representatives with vanishing self-intersection); see e.g.~\cite[\textsection 6.1]{FMNOPR} for a discussion. 

\begin{proof}[Proof of Proposition \ref{prop:borromean}]  As $\pi_2(S^4)=0$, and this unique homotopy class is represented by an \emph{embedded} $2$-sphere (for example the unknotted $S^2$), this implies that every generically immersed $2$-sphere in $S^4$ with vanishing self-intersection has vanishing $\tau$ invariant.

Now suppose~$F\subset D^4$ is a properly embedded connected genus 0 surface bounding the Borromean rings~$L\subset S^3$. Consider~\cite[Figure 1]{ConantSchneidermanTeichner}. This depicts the Borromean rings~$L\subset S^3$ together with a specific choice of three embedded discs~$D_1,D_2,D_3$ in~$D^4$, bounded by~$L$, that intersect each other exactly twice in~$D_1\cap D_2=\{q,q'\}$, and with opposite orientations. Define a generically immersed $2$-sphere $S:=F\cup D_1\cup D_2\cup D_3\subset S^4$, and note it is $s$-characteristic simply because $S^4$ has vanishing intersection form. In~\cite[Figure 1]{ConantSchneidermanTeichner}, there is furthermore depicted a single framed, embedded Whitney disc~$W$ pairing the intersection points~$q$ and~$q'$, and meeting~$D_3$ exactly once in some point~$p$. Thus we calculate that~$\tau(S)=1$. This contradicts our previous reasoning that $\tau(S)=0$, and so implies~$F$ does not exist.
\end{proof}

\begin{corollary} The Borromean rings (with any orientation) are not weakly doubly slice.
\end{corollary}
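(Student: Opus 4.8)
The plan is to reduce the statement directly to Proposition~\ref{prop:borromean}. Suppose towards a contradiction that the Borromean rings $L$, equipped with some orientation, are weakly doubly slice, so that $L=\Sigma\cap S^3$ for an unknotted, locally flat $2$-sphere $\Sigma\subset S^4$ meeting the standard equatorial $S^3$ nontrivially. First I would isotope $\Sigma$ so that it meets $S^3$ transversely with $\Sigma\cap S^3=L$, and decompose $S^4=D^4_+\cup_{S^3}D^4_-$ into its two hemispheres, setting $F:=\Sigma\cap D^4_+$. Then $F$ is a nonempty compact oriented surface, properly embedded in $D^4_+\cong D^4$, with $\partial F=L$ carrying the given orientation.

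The key observation is that $F$ has genus $0$. Indeed, $F$ is obtained from the abstract surface $\Sigma\cong S^2$ by cutting along the collection of disjoint simple closed curves $L\subset\Sigma$, and every compact subsurface of $S^2$ is planar. The surface $F$ may a priori be disconnected, so the next step is a standard tubing (piping) argument: since $D^4$ is connected, any two components of $F$ can be joined by a thin embedded annulus supported in the interior of $D^4$, and tubing together two distinct genus-$0$ components again produces a genus-$0$ surface with the same boundary. Iterating, one obtains a connected, compact, oriented, properly embedded surface $F'\subset D^4$ of genus $0$ with $\partial F'=L$.

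This directly contradicts Proposition~\ref{prop:borromean}, which asserts that the Borromean rings (with any orientation) bound no properly embedded connected genus $0$ surface in $D^4$. Hence $L$ is not weakly doubly slice, for any orientation. Given Proposition~\ref{prop:borromean} there is essentially no obstacle here; the one point to verify carefully is that slicing the embedded $S^2$ along its intersection with the equator yields planar pieces, which is exactly the statement that a compact subsurface of $S^2$ is planar, and the tubing of components is routine. We remark that the same argument shows, more generally, that every weakly doubly slice link bounds a connected planar surface properly embedded in $D^4$.
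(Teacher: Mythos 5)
Your proof is correct, and it reaches the same endpoint as the paper -- a contradiction with Proposition~\ref{prop:borromean} -- but by a more uniform route. The paper first case-splits according to the two possible configurations of a $2$-sphere meeting the equator in a $3$-component link (Figure~\ref{fig:wiggle}, via Lemma~\ref{lem:linkingnumbers}): in the configuration where one side consists of three disjoint discs, so that $L$ would be slice, it invokes $\mu_{123}(L)=1$ and the concordance invariance of Milnor's invariants; in the other configuration it tubes the disc component of $F$ to the annulus component and then applies Proposition~\ref{prop:borromean}. You avoid the case analysis entirely by observing that \emph{every} piece of $S^2$ cut along the three curves of $L$ is planar, so $F=\Sigma\cap D^4_+$ is a (possibly disconnected) planar surface, and tubing its components in the interior of $D^4$ -- legitimate by general position, since arcs and surfaces are generically disjoint in a $4$-manifold -- produces a connected genus-$0$ surface bounding $L$. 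This makes the Milnor-invariant step superfluous: in the ``three discs'' configuration the opposite hemisphere already contains a connected pair of pants, which contradicts Proposition~\ref{prop:borromean} directly, and your tubing handles the three discs as well. The trade-off is only expository: the paper's Milnor argument is self-contained and elementary for that case, whereas your argument leans entirely on the $\tau$-invariant input of Proposition~\ref{prop:borromean}; your closing remark that every weakly doubly slice link bounds a connected planar surface in $D^4$ is also correct and is the natural general statement underlying both proofs.
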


\begin{proof} Write $L$ for the Borromean rings with some choice of orientation and suppose (for a contradiction) $L$ is weakly doubly slice. Then $L$ is the cross section of a~$2$-sphere meeting an equatorial~$S^3$ in one of the configurations depicted in Figure~\ref{fig:wiggle}. 
The figure on the left cannot occur: if it did, then~$L$ would be slice, which is impossible since~$\mu_{123}(L)=1$ and Milnor's invariants are invariant under link concordance~\cite{Stallings,Casson}.
For the figure on the right, tube the annulus of~$F$ to the disc of~$F$ to see a connected genus $0$ surface, contradicting Proposition~\ref{prop:borromean}.
\end{proof}

\subsection{Orientations and double sliceness}

In Section \ref{sec:weakly} we saw that the unoriented links L6n1 and L8n8 are not weakly doubly slice with one quasi-orientation but are weakly doubly slice with another.
McCoy and McDonald have asked about the existence of such examples among~$2$-component links~\cite[Question 3]{McCoyMcDonald}.
Our multivariable invariants are fairly sensitive to orientation changes and we use this to provide such an example.

\begin{example}
\label{ex:Orientations}
Set~$K:=8_{20}$, and consider the unoriented link~$L=K_1 \cup K_2$ obtained as the~$(2,0)$-cable of~$K$.
We argue that~$L$ is weakly doubly slice with respect to one of its quasi-orientations but not with respect to the other.
If we orient~$K_1$ and~$K_2$ in opposite directions, then the fact that~$K$ is slice implies~$L$ is weakly doubly slice~\cite[Proposition 6]{McDonald}.
Next, we orient~$L$ so that~$K_1$ and~$K_2$ (which are copies of~$K$) have ``parallel" orientations and we use Corollary~\ref{cor:WeaklyDoublySlice} to show that~$L$ is not weakly doubly slice:
Set~$\omega:=e^{\pi i/6}$.
The Levine-Tristram signature of~$K$ vanishes identically, except at~$\omega^2$ and~$\overline{\omega}^2$, where~$\sigma_K(\omega^2)=\sigma_K(\overline{\omega^2}) = 1$.
Since with these orientations,~$L$ is obtained as the satellite on a winding number 2 pattern, we deduce from the cabling formula for the Levine-Tristram signature~\cite[item (2) of the Remark on page 76]{LitherlandIterated} that~$ \sigma_{L}^{LT}(\omega)=\sigma_K(\omega^2)=1.$
Corollary~\ref{cor:WeaklyDoublySlice} now implies that~$L$ is not weakly doubly slice.
Finally, note that Theorem~\ref{thm:MainTheoremIntro} shows that the ordered link $L$ is not strongly doubly slice since its multivariable signature is not identically zero; this also follows since the components of $L$ are not doubly slice.
\end{example}

\bibliographystyle{annotate}
\bibliography{BiblioDoublySlice}

\newcommand{\etalchar}[1]{$^{#1}$}
\begin{thebibliography}{FMN{\etalchar{+}}20}

\bibitem[BFP16]{BorodzikFriedlPowell}
Maciej Borodzik, Stefan Friedl, and Mark Powell.
\newblock Blanchfield forms and {G}ordian distance.
\newblock {\em J. Math. Soc. Japan}, 68(3):1047--1080, 2016.


\bibitem[Cas75]{Casson}
A.~J. Casson.
\newblock Link cobordism and {M}ilnor's invariant.
\newblock {\em Bull. London Math. Soc.}, 7:39--40, 1975.


\bibitem[CF08]{CimasoniFlorens}
David Cimasoni and Vincent Florens.
\newblock Generalized {S}eifert surfaces and signatures of colored links.
\newblock {\em Trans. Amer. Math. Soc.}, 360(3):1223--1264, 2008.


\bibitem[CFT18]{ConwayFriedlToffoli}
Anthony Conway, Stefan Friedl, and Enrico Toffoli.
\newblock The {B}lanchfield pairing of colored links.
\newblock {\em Indiana Univ. Math. J.}, 67(6):2151--2180, 2018.


\bibitem[Che21]{Chen}
Wenzhao Chen.
\newblock A lower bound for the double slice genus.
\newblock {\em Trans. Amer. Math. Soc.}, 374(4):2541--2558, 2021.


\bibitem[Cim04]{CimasoniPotential}
David Cimasoni.
\newblock A geometric construction of the {C}onway potential function.
\newblock {\em Comment. Math. Helv.}, 79(1):124--146, 2004.


\bibitem[CK19]{ChaKim}
Jae~Choon Cha and Taehee Kim.
\newblock Unknotted gropes, {W}hitney towers, and doubly slicing knots.
\newblock {\em Trans. Amer. Math. Soc.}, 371(4):2383--2429, 2019.


\bibitem[CN20]{ConwayNagel}
Anthony Conway and Matthias Nagel.
\newblock Stably slice disks of links.
\newblock {\em J. Topol.}, 13(3):1261--1301, 2020.


\bibitem[CNT20]{ConwayNagelToffoli}
Anthony Conway, Matthias Nagel, and Enrico Toffoli.
\newblock Multivariable signatures, genus bounds, and {$0.5$}-solvable
  cobordisms.
\newblock {\em Michigan Math. J.}, 69(2):381--427, 2020.


\bibitem[Con17]{ConwayThesis}
Anthony Conway.
\newblock {\em Invariants of colored links and generalizations of the {B}urau
  representation}.
\newblock PhD thesis, Universit\'e de Gen\`eve, 2017.
\newblock
  \href{http://dx.doi.org/10.13097/archive-ouverte/unige:99352}{10.13097/archive-ouverte/unige:99352}.

\bibitem[Con18]{Conway}
Anthony Conway.
\newblock An explicit computation of the {B}lanchfield pairing for arbitrary
  links.
\newblock {\em Canad. J. Math.}, 70(5):983--1007, 2018.


\bibitem[Coo82a]{CooperThesis}
Daryl Cooper.
\newblock Signatures of surfaces with applications to knot and link cobordism.
\newblock 1982.

\bibitem[Coo82b]{Cooper}
Daryl Cooper.
\newblock The universal abelian cover of a link.
\newblock In {\em Low-dimensional topology ({B}angor, 1979)}, volume~48 of {\em
  London Math. Soc. Lecture Note Ser.}, pages 51--66. Cambridge Univ. Press,
  Cambridge-New York, 1982.

\bibitem[COT03]{CochranOrrTeichner}
Tim~D. Cochran, Kent~E. Orr, and Peter Teichner.
\newblock Knot concordance, {W}hitney towers and {$L^2$}-signatures.
\newblock {\em Ann. of Math. (2)}, 157(2):433--519, 2003.


\bibitem[Cro71]{Crowell}
Richard Crowell.
\newblock The derived module of a homomorphism.
\newblock {\em Advances in Math.}, 6:210--238 (1971), 1971.


\bibitem[CS69]{CrowellStrauss}
Richard Crowell and Dona Strauss.
\newblock On the elementary ideals of link modules.
\newblock {\em Trans. Amer. Math. Soc.}, 142:93--109, 1969.


\bibitem[CS80]{CappellShaneson}
Sylvain~E. Cappell and Julius~L. Shaneson.
\newblock Link cobordism.
\newblock {\em Comment. Math. Helv.}, 55(1):20--49, 1980.


\bibitem[CST14]{ConantSchneidermanTeichner}
James Conant, Robert. Schneiderman, and Peter Teichner.
\newblock Milnor invariants and twisted {W}hitney towers.
\newblock {\em J. Topol.}, 7(1):187--224, 2014.


\bibitem[DFL17]{DegtyarevFlorensLecuonaSplice}
Alex Degtyarev, Vincent Florens, and Ana~G. Lecuona.
\newblock The signature of a splice.
\newblock {\em Int. Math. Res. Not. IMRN}, (8):2249--2283, 2017.


\bibitem[DFL18]{DegtyarevFlorensLecuona}
Alex {Degtyarev}, Vincent {Florens}, and Ana {Lecuona}.
\newblock {Slopes and signatures of links}.
\newblock {\em ArXiv 1802.01836}, 2018.


\bibitem[Don15]{Donald}
Andrew Donald.
\newblock Embedding {S}eifert manifolds in {$S^4$}.
\newblock {\em Trans. Amer. Math. Soc.}, 367(1):559--595, 2015.


\bibitem[FHH89]{FHH}
Temple~H. Fay, Keith~A. Hardie, and Peter~J. Hilton.
\newblock The two-square lemma.
\newblock {\em Publ. Mat.}, 33(1):133--137, 1989.


\bibitem[FMN{\etalchar{+}}20]{FMNOPR}
Peter Feller, Allison Miller, Matthias Nagel, Patrick Orson, Mark Powell, and
  Arunima Ray.
\newblock Embedding spheres in knot traces.
\newblock {\em Ar{X}iv 2004.04204v2}, 2020.


\bibitem[FNOP19]{FriedlNagelOrsonPowell}
Stefan Friedl, Matthias Nagel, Patrick Orson, and Mark Powell.
\newblock A survey of the foundations of four-manifold theory in the
  topological category.
\newblock {\em Ar{X}iv 1910.07372}, 2019.


\bibitem[Fox62]{MR0140099}
R.~H. Fox.
\newblock A quick trip through knot theory.
\newblock In {\em Topology of 3-manifolds and related topics ({P}roc. {T}he
  {U}niv. of {G}eorgia {I}nstitute, 1961)}, pages 120--167. Prentice-Hall,
  Englewood Cliffs, N.J., 1962.

\bibitem[FPP20]{FellerParkPowell}
Peter {Feller}, Junghwan {Park}, and Mark {Powell}.
\newblock The $\mathbb{Z}$-slice genus of boundary links.
\newblock {\em ArXiv 2012.14367}, 2020.


\bibitem[Fri03]{FriedlThesis}
Stefan~Klaus Friedl.
\newblock {\em Eta invariants as sliceness obstructions and their relation to
  {C}asson-{G}ordon invariants}.
\newblock ProQuest LLC, Ann Arbor, MI, 2003.
\newblock Thesis (Ph.D.)--Brandeis University.


\bibitem[GL83]{GilmerLivingston}
Patrick~M. Gilmer and Charles Livingston.
\newblock On embedding {$3$}-manifolds in {$4$}-space.
\newblock {\em Topology}, 22(3):241--252, 1983.


\bibitem[Hil12]{Hillman}
Jonathan Hillman.
\newblock {\em Algebraic invariants of links}, volume~52 of {\em Series on
  Knots and Everything}.
\newblock World Scientific Publishing Co. Pte. Ltd., Hackensack, NJ, second
  edition, 2012.


\bibitem[HS97]{HiltonStammbach}
Peter Hilton and Urs Stammbach.
\newblock {\em {A course in homological algebra. 2nd ed.}}
\newblock New York, NY: Springer, 2nd ed. edition, 1997.


\bibitem[Iss19]{IssaThesis}
Ahmad Issa.
\newblock {\em On Seifert fibered spaces embedding in 4-space, bounding
  definite manifolds and quasi-alternating Montesinos links}.
\newblock PhD thesis, University of Texas at Austin, 2019.

\bibitem[Kea75]{KeartonCobordism}
Cherry Kearton.
\newblock Cobordism of knots and {B}lanchfield duality.
\newblock {\em J. London Math. Soc. (2)}, 10(4):406--408, 1975.


\bibitem[Kea81]{KeartonHermitian}
Cherry Kearton.
\newblock Hermitian signatures and double-null-cobordism of knots.
\newblock {\em J. London Math. Soc. (2)}, 23(3):563--576, 1981.


\bibitem[Kim06]{KimNew}
Taehee Kim.
\newblock New obstructions to doubly slicing knots.
\newblock {\em Topology}, 45(3):543--566, 2006.


\bibitem[KK18]{KimKim}
Se-Goo Kim and Taehee Kim.
\newblock Polynomial splittings of correction terms and doubly slice knots.
\newblock {\em J. Knot Theory Ramifications}, 27(1):1850001, 9, 2018.


\bibitem[KL99]{KirkLivingston}
Paul Kirk and Charles Livingston.
\newblock Twisted {A}lexander invariants, {R}eidemeister torsion, and
  {C}asson-{G}ordon invariants.
\newblock {\em Topology}, 38(3):635--661, 1999.


\bibitem[Ko87]{KoSeifert}
Ki~Hyoung Ko.
\newblock Seifert matrices and boundary link cobordisms.
\newblock {\em Trans. Amer. Math. Soc.}, 299(2):657--681, 1987.


\bibitem[Ko89]{KoCS}
Ki~Hyoung Ko.
\newblock A {S}eifert-matrix interpretation of {C}appell and {S}haneson's
  approach to link cobordisms.
\newblock {\em Math. Proc. Cambridge Philos. Soc.}, 106(3):531--545, 1989.


\bibitem[Lev69]{LevineInvariants}
Jerome Levine.
\newblock Invariants of knot cobordism.
\newblock {\em Invent. Math.}, 8:98--110; addendum, ibid. 8 (1969), 355, 1969.


\bibitem[Lev89]{LevineMetabolicHyperbolic}
Jerome Levine.
\newblock Metabolic and hyperbolic forms from knot theory.
\newblock {\em J. Pure Appl. Algebra}, 58(3):251--260, 1989.


\bibitem[Lit79]{LitherlandIterated}
Richard Litherland.
\newblock Signatures of iterated torus knots.
\newblock In {\em Topology of low-dimensional manifolds \textup{(}{P}roc.
  {S}econd {S}ussex {C}onf., {C}helwood {G}ate, 1977\textup{)}}, volume 722 of
  {\em Lecture Notes in Math.}, pages 71--84. Springer, Berlin, 1979.

\bibitem[LM15]{LivingstonMeier}
Charles Livingston and Jeffrey Meier.
\newblock Doubly slice knots with low crossing number.
\newblock {\em New York J. Math.}, 21:1007--1026, 2015.


\bibitem[LM21]{LinkInfo}
Charles Livingston and Allison Moore.
\newblock Linkinfo:~table of link invariants, January 2021.
\newblock http://linkinfo.sitehost.iu.edu.

\bibitem[McD19]{McDonald}
Clayton McDonald.
\newblock Band number and the double slice genus.
\newblock {\em New York J. Math.}, 25:964--974, 2019.


\bibitem[McD20]{MR4163852}
Clayton McDonald.
\newblock Doubly slice odd pretzel knots.
\newblock {\em Proc. Amer. Math. Soc.}, 148(12):5413--5420, 2020.


\bibitem[Mei15]{Meier}
Jeffrey Meier.
\newblock Distinguishing topologically and smoothly doubly slice knots.
\newblock {\em J. Topol.}, 8(2):315--351, 2015.


\bibitem[MM21]{McCoyMcDonald}
Duncan {McCoy} and Clayton {McDonald}.
\newblock Doubly slice {M}ontesinos links.
\newblock {\em ArXiv 2101.04083}, 2021.


\bibitem[OP20]{OrsonPowell}
Patrick {Orson} and Mark {Powell}.
\newblock {A lower bound for the doubly slice genus from signatures}.
\newblock {\em ArXiv 2008.04138}, 2020.


\bibitem[Ors17]{MR3604378}
Patrick Orson.
\newblock Double {$L$}-groups and doubly slice knots.
\newblock {\em Algebr. Geom. Topol.}, 17(1):273--329, 2017.


\bibitem[RS06]{SheihamRanicki}
Andrew Ranicki and Desmond Sheiham.
\newblock Blanchfield and {S}eifert algebra in high-dimensional boundary link
  theory. {I}. {A}lgebraic {$K$}-theory.
\newblock {\em Geom. Topol.}, 10:1761--1853, 2006.


\bibitem[She03]{SheihamThesis}
Desmond Sheiham.
\newblock Invariants of boundary link cobordism.
\newblock {\em Mem. Amer. Math. Soc.}, 165(784):x+110, 2003.


\bibitem[She06]{Sheiham2}
Desmond Sheiham.
\newblock Invariants of boundary link cobordism. {II}. {T}he
  {B}lanchfield-{D}uval form.
\newblock In {\em Non-commutative localization in algebra and topology}, volume
  330 of {\em London Math. Soc. Lecture Note Ser.}, pages 143--219. Cambridge
  Univ. Press, Cambridge, 2006.

\bibitem[ST01]{SchneidermanTeichner}
Rob Schneiderman and Peter Teichner.
\newblock Higher order intersection numbers of 2-spheres in 4-manifolds.
\newblock {\em Algebr. Geom. Topol.}, 1:1--29, 2001.


\bibitem[Sta65]{Stallings}
John Stallings.
\newblock Homology and central series of groups.
\newblock {\em J. Algebra}, 2:170--181, 1965.


\bibitem[Sum71]{Sumners}
De~Witt Sumners.
\newblock Invertible knot cobordisms.
\newblock {\em Comment. Math. Helv.}, 46:240--256, 1971.


\bibitem[Vir09]{Viro}
Oleg Viro.
\newblock Twisted acyclicity of a circle and signatures of a link.
\newblock {\em J. Knot Theory Ramifications}, 18(6):729--755, 2009.


\end{thebibliography}

\end{document}